\documentclass[11pt,reqno,a4paper]{amsart}
\usepackage[british]{babel}
\usepackage{amssymb,amstext,amsthm,eucal, float, upgreek}
\usepackage{latexsym,amsmath,amsfonts,amscd,mathrsfs, booktabs}
\usepackage{enumerate}
\usepackage{graphicx}
\usepackage{caption}
\usepackage{subcaption}
\usepackage{array,color}
\usepackage{enumitem}
\usepackage{tgpagella}
\usepackage{tikz, bbm}
\usepackage[utf8]{inputenc}
\usepackage[margin=2.8cm]{geometry}
\usepackage[small]{eulervm}
\usepackage[unicode]{hyperref}
\usepackage{framed}
\usepackage{xcolor}
\usetikzlibrary{positioning}
\tikzset{main node/.style={rectangle,rounded corners,fill=blue!20,draw,minimum size=1cm,inner sep=0pt},
            }
\tikzset{sec node/.style={rectangle,rounded corners,fill=green!20,draw,minimum size=1cm,inner sep=0pt},
            }
\usetikzlibrary{shapes.misc}
\tikzset{cross/.style={cross out, draw=black, minimum size=2*(#1-\pgflinewidth), inner sep=0pt, outer sep=0pt},
%default radius will be 1pt.
cross/.default={3pt}}

\definecolor{mygray}{gray}{0.7}

% Put % before of what you want disabled

% Select what to do with todonotes:
% \usepackage[disable]{todonotes} % notes not showed
\usepackage[draft]{todonotes}   % notes showed

% Select what to do with command \comment:
% \newcommand{\comment}[1]{}  %comment not showed
 %comment showed

%%%%%%%%%%%%%%%%%%%%%%%%%%%%%%%%%%%%%%%%%%%%%%%%%%%%%%%%%%%%%%%%%%%%%%%%%%%%%%%%%%%
\usetikzlibrary{arrows,decorations.markings,positioning}
\tikzset{inner sep=0pt, node distance=5mm,
  root/.style={circle,draw,minimum size=5pt,thick},
  broot/.style={circle,draw,minimum size=5pt,thick,fill},
  xroot/.style={circle,draw,minimum size=5pt,thick,label=below:$\times$},
  doublearrow/.style={postaction={decorate},   decoration={markings,mark=at position .6 with {\arrow[line width=1.2pt]{>}}},double distance=1.6pt,thick},
  rdoublearrow/.style={postaction={decorate},   decoration={markings,mark=at position .4 with {\arrowreversed[line width=1.2pt]{>}}},double distance=1.6pt,thick},
	rtriplearrow/.style={postaction={decorate},   decoration={markings,mark=at position .4 with {\arrowreversed[line width=1.2pt]{>}}},double distance=2.5pt,thick},
	ltriplearrow/.style={postaction={decorate},   decoration={markings,mark=at position .6 with {\arrow[line width=1.2pt]{>}}},double distance=2.5pt,thick},
  curvedline/.style={bend=right}
}
%%%%%%%%%%%%%%%%%%%%%%%%%%%%%%%%%%%%%%%%%%%%%%%%%%%%%%%%%%%%%%%%%%%%%%%%%%%%%%%%%%%
\hypersetup{
  pdftitle   = {Some notes on \texorpdfstring{$\mathrm{G}(3)$}{}},
  pdfkeywords = {},
  pdfauthor  = {Boris Kruglikov, Andrea Santi and Dennis The},
  pdfcreator = {\LaTeX\ with package \flqq hyperref\frqq}
}
%%%%%%%%%%%%%%%%%%%%%%%%%%%%%%%%%%%%%%%%%%%%%%%%%%%%%%%%%%%%%%%%%%%%%%%%%%%%%%%%%%%
\numberwithin{equation}{section}
\theoremstyle{plain}
\newtheorem{theorem}{Theorem}[section]
\newtheorem{proposition}[theorem]{Proposition}
\newtheorem{prop}[theorem]{Proposition}
\newtheorem{corollary}[theorem]{Corollary}
\newtheorem{cor}[theorem]{Corollary}
\newtheorem{lemma}[theorem]{Lemma}

\theoremstyle{definition}
\newtheorem{rem}[theorem]{Remark}
\newtheorem{rk}[theorem]{Remark}
\newtheorem{example}[theorem]{Example}
\newtheorem{definition}[theorem]{Definition}
%%%%%%%%%%%%%%%%%%%%%%%%%%%%%%%%%%%%%%%%%%%%%%%%%%%%%%%%%%%%%%%%%%%%%%%%%%%%%%%%%%%
\newcommand{\Hom}{\operatorname{Hom}}

\newcommand{\der}{\mathfrak{der}}
\newcommand{\fosp}{\mathfrak{osp}}
\newcommand{\fsl}{\mathfrak{sl}}
\newcommand{\fso}{\mathfrak{so}}
\newcommand{\fsp}{\mathfrak{sp}}

\newcommand{\fm}{\mathfrak{m}}
\newcommand{\fp}{\mathfrak{p}}

\newcommand{\fg}{\mathfrak{g}}
\newcommand{\fh}{\mathfrak{h}}
\newcommand{\fk}{\mathfrak{k}}

\newcommand{\1}{\mathbbm{1}}

\newcommand{\ZZ}{\mathbb{Z}}

\newcommand{\CC}{\mathbb{C}}

\newcommand{\be}{\boldsymbol{e}}

\newcommand{\gr}{\operatorname{gr}}
\newcommand{\bep}{\boldsymbol{\epsilon}}
\renewcommand{\a}{\alpha}
\renewcommand{\b}{\beta}
\newcommand{\res}{\operatorname{res}}

% Dennis' macros
 \newcommand\cA{\mathcal{A}}
 \newcommand\cB{\mathcal{B}}
 \newcommand\bbC{\mathbb{C}}
 \newcommand\bbP{\mathbb{P}}
 \newcommand\sfZ{\mathsf{Z}}
 \newcommand{\fz}{\mathfrak{z}}
 \newcommand\tspan{\operatorname{span}}
 \newcommand\op{\oplus}
 \newcommand\fcspo{\mathfrak{cspo}}
 \newcommand\fspo{\mathfrak{spo}}
 \newcommand\id{\operatorname{id}}
 \newcommand\ptm[1]{{\tiny
 \left( \begin{array}{cccc|cccc} #1
 \end{array} \right)}}
 \newcommand{\fq}{\mathfrak{q}}
 \newcommand{\ff}{\mathfrak{f}}
 \newcommand\cU{\mathcal{U}}
 \newcommand\cV{\mathcal{V}}
 \newcommand{\fC}{\mathfrak{C}}
 \newcommand{\fG}{\mathfrak{G}}
 \newcommand\bbA{\mathbb{A}}
 
 \newcommand\cC{\mathcal{C}}
 \newcommand\cD{\mathcal{D}}
 \newcommand\cH{\mathcal{H}}
 \newcommand\cL{\mathcal{L}}
 \newcommand\bC{\mathbf{C}}
 \newcommand\bX{\mathbf{X}}
 \newcommand\bD{\mathbf{D}}
 \newcommand\bS{\mathbf{S}}
 \newcommand\bT{\mathbf{T}}
 \newcommand\bU{\mathbf{U}}
 \newcommand\bV{\mathbf{V}}
\newcommand\bB{\mathbf{B}}
 \newcommand\cE{\mathcal{E}}
 
 \newcommand{\Ch}{Ch}
 \renewcommand\ss{{\rm ss}}
 \newcommand\fms[1]{\fm_{\bar{#1}}}
 
 \newcommand\GA[3]{\raisebox{-0.05in}{\begin{tikzpicture}
 \draw (0,0) -- (1,0);
 \draw (0,0.05) -- (1,0.05);
 \draw (0,-0.05) -- (1,-0.05);
 \draw (0.6,0.15) -- (0.4,0) -- (0.6,-0.15);
 \node[draw,circle,inner sep=1.5pt,fill=white] at (0,0) {};
 \node[above] at (0,0.15) {$#1$};
 \node[draw,circle,inner sep=1.5pt,fill=white] at (1,0) {};
 \node[above] at (1,0.15) {$#2$};
 \node[draw,circle,inner sep=1.5pt,fill=white] at (2,0) {};
 \node[above] at (2,0.15) {$#3$};
 \end{tikzpicture}}}
 
 \newcommand\bbV{\mathbb{V}}
 \newcommand\bbU{\mathbb{U}}
 \newcommand\pr{\operatorname{pr}}
 \newcommand\fa{\mathfrak{a}}
 \newcommand\bbT{\mathbb{T}}

% Boris' macros

 \newcommand\g{\mathfrak{g}}
 \newcommand\p{\partial}
 
 \renewcommand\C{{\mathbb C}}
 \newcommand{\comm}[1]{}
 \newcommand\opp[1]{\mathop{\rm #1}\nolimits}

%%%%%%%%%%%%%%%%%%%%%%%%%%%%%%%%%%%%%%%%%%%%%%%%%%%%%%%%%%%%%%%%%%%%%%%%%%%%%%%%%%%

\newcommand{\FIXMEA}[1]{\textcolor{blue}{\textsf{FIXMEA: #1}}}

%%%%%%%%%%%%%%%%%%%%%%%%%%%%%%%%%%%%%%%%%%%%%%%%%%%%%%%%%%%%%%%%%%%%%%%%%%%%%%%%%%%
\allowdisplaybreaks
%%%%%%%%%%%%%%%%%%%%%%%%%%%%%%%%%%%%%%%%%%%%%%%%%%%%%%%%%%%%%%%%%%%%%%%%%%%%%%%%%%%
\begin{document}
\title[$G(3)$-supergeometry and the super Hilbert--Cartan equation]{$G(3)$-supergeometry and a supersymmetric extension\\ of the Hilbert--Cartan equation}
\author{Boris Kruglikov}
\author{Andrea Santi}
\author{Dennis The}

\address{(BK, DT) Department of Mathematics and Statistics, UiT The Arctic University of Norway, Troms\o{} 90-37, Norway}
\address{(AS) Department of Mathematics ``Tullio Levi-Civita'', University of Padova, 35121 Padova, Italy}
\thanks{}

 \begin{abstract}
We realize the simple Lie superalgebra $G(3)$ as supersymmetry of various geometric structures,
most importantly super-versions of the Hilbert--Cartan equation (SHC) and Cartan's involutive PDE 
system that exhibit $G(2)$ symmetry.
We provide the symmetries explicitly and compute, via the first Spencer cohomology groups, 
the Tanaka--Weisfeiler prolongation of the negatively graded Lie superalgebras associated 
with two particular choices of parabolics. We discuss non-holonomic superdistributions with 
growth vector $(2|4,1|2,2|0)$ deforming the flat model SHC, and prove that 
the second Spencer cohomology group gives a binary quadratic form, thereby indicating a 
``square-root'' of Cartan's classical binary quartic invariant for generic rank $2$ distributions 
in a $5$-dimensional space. Finally, we obtain super-extensions of Cartan's 
classical submaximally symmetric models, compute their symmetries and
observe a supersymmetry dimension gap phenomenon.
 \end{abstract}

\maketitle

\vskip-0.7cm\par\noindent
\tableofcontents
\vskip-0.5cm\par\noindent

\section{Introduction and the main results}\label{sec:introduction}
\vskip0.3cm\par

In the early days of Lie theory, W.\,Killing found all five exceptional simple Lie algebras,
yet without concrete geometric realizations. The simplest of these, the 14-dimensional Lie
algebra $G(2)$, discovered in 1887, was realized as the symmetry algebra of two different Klein geometries in 1893 by E.\,Cartan and F.\,Engel, in two successive papers in the same issue of Comptes Rendus
\cite{G2-Cartan,G2-Engel}.

Supersymmetry was brought to life in the context of quantum field theory and it
is based on the theory of Lie superalgebras.
The first simple (real) Lie superalgebra was computed by J.\,Wess and B.\,Zumino in 1974
as the symmetry superalgebra of $AdS^{5|8}$ \cite{MR0356830},
a superization of the anti de Sitter space playing a special role in general relativity. This was one
of the classical Lie superalgebras $\mathfrak{su}(2,2|1)=\mathfrak{osp}(4,4|2;\mathbb R)\cap\mathfrak{sl}(4|1;\CC)$.
The classification of simple complex Lie superalgebras was achieved by V.\,Kac in 1977 \cite{MR0486011},
and the simplest exceptional one, in the list of Lie superalgebras with a reductive even part, is $G(3)$ of dimension $(17|14)$. This Lie superalgebra is traditionally described by introducing the brackets on its even and odd parts and not as the symmetry superalgebra of some simple algebraic or geometric structure. (Arguably, one reason is that the smallest non-trivial representation of $G(3)$ is the adjoint representation \cite{MR1026702}.)

The goal of this paper is to realize $G(3)$ as the symmetry of a (Klein) supergeometry,
and then study the invariants and deformations of this supergeometry. We remark that $G(2)$ is a subalgebra of $G(3)$, and we extensively make use of this important fact. Indeed, we will establish super-analogs of the celebrated differential equations associated to $G(2)$. For simplicity, in this paper we restrict to Lie algebras and superalgebras over $\CC$,
the straightforward version over $\mathbb R$ corresponds to the split (normal) form. 

Let us briefly recall the classical results before we describe the super-models.
\subsection{History: realizations of $G(2)$ as symmetry}
In Cartan's realization, $G(2)$ is the symmetry of a rank 2 distribution in a 5-space.
This distribution is associated to the underdetermined ordinary differential equation
 \begin{equation}\label{HC-ODE}
z'=\tfrac12(u'')^2,
 \end{equation}
for the functions $u=u(x)$, $z=z(x)$. Equivalently, it is the 5-manifold $\Sigma = \{ z_1 = \frac{1}{2} (u_2)^2 \}$ in the mixed jet-space $J^{2,1}(\bbC,\bbC^2)=\bbC^6(x,u,u_1,u_2,z,z_1)$ equipped 
with the Pfaffian system
 \begin{align} \label{HC-Pfaff}
 \langle du-u_1dx, \,\, du_1-u_2dx,\,\, dz-\tfrac12 u_2^2dx\rangle,
 \end{align}
i.e., the pullback to $\Sigma$ of the Cartan system in $J^{2,1}(\bbC,\bbC^2)$.  
Symmetries of \eqref{HC-Pfaff} are often referred to as {\em internal} symmetries of \eqref{HC-ODE}.
In modern terms, this Pfaffian system is the Klein geometry encoded as a rank 2 distribution on
the flag variety $G(2)/P_1$, where $P_1$ is the parabolic subgroup
with marked Dynkin diagram \,\,
 \raisebox{-0.08in}{\begin{tikzpicture}
 \draw (0,0) -- (1,0);
 \draw (0,0.05) -- (1,0.05);
 \draw (0,-0.05) -- (1,-0.05);
 \draw (0.6,0.15) -- (0.4,0) -- (0.6,-0.15);
 \node[draw,circle,inner sep=1.5pt,fill=white] at (0,0) {};
 \node[draw,circle,inner sep=1.5pt,fill=white] at (1,0) {};
 \node[below] at (0,-0.1) {$\times$};
 \end{tikzpicture}}
\,\, of $G(2)$.\footnote{We will abuse the notation $G(2)$ for the Lie group and the corresponding Lie algebra, and similarly for $G(3)$ in the super-setting later on. For parabolics, $P$ will denote a Lie (super-)group with Lie (super-)algebra $\fp$, sometimes with extra ornamentation.}

In Engel's realization, $G(2)$ is the symmetry of a contact distribution $\cC$ on a 5-dimensional space $M$
equipped with a field of rational normal curves of degree $3$ (twisted cubics) $\cV\subset\bbP(\cC)$, in modern terms, a paraconformal or $GL(2)$-structure.
We recall that a twisted cubic is given by
$\lambda\mapsto[\lambda^3:\lambda^2:\lambda:1]$ in some (projective) frame of the distribution,
modulo projective reparametrizations of $\lambda$.

In 1910, E.\,Cartan \cite{MR1509120} realized $G(2)$ as the {\em contact} (or {\em external}) symmetry of an overdetermined system of differential equations
 \begin{equation}\label{C-PDE}
u_{xx}=\tfrac13\lambda^3,\quad u_{xy}=\tfrac12\lambda^2,\quad u_{yy}=\lambda.
 \end{equation}
Upon elimination of the parameter $\lambda$, one obtains an involutive PDE system that we call the {\em $G(2)$-contact PDE system}, namely
 \begin{equation} \label{G2-ct-PDE}
u_{xx}=\tfrac13 u_{yy}^3, \quad u_{xy}=\tfrac12 u_{yy}^2.
 \end{equation}
It should be mentioned that D.\,Hilbert in 1912 \cite{MR1511723} showed 
that \eqref{HC-ODE} (resp. \eqref{C-PDE}) do not allow integral curves
(resp. surfaces) to be expressed in closed form, that is, without quadratures, a phenomenon
explained by E.\,Cartan in a more general context in 1914. %Ref to Cartan?
Henceforth \eqref{HC-ODE} is called the {\it Hilbert--Cartan
equation}. (Variations like the factor $\tfrac12$ in equation \eqref{HC-ODE} are inessential.
Later we will also have similar differences in notation 
for the super-versions.)

Since then many methods to compute symmetries of the HC equation \eqref{HC-ODE} have been developed,
in particular relating internal to generalized symmetries. %Ref to Anderson Olver Kamran?
For us the most important approach will be that of
N.\,Tanaka \cite{MR0266258} %maybe better cite his 1967 paper
and B.\,Weisfeiler \cite{MR0232811}. This gives an upper bound on the symmetry algebra from 
the algebraic prolongation
of the symbol algebra (also known as the Carnot algebra in optimal control). We recall that associated to 
any distribution $\cD$ on a manifold, there is the weak derived flag
\begin{equation}
\label{eq:weakderivedflag}
\cD^{1}=\cD \subset \cD^{2}\subset\cdots\subset\cD^{k}\subset\cdots\;,\qquad 
\cD^{k+1} = [\cD, \cD^{k}].
\end{equation}
 A distribution $\cD$ is called {\em regular} if the ranks of $\cD^k|_x$ are constant in $x$ for all $k>0$, that is, the $\cD^k$ are distributions for all $k>0$.  Setting $\fg_{-i}(x)=\cD^{i}|_x/\cD^{i-1}|_x$, the symbol algebra at $x\in M$ is
$\fm_x=\bigoplus_{k<0}\fg_k(x)$. If we assume that $\cD$ is bracket-generating of depth $\mu$ ($\cD^{\mu}$ is the full tangent bundle and $\cD^{\mu-1}\subsetneqq \cD^{\mu}$) and 
strongly regular of type $\fm$ (all $\fm_x$ are isomorphic to a fixed negatively-graded Lie algebra $\fm$),
then $\fg_k=0$ precisely for all $k<-\mu$. The maximal prolongation of 
$\fm=\bigoplus_{-\mu\leq k<0}\fg_k$ is then defined as the unique (possibly infinite-dimensional) 
$\mathbb Z$-graded Lie algebra
 $$pr(\fm)=\bigoplus_{k=-\mu}^{+\infty} \g_k$$
that extends $\fm$, is transitive (for all $k\geq 0$, if $X\in\g_k$ is an element such that $[X,\g_{-1}]=0$, then $X=0$) and is maximal with these properties. In particular $\fg_0=\der_{gr}(\fm)$ is the Lie algebra of grade-preserving derivations of $\fm$. 

If $M=(M_o,\cA_M)$ is a supermanifold, with underlying topological space $M_o$ and sheaf of superfunctions $\cA_M$, then a distribution is defined as a (graded) $\cA_M$-subsheaf $\cD$ of 
the tangent sheaf $\mathcal TM=\operatorname{Der}(\cA_M)$ of $M$ that is locally a direct factor, 
see e.g. \cite[\S 4.7]{MR2069561}.  (We will often use ``superdistribution'' as a shortening of ``distribution on a supermanifold''.)  Any such $\cD$ induces a vector bundle 
$\cD|_{M_o}=\cup_{x\in M_o}\cD|_x$ on $M_o$ (where $\cD|_x$ is the evaluation of $\cD$ at $x\in M_o$), 
but we note that this bundle does not fully determine $\cD$. 
 The weak derived flag associated to $\cD$ is defined as in \eqref{eq:weakderivedflag}.  Similarly, $\cD$ is called {\em regular} if $\cD^k$ are superdistributions for all $k>0$, and bracket-generating of depth $\mu$ if $\cD^\mu=\mathcal TM$. Hence we obtain vector bundles $\cD^k|_{M_o}=\cup_{x\in M_o}\cD^k|_x$ on $M_o$ and a symbol $\fm_x$ at any $x\in M_o$ that is a (finite-dimensional) Lie superalgebra.

 We also consider the stalk $\cD_x^k$ of $\cD^k$ at $x\in M_o$ as a module over the local ring $(\cA_M)_x$ 
and set $gr(\mathcal T_xM)= \bigoplus_{k>0} gr(\mathcal T_xM)_{-k}$, where $gr(\mathcal T_xM)_{-k}=\cD_x^k/\cD_x^{k-1}$. This is naturally a graded Lie superalgebra free over $(\cA_M)_x$.  Since supervector fields are not determined by their values at the points of $M_o$, the correct generalization of the concept of strong regularity is given in terms of the stalks. 

\begin{definition}
Let $\cD$ be a regular distribution on a supermanifold $M=(M_o,\cA_M)$ that is 
bracket-generating of depth $\mu$. Then $\cD$ is strongly regular if there exists a negatively-graded Lie superalgebra $\fm=\bigoplus_{0<k\leq\mu}\fm_{-k}$ such that $gr(\mathcal T_xM)\cong (\cA_M)_x\otimes \fm$ at any $x\in M_o$, as graded Lie superalgebras over $(\cA_M)_x$.
\end{definition} 

Concretely, a strongly regular superdistribution admits a local basis of supervector fields adapted to the weak derived flag and whose brackets are
given by the structure constants of $\fm$, after the appropriate quotients have been taken.
The superdistributions considered in this paper will all tacitly be assumed strongly regular. 

The proof of the existence and uniqueness of $pr(\fm)$ given in \cite{MR0266258} extends verbatim to the Lie superalgebra case and the mild generalization $pr(\fm,\fg_0)$ that includes a reduction $\fg_0\subset\der_{gr}(\fm)$ of the structure Lie superalgebra  is straightforward. In the context of graded Lie superalgebras we will call it the {\em Tanaka--Weisfeiler prolongation}.

Let us come back to the classical case. The three realizations of $G(2)$ discussed above are conveniently related by the diagram of
Figure \ref{F:G2-twistor}.

 \begin{figure}[h]
 \begin{center}
 \begin{tikzpicture}
 \draw (0,0) -- (1,0);
 \draw (0,0.07) -- (1,0.07);
 \draw (0,-0.07) -- (1,-0.07);
 \draw (0.6,0.15) -- (0.4,0) -- (0.6,-0.15);
 \node[draw,circle,inner sep=2pt,fill=white] at (0,0) {};
 \node[draw,circle,inner sep=2pt,fill=white] at (1,0) {};
 \node[below] at (0,-0.1) {$\times$};
 \draw (3,0) -- (4,0);
 \draw (3,0.07) -- (4,0.07);
 \draw (3,-0.07) -- (4,-0.07);
 \draw (3.6,0.15) -- (3.4,0) -- (3.6,-0.15);
 \node[draw,circle,inner sep=2pt,fill=white] at (3,0) {};
 \node[draw,circle,inner sep=2pt,fill=white] at (4,0) {};
 \node[below] at (4,-0.1) {$\times$};
 \draw (1.5,1) -- (2.5,1);
 \draw (1.5,1.07) -- (2.5,1.07);
 \draw (1.5,0.93) -- (2.5,0.93);
 \draw (2.1,1.15) -- (1.9,1) -- (2.1,0.85);
 \node[draw,circle,inner sep=2pt,fill=white] at (1.5,1) {};
 \node[draw,circle,inner sep=2pt,fill=white] at (2.5,1) {};
 \node[below] at (1.5,0.9) {$\times$};
 \node[below] at (2.5,0.9) {$\times$};
\path[->,>=angle 90](1.9,0.75) edge (0.65,0.25);
\path[->,>=angle 90](2.1,0.75) edge (3.35,0.25);
 \end{tikzpicture}
\end{center}
\caption{$G(2)$-twistor correspondence as a relation between PDEs and ODEs.}
\label{F:G2-twistor}
\end{figure}
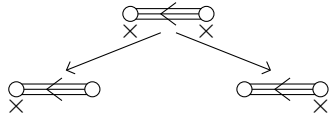\par\noindent
On the left is the 5-dimensional space $G(2)/P_1$ equipped with a
rank 2 distribution having growth vector $(2,1,2)$. (Here and in the following, the {\em growth vector} is the list of dimensions of the graded components
of the symbol algebra. Another convention is to list dimensions of the filtered components, in which case
it is called a $(2,3,5)$ distribution.) On the right is the 5-dimensional contact manifold $G(2)/P_2$
with a reduction of the structure group to $GL(2)\subset CSp(4)$.

Finally, on the top is the 6-dimensional space $G(2)/P_{12}$ equipped with a rank 2 distribution $\cD$ having growth vector $(2,1,1,1,1)$ and this geometry is indeed derived from the PDE model \eqref{G2-ct-PDE}. Namely, as a 6-space $\cE \subset J^2(\bbC^2,\bbC)$, it inherits:
 \begin{enumerate}
 \item[(i)] the Cartan distribution $\cD^{2}$ of rank 3, which coincides with the first derived of $\cD$. The
associated {\em Cauchy characteristic space} $\Ch(\cD^{2})$ consists of (internal) symmetries of $\cD^{2}$ that lie inside $\cD^{2}$ itself and is generated by $$\bC_1 = D_x - \lambda D_y\;,$$ where
\begin{equation}
%\begin{split}
D_x=\partial_x+u_x\partial_u+\frac{\lambda^3}{3}\partial_{u_x}+\frac{\lambda^2}{2}\partial_{u_y}\;,\quad
D_y=\partial_y+u_y\partial_u+\frac{\lambda^2}{2}\partial_{u_x}+\lambda\partial_{u_y}
%\end{split}
\end{equation}
are truncated total derivatives.
 \item[(ii)] 1-dimensional fibres for $\cE \to J^1(\bbC^2,\bbC)$, with vertical bundle spanned by $\bC_2=\partial_\lambda$.  The latter generates $\Ch(\cD^{4})$.
 \end{enumerate}
 The rank 2 distribution $\cD$ can be recovered from (i)-(ii) as the span of $\bC_1$ and $\bC_2$.

The left arrow in Figure \ref{F:G2-twistor} is the quotient $\cE \to \Sigma\cong \cE / \bC_1$ of $\cE$ by $\bC_1$ and $\Sigma$ inherits the rank $2$ distribution $\cD^{2}/\bC_1$. The right arrow is the quotient $\cE \to M\cong\cE / \bC_2$ by $\bC_2$ and $M$ inherits the contact distribution $\cC = \cD^{4} / \bC_2$ equipped with the twisted cubic obtained as (the Zariski-closure of) the push-forward of $\bC_1$ through the projection.

Both quotients exist in general only locally and only for $G(2)$-invariant (flat) structures.
In fact, the second arrow does not exist for general curved parabolic geometries (the field of curves determined by the push-forward of a vector field in $\Ch(\cD^{2})$ may not be a rational normal
curve, therefore it does not give rise to the required reduction $GL(2)\subset CSp(4)$), while the first quotient exists universally, as the geometries are just determined  by the type of the associated distributions.
The latter gives a bijection between involutive PDE systems of the second order for $u = u(x,y)$ and
(by a theorem of E.\,Goursat) Monge equations, i.e., underdetermined ODEs of the type
 $$
z'=f(x,u,u',u'',z)\;,
 $$
for $u=u(x)$, $z=z(x)$.
%This bijection is in modern language a twistor correspondence.
Cartan \cite{MR1509120} showed that \eqref{G2-ct-PDE}  has maximal (contact)  symmetry dimension amongst all second order involutive PDE systems and \eqref{HC-ODE} maximal (internal) symmetry dimension amongst all Monge equations.

Generalizations of the $G(2) / P_{12} \to G(2) / P_1$ fibration to the other exceptional simple Lie groups were first investigated by K. Yamaguchi \cite{MR1699860}.  Recently, work of D. The \cite{MR3759350} gave the first explicit geometric generalizations of the Cartan--Engel $G(2)$-models to the exceptionals.  The idea is simply illustrated in the $G(2)$-case: the twisted cubic $\cV\subset\bbP(\cC)$ from Engel's $G(2)/P_2$-picture is a {\em Legendrian projective variety}, i.e., its osculations give a family $\widehat\cV$ of Lagrangian subspaces, and the fibres of $\cE \to M$ are modelled on $\widehat\cV$.  The Lagrange--Grassmann bundle $\widetilde{M}=LG(\cC) \to M$ (whose fibres are Lagrangian subspaces of the contact distribution $\cC$ on $M$) is locally isomorphic to $J^2(\bbC^2,\bbC)$, so $\cE \subset \widetilde{M}$ corresponds to a PDE.  The difference of perspective in \cite{MR3759350} is to view the PDE \eqref{G2-ct-PDE} as an equivalent description of $G(2)/P_2$-geometry, using the fact that $\widehat\cV$ provides the same reduction to $GL(2)\subset CSp(4)$ as $\cV$ does.

We now turn to our results in the super-setting.

\subsection{New results: realizations of $G(3)$ as supersymmetry}

We will demonstrate that a super-extension of the HC equation \eqref{HC-ODE} is given by the following system of
partial differential equations that we call {\it SHC} (for {\it super Hilbert-Cartan}):
 \begin{equation} \label{SHC-ODE}
z_x = \tfrac12 u_{xx}^2 + u_{x\nu} u_{x\tau} , 
\quad z_\nu = u_{xx} u_{x\nu}, \quad z_\tau = u_{xx} u_{x\tau}, \quad u_{\nu\tau} = -u_{xx}\;,
 \end{equation}
where $u=u(x,\nu,\tau)$ and $z=z(x,\nu, \tau)$. It is a submanifold $\Sigma$ of codimension $(2|2)$ in the mixed jet-superspace $J^{2,1}(\bbC^{1|2},\bbC^{2|0})$, equipped with the pullback of the Cartan system. Unlike \eqref{HC-ODE}, which has general solution depending on one arbitrary function of one variable, the space of solutions of \eqref{SHC-ODE} depends only on five arbitrary constants, see Section \ref{subsec:integralsubmnfds}. 

From the internal perspective, this corresponds to a superdistribution of rank $(2|4)$ in a $(5|6)$-dimensional supermanifold
with growth vector $(2|4,1|2,2|0)$, and the equation can be directly produced by integrating the graded nilpotent
Lie superalgebra associated to the parabolic $\mathfrak{p}_2^{\rm IV}\subset G(3)$
(see Sections \ref{subsec:2.2} and \ref{S:G3-gradings} for notations) via the super-version of the Baker--Campbell--Hausdorff
formula and then locally rectifying the corresponding Pfaffian system on $G(3)/P_2^{\rm IV}$.
This is how we obtained \eqref{SHC-ODE} initially; however in this paper we present a method closer to that used in \cite{MR3759350}.
We then prove that the internal symmetry superalgebra of \eqref{SHC-ODE} is $G(3)$ in Theorem \ref{T:G3-SHC-sym}.

Specifically, we begin with a contact grading.  From Table \ref{Table:19sgeo}, we note that there are two contact gradings on $G(3)$: the one associated to
$\mathfrak{p}^{\rm I}_1$ which corresponds to a purely odd distribution (i.e., which gives rise to a ``consistent'' $\mathbb Z$-grading in Kac's terminology),
and the other associated to $\mathfrak{p}^{\rm IV}_1$ corresponding to a distribution of mixed parity (i.e., an ``inconsistent'' $\mathbb Z$-grading).
Both have purely even normal bundle and we will explore the second option.

The homogeneous superspace $G(3)/P_1^{\rm IV}$ has dimension $(5|4)$ and it comes with
a contact superdistribution $\mathcal C$ of rank $(4|4)$. We first determine an invariant cone field in it,
characterizing the reduction of the structure group to $COSp(3|2)\subset CSpO(4|4)$;
note the order of letters.
In other words, at any fixed topological point $x\in G(3)/P_1^{\rm IV}$, the projectivization of $\mathcal C|_x$ contains a distinguished
subvariety $\cV|_x$ of dimension $(1|2)$. This supervariety is isomorphic to the unique irreducible flag manifold
of the simple Lie supergroup $OSp(3|2)$, namely
$$
\cV|_x\cong OSp(3|2)/P_1^{\rm II}\;,
$$
where $\mathfrak{p}_1^{\rm II}$ is the parabolic subalgebra
  \raisebox{-0.08in}{
	$
\begin{tikzpicture}
 \draw (1,0) -- (2,0);
 \node[draw,circle,inner sep=2pt,fill=mygray] at (1,0) {};
 \node[draw,circle,inner sep=2pt,fill=black] at (2,0) {};
 \node[below] at (1,-0.1) {$\times$};
 \end{tikzpicture}
 $
}.
We call it the $(1|2)$-twisted cubic, because its underlying classical manifold is a rational normal curve of degree 3 and it is super-deformed in 2 odd dimensions.

Lagrangian subspaces obtained as osculations of the field of $(1|2)$-twisted cubics $\mathcal V\subset\mathbb P(\mathcal C)$ determine
a submanifold $\cE$ in the superspace of 2-jets $J^{2}(\bbC^{2|2},\bbC^{1|0})$ and we obtain the following
extension of \eqref{G2-ct-PDE}, which we call {\em $G(3)$-contact super-PDE system}:
 \begin{equation}\label{SC-PDE}
 \begin{split}
u_{xx} &= \tfrac13 u_{yy}^3 + 2 u_{yy} u_{y\nu} u_{y\tau}, \quad
u_{xy} = \tfrac12 u_{yy}^2 + u_{y\nu} u_{y\tau}, \\
u_{x\nu}&= u_{yy} u_{y\nu}, \qquad\ u_{x\tau} = u_{yy} u_{y\tau}, \qquad\ u_{\nu\tau} = -u_{yy},
 \end{split}
 \end{equation}
where $u=u(x,y,\nu,\tau)$. See Theorem \ref{thm:superPDEG(3)}. Furthermore, we show that the contact symmetry algebra of this super-PDE system is exactly $G(3)$, see Theorem \ref{T:G3-symII}.

The models we obtain are related by the diagram of Figure \ref{F:G3-twistor}. On the left we see the $(5|4)$-dimensional superspace $G(3)/P_1^{\rm IV}$
with a field of $(1|2)$-twisted cubics in the projectivized contact distribution, that is, with a reduction of the structure group $CSpO(4|4)$ to $COSp(3|2)$.
On the right there is the $(5|6)$-dimensional superspace $G(3)/P_2^{\rm IV}$ endowed with a superdistribution of growth $(2|4,1|2,2|0)$, which corresponds to the SHC equation \eqref{SHC-ODE}.

Finally, on the top we see the $(6|6)$-dimensional superspace $G(3)/P_{12}^{\rm IV}$ equipped with the superdistribution $\cD$
of growth $(2|2,1|2,1|2,1|0,1|0)$. This is derived from the super-PDE model \eqref{SC-PDE}.
As in the $G(2)$-case, the superdistribution $\cD$ is not
the Cartan distribution of the super-PDE model considered as a submanifold $\cE$ of $J^2(\mathbb C^{2|2},\mathbb{C}^{1|0})$.
In fact, the Cartan distribution is the first derived $\cD^{2}$ of $\cD$ and has rank $(3|4)$;
we also note that $\cD^{3}$ has rank $(4|6)$, while
$\cD^{4}$ has rank $(5|6)$.

\begin{figure}[h]
 \begin{center}
 \begin{tikzpicture}
 \draw (0,0) -- (1,0);
 \draw (0,0.07) -- (1,0.07);
 \draw (0,-0.07) -- (1,-0.07);
 \draw (0.4,0.15) -- (0.6,0) -- (0.4,-0.15);
 \draw (1,0.05) -- (2,0.05);
 \draw (1,-0.05) -- (2,-0.05);
 \draw (1.6,0.15) -- (1.4,0) -- (1.6,-0.15);
 \node[draw,circle,inner sep=2pt,fill=white] at (0,0) {};
 \node[draw,circle,inner sep=2pt,fill=mygray] at (1,0) {};
 \node[draw,circle,inner sep=2pt,fill=black] at (2,0) {};
 \node[below] at (0,-0.1) {$\times$};
 \draw (4,0) -- (5,0);
 \draw (4,0.07) -- (5,0.07);
 \draw (4,-0.07) -- (5,-0.07);
 \draw (4.4,0.15) -- (4.6,0) -- (4.4,-0.15);
 \draw (5,0.05) -- (6,0.05);
 \draw (5,-0.05) -- (6,-0.05);
 \draw (5.6,0.15) -- (5.4,0) -- (5.6,-0.15);
 \node[draw,circle,inner sep=2pt,fill=white] at (4,0) {};
 \node[draw,circle,inner sep=2pt,fill=mygray] at (5,0) {};
 \node[draw,circle,inner sep=2pt,fill=black] at (6,0) {};
 \node[below] at (5,-0.1) {$\times$};
 \draw (2,1.5) -- (3,1.5);
 \draw (2,1.57) -- (3,1.57);
 \draw (2,1.43) -- (3,1.43);
 \draw (2.4,1.65) -- (2.6,1.5) -- (2.4,1.35);
 \draw (3,1.55) -- (4,1.55);
 \draw (3,1.45) -- (4,1.45);
 \draw (3.6,1.65) -- (3.4,1.5) -- (3.6,1.35);
 \node[draw,circle,inner sep=2pt,fill=white] at (2,1.5) {};
 \node[draw,circle,inner sep=2pt,fill=mygray] at (3,1.5) {};
 \node[draw,circle,inner sep=2pt,fill=black] at (4,1.5) {};
 \node[below] at (2,1.4) {$\times$};
 \node[below] at (3,1.4) {$\times$};
\path[->,>=angle 90] (2.8,1.25) edge (1,0.25);
\path[->,>=angle 90] (3.2,1.25) edge (5,0.25);
 \end{tikzpicture}
\end{center}
\caption{$G(3)$-twistor correspondence considered in this paper.}
\label{F:G3-twistor}
\end{figure}
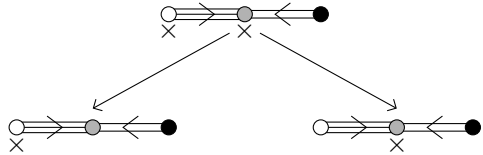

The superdistribution $\cD^{2}$ has a non-trivial Cauchy characteristic space $\Ch(\cD^{2})$, which we compute in Section \ref{CC-HC} (we remark that $\cD^2$ is simply denoted by the symbol $\cH$ in that section). It is spanned by the even supervector field
\begin{equation}
\label{eq:CCsupereven}
\bC=D_x-\lambda D_y-\theta D_\nu-\phi D_\tau\;,
\end{equation}
where $D_x,D_y, D_\tau, D_\nu$ are truncated total derivatives (explicit formulae are given in Section \ref{CC-HC}) and $\lambda=u_{yy}$, $\theta=-u_{y\tau}$, $\phi=u_{y\nu}$.
On the other hand $\cD^{4}$ has a $(1|2)$-dimensional Cauchy characteristic space
$\Ch(\cD^{4})=\langle \partial_\lambda|\partial_{\theta},\partial_{\phi}\rangle$.

The left arrow is the quotient by $\Ch(\cD^{4})$ and the contact superdistribution 
$\cC$ on $G(3)/P_1^{\rm IV}$ at the bottom left is $\cD^{4}/\Ch(\cD^{4})$.
The right arrow is the quotient by $\Ch(\cD^{2})$ and the superdistribution of rank $(2|4)$ 
on $G(3)/P_2^{\rm IV}$ at the bottom right is $\cD^{2}/\Ch(\cD^{2})$. Note that the left and right 
arrows are swapped w.r.t. the classical picture presented in Figure \ref{F:G2-twistor}; this is due to 
the reversal of the triple arrow in the Dynkin diagram of $G(3)$ w.r.t. that of $G(2)$.

The local quotient given by the left arrow exists in general only for special (in particular flat) structures.
However the local quotient to the right has no obstructions, and there is an equivalence of 
categories for the corresponding geometries, see Appendix \ref{A0}.

\subsection{Spencer cohomology of $G(3)$ and curved supergeometries}

Our proof that $\fg=G(3)$ is the internal symmetry superalgebra of \eqref{SHC-ODE}, 
resp.\ the contact symmetry superalgebra of \eqref{SC-PDE}, 
is based on the {\it explicit} determination of all the supersymmetries and on the computation 
of the Tanaka--Weisfeiler prolongation of the symbol algebra associated to the relevant distribution, 
resp.\ with a further orthosymplectic reduction $\mathfrak{cosp}(3|2)\subset\der_{gr}(\fm)$.
We emphasize that in \cite{MR3759350}, the proof that the given PDEs have symmetry realized by 
the exceptional Lie algebras was given independently from the explicit symmetry computation. 
That result was based on the theory of parabolic geometries, which is not available in the super-setting. 

Theorem \ref{thm:235H^1}, resp.\ Theorem \ref{prop:P1-cohom}, 
computes the first Spencer cohomology group $H^1(\fm,\fg)$  (see \eqref{eq:dn} for its decomposition into homogeneous components) of the negatively graded Lie superalgebra 
$\fm\subset\fg$ corresponding to the $\mathbb Z$-grading induced by $\mathfrak{p}^{\rm IV}_2$, 
resp.\ $\mathfrak{p}^{\rm IV}_1$. Vanishing in nonnegative, resp. positive, degrees 
amounts exactly to $pr(\fm)\cong G(3)$, resp. $pr(\fm,\fg_0)\cong G(3)$, 
that is, $G(3)$ is the maximal prolongation. %\cite{MR0266258}

In addition, we compute the second Spencer cohomology groups $H^2(\fm,\fg)$ that are classically identified
with the spaces of (fundamental) curvatures or structure functions \cite{Goncharov,MR0203626,MR1931997}.
% of the geometries in question.
Traditionally, this has two motivations. First, Spencer cohomology consists of compatibility constraints
of the Lie equation on symmetry and can be expressed via curvatures \cite{MR3253556}.
Second, in parabolic geometry $H^2(\fm,\fg)$ contains the complete obstructions to flatness of
the (regular, normal) Cartan connection, i.e., the so-called harmonic curvatures.
In this paper we take instead the
deformation approach, in which $H^2(\fm,\fg)$ classifies filtered deformations of graded subalgebras \cite{MR1688484};
note that the symmetry breaking mechanism of \cite{MR3604980} is also based on this deformation idea and
we apply it in the context of supergeometry. 

The relevant Lie algebra cohomologies are computed in the classical case
via Kostant's version of the Bott--Borel--Weil theorem \cite{MR0142696}, a result which does not hold in the general super-case. Cohomology groups have been known
for some irreducible (i.e, depth $\mu=1$) supergeometries \cite{MR1931997} and some
distinguished (i.e., corresponding to a Dynkin diagram with just one odd root) Borel subalgebras \cite{MR3531745}, but not for the parabolic subalgebras
of depth $\mu=2,3$  that we consider in this paper. (Note that $G(3)$ does not have $|1|$-gradings.) Our proofs use various techniques, such as the Hochschild--Serre spectral sequence for
$\mathfrak{p}^{\rm IV}_1$ and a combination of different exact sequences with the representation theory of $\mathfrak{osp}(1|2)$ for
$\mathfrak{p}^{\rm IV}_2$. 

In the latter case, it is intriguing that $H^{2,2}(\fm,\fg)\cong S^2\mathbb C^2$, which yields a ``square root'' of Cartan's classical  binary quartic invariant for $(2,3,5)$-distributions, see Theorem \ref{thm:SC2IV}. A similar phenomenon has been observed in the context of    supergravity \cite{MR3594366}. For the second cohomology group of the graded Lie superalgebra associated to $\mathfrak{p}^{\rm IV}_1$, see Theorem  \ref{prop:P1-cohom}. 

Note that comparing (super)dimensions $(p|q)$ has different meanings in the literature: sometimes
the maximal dimension is understood in the even sense (max $p$), sometimes in the odd sense (max $q$)
or in the total sense (max $p+q$). In this paper, we use the stronger notion of partial
order: $(p'|q')\leq(p|q)$ iff $p'\leq p$ and $q'\leq q$. In this sense, we prove in Theorem \ref{T:G3-sym} 
that $(17|14)$ is the maximal supersymmetry dimension for (locally transitive) $G(3)$-contact supergeometries. 
 % NO NEED: among all almost homogeneous ones, i.e., when the symmetry has an open orbit.

Finally, we discuss curved geometries modelled on the homogeneous superspace 
$G(3)/P_2^{\rm IV}$. We consider distributions in $(5|6)$-dimensional superspaces
with growth $(2|4,1|2,2|0)$ and prove
that the SHC symbol is rigid, i.e., given this growth (plus some mild non-degeneracy conditions),
the graded Lie superalgebra structure is unique. See Theorem \ref{RigThm} and Corollary \ref{Rigidity}.
Contrary to the case of rank $2$ distributions in $5$-dimensional spaces,
the generic rank $(2|4)$ distributions in $(5|6)$-superspaces have depth $\mu=2$, 
so the distributions with the indicated growth are {\it not} generic. 
We address the restrictions this puts on their even part.

We investigate integral submanifolds of general superdistributions with the growth vector $(2|4,1|2,2|0)$
in Theorem \ref{thm:cointegrals} and notice a difference it makes with the even case.

Then we observe a supersymmetry gap phenomenon in Theorem \ref{UpperBound}: 
The maximal supersymmetry dimension of (locally transitive) distributions with growth vector $(2|4,1|2,2|0)$
of SHC type is $(17|14)$, and among all such distributions
any symmetry superalgebra different from $G(3)$ has dimension at most $(10|8)$. 

Finally, we show in Theorem \ref{super-sub-max} that the following deformation of the SHC 
 $$
z_x = f(u_{xx}) + u_{x\nu}u_{x\tau},\quad
z_{\nu}  = f'(u_{xx}) u_{x\nu},\quad
z_{\tau} = f'(u_{xx}) u_{x\tau},\quad
u_{\nu\tau} = -f'(u_{xx}),
 $$
gives a realization of the above dimension bound whenever the function $f$ of one (even) variable  
is $f(s)=\int s^k\,ds$ with $k\neq-2,-\frac23,-\frac13,0,1$. 
These non-flat models can be considered as super-extensions of Cartan's classical 
submaximally symmetric $G(2)/P_1$ structures.

\subsection{Structure of the paper and future directions}

In Section \ref{S2} we recall the basics of $G(3)$,
its parabolic subalgebras and $\mathbb Z$-gradings.  Figure \ref{F:G3-supergeometries} gives all the $19$ generalized flag varieties of $G(3)$ and twistor correspondences, further discussed in Appendix \ref{A0}. 
(The diagram is complete in the flat case, while for curved geometries some arrows may disappear.) Associated to the $G(3)$-contact case, i.e., to the flag superspace $G(3)/P_1^{\rm IV}$, we compute the supervariety $\cV$, its osculations, and super-symmetric forms on a naturally associated Jordan superalgebra, leading to a collection $\widehat\cV$ of Lagrangian
subspaces along $\cV$.

Section \ref{S3} is devoted to cohomology -- this is an important ingredient in the proof that $G(3)$ is 
the symmetry superalgebra of the two main differential equations that we will derive in Section \ref{S4}. 
Some technical cohomological computations are postponed to the Appendix \ref{appendixA}.
We then explain in Section \ref{S4} the relation between the two differential equations and give 
the explicit expression of supersymmetries:
we encode them by the generating function of the
contact vector field in the case of the super-PDE \eqref{SC-PDE} and we delegate the formulae to
Appendix \ref{S:SHC-sym} in the case of the SHC equation \eqref{SHC-ODE}. 

Finally in Section \ref{S5} we discuss curved geometries of type $G(3)/P_2^{\rm IV}$:
their symbol, genericity and in which respects they differ from SHC.  
The submaximally symmetric models are derived at the end of this section, and their 
supersymmetries are explicitly given.

Throughout the paper, we will work 
with Lie superalgebras over the complex field  and freely exponentiate to the corresponding 
Lie supergroups and homogeneous superspaces via the functor of points 
(see Section \ref{S:super-V} and, e.g., \cite{MR2840967} for more details). 
%All our constructions are easily seen to vary functorially with
%the choice of the superalgebra $\bbA=\bbA_{\bar 0}\oplus\bbA_{\bar 1}$.

In forthcoming works we will develop the theory of Cartan connections and parabolic geometries in the super-setting. In particular, this will allow us to deal with curved geometries with an intransitive symmetry superalgebra as well investigate the precise geometric relationship between our fundamental binary quadratic invariant and Cartan's classical binary quartic. 
Geometries modelled on other simple Lie superalgebras are also
of importance, e.g., the Lie superalgebra $F(4)$ is popular due to
its relation to conformal field theories. 
This work proposes $G(3)$ as the supersymmetry of differential equations.  The relation of our construction to the twistor spinors associated to Nurowski's conformal metrics %underlying generic rank $2$ distributions on $5$-dimensional manifolds 
will be discussed elsewhere.
%\FIXMEA{The deformation approach allows us to prove that the models have submaximal symmetry among all almost homogeneous spaces}
%\todo{This is a to do note at margin}
%\comment{This is a simple comment between text}
%\todo[inline]{This is a todo note inline}
%\comment{This is the end, check the to-do list}
%\listoftodos
%\par\noindent

 \section{Algebraic aspects and parabolic subalgebras of $G(3)$}\label{S2}

 \subsection{Root systems and Dynkin diagrams of $G(3)$}
 \label{S:G3-rts}

 The (complex) Lie superalgebra (LSA) $\fg = G(3)$ has dimension $(17|14)$, with even and odd parts:
 \begin{align}
 \fg_{\bar{0}} = G(2) \oplus A(1), \qquad
 \fg_{\bar{1}} = \bbC^7 \boxtimes \bbC^2. \label{G3}
 \end{align}
Here, we use the notation $G(2)$ and $A(1)\cong \fsp(2)$ to denote complex simple Lie algebras, while $\fg_{\bar{1}}$ is the $\fg_{\bar{0}}$-representation that is the (external) tensor product of the standard $G(2)$ and $A(1)$ representations. The somewhat unusual notation $\fsp(2)$ will be reserved specifically to the ideal $A(1) \subset \fg_{\bar{0}}$ throughout the whole paper, to avoid confusion with other $\fsl(2)$-subalgebras.

 A Cartan subalgebra $\fh$ of $\fg$ is by definition a Cartan subalgebra for $\fg_{\bar{0}}$.  All are conjugate, so we fix one such choice.  We adopt the root conventions in \cite[\S 2.19]{MR1773773}.  Fix vectors $\delta,\epsilon_1,\epsilon_2,\epsilon_3$ in $\fh^*$ with $\epsilon_1 + \epsilon_2 + \epsilon_3 = 0$ such that $\langle \epsilon_i, \epsilon_j \rangle = 1 - 3\delta_{ij}$, $\langle \delta, \delta \rangle = 2$ and $\langle \epsilon_i, \delta \rangle = 0$. The $G(3)$ root system $\Delta = \Delta_{\bar{0}} \cup \Delta_{\bar{1}} \subset \fh^* \backslash \{ 0 \}$ is given by:
  \[
 \Delta_{\bar{0}} = \{ \pm 2 \delta, \pm \epsilon_i, \epsilon_i - \epsilon_j\}, \quad
 \Delta_{\bar{1}} = \{ \pm \delta, \pm \delta \pm \epsilon_i  \},
 \]
where $1\leq i\neq j\leq 3$. Given any even root $\alpha$ one has $\langle\alpha,\alpha\rangle\neq0$, 
and the usual reflection
 \begin{equation}
\label{eq:evenreflection}
S_\alpha(\beta) =
 \beta - \frac{2\langle \beta, \alpha \rangle}{\langle \alpha, \alpha \rangle} \alpha
\end{equation}
on $\fh^*$ preserves each of $\Delta_{\bar{0}}$ and $\Delta_{\bar{1}}$. 
The Weyl group of $\fg$ is generated by all such even reflections.
For any fixed simple root system $\Pi$ and any odd isotropic root $\alpha\in\Pi$, we define the odd reflection (see \cite{MR2743764})%\cite[\S 2.63]{MR1773773}:
 \begin{equation}
\label{eq:oddreflection}
S_\alpha(\beta) = \begin{cases}
 \beta + \alpha, & \langle\alpha, \beta \rangle \neq 0;\\
  \beta, & \langle\alpha, \beta \rangle = 0, \, \beta \neq \alpha;\\
  -\alpha, & \beta = \alpha;
  \end{cases}
 \end{equation}
for any $\beta\in\Pi$.

Up to Weyl group equivalence, there are four inequivalent simple systems $\Pi = \{ \alpha_1,\alpha_2, \alpha_3 \}$, and each leads to a Cartan matrix and corresponding Dynkin diagram -- see e.g. \cite[\S 3]{MR2827478} for details.  These diagrams are given in Figure \ref{F:simple-rts}.
 \begin{itemize}
 \item Nodes are white, black, or grey according to whether the corresponding simple root $\alpha_i$ is even, odd with $\langle \alpha_i,\alpha_i \rangle \neq 0$, or odd with $\langle \alpha_i,\alpha_i \rangle = 0$;
 \item Dynkin labels $m_1,m_2,m_3$ inscribed above each node correspond to the highest root $\alpha_{high}=m_1\alpha_1 + m_2\alpha_2 + m_3\alpha_3$.
 \end{itemize}
One cannot extend the Weyl group to a larger group that includes reflections for isotropic odd roots, since the latter cannot in general be extended to linear transformations of $\fh^*$ that send roots into roots.  Nevertheless, applying \eqref{eq:oddreflection} to any $\beta\in\Pi$ transforms one simple root system to another, and red arrows in Figure \ref{F:simple-rts} indicate such transformations.

  \begin{figure}[h]
 \begin{center}
 \[
 \begin{array}{|c|c|c|c|} \hline
 I & II & III & IV\\ \hline
 \raisebox{-0.1in}{
 \begin{tikzpicture}[remember picture]
 \draw (0,0) -- (1,0);
 \draw (1,0) -- (2,0);
 \draw (1,0.07) -- (2,0.07);
 \draw (1,-0.07) -- (2,-0.07);
 \draw (1.6,0.15) -- (1.4,0) -- (1.6,-0.15);
 \node[draw,circle,inner sep=2pt,fill=mygray] (A1) at (0,0) {};
 \node[draw,circle,inner sep=2pt,fill=white] at (1,0) {};
 \node[draw,circle,inner sep=2pt,fill=white] at (2,0) {};
 \node[above] at (0,0.25) {2};
 \node[above] at (1,0.25) {4};
 \node[above] at (2,0.25) {2};
 \node[below] at (0,-0.25) {$\alpha_1$};
 \node[below] at (1,-0.25) {$\alpha_2$};
 \node[below] at (2,-0.25) {$\alpha_3$};
 \end{tikzpicture}}
 %%%%%%%%
 & \raisebox{-0.1in}{
 \begin{tikzpicture}[remember picture]
 \draw (0,0) -- (1,0);
 \draw (1,0) -- (2,0);
 \draw (1,0.07) -- (2,0.07);
 \draw (1,-0.07) -- (2,-0.07);
 \draw (1.6,0.15) -- (1.4,0) -- (1.6,-0.15);
 \node[draw,circle,inner sep=2pt,fill=mygray] (B1) at (0,0) {};
 \node[draw,circle,inner sep=2pt,fill=mygray] (B2) at (1,0) {};
 \node[draw,circle,inner sep=2pt,fill=white] at (2,0) {};
 \node[above] at (0,0.25) {3};
 \node[above] at (1,0.25) {4};
 \node[above] at (2,0.25) {2};
 \node[below] at (0,-0.25) {$\alpha_1$};
 \node[below] at (1,-0.25) {$\alpha_2$};
 \node[below] at (2,-0.25) {$\alpha_3$};
 \end{tikzpicture}}
 %%%%%%%%
 & \raisebox{-0.3in}{
 \begin{tikzpicture}[remember picture]
 \draw (0.5,0) -- (1.5,0);
 \draw (0.5,0.07) -- (1.5,0.07);
 \draw (0.5,-0.07) -- (1.5,-0.07);
 \draw (0.5,0) -- (1,1) -- (1.5,0);
 \draw (1.52,0.11) -- (1.02,1.1);
 \node[draw,circle,inner sep=2pt,fill=mygray] (C1) at (0.5,0) {};
 \node[draw,circle,inner sep=2pt,fill=mygray] (C2) at (1.5,0) {};
 \node[draw,circle,inner sep=2pt,fill=white] at (1,1) {};
 \node[above] at (0.5,0.2) {2};
 \node[above] at (1.5,0.2) {2};
 \node[above] at (1,1.2) {3};
 \node[below] at (0.5,-0.2) {$\alpha_1$};
 \node[below] at (1.5,-0.2) {$\alpha_2$};
 \node[below] at (1,0.8) {$\alpha_3$};
 \end{tikzpicture}}
 %%%%%%%%
 & \raisebox{-0.1in}{
 \begin{tikzpicture}[remember picture]
 \draw (0,0) -- (1,0);
 \draw (0,0.07) -- (1,0.07);
 \draw (0,-0.07) -- (1,-0.07);
 \draw (0.4,0.15) -- (0.6,0) -- (0.4,-0.15);
 \draw (1,0.05) -- (2,0.05);
 \draw (1,-0.05) -- (2,-0.05);
 \draw (1.6,0.15) -- (1.4,0) -- (1.6,-0.15);
 \node[draw,circle,inner sep=2pt,fill=white] at (0,0) {};
 \node[draw,circle,inner sep=2pt,fill=mygray] (D2) at (1,0) {};
 \node[draw,circle,inner sep=2pt,fill=black] at (2,0) {};
 \node[above] at (0,0.15) {2};
 \node[above] at (1,0.15) {3};
 \node[above] at (2,0.15) {3};
 \node[below] at (0,-0.25) {$\alpha_1$};
 \node[below] at (1,-0.25) {$\alpha_2$};
 \node[below] at (2,-0.25) {$\alpha_3$};
 \end{tikzpicture}} \\ \hline
 \begin{array}{l}
 \alpha_1 = \delta + \epsilon_3\\
 \alpha_2 = \epsilon_1\\
 \alpha_3 = \epsilon_2 - \epsilon_1
 \end{array}
 & \begin{array}{l}
 \alpha_1 = -\delta - \epsilon_3\\
 \alpha_2 = \delta - \epsilon_2\\
 \alpha_3 = \epsilon_2 - \epsilon_1
 \end{array}
 &
 \begin{array}{l}
 \alpha_1 = -\delta + \epsilon_2\\
 \alpha_2 = \delta - \epsilon_1\\
 \alpha_3 = \epsilon_1
 \end{array}
 &
 \begin{array}{l}
 \alpha_1 = \epsilon_2 - \epsilon_1\\
 \alpha_2 = \epsilon_1 - \delta\\
 \alpha_3 = \delta
 \end{array}\\ \hline
 \end{array}
 \]
 \begin{tikzpicture}[overlay, red, remember picture]
\draw[<->, ultra thick, densely dotted] (A1) to[in=-130, out = -50] (B1);
\draw[<->, ultra thick, densely dotted] (B2) to[in=-150, out = -50] (C1);
\draw[<->, ultra thick, densely dotted] (C2) to[in=-130, out = -30] (D2);
 \end{tikzpicture}
 \end{center}
 \caption{Inequivalent simple root systems for $G(3)$}
 \label{F:simple-rts}

 \end{figure}

 \subsection{Parabolic subalgebras and a map of $G(3)$-supergeometries}
\label{subsec:2.2}
%  Generalized flag supermanifolds and twistor correspondences

  A {\em $\ZZ$-grading} of $\fg$ is a decomposition $\fg = \bigoplus_{k \in \ZZ} \fg_k$ satisfying $[\fg_i,\fg_j] \subset \fg_{i+j}$ for all $i,j \in \ZZ$.  In particular, $\fg_0$ is a LSA and each $\fg_k$ is a $\fg_0$-module.  Since the Killing form on $G(3)$ is non-degenerate, then $\fg_k = (\fg_{-k})^*$ as $\fg_0$-modules.
 The corresponding parabolic subalgebra is $\fp = \fg_{\geq 0}= \bigoplus_{k\geq 0} \fg_k$, and $\fg_-$ is the associated {\em symbol algebra}, a nilpotent graded LSA.  Letting $\der_{gr}(\fg_-)$ denote the LSA of (super-)derivations of $\fg_-$ of zero degree, we have $\fg_0 \subset \der_{gr}(\fg_-)$.  Moreover, for these gradings $\fg_{-1}$ is bracket-generating, i.e., \ $\fg_{-1}$ generates all of $\fg_-$ by iteratively bracketing with $\fg_{-1}$, so $\der_{gr}(\fg_-) \hookrightarrow \mathfrak{gl}(\fg_{-1})$.

Such $\ZZ$-gradings are obtained from a choice of {\em grading element $\sfZ \in \fh$}. 
 \comm{
Namely, define $\fg_k = \bigoplus_{\alpha(\sfZ)=k} \fg_\alpha \oplus(\delta_{0,k}\!\cdot\!\fh)$, 
where $\delta_{0,k}$ is the Kronecker delta and $\fg_\alpha$ is the root space for $\alpha \in \Delta$. 
 }
Namely, define $\fg_k = \{ x \in \fg : [\sfZ, x] = k x \}$ for any $k\in\mathbb Z$.  Note that $\sfZ \in\fh \subset\fg_0$ and 
the root space $\fg_\alpha \subset \fg_k$ for $\alpha \in \Delta$ such that
$\alpha(\sfZ)=k$. It follows that $\sfZ \in \fz(\fg_0)$. 
If $\mu = \max\{ k : \fg_k \neq 0 \}$, then 
$\mu = \alpha_{high}(\sfZ)$ and $\fg$ is said to have a {\em $|\mu|$-grading}.
Given a simple root system $\{ \alpha_1,\alpha_2, \alpha_3 \}$, let $\{ \sfZ_1, \sfZ_2, \sfZ_3 \} \subset \fh$ be its dual basis.  Then $\sfZ = \sum_{i \in \cA} \sfZ_i$ specifies a grading element for any nonempty subset $\cA \subset \{ 1,2,3 \}$, and in turn a parabolic subalgebra $\fp_\cA$.  All non-trivial gradings of $G(3)$ are of this form, varying the choice of simple root system  (labelled $I$ to $IV$ as in Figure  \ref{F:simple-rts}) \cite{MR0498755}.

We refer to $M_{\cA} = G / P_\cA$ as a {\em $G(3)$-supergeometry}, where $G$ and $P_{\cA}$ are (connected) Lie supergroups corresponding to $\fg$ and $\fp_\cA$ respectively. If $\cB \subset \cA$, there are natural fibrations $M_\cA \to M_\cB$.  Considering each simple root system from Figure \ref{F:simple-rts} leads to the map of $G(3)$-supergeometries given in Figure \ref{F:G3-supergeometries},
see Appendix \ref{A0} for further details.

\begin{figure}[h]
 \begin{center}
 \begin{tikzpicture}
    \node[main node] (T1) {$M_{123}^{\rm I}$};
    \node[main node] (T2) [right = 2cm of T1] {$M_{123}^{\rm II}$};
    \node[main node] (T3) [right = 2cm of T2] {$M_{123}^{\rm III}$};
    \node[main node] (T4) [right = 2cm of T3] {$M_{123}^{\rm IV}$};

    \node[main node] (M1) [below left = 1.3cm and 0.05cm of T1]  {$M_{12}^{\rm I}$};
    \node[main node] (M2) [below = 1.3cm of T1] {$M_{13}^{\rm I}$};
    \node[main node] (M3) [below right = 1.3cm and 0.05cm of T1] {$M_{23}^I = M_{23}^{\rm II}$};
    \node[main node] (M4) [below = 1.3cm of T2]  {$M_{12}^{\rm II}$};
    \node[main node] (M5) [below right = 1.3cm and 0.075cm of T2]  {$M_{13}^{\rm II} = M_{23}^{\rm III}$};
    \node[main node] (M6) [below = 1.3cm of T3]  {$M_{12}^{\rm III}$};
    \node[main node] (M7) [below right = 1.3cm and 0.05cm of T3]  {$M_{13}^{\rm III}=M_{13}^{\rm IV}$};
    \node[sec node] (M8) [below = 1.3cm of T4]  {$M_{12}^{\rm IV}$};
    \node[main node] (M9) [below right = 1.3cm and 0.1cm of T4]  {$M_{23}^{\rm IV}$};

    \node[main node] (B1) [below = 2cm of M1]  {$M_1^{\rm I}$};
    \node[main node] (B2) [below right = 2cm and -0.75cm of M2]  {$M_2^{\rm I} = M_2^{\rm II}$};
    \node[main node] (B3) [below = 2cm of M4]  {$M_3^{\rm I} = M_3^{\rm II} = M_2^{\rm III}$};
    \node[main node] (B4) [below = 2cm of M6]  {$M_1^{\rm II} = M_3^{\rm III} = M_3^{\rm IV}$};
    \node[sec node] (B5) [below left = 2cm and 0.1cm of M9]  {$M_1^{\rm III} = M_1^{\rm IV}$};
    \node[sec node] (B6) [below = 2cm of M9]  {$M_2^{\rm IV}$};

    \node [below=0.1cm of B5] {{\tiny {\color{red} $G(3)$-contact}}};
    \node [below=0.1cm of B6] {{\tiny {\color{red} SHC}}};

    \path[draw,thick]
    (T1) edge node {} (M1)
    (T1) edge node {} (M2)
    (T1) edge node {} (M3)
    (T2) edge node {} (M3)
    (T2) edge node {} (M4)
    (T2) edge node {} (M5)
    (T3) edge node {} (M5)
    (T3) edge node {} (M6)
    (T3) edge node {} (M7)
    (T4) edge node {} (M7)
    (T4) edge node {} (M8)
    (T4) edge node {} (M9)
    (M1) edge node {} (B1)
    (M1) edge node {} (B2)
    (M2) edge node {} (B1)
    (M2) edge node {} (B3)
    (M3) edge node {} (B2)
    (M3) edge node {} (B3)
    (M4) edge node {} (B2)
    (M4) edge node {} (B4)
    (M5) edge node {} (B3)
    (M5) edge node {} (B4)
    (M6) edge node {} (B3)
    (M6) edge node {} (B5)
    (M7) edge node {} (B4)
    (M7) edge node {} (B5)
    (M8) edge node {} (B5)
    (M8) edge node {} (B6)
    (M9) edge node {} (B4)
    (M9) edge node {} (B6);

\end{tikzpicture}
\end{center}
\caption{Map of $G(3)$-supergeometries. Green nodes are the focus of this article.}
\label{F:G3-supergeometries}
\end{figure}
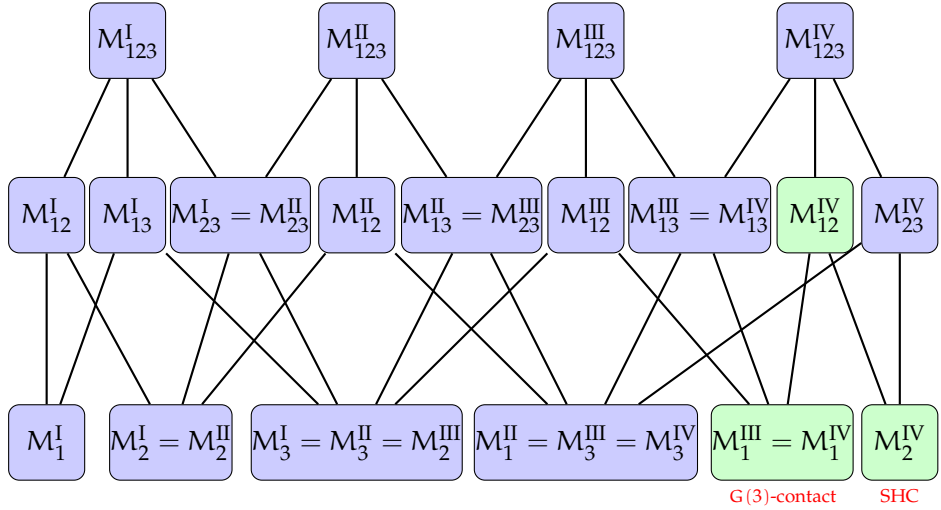

For each $G(3)$-supergeometry, it is natural to search for some explicit geometric structures whose symmetry algebra is precisely $G(3)$.  Our goal is to carry out this program in two cases, labelled in Figure \ref{F:G3-supergeometries} as $G(3)$-contact and SHC.

 \subsection{SHC and contact gradings}
 \label{S:G3-gradings}

 We will work with $\Pi = \{ \alpha_1, \alpha_2, \alpha_3 \}$ labelled IV in Figure \ref{F:simple-rts}.  This gives an associated positive $\Delta^+$ (resp. negative $\Delta^-$) system of roots. Explicitly,
 \begin{align*}
 \Delta_{\bar{0}}^+: & \quad
\alpha_1,\ 2\alpha_3,\ \alpha_2 + \alpha_3,\ \alpha_1 + \alpha_2 + \alpha_3,\ 
\alpha_1 + 2 \alpha_2 + 2 \alpha_3,\ \alpha_1 + 3 \alpha_2 + 3\alpha_3,\
2\alpha_1 + 3 \alpha_2 + 3\alpha_3;\\
 \Delta_{\bar{1}}^+: & \quad
\alpha_2,\ \alpha_3,\ \alpha_1 + \alpha_2,\ \alpha_2 + 2\alpha_3,\ \alpha_1 + \alpha_2 + 2\alpha_3,\ 
\alpha_1 + 2\alpha_2 + \alpha_3,\ \alpha_1 + 2\alpha_2 + 3\alpha_3;
 \end{align*}
with $\Delta_{i}^- =-\Delta_{i}^+$, $i\in\mathbb Z_2$.
 While there are many possibilities for $\sfZ$, we will focus on two choices:
 \begin{itemize}
 \item {\em contact grading} defined by $\sfZ = \sfZ_1$, which is a $|2|$-grading;
 \item {\em SHC (``super Hilbert--Cartan'') grading} defined by $\sfZ = \sfZ_2$, which is a $|3|$-grading.
 \end{itemize}
 These gradings closely parallel those considered in the classical cases in \cite{MR3759350, MR1699860}, and indeed
the even parts $(\fg_-)_{\bar{0}}$ give precisely the symbol algebras considered there. The above choices of $\Pi$ and $\sfZ$ were motivated from the following distinguishing feature in the classical cases:
\vskip0.3cm\par\noindent
{\it Let $\fg$ be a complex simple Lie algebra not of type $A$ or $C$.  The adjoint representation of $\fg$ has highest weight (root) that is a fundamental weight $\lambda_k = \alpha_{high} = \sum_i m_i \alpha_i$ with $m_k = 2$ and $\sum_{i \in \mathcal{N}_k} m_i = 3$, where $\mathcal{N}_k$ are the neighbours to the $k^{\text{th}}$-node (excluding the $k^{\text{th}}$-node) in the Dynkin diagram. The element $\sfZ = \sfZ_k$ defines a contact grading, while $\sfZ = \sum_{i \in \mathcal{N}_k} \sfZ_i$ defines a $|3|$-grading  generalizing the one for classical $(2,3,5)$-geometries (in particular $\dim\fg_{-3}=2$ as for the Hilbert--Cartan grading).}
\vskip0.3cm\par
For both gradings above, $\fz(\fg_0) = \tspan\{ \sfZ \} \subset (\fg_0)_{\bar{0}}$.  Below are the roots organized by parity and grading (with $\Delta_{i}(k) = \{ \alpha \in \Delta_{i} : \alpha(\sfZ) = k \}$ for $i\in\mathbb Z_2$), along with the module structure for the semisimple part $(\fg_0)_{\bar{0}}^{\ss}$ of $(\fg_0)_{\bar{0}}$.
 \vskip0.2cm\par\noindent
{\underline{\it Contact grading}: $\mathfrak{p}_1^{\rm IV}$}
 \begin{align} \label{ct-grading}
 \begin{array}{|c|c|c|} \hline
 k & \Delta_{\bar{0}}(k)  &  \Delta_{\bar{1}}(k) \\ \hline\hline
 0 & \pm (\alpha_2 + \alpha_3),\,\, \pm 2\alpha_3 & \pm \alpha_2, \,\, \pm \alpha_3, \,\, \pm (\alpha_2 + 2\alpha_3)\\ \hline
 1 & \begin{array}{l} \alpha_1, \,\, \alpha_1 + \alpha_2 + \alpha_3, \\ \alpha_1 + 2\alpha_2 + 2\alpha_3, \,\, \alpha_1 + 3\alpha_2 + 3\alpha_3 \end{array} & \begin{array}{l} \alpha_1 + \alpha_2, \quad \alpha_1 + \alpha_2 + 2\alpha_3, \\  \alpha_1 + 2\alpha_2 + \alpha_3,\,\, \alpha_1 + 2\alpha_2 + 3\alpha_3 \end{array}\\ \hline
 2 & 2\alpha_1 + 3\alpha_2 + 3\alpha_3 & \\ \hline
 \end{array}
 \end{align}
 \begin{align}  \label{E:contact-m}
 \begin{array}{|c|c|c|c|} \hline
 k & (\fg_k)_{\bar{0}} & (\fg_k)_{\bar{1}} & \mbox{dim}\\ \hline\hline
 0 & \bbC \op \fsl(2) \op \fsp(2) & S^2 \bbC^2 \boxtimes \bbC^2 & 7|6\\ \hline
 -1 & S^3 \bbC^2 \boxtimes \bbC & \bbC^2 \boxtimes \bbC^2 & 4|4\\ \hline
 -2 & \bbC \boxtimes \bbC &  & 1|0 \\ \hline
 \end{array}
 \end{align}
 Note that $\alpha_2 + \alpha_3 = \epsilon_1$ and $2\alpha_3 = 2\delta$ are the positive roots of $\fsl(2)$ and $\fsp(2)$, respectively.  The bracket $\Lambda^2\fg_{-1} \to \fg_{-2}$ yields a $\fg_0$-invariant conformal symplectic-orthogonal structure on $\fg_{-1}$, so we can naturally view $\fg_0 \subset \fcspo(\fg_{-1})\cong \fcspo(4|4)$.  We will make this explicit in Section \ref{S:eta}.  Also, $\fg_0=\bbC \sfZ_1\oplus\ff\cong\mathfrak{cosp}(3|2)$, where
 %In \S \ref{S:eta}, we will see that $\fg_{-1}$ admits a $\fg_0$-invariant conformal symplectic-orthogonal (CSpO) structure $[\eta]$, so we naturally can view $\fg_0 \subset \fcspo(\fg_{-1})\cong \fcspo(4|4)$.  Also, $\fg_0=\bbC Z_1\oplus\ff$, where
\begin{equation}
\ff=(\fg_0)_{\bar 0}^{\ss}\oplus(\fg_0)_{\bar 1}\cong\mathfrak{osp}(3|2)
\label{eq:LSAff}
\end{equation}
is the semisimple part of $\fg_0$.

 \vskip0.2cm\par\noindent
{\underline{\it SHC grading:} $\mathfrak{p}_2^{\rm IV}$}
  \begin{align} \label{HC-grading}
 \begin{array}{|c|c|c|} \hline
 k & \Delta_{\bar{0}}(k) &  \Delta_{\bar{1}}(k) \\ \hline\hline
 0 & \pm\alpha_1, \pm 2\alpha_3 & \pm \alpha_3\\\hline
 1 & \begin{array}{l} \alpha_2 + \alpha_3, \,\, \alpha_1 + \alpha_2 + \alpha_3 \end{array} &
 \begin{array}{l}
 \alpha_2, \,\, \alpha_1 + \alpha_2,\,\,
 \alpha_2 + 2\alpha_3, \,\, \alpha_1 + \alpha_2 + 2\alpha_3\end{array}\\ \hline
 2 & \alpha_1 + 2 \alpha_2 + 2 \alpha_3 & \alpha_1 + 2\alpha_2 + \alpha_3,\,\, \alpha_1 + 2\alpha_2 + 3\alpha_3\\ \hline
 3 & \alpha_1 + 3 \alpha_2 + 3\alpha_3,\,\,2\alpha_1 + 3 \alpha_2 + 3\alpha_3 & \\ \hline
 \end{array}
 \end{align}
 \begin{align}
 \begin{array}{|c|c|c|c|} \hline
 k & (\fg_k)_{\bar{0}} & (\fg_k)_{\bar{1}} & \mbox{dim}\\ \hline\hline
 0 & \bbC \op \fsl(2) \op \fsp(2) & \bbC \boxtimes \bbC^2 & 7|2\\ \hline
 -1 & \bbC^2 \boxtimes \bbC & \bbC^2 \boxtimes \bbC^2 & 2|4\\ \hline
 -2 & \bbC \boxtimes \bbC & \bbC \boxtimes \bbC^2 & 1|2 \\ \hline
 -3 & \bbC^2 \boxtimes \bbC &  & 2|0\\ \hline
 \end{array}
 \end{align}
 Note that $\alpha_1= \epsilon_2-\epsilon_1$ and $2\alpha_3 = 2\delta$ are the positive roots of $\fsl(2)$ and $\fsp(2)$ respectively, and $\fg_0 \cong \bbC \op \fsl(2) \op \fosp(1|2)$.

 \begin{prop} \label{P:max-subalg}
 For the contact grading of $G(3)$, the subalgebra $\fg_0 \subset \fcspo(\fg_{-1})$ is a maximal subalgebra.
 For the SHC grading of $G(3)$, we have $\fg_0 \cong \der_{gr}(\fg_-)$.
 \end{prop}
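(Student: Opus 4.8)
The two assertions are independent, and I treat them separately.

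\textbf{Contact grading.} The plan is to reduce maximality to irreducibility of a quotient module. Recall $\fg_0 = \bbC\sfZ_1\oplus\ff$ with $\ff\cong\fosp(3|2)$; the grading element $\sfZ_1$ acts as a nonzero scalar on $\fg_{-1}$, so $\bbC\sfZ_1$ is exactly the conformal line of $\fcspo(\fg_{-1}) = \bbC\cdot\id\oplus\fspo(\fg_{-1})$ and lies in $\fg_0$, while $\ff$ lies in $\fspo(\fg_{-1})$. Hence $\fcspo(\fg_{-1})/\fg_0\cong\fspo(\fg_{-1})/\ff$ is a genuine $\fg_0$-module $Q$ of dimension $(10|10)$. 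Using the invariant symplectic-orthogonal form of Section \ref{S:eta} to identify $\fspo(\fg_{-1})$ with the super-symmetric square $S^2\fg_{-1}$, and decomposing under the reductive part $\fsl(2)\oplus\fsp(2)$ of $\ff$ with $\fg_{-1} = S^3\bbC^2\boxtimes\bbC\oplus\bbC^2\boxtimes\bbC^2$, one finds $Q\cong S^6\bbC^2\boxtimes\bbC\oplus S^2\bbC^2\boxtimes\bbC\oplus S^4\bbC^2\boxtimes\bbC^2$. This is the exact super-analogue of the $G(2)$ computation, where $\fcsp(\fg_{-1})/\fg_0\cong S^6\bbC^2$ is already irreducible.

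Granting that $Q$ is an irreducible $\fg_0$-module, maximality is immediate: for any subalgebra $\fg_0\subsetneq\fa\subseteq\fcspo(\fg_{-1})$ the image $\fa/\fg_0$ is a nonzero $\fg_0$-submodule of $Q$, hence all of $Q$, so $\fa = \fcspo(\fg_{-1})$. Proving irreducibility is the main obstacle for this part. I would argue that any nonzero $\fg_0$-submodule $W\subseteq Q$ is $(\fsl(2)\oplus\fsp(2))$-invariant, hence a sum of the three components above, and is stable under the odd part $(\fg_0)_{\bar1} = S^2\bbC^2\boxtimes\bbC^2$ of $\ff$; a Clebsch--Gordan count shows this odd action maps each component nontrivially onto the others, forcing $W = Q$. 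The genuine content, absent in the $G(2)$ case, is to verify that these linking maps are nonzero, equivalently that the highest-weight vector of weight $3\epsilon_1$ generates the two ``new'' components $S^2\bbC^2\boxtimes\bbC$ and $S^4\bbC^2\boxtimes\bbC^2$; this is a short but essential root-vector computation, which must also cope with the possible atypicality of $3\epsilon_1$ for $\fosp(3|2)$.

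\textbf{SHC grading.} Here $\fg_0\subseteq\der_{gr}(\fg_-)$ always, and since $\fg_{-1}$ is bracket-generating, $\der_{gr}(\fg_-)\hookrightarrow\mathfrak{gl}(\fg_{-1})$; the task is to show this image is exactly $\fg_0$, i.e.\ has dimension $(7|2)$. The organizing observation is that, writing $S = \bbC^2$ for the standard $\fsl(2)$-module and $T = \bbC^{1|2}$ for the standard $\fosp(1|2)$-module, the symbol becomes $\fg_{-1}\cong S\boxtimes T$, $\fg_{-2}\cong\bbC\boxtimes T$, $\fg_{-3}\cong S\boxtimes\bbC$ under the semisimple part $\fsl(2)\oplus\fosp(1|2)$ of $\fg_0$. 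Consequently $\mathfrak{gl}(\fg_{-1}) = \mathfrak{gl}(S)\otimes\mathfrak{gl}(T)$, and using $\mathfrak{gl}(T)\cong\fosp(1|2)\oplus S^2 T$ together with $S^2 T\cong\bbC\oplus U$ (with $U$ an irreducible $(1|2)$-module; complete reducibility holds for $\fosp(1|2)$), this decomposes into the $\fg_0$-pieces $\bbC$, $\fsl(2)$, $\fosp(1|2)$ and three ``extra'' irreducible components $\bbC\boxtimes U$, $\fsl(2)\boxtimes\fosp(1|2)$, $\fsl(2)\boxtimes U$ of total dimension $(13|14)$.

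Since $\der_{gr}(\fg_-)$ is stable under the adjoint action of $\fsl(2)\oplus\fosp(1|2)\subseteq\fg_0$, it is a submodule of $\mathfrak{gl}(\fg_{-1})$ and hence a direct sum of whichever of the six irreducible components it meets; it therefore suffices to test one highest-weight representative of each extra component against the Leibniz rule for the brackets $\beta_2\colon\Lambda^2\fg_{-1}\to\fg_{-2}$ and $\beta_3\colon\fg_{-1}\otimes\fg_{-2}\to\fg_{-3}$, showing it fails to preserve $\ker\beta_2$ or to descend consistently to $\fg_{-3}$. That the constraint has teeth is already visible for diagonal endomorphisms: scaling $(\fg_{-1})_{\bar0}$ by $a$ and $(\fg_{-1})_{\bar1}$ by $b$ is a derivation only if $a = b$, since $\beta_2$ surjects onto $(\fg_{-2})_{\bar0}$ from both the even--even and the odd--odd part of $\fg_{-1}$. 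The main obstacle is thus the explicit computation: one records the structure constants of $\beta_2,\beta_3$ from root vectors and checks the three extra components are excluded, most tellingly $\fsl(2)\boxtimes\fosp(1|2)$, whose representatives behave like products of two honest derivations and so violate Leibniz. Equivariance reduces this to three targeted checks, after which $\der_{gr}(\fg_-) = \bbC\oplus\fsl(2)\oplus\fosp(1|2) = \fg_0$.
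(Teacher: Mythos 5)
Your strategy is genuinely different from the paper's in both halves, and the facts you rely on are true — but in both halves you defer exactly the verification that carries the proof, so as written this is a correct outline rather than a complete argument.

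For the contact grading, your reduction of maximality to irreducibility of the $\fg_0$-module $Q=\fcspo(\fg_{-1})/\fg_0\cong\fspo(\fg_{-1})/\ff$ is sound, and your decomposition $Q\cong (S^6\bbC^2\boxtimes\bbC)\oplus(S^2\bbC^2\boxtimes\bbC)\oplus(S^4\bbC^2\boxtimes\bbC^2)$ is correct. The paper, however, never proves this irreducibility at this stage: it supposes an intermediate subalgebra $\ff\subsetneq\widetilde\ff\subsetneq\fk=\fspo(4|4)$, first classifies the unique intermediate even subalgebra $\widetilde\fh=\fsl(2)\oplus\fso(4)$ (a purely even $\fsl(2)_{diag}$-module computation), then applies Schur's lemma to the odd part to force $\widetilde\ff_{\bar1}=\ff_{\bar1}$ or $\widetilde\ff_{\bar1}=\fk_{\bar1}$, and kills both cases by structural facts: simplicity of $\fk$ (so $[\fk_{\bar1},\fk_{\bar1}]=\fk_{\bar0}$) and $\widetilde\fh$-irreducibility of $\fk_{\bar1}$. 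No odd linking map is ever evaluated. In your route, by contrast, the Clebsch--Gordan count only shows the maps $(\ff_0)_{\bar1}\otimes C_i\to C_j$ between components \emph{can} be nonzero; if, say, the odd action on $S^6\bbC^2\boxtimes\bbC$ happened to vanish, that component would be a proper submodule and your argument would collapse. You explicitly defer this check. The fact you need is indeed true: it is the $\fosp(3|2)$-irreducibility of $H^{0,1}(\fm,\fg)\cong\fcspo(\fg_{-1})/\fg_0$, which the paper establishes independently in Theorem \ref{prop:P1-cohom} by a composition-series argument against tables of small $\fosp(3|2)$-modules. So the gap can be closed either by your root-vector computation or by invoking that classification — but your proposal does neither.

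For the SHC grading, your plan — decompose $\mathfrak{gl}(\fg_{-1})\cong(\bbC\oplus\fsl(2))\otimes(\fosp(1|2)\oplus\bbC\oplus U)$ into six pairwise non-isomorphic $\fg_0$-components and exclude the three extras by Leibniz — is a legitimate direct alternative, and your diagonal-scaling check really does exclude $\bbC\boxtimes U$, using that both $\omega$ and $q$ are nonzero in the SHC symbol. The paper instead gets the second claim as a corollary of Theorem \ref{thm:235H^1}: $H^{0,1}(\fm,\fg)=0$ forces $\der_{gr}(\fg_-)=\fg_0$, and that vanishing is proved through the long exact sequence of Proposition \ref{prop:3exse}, Lemma \ref{lem:cohomM1}, and the Djokovi\'{c}--Hochschild complete reducibility of $\fosp(1|2)$-modules (the consecutive-highest-weight constraint), with no structure-constant computations. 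Your route is more elementary and self-contained; the paper's piggybacks on cohomology it must compute anyway and yields vanishing in all degrees $d\geq 0$, not just $d=0$. But the exclusions of $\fsl(2)\boxtimes\fosp(1|2)$ and $\fsl(2)\boxtimes U$ are only asserted in your text ("one records the structure constants ... and checks"); until those two representative checks are actually written out, this half is likewise incomplete.
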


 \begin{proof}
To establish the first claim, it suffices to show that $\ff = \fosp(3|2) \subset \fk := \fspo(4|4)$ is a maximal subalgebra.  The decompositions of $\ff$ and $\fk$  into even and odd parts are\footnote{We use the symbol $\boxtimes$ to denote the external tensor product of $\fsl(2)$ and $\fsp(2)$ representations.}
 \begin{align}
 \ff &= \ff_{\bar{0}} \op \ff_{\bar{1}} \cong (\fsl(2) \op \fsp(2)) \op (S^2 \bbC^2 \boxtimes \bbC^2),\\
 \fk &= \fk_{\bar{0}} \op \fk_{\bar{1}} \cong (\fsp(4) \op \fso(4)) \op (\bbC^4 \boxtimes \bbC^4),
 \end{align}
and $\ff_{\bar{0}} \hookrightarrow \fk_{\bar{0}}$ via the action on $\fg_{-1}=(\fg_{-1})_{\bar 0}\oplus(\fg_{-1})_{\bar 1} \cong (S^3\bbC^2 \boxtimes \bbC) \op (\bbC^2 \boxtimes \bbC^2)$. Note that  $\fso(4) \cong \fsl(2) \op \fsp(2)$, with $\fsp(2) \subset \ff_{\bar{0}}$ embedded purely in the latter factor $\fsp(2)$. On the other hand,
$\fsl(2) \subset \ff_{\bar{0}}$ is diagonally embedded in $\fsp(4) \op \fsl(2)$ and we denote its image by $\fsl(2)_{diag}$.
 \vskip0.1cm\par\noindent
{\underline{\it Step 1.}}
We first claim that the unique subalgebra $\widetilde\fh$ properly contained between $\ff_{\bar{0}}$ and $\fk_{\bar{0}}$ is
$\widetilde\fh=\fsl(2)\oplus \fso(4)$, where $\fsl(2)\subset \fsp(4)$ via the irreducible action on $S^{3}\bbC^2$.

Indeed, if $\widetilde\fh$ is a subalgebra such that $\ff_{\bar{0}} \subsetneq \widetilde\fh\subsetneq \fk_{\bar{0}}$ then $\widetilde\fh=\fh\oplus\fsp(2)$ for some subalgebra
$$\fsl(2)_{diag}\subsetneq\fh\subsetneq \fsp(4)\oplus\fsl(2)\;,$$
where $\fsl(2)$ is the first factor of $\fso(4)$. However $\fsp(4)\op\fsl(2)\cong (\fsl(2)\oplus S^6\bbC^2)\oplus\fsl(2)_{diag}$ as an $\fsl(2)_{diag}$-module, where $\fsp(4)\cong \fsl(2)\oplus S^6\bbC^2$. Since $S^6\bbC^2$ is an $\fsl(2)$-module and $\fsp(4)$ is a simple Lie algebra,
we immediately see that $S^{6}\bbC^2$ is not a subalgebra of $\fsp(4)$.

By the previous discussion
$$
\fh=\begin{cases}\fsl(2)\oplus\fsl(2)_{diag}=\fsl(2)\oplus\fsl(2),\;\text{or}\\
S^{6}\bbC^2\oplus\fsl(2)_{diag},
\end{cases}
$$
but the second case does not correspond to any subalgebra of $\fsp(4)\oplus\fsl(2)$, hence the claim.

%projection onto the first factor gives an irreducible embedding.
%On the other hand, maximality of $\ff_{\bar{0}} \subset \fk_{\bar{0}}$ follows from irreducibility of $\fk_{\bar{0}} / \ff_{\bar{0}}\cong (\fsp(4)\op\fsl(2))/ \fsl(2)_{diag}$.
 % The referenced theorems are not applicable here: they discuss subalgebras of simple Lie algebras, while here k[\bar{0}] is ss ..
%We have to explain the embedding in more details: sl(2)<sp(4)+sl(2) is diagonal with irreducible first factor, sp(2)<=sl(2) and so(4)=sl(2)+sl(2) with the two sl(2) as above in order of appearence .. then maximality follows from irreducibility of k[\bar{0}]/f[\bar{0}]=sp(4)+sl(2)/sl(2)_diag.
%
 \vskip0.2cm\par\noindent
{\underline{\it Step 2.}}
As $\ff_{\bar{0}}$-modules,
 \[
\fk_{\bar{1}} \cong (S^3 \bbC^2 \boxtimes \bbC) \otimes (\bbC^2 \boxtimes \bbC^2) \cong
(S^4 \bbC^2 \boxtimes \bbC^2 ) \op (S^2 \bbC^2 \boxtimes \bbC^2),
 \]
 where the last isomorphism gives the decomposition into irreducibles.

 Assume $\widetilde\ff$ is a subalgebra such that $\ff \subsetneq \widetilde\ff \subsetneq \fk$, so we get corresponding inclusions of their even and odd parts. Since $\widetilde\ff_{\bar{1}}$ is an $\ff_{\bar{0}}$-module, then $\widetilde\ff_{\bar{1}} = \fk_{\bar{1}}$ or $\widetilde\ff_{\bar{1}} = \ff_{\bar{1}}$ by Schur's lemma and we consider the two possibilities separately:
 \begin{enumerate}
 \item[(a)] $\widetilde\ff_{\bar{1}} = \fk_{\bar{1}}$. In this case $\widetilde\ff_{\bar{0}} = \ff_{\bar{0}}$ or $\widetilde\ff_{\bar{0}} = \widetilde\fh$, thus
$[\fk_{\bar{1}},\fk_{\bar{1}}] \subset \widetilde\fh$.  However, $[\fk_{\bar{1}},\fk_{\bar{1}}] = \fk_{\bar{0}}$ since $\fk= \fspo(4|4)$ is simple.
 \item[(b)] $\widetilde\ff_{\bar{1}} = \ff_{\bar{1}}$. Here $\widetilde\ff_{\bar{0}} = \fk_{\bar{0}}$ or $\widetilde\ff_{\bar{0}} = \widetilde\fh$
and in both cases $\ff_{\bar{1}}\subset \fk_{\bar{1}}$ is $\widetilde\fh$-invariant. However $\fk_{\bar{1}}$ is $\widetilde\fh$-irreducible.
\end{enumerate}
 In both cases, we obtained contradictions, so $\ff \subset \fk$ is maximal, hence the first claim is proven.

The second claim follows from our Theorem \ref{thm:235H^1} (in particular, $H^{0,1}(\fg_-,\fg) = 0$).
 \end{proof}

Despite tensor fields on supermanifolds $M=(M_o,\mathcal A_M)$ not being determined by their values at points $x\in M_o$, the $G$-invariant geometric structures on the homogeneous supermanifold $M=G/P$ correspond bijectively to $\fp$-invariant data on $\fg/\fp$, cf. \cite{MR2640006, MR2541343}. For the contact and SHC cases above, we will describe geometric structures given by:
 \begin{itemize}
 \item a superdistribution corresponding to $(\fg_{-1}\oplus\fp)/ \fp$;
 \item a reduction of the structure group $\mathrm{Aut}_{gr}(\fg_-)$ with associated LSA $\der_{gr}(\fg_-)$ to a connected (super-)subgroup $G_0$ with subalgebra $\fg_0$. 
 \end{itemize}
Since $\fg_+=\bigoplus_{k>0} \fg_k$ acts trivially on $(\fg_{-1}\oplus\fp)/ \fp$, it suffices to consider the $\fg_0$-action on $\fg_{-1}$.

 As no reduction is required for SHC, the rest of Section \ref{S2} is devoted to considering the contact case and giving explicit descriptions of various $\fg_0$-invariant structures on $\fg_{-1}$. More precisely, we will introduce the $(1|2)$-twisted cubic $\mathcal V|_x\subset \mathbb P(\mathcal C|_x)$, which gives rise to the maximal reduction $G_0=COSp(3|2)\subset CSpO(4|4)$ of Proposition \ref{P:max-subalg}. For simplicity of notation, we will denote $\mathcal V|_x$  just by $\mathcal V$ in the rest of Section \ref{S2}.

\subsection{Structures associated with the contact grading of $G(3)$}
\label{subsec:structurescontact}

Let $\fg = \fg_{-2} \op \fg_{-1} \op \fg_0 \op \fg_1 \op \fg_2$ be the contact grading of $G(3)$ associated with the parabolic subalgebra $\mathfrak p_{1}^{\rm IV}$. In this and in the following subsection $\mathfrak f\cong\mathfrak{osp}(3|2)$ is as in \eqref{eq:LSAff}.
\subsubsection{The $\fg_0$-invariant CSpO-structure $[\eta]$ on $\fg_{-1}$}
\label{S:eta}
The Lie bracket $\Lambda^2 \fg_{-1} \to \fg_{-2}$ induces a $\fg_0$-invariant {\em conformal symplectic-orthogonal} (CSpO) structure on $V=\fg_{-1}$, which we make explicit in this section. Now $$V_{\bar{0}} = S^3 \bbC^2 \boxtimes \bbC\;,\qquad V_{\bar{1}} = \bbC^2 \boxtimes \bbC^2\;,$$ as modules for $(\fg_0)_{\bar{0}}^{\ss} \cong \fsl(2) \op \fsp(2)$.
We fix standard bases $\{ x,y \}$ and $\{ e, f \}$ on the two copies of $\bbC^2$, along with dual bases $\{ \partial_x, \partial_y \}$ and $\{ \partial_e, \partial_f \}$ of $(\bbC^2)^*$. We denote the invariant symplectic forms on the respective copies of $\bbC^2$ both by $\omega$ and normalize them by requiring $\omega(x,y) = \omega(e,f) = 1$. Finally, we let $\flat : S^\bullet(\bbC^2) \to S^\bullet(\bbC^2)^*$ be the duality induced by $\omega$, that is, the natural extension to an algebra automorphism of the identifications $\bbC^2\cong (\bbC^2)^*$ given by $x \mapsto \partial_y$, $y \mapsto -\partial_x$, and $e \mapsto \partial_f$, $f \mapsto -\partial_e$.

The Lie algebra $(\fg_0)_{\bar{0}}^{\ss} $ is spanned by the two $\fsl(2)$-triples
 \begin{align} \label{E:sl2}
 \begin{array}{ll}
 H_1 = x \partial_x - y \partial_y, \quad
 X_1 = x\partial_y, \quad
 Y_1 = y\partial_x,
\end{array}
 \end{align}
and
 \begin{align} \label{E:sp2}
 \begin{array}{ll}
H_2 = e\partial_e - f\partial_f, \quad
 \,\,X_2 = e\partial_f, \quad
 \,Y_2 = f\partial_e,
 \end{array}
 \end{align}
while the odd part $(\fg_0)_{\bar{1}} \cong S^2 \bbC^2 \boxtimes \bbC^2$ has a basis
 $\{ x^2 \otimes e, \, x^2 \otimes f, \, xy \otimes e, \, xy \otimes f, \, y^2 \otimes e,\, y^2 \otimes f \}$. The restriction of the adjoint action to $(\fg_0)_{\bar{1}}$ and $V$ is determined by $(\fg_0)_{\bar{0}}^{\ss}$-equivariant maps, unique up to an overall scale (note that we use $\odot$ for symmetric tensor product below):
 \begin{align*}
 \begin{array}{lll}
(\fg_0)_{\bar{1}} \cdot V_{\bar{0}} \subset V_{\bar{1}}: %& S^2 \bbC^2 \boxtimes \bbC^2 \times S^3 \bbC^2 \boxtimes \bbC \to \bbC^2 \boxtimes \bbC^2,
& (t \otimes w', g) \mapsto c_1 g(t^\flat) \otimes w'\;,\\
 (\fg_0)_{\bar{1}} \cdot V_{\bar{1}} \subset V_{\bar{0}}: %& S^2 \bbC^2 \boxtimes \bbC^2 \times \bbC^2 \boxtimes \bbC^2 \to S^3 \bbC^2 \boxtimes \bbC,
 & (t \otimes w', u \otimes w) \mapsto  \omega(w',w)t\odot u\;,
\end{array}
\end{align*}
where $t\otimes w'\in (\fg_0)_{\bar{1}}$, $g\in V_{\bar{0}}$, $u\otimes w\in V_{\bar{1}}$, and $c_1$ is a constant that we shall soon fix.  There are also $(\fg_0)_{\bar{0}}^{\ss}$-invariant skewsymmetric and symmetric bilinear forms:
 \begin{align}
\label{eq:skewsymmetric-1}
 \mbox{on $V_{\bar{0}}$}: &\quad (g,h) \mapsto g_{xxx} h_{yyy} - 3 g_{xxy} h_{yyx} + 3 g_{xyy} h_{yxx} - g_{yyy} h_{xxx}\;,\\
\label{eq:symmetric-1}
 \mbox{on $V_{\bar{1}}$}: &\quad (u_1 \otimes w_1, u_2 \otimes w_2) \mapsto c_2\, \omega(u_1,u_2) \omega(w_1,w_2)\;,
 \end{align}
where $g,h\in V_{\bar{0}}$ and $u_1 \otimes w_1, u_2 \otimes w_2\in V_{\bar{1}}$. Here $c_2$ is also a constant to be determined.
\begin{lemma}
\label{lem:reductionCSPO}
 Let $\eta$ be the symplectic-orthogonal structure on $V=V_{\bar 0}\oplus V_{\bar 1}$ defined by $\eta(V_{\bar{0}},V_{\bar{1}}) = 0$ and the bilinear forms \eqref{eq:skewsymmetric-1} and \eqref{eq:symmetric-1} on $V_{\bar{0}}$ and $V_{\bar{1}}$, respectively. Then $\eta$ is $\ff$-invariant if and only if $c_1 c_2=-6$ and, in this case, its conformal class $[\eta]$ is $\fg_0$-invariant.
\end{lemma}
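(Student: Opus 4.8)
The plan is to verify $\ff$-invariance of $\eta$ by checking invariance under the three pieces of $\ff = \ff_{\bar 0}\oplus\ff_{\bar 1} \cong (\fsl(2)\oplus\fsp(2))\oplus(S^2\bbC^2\boxtimes\bbC^2)$ separately. The even part $\ff_{\bar 0} = (\fg_0)_{\bar 0}^{\ss}$ is the easy case: by construction the forms \eqref{eq:skewsymmetric-1} and \eqref{eq:symmetric-1} are manifestly $(\fg_0)_{\bar 0}^{\ss}$-invariant, being built from the invariant symplectic forms $\omega$ on the two copies of $\bbC^2$, and since $\eta(V_{\bar 0},V_{\bar 1})=0$ is respected by the block-diagonal action of $\ff_{\bar 0}$, nothing further is needed there. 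The entire content of the lemma is therefore the invariance under the odd part $\ff_{\bar 1}$, which couples $V_{\bar 0}$ and $V_{\bar 1}$.

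The key computation is the invariance condition coming from $\ff_{\bar 1}$. For a super-skewsymmetric even form $\eta$, invariance under an odd element $\xi = t\otimes w'\in\ff_{\bar 1}$ amounts to the super-derivation identity
\begin{equation*}
\eta(\xi\cdot v_1, v_2) \pm \eta(v_1, \xi\cdot v_2) = 0
\end{equation*}
with the appropriate Koszul sign, for $v_1,v_2$ ranging over $V$. Since $\xi$ swaps parities ($\xi\cdot V_{\bar 0}\subset V_{\bar 1}$ and $\xi\cdot V_{\bar 1}\subset V_{\bar 0}$) and $\eta$ pairs $V_{\bar 0}$ with $V_{\bar 0}$ and $V_{\bar 1}$ with $V_{\bar 1}$, the only nontrivial relation to check is the mixed one: for $v_1 = g\in V_{\bar 0}$ and $v_2 = u\otimes w\in V_{\bar 1}$, we need the contribution of $\eta(\xi\cdot g, u\otimes w)$ (which lands in the $V_{\bar 1}$-pairing \eqref{eq:symmetric-1}) to cancel against $\eta(g, \xi\cdot(u\otimes w))$ (which lands in the $V_{\bar 0}$-pairing \eqref{eq:skewsymmetric-1}). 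Plugging in the explicit action maps $\xi\cdot g = c_1\, g(t^\flat)\otimes w'$ and $\xi\cdot(u\otimes w) = \omega(w',w)\, t\odot u$, both sides become explicit contractions of $g$, $t$, $u$ against $\omega$; I would compute both on the standard basis $\{x^2,xy,y^2\}\otimes\{e,f\}$ for $\ff_{\bar 1}$ and the bases of $V_{\bar 0}, V_{\bar 1}$, and read off that the two terms are proportional with ratio forcing $c_1 c_2 = -6$. It suffices to test a single well-chosen triple (e.g. $\xi = x^2\otimes e$, $g = x^3$ or $y^3$, and $u\otimes w$ chosen so both pairings are nonzero), since equivariance of the underlying maps guarantees the one scalar relation propagates to all cases.

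Having fixed $c_1 c_2 = -6$, the final claim about the \emph{conformal} class is formal: $\fz(\fg_0) = \tspan\{\sfZ_1\}$ acts on $\fg_{-1}$ by a nonzero scalar (the grading element acts as $-1$ on $\fg_{-1}$ by definition of the grading), hence rescales $\eta$ rather than preserving it, so $\sfZ_1$ preserves the conformal class $[\eta]$ but not $\eta$ itself. Combined with the $\ff$-invariance just established and $\fg_0 = \bbC\sfZ_1\oplus\ff$, this shows $[\eta]$ is $\fg_0$-invariant while $\eta$ is only $\ff$-invariant, as asserted.

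The main obstacle I anticipate is purely bookkeeping rather than conceptual: getting the Koszul signs right in the super-skewsymmetry of $\eta$ and in the super-Leibniz rule for the odd derivation $\xi$, together with the sign and normalization conventions implicit in the map $\flat$ and in $\odot$. The numerical coefficient $-6$ (as opposed to its sign or magnitude) is entirely an artifact of these normalizations—the factor $3$ in \eqref{eq:skewsymmetric-1} and the identification $x\mapsto\partial_y,\,y\mapsto-\partial_x$—so the real work is threading those conventions consistently through a single contraction; once one mixed pairing is computed correctly, the result drops out.
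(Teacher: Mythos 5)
Your proof is correct and, for the core of the lemma, follows essentially the same route as the paper: even-part invariance is immediate from the construction of \eqref{eq:skewsymmetric-1} and \eqref{eq:symmetric-1}, the only nontrivial condition is the mixed pairing under a single odd element (the paper tests $T=y^2\otimes e$ on $x^3$ and $y\otimes f$, getting $6c_1c_2+36=0$, exactly your computation with a different but equivalent choice of triple), and the conformal-class statement reduces to the grading element acting on $\fg_{-1}$ by a scalar. The one genuine difference is the sufficiency step: the paper settles it by a "longer check" (running through all odd generators and all mixed pairs), whereas you promote the single test to full invariance via equivariance. Your argument is valid, but as stated it is incomplete: propagation from one triple requires knowing that the space of $(\fg_0)_{\bar 0}^{\ss}$-invariant trilinear forms on $\ff_{\bar 1}\otimes V_{\bar 0}\otimes V_{\bar 1}$ is \emph{one}-dimensional, since both terms of the invariance defect are such forms. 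This multiplicity-one fact does hold — on the $\fsp(2)$ side $\bbC^2\otimes\bbC\otimes\bbC^2$ contains the trivial module once, and on the $\fsl(2)$ side $S^2\bbC^2\otimes S^3\bbC^2\otimes\bbC^2$ contains it once (only via the $S^1$ summand of $S^2\otimes S^3$) — but without stating it, "equivariance propagates the relation" is not automatic: were the multiplicity two, vanishing at one triple would not force vanishing everywhere. With that Clebsch--Gordan computation added, your version is actually cleaner than the paper's, replacing the unilluminating exhaustive check by Schur's lemma; it also makes explicit the conformal-class argument that the paper leaves implicit.
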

\begin{proof}
We have already seen invariance by $(\fg_{0})_{\bar 0}^{\ss}$. Invariance of $\eta$ under $T = y^2 \otimes e \in (\fg_0)_{\bar{1}}$ implies
 \[
 0 = \eta(T(x^3), y \otimes f) + \eta(x^3, T(y\otimes f)) = \eta( 6c_1 x\otimes e, y \otimes f) + \eta(x^3, y^3) = 6 c_1 c_2 + 36,
 \]
 which forces $c_1 c_2 = -6$.  A longer check shows that this condition is also sufficient.
\end{proof}

\subsubsection{The $\fg_0$-action on $\fg_{-1}$}
 \label{S:g0-mat}
Let $\fspo(4|4)$ be the LSA of linear transformations preserving $\eta$, and $\fcspo(4|4)$ its 1-dimensional central extension. By Lemma \ref{lem:reductionCSPO}, the conformal class $[\eta]$ is $\fg_0$-invariant, and this yields a reduction to $\fg_0 \subset \fcspo(4|4)$. We normalize $c_2 = -6^3$ (so $c_1 = \frac{1}{6^2}$) and consider the basis
\begin{align} \label{spo-basis}
 \{ v_1,\ldots,v_4\;|\; v_5,\ldots, v_8 \}=\{ x^3, \,\, -3x^2 y, \,\, -6y^3,\,\, -6xy^2 \;|\; x \otimes e,\,\, x \otimes f,\,\, y \otimes f,\,\, -y \otimes e \}\;
\end{align}
of $V$.
In this case $\eta$ is represented by ($-6^3$ times) the matrix
 \begin{align} \label{spo-structure}
 \left( \begin{array}{cc|ccc}
 0 & \id_2 \\
 -\id_2 & 0 \\ \hline
 && 0 &\id_2 \\
 && \id_2 & 0
 \end{array} \right).
 \end{align}
With respect to the basis \eqref{spo-basis}, the semisimple part $\ff\subset \fspo(4|4)$ of $\fg_0$ is spanned by the following matrices, which act as usual on column vectors: %from the {\em left}:
  \vskip0.2cm\par\noindent
{\underline{\it Even part}}
 \[
 \begin{array}{ll}
 H_1 = \ptm{3 &&&\\ &1 &&\\ &&-3&\\ &&&-1&\\ \hline &&&&1\\&&&&&1\\ &&&&&&-1\\&&&&&&&-1}, &
 H_2 = \ptm{&&&&\\&&&&\\&&&&\\&&&&\\ \hline &&&&1\\ &&&&&-1\\ &&&&&&-1\\ &&&&&&&1},\\
 X_1 = \ptm{
 0 & -3 & 0 & 0\\
 0 & 0 & 0 & 4\\
 0 & 0 & 0 & 0\\
 0 & 0 & 3 & 0\\ \hline
 &&&& 0 & 0 & 0 & -1\\
 &&&& 0 & 0 & 1 & 0\\
 &&&& 0 & 0 & 0 & 0\\
 &&&& 0 & 0 & 0 & 0}, &
 X_2 = \ptm{
 &&&\\
 &&&\\
 &&&\\
 &&&\\ \hline
 &&&& 0 & 1 & 0 & 0\\
 &&&& 0 & 0 & 0 & 0\\
 &&&& 0 & 0 & 0 & 0\\
 &&&& 0 & 0 & -1 & 0},\\
 Y_1 = \ptm{
 0 & 0 & 0 & 0\\
 -1 & 0 & 0 & 0\\
 0 & 0 & 0 & 1\\
 0 & 1 & 0 & 0\\ \hline
 &&&& 0 & 0 & 0 & 0\\
 &&&& 0 & 0 & 0 & 0\\
 &&&& 0 & 1 & 0 & 0\\
 &&&& -1 & 0 & 0 & 0}, &
 Y_2 = \ptm{
 &&&\\
 &&&\\
 &&&\\
 &&&\\ \hline
 &&&& 0 & 0 & 0 & 0\\
 &&&& 1 & 0 & 0 & 0\\
 &&&& 0 & 0 & 0 & -1\\
 &&&& 0 & 0 & 0 & 0}.
 \end{array}
 \]
  \vskip0.2cm\par\noindent
{\underline{\it Odd part}} 
 \[
 \begin{array}{l@{\,}ll@{\,}l}
% A_1 &= 3x^2 \otimes e & A_2 &= 3x^2 \otimes f\\
 A_1 &= \ptm{
 &&&& 0 & 3 & 0 & 0\\
 &&&& 0 & 0 & -1 & 0\\
 &&&& 0 & 0 & 0 & 0\\
 &&&& 0 & 0 & 0 & 0\\ \hline
 0 & 0 & 0 & -1 \\
 0 & 0 & 0 & 0 \\
 0 & 0 & 0 & 0 \\
 0 & 0 & 3 & 0}, &
 A_2 &= \ptm{
 &&&& -3 & 0 & 0 & 0\\
 &&&& 0 & 0 & 0 & -1\\
 &&&& 0 & 0 & 0 & 0\\
 &&&& 0 & 0 & 0 & 0\\ \hline
 0 & 0 & 0 & 0 \\
 0 & 0 & 0 & -1 \\
 0 & 0 & -3 & 0 \\
 0 & 0 & 0 & 0}, \\
% A_3 &= 6xy \otimes e & A_4 &= 6xy \otimes f \\ 
 A_3 &= \ptm{
 &&&& 0 & 0 & 0 & 0\\
 &&&& 0 & -2 & 0 & 0\\
 &&&& 0 & 0 & 0 & 0\\
 &&&& 0 & 0 & -1 & 0\\ \hline
 0 & 1 & 0 & 0 \\
 0 & 0 & 0 & 0 \\
 0 & 0 & 0 & 0 \\
 0 & 0 & 0 & -2}, &
 A_4 &= \ptm{
 &&&& 0 & 0 & 0 & 0\\
 &&&& 2 & 0 & 0 & 0\\
 &&&& 0 & 0 & 0 & 0\\
 &&&& 0 & 0 & 0 & -1\\ \hline
 0 & 0 & 0 & 0 \\
 0 & 1 & 0 & 0 \\
 0 & 0 & 0 & 2 \\
 0 & 0 & 0 & 0}, \\
% A_5 &= 6y^2 \otimes e & A_6 &= 6y^2 \otimes f  \\
 A_5 &= \ptm{
 &&&& 0 & 0 & 0 & 0\\
 &&&& 0 & 0 & 0 & 0\\
 &&&& 0 & 0 & -1 & 0\\
 &&&& 0 & -1 & 0 & 0\\ \hline
 1 & 0 & 0 & 0 \\
 0 & 0 & 0 & 0 \\
 0 & 0 & 0 & 0 \\
 0 & 1 & 0 & 0}, &
 A_6 &= \ptm{
 &&&& 0 & 0 & 0 & 0\\
 &&&& 0 & 0 & 0 & 0\\
 &&&& 0 & 0 & 0 & -1\\
 &&&& 1 & 0 & 0 & 0\\ \hline
 0 & 0 & 0 & 0 \\
 1 & 0 & 0 & 0 \\
 0 & -1 & 0 & 0 \\
 0 & 0 & 0 & 0}.
 \end{array}
 \]
 These odd matrices correspond to $A_1 = 3x^2\otimes e$, $A_2 = 3x^2 \otimes f$, $A_3 = 6xy \otimes e$, $A_4 = 6xy \otimes f$, $A_5 = 6y^2 \otimes e$, and $A_6 = 6y^2 \otimes f$.

 \subsubsection{A distinguished supervariety}
 \label{S:super-V}

Let $\bbP(V)=\mathrm{Gr}(1|0;4|4)\cong \bbP^{3|4}$ be the projective superspace corresponding to the linear supermanifold $V=V_{\bar 0}\oplus V_{\bar 1}\cong \bbC^{4|4}$, see \cite[\S 4.3]{MR1632008}, with its natural action of the connected Lie supergroup $G_0=COSp(3|2)\subset CSpO(4|4)$ generated by $\fg_0 \subset \fcspo(4|4)$ (resp. $F\subset\mathrm{SpO}(4|4)$ generated by $\ff\subset \fspo(4|4)$). The underlying topological manifold of $\bbP(V)$ is the $3$-dimensional classical projective space $\bbP(V_{\bar 0})\cong \bbP^{3}$.

It is convenient to consider $\bbP(V)$ and the Lie supergroup $G_0$ in the sense of their {\it functor of points} $\bbA\mapsto \bbP(V)(\bbA)$ and $\bbA\mapsto G_0(\bbA)$ \cite{MR2840967}. Concretely, for any finite-dimensional supercommutative superalgebra $\bbA=\bbA_{\bar 0}\oplus\bbA_{\bar 1}$ (e.g., any  exterior algebra with a finite number of generators), we consider the $\bbA$-module
$V\otimes\bbA$ and set $$\bbP(V)(\bbA):=\bbP^{1|0}(V\otimes \bbA)\;,$$ that is the collection of free $\bbA$-modules in $V\otimes\bbA$ of rank $(1|0)$ \cite[Prop. 1.7.7]{MR3328668}.

We note that $V\otimes\bbA=(V\otimes \bbA)_{\bar 0}\oplus (V\otimes\bbA)_{\bar 1}$, where
$$ (V\otimes\bbA)_{\bar 0}:=(V_{\bar 0}\otimes \bbA_{\bar 0})\oplus (V_{\bar 1}\otimes \bbA_{\bar 1})\;,\qquad (V\otimes\bbA)_{\bar 1}:=(V_{\bar 0}\otimes \bbA_{\bar 1})\oplus (V_{\bar 1}\otimes \bbA_{\bar 0})\;,
$$
and set $V(\bbA):=(V \otimes \bbA)_{\bar 0}$. The correspondence $\bbA\mapsto V(\bbA)$ is the functor of points associated to the linear supermanifold $V$. The (set-theoretic) group $G_0(\bbA)$ acts on $V(\bbA)$ by means of even transformations with coefficients in $\bbA$ \cite{Rittenberg:1977eg}, thus giving an action of $G_0$ on $V$ in the sense of \cite[Def. 11.7.2]{MR2840967}. Clearly, we also have an action of $G_0(\bbA)$ on the full $V\otimes\bbA$ and therefore an induced action of $G_0$ on $\bbP(V)$ via the associated functors of points. A similar construction holds for the Lie supergroup $F$.

We consider the topological point $o := [v_1] = [x^3]\in \bbP(V_{\bar 0})$ and its isotropy subalgebra $\fq \subset \ff$, which is parabolic. More precisely $\ff$ admits a $|1|$-grading
 \begin{align} \label{f-decomp}
 \ff = \ff_{-1} \op \ff_0 \op \ff_1
 \end{align}
such that $\fq = \ff_{\geq 0}$. We note that $\ff_{\pm 1}$ are abelian and that the grading
comes from the grading element $\sfZ_2 \in \fh$, that is:
 \begin{align*}
 \begin{array}{c|c|c|c|c}
 k & (\ff_k)_{\bar{0}} & (\ff_k)_{\bar{1}} & \mbox{Even roots} & \mbox{Odd roots}\\ \hline
 1 & X_1 & A_1, A_2 & \alpha_2 + \alpha_3 & \alpha_2,\, \alpha_2 + 2\alpha_3\\
 0 & H_1, H_2, X_2, Y_2 & A_3, A_4 & \pm 2\alpha_3 & \pm \alpha_3\\
 -1 & Y_1 & A_5, A_6 & -\alpha_2 - \alpha_3 & -\alpha_2,\, -\alpha_2 - 2\alpha_3\\
 \end{array}
 \end{align*}
Let $\cV \subset \bbP(V)$ be the $G_0$-orbit through $o$. We explicitly describe $\cV$ near $o$ by exponentiating the action of $\ff_{-1} = \tspan\{ Y_1, A_5, A_6 \}$ applied to $o$, thought as an element of $V(\bbA)$. The relevant 1-parameter subgroups of $G_0$ (in the sense of \cite[\S 7.5]{MR2840967}) are as follows:

 \begin{itemize}
 \item  The curve $\lambda \mapsto (x+\lambda y)^3$ in $V(\bbA)$ w.r.t. the even parameter $\lambda\in\bbA_{\bar 0}$ has derivative equal to $Y_1 \cdot x^3 = 3x^2 y$ at $\lambda=0$ and  indeed it coincides with $\exp(\lambda Y_1) \cdot v_1$. So we calculate the latter via the components of $(x+\lambda y)^3$ in the basis \eqref{spo-basis};
 \item We use odd $\theta,\phi\in\bbA_{\bar 1}$ for the 1-parameter subgroups generated by elements of the odd part $(\fg_0)_{\bar{1}}$, e.g., $\exp(\theta A_5) = 1 + \theta A_5$ since $\theta^2 = 0$.
 \end{itemize}

We obtain
 \begin{align} \label{cV}
 \ptm{1\\0\\0\\0\\\hline 0\\0\\0\\0}
 \stackrel{\exp(\lambda Y_1)}{\longmapsto}
 \ptm{1\\-\lambda\\-\frac{\lambda^3}{6}\\-\frac{\lambda^2}{2}\\\hline0\\0\\0\\0}
 \stackrel{\exp(\theta A_5)}{\longmapsto}
 \ptm{1\\-\lambda\\-\frac{\lambda^3}{6}\\-\frac{\lambda^2}{2}\\\hline \theta \\0\\0\\-\theta \lambda}
 \stackrel{\exp(\phi A_6)}{\longmapsto}
 \ptm{1\\-\lambda\\-\frac{\lambda^3}{6} + \phi \theta \lambda\\-\frac{\lambda^2}{2}+ \phi\theta \\\hline \theta\\ \phi \\ \phi \lambda\\-\theta \lambda}\;,
 \end{align}
as an element of $V(\mathbb A)$. The projectivization of \eqref{cV} yields our local parametrization of $\cV$ near $o$ with parameters  $\lambda\in\bbA_{\bar 0}$ and $\theta,\phi\in\bbA_{\bar 1}$.  Its Zariski-closure is the full $G_0$-orbit $\cV$, given by adding the super-point at infinity, i.e., the projectivization of  $(0,0,1,0\,|\,0,0,0,0)^\top$.

\begin{definition} \label{D:cV}
The supervariety $\cV \subset \bbP(V)$ is called the {\it $(1|2)$-twisted cubic} and the {\it super-point} $\ell=\ell(\lambda,\theta,\phi)$ of $\cV$ is the free $\bbA$-module of rank $1|0$ generated by \eqref{cV} or the super-point at infinity.
\end{definition}

For later use in Section \ref{sec:lagrangiansubspacealong}, it will be convenient to re-write the result \eqref{cV} by re-ordering the basis of $V$ and using the canonical isomorphism
$V \otimes \bbA\cong \bbA\otimes V$ that interchanges right with left coordinates via the ``sign rule'', namely:
 \begin{align} \label{cV-left}
v_1 - \lambda v_2 - \theta v_5 - \phi v_6 - \left(\frac{\lambda^3}{6} + \lambda\theta\phi \right) v_3 - \left(\frac{\lambda^2}{2} + \theta \phi\right) v_4 - \lambda \phi v_7 + \lambda \theta v_8,
 \end{align}
where $\lambda\in\bbA_{\bar 0}$ and $\theta,\phi\in\bbA_{\bar 1}$.
 \subsubsection{Osculations of $\cV$}
 \label{S:based-osc}

Let $\cU(\fg_0)$ be the universal enveloping algebra of $\fg_0$, naturally filtered by degree, 
and let $\cU_k(\fg_0)$ denote the $k$-th filtrand, where $k \in \ZZ_{\geq 0}$.  Define the map $\cU_k(\fg_0) \to V$ given by $t \mapsto t \cdot v_1$, and call its image $\widehat{T}_o^{(k)} \cV$ the {\em $k$-th osculating space} of $\cV$ at $o = [v_1]$.  (The space $\widehat{T}_o^{(1)} \cV$ is also called the {\em affine tangent space} of $\cV$ at $o$.) Since the semisimple part $\ff_0^{\ss}$ of $\ff_0$ is contained in the annihilator of $v_1$, the aforementioned map is $\ff_0^{\ss}$-equivariant.

 Recall from \eqref{f-decomp} that $\fq$ preserves the line $o$, so we obtain a $\fq$-invariant filtration of $V$ by osculating spaces
$$
\cdots\subset\widehat{T}_o^{(k-1)} \cV\subset\widehat{T}_o^{(k)} \cV\subset\widehat{T}_o^{(k+1)} \cV\subset\cdots\;,
$$
with associated graded $$\gr(V) = \bigoplus_{k\in \ZZ_{\geq 0}} N_k$$ given by the sum of {\em normal spaces} $N_k = \widehat{T}_o^{(k)} \cV / \widehat{T}_o^{(k-1)} \cV$.  The induced surjection $\cU_k(\fg_0) \to N_k$ has $\cU_{k-1}(\fg_0)$ in its kernel, so it descends to a $\ff_0^{\ss}$-equivariant map $\cU_k(\fg_0) / \cU_{k-1}(\fg_0)  \to N_k$. 

Since $\fq$ preserves $o$ and $\cU_k(\fg_0) / \cU_{k-1}(\fg_0) \cong S^k(\fg_0)$ as $\fg_0$-modules, so as $\ff_0^{\ss}$-modules as well, the restriction
$$\varphi_k : S^k(\ff_{-1}) \to N_k$$
of our map to $S^k(\ff_{-1})$ is still surjective and $\ff_0^{\ss}$-equivariant. Hence
 \begin{align} \label{Nk}
 N_k \cong S^k(\ff_{-1}) / \ker(\varphi_k),
 \end{align}
 as $\ff_0^{\ss}$-modules. One can also easily see that $N_k$ has degree $-k$ w.r.t. the grading element $\sfZ_2$.  

For any $i,j$, the full symmetrization of $S^i(\ff_{-1}) \otimes \ker(\varphi_j)$ sits inside $S^{i+j}(\ff_{-1})$ and acting on $v_1$ lands in $\widehat{T}^{(i+j-1)}_o \cV$. Hence $\gr(V)$ inherits from the product in $S^\bullet(\ff_{-1})$ a (supercommutative, associative) $\ZZ$-graded  superalgebra structure
 \begin{align} \label{grV-alg}
 N_i \otimes N_j \to N_{i+j},
 \end{align}
which is $\ff_0^{\ss}$-equivariant.

We organize the roots \eqref{ct-grading} of $V = \fg_{-1}$ using $\sfZ_2$ to obtain
$
 \gr(V) = N_0 \op N_1 \op N_2 \op N_3
$
 with:
 \begin{align} \label{E:grV}
 \begin{array}{|c|c|c|c|c|c} \hline
  \gr(V) & \mbox{Grading} & \mbox{Even part} & \mbox{Odd part}\\ \hline\hline
 N_0 & 0 & -\alpha_1 & \\ \hline
 N_1 & -1 & -\alpha_1 - \alpha_2 - \alpha_3 & -\alpha_1 - \alpha_2, \,\, -\alpha_1 - \alpha_2 - 2\alpha_3\\ \hline
 N_2 & -2 & -\alpha_1 - 2\alpha_2 - 2\alpha_3 & -\alpha_1 - 2\alpha_2 - 3\alpha_3, \,\, -\alpha_1 - 2\alpha_2 - \alpha_3\\ \hline
 N_3 & -3 & -\alpha_1 - 3\alpha_2 - 3\alpha_3 & \\ \hline
 \end{array}
 \end{align}
 The identification \eqref{Nk} yields the dictionary below, where $Y_1, A_5, A_6$ refer to the {\em equivalence classes} of these elements modulo $\ker(\varphi_1)$, and similarly for the elements in $S^{k}(\ff_{-1})$, $k=2,3$. The action of a representative in a class on $v_1$   is illustrated next to it on the right.
 \[
 \begin{array}{|c|c|c|c|c|} \hline
  & \multicolumn{2}{c|}{\mbox{Even representatives}} & \multicolumn{2}{c|}{\mbox{Odd representatives}}\\ \hline\hline
 N_0 & 1 & v_1 & & \\ \hline
 N_1 &  Y_1 & -v_2 & \begin{array}{l} A_5\\ A_6 \end{array} & \begin{array}{l} v_5\\ v_6 \end{array} \\ \hline
 N_2 & (Y_1)^2\equiv A_5 A_6 & -v_4  & \begin{array}{l} Y_1 A_5\\ Y_1 A_6 \end{array} & \begin{array}{l}-v_8\\ v_7 \end{array}\\ \hline
 N_3 & (Y_1)^3\equiv Y_1 A_5 A_6 & -v_3  & & \\ \hline
 \end{array}
 \]
Note that one generating relation $(Y_1)^2 \equiv A_5 A_6$ arises.  (The relations $(A_5)^2 = (A_6)^2 = 0$ and $A_5 A_6 = -A_6 A_5$ are automatic since $A_5,A_6$ are odd.)

 \subsubsection{Super-symmetric cubic and quadratic forms on a Jordan superalgebra}
\label{subsec:Super-symmetric cubic and quadratic forms on a Jordan superalgebra}

 Let $W = N_1$.  Note that $N_1 \otimes N_2 \to N_3$ is a non-degenerate $\ff_0^{\ss}$-equivariant pairing and that $\ff_0^{\ss}$ acts trivially on the $1$-dimensional module $N_3$. Hence $N_2 \cong W^*$ as $\ff_0^{\ss}$-modules and $N_1 \otimes N_1 \to N_2$ yields a map $W \otimes W \to W^*$ that is $\ff_0^{\ss}$-invariant. The associated cubic form $\fC \in S^3 W^*$ is clearly supersymmetric, as the algebra structure $\eqref{grV-alg}$ is induced from the product in $S^{\bullet}(\ff_{-1})$.

Concretely, we fix $(Y_1)^3\in N_3$ and consider the dual bases:
 \begin{align} \label{W-basis}
 \begin{array}{l@{\quad}l@{\quad}l@{\quad}l}
 \mbox{Basis of $W$\;\,}: & w_1 = Y_1 , & w_2 = A_5 , & w_3 = A_6 .\\
 \mbox{Basis of $W^*$}: & w^1 = (Y_1)^2 , & w^2 = Y_1 A_6 ,& w^3 = -Y_1 A_5 .
 \end{array}
 \end{align}
\begin{proposition} The following supersymmetric forms are $\ff_0^{\ss}$-invariant:
 \begin{align*}
 \begin{array}{ll}
 \fC\!\!\!\!\!&=\frac{1}{3} (w^1)^3 - 2 w^1 w^2 w^3 \in S^3 W^*,\\
\mathfrak{G}\!\!\!\!\!&=(w^1)^2 - 4 w^2 w^3 \in S^2 W^*,\\
 \fC^* \!\!\!\!\!&= \frac{4}{9} (w_1)^3 - \frac{2}{3} w_1 w_2 w_3 \in S^3 W, \\
\mathfrak{G}^*\!\!\!\!\!&= (w_1)^2 - w_2 w_3 \in S^2 W,
 \end{array}
 \end{align*}
 and they are the unique $\ff_0^{\ss}$-invariant forms in these spaces up to scale.
 \end{proposition}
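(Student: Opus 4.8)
The plan is to reduce everything to the representation theory of $\fosp(1|2)$, exploiting the fact that this is one of the few Lie superalgebras all of whose finite-dimensional representations are completely reducible. First I would pin down the acting algebra and the module. Stripping the grading element $H_1$ from the $\ff_0$ displayed in Section~\ref{S:super-V}, the semisimple part is $\ff_0^{\ss} = \tspan\{H_2, X_2, Y_2\mid A_3, A_4\} \cong \fosp(1|2)$, whose even part is the $\fsp(2)$-triple \eqref{E:sp2}. From the root data in \eqref{E:grV} (equivalently, from the matrices of Section~\ref{S:g0-mat} acting on $\ff_{-1}$ adjointly), the basis $\{w_1\mid w_2, w_3\} = \{Y_1\mid A_5, A_6\}$ of $W = N_1$ from \eqref{W-basis} has $w_1$ a $\fsp(2)$-singlet and $\{w_2, w_3\}$ a $\fsp(2)$-doublet of weights $\pm 1$. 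Hence $W \cong \bbC^{1|2}$ is exactly the standard representation of $\fosp(1|2)$.

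For the uniqueness assertion I would count multiplicities of the trivial module, which by complete reducibility equals the dimension of the invariants. Computing super-symmetric powers of $\bbC^{1|2}$ and restricting to the even $\fsp(2)$: the even part of $S^2 W$ is $\tspan\{w_1^2, w_2 w_3\}$ (two $\fsp(2)$-singlets) and the odd part is $\tspan\{w_1 w_2, w_1 w_3\}$ (one $\fsp(2)$-doublet), so $S^2 W \cong \bbC^{1|0}\op\bbC^{1|2}$; the computation for $S^3 W$ is identical, since $\Lambda^3$ of the odd plane vanishes, giving $S^3 W \cong \bbC^{1|0}\op\bbC^{1|2}$ as well. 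In each case the trivial module $\bbC^{1|0}$ occurs exactly once, so the invariant element is unique up to scale. Since $W$ is self-dual (it carries the invariant orthosymplectic form), the same decompositions hold for $S^2 W^*$ and $S^3 W^*$, yielding uniqueness of $\mathfrak{G}$ and $\fC$.

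For existence I would exhibit each invariant intrinsically. The quadratic forms $\mathfrak{G}^*$ and $\mathfrak{G}$ are simply the defining orthosymplectic form of $\fosp(1|2)$ on $W$ and its inverse on $W^*$: symmetric on the even line and symplectic on the odd plane, which is precisely the super-symmetric pattern $w_1^2 - w_2 w_3$, respectively $(w^1)^2 - 4 w^2 w^3$. The cubic $\fC$ has already been produced in Section~\ref{subsec:Super-symmetric cubic and quadratic forms on a Jordan superalgebra} from the $\ff_0^{\ss}$-equivariant multiplication \eqref{grV-alg}, by composing $N_1\otimes N_1 \to N_2$ with the nondegenerate equivariant pairing $N_1 \otimes N_2 \to N_3 \cong \bbC$ on which $\ff_0^{\ss}$ acts trivially; the dual construction gives $\fC^* \in S^3 W$. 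By the multiplicity-one statement these intrinsic invariants must coincide up to scale with the listed forms, and the explicit constants are fixed by the dictionary of Section~\ref{S:based-osc} — in particular the relation $(Y_1)^2 \equiv A_5 A_6$ — together with the dual-basis normalization in \eqref{W-basis}.

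The hard part will be the explicit normalization step: confirming that the specific coordinate expressions, with their constants $\tfrac13, -2, \tfrac49, -\tfrac23$, are genuinely the invariants and nonzero rather than accidentally proportional to a noninvariant combination. Invariance under the even $\fsp(2)$ is immediate from the weight bookkeeping above, so by uniqueness it suffices to check annihilation by a single odd generator, say $A_3$, since $A_3$ together with $\fsp(2)$ generates all of $\fosp(1|2)$. This reduces the verification to a short computation applying $A_3$ to each listed form; the only genuine delicacy is the super sign rule, which must be tracked carefully when $A_3$ differentiates products of the odd generators $w_2, w_3$.
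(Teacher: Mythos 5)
Your proposal is correct, and it reaches the result by a genuinely different route than the paper. The paper's own proof is a terse direct computation: since $\ff_0^{\ss}$ is semisimple it is generated by its odd part, so invariance (and, by solving the resulting linear conditions, uniqueness) is checked by applying the explicit matrices of $A_3,A_4$ on $W$ and their negative supertransposes on $W^*$ to the forms. You instead obtain uniqueness structurally: identifying $\ff_0^{\ss}\cong\fosp(1|2)$ and $W\cong\bbC^{1|2}$, you invoke Djokovi\'c--Hochschild complete reducibility and count the multiplicity of the trivial module in $S^2W$ and $S^3W$, which is exactly one; this is the same representation-theoretic machinery the paper deploys later (Section \ref{sec:cohomology}) for the cohomology computations, so it fits the paper's toolkit while replacing the linear-algebra uniqueness check by a conceptual one. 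Two small points to tighten when executing it. First, passing from the $\fsp(2)$-content $\{$two even singlets, one odd doublet$\}$ to the decomposition $\bbC^{1|0}\op\bbC^{1|2}$ requires the branching fact (quoted in the paper from the $\fosp(1|2)$ literature) that every nontrivial irreducible decomposes under $\fsp(2)$ into two irreducibles with distinct consecutive highest weights -- this is what rules out the alternative $(0|1)\op(2|1)$ and forces the even parity of the trivial summand; your phrase ``the computation is identical'' glosses this. Second, $\fsp(2)$-invariance of the listed monomials is not a matter of weights alone: it holds because $w_1$ (resp.\ $w^1$) spans a trivial $\fsp(2)$-module while $w_2w_3$ (resp.\ $w^2w^3$) is, up to scale, the symplectic area form of the odd doublet. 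Granted these, your reduction of the final normalization check to the single odd generator $A_3$ is valid (since $\fsp(2)$ together with $A_3$ generates $\fosp(1|2)$) and is a mild economy over the paper's check against both $A_3$ and $A_4$; the computation does confirm the stated constants, e.g.\ $A_3\cdot\bigl((w^1)^2-4w^2w^3\bigr)=4w^1w^3-4w^1w^3=0$.
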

\begin{proof}
Since $\ff_0^{\ss}$ is semisimple, it is generated by its odd part $(\ff_0)_{\bar{1}}$ and it is sufficient to check invariance under the latter. The claim follows then from a
direct calculation using the explicit expressions w.r.t. the basis \eqref{W-basis} of the action of the elements of $(\ff_0)_{\bar{1}}$ on $W$,   
 \[
 A_3 = {\tiny
 \left( \begin{array}{c|cc}
 0 & 0 & 2\\ \hline -1 & 0 & 0\\ 0 & 0 & 0
 \end{array} \right)}, \quad
 A_4 = {\tiny
 \left( \begin{array}{c|cc}
 0 & -2 & 0\\ \hline 0 & 0 & 0\\ -1 & 0 & 0
 \end{array} \right)},
 \]
 and the action
\[
 -A_3^{st} = {\tiny
 \left( \begin{array}{c|cc}
 0 & 1 & 0\\ \hline 0 & 0 & 0\\ 2 & 0 & 0
 \end{array} \right)}, \quad
 -A_4^{st} = {\tiny
 \left( \begin{array}{c|cc}
 0 & 0 & 1\\ \hline -2 & 0 & 0\\ 0 & 0 & 0
 \end{array} \right)},
 \]
on $W^*$ given by the negative of the supertranspose.
\end{proof} 

We note that post-composing $W \otimes W \to W^*$ with the $\ff_0^{\ss}$-equivariant duality
\[
 w^1 \mapsto w_1, \quad
 w^3 \mapsto -\frac{1}{2}w_2, \quad
 w^2 \mapsto \frac{1}{2}w_3\;,
 \]
from $W^*$ to $W$ induced by $\fG$ gives an $\ff_0^{\ss}$-equivariant (supercommutative, not associative) superalgebra structure $\circ$ on $W$. Its multiplication table is
 \[
 \begin{array}{c|ccc}
 \circ & w_1 & w_2 & w_3\\ \hline
 w_1 & w_1 & \frac{1}{2} w_2 & \frac{1}{2} w_3\\
 w_2 & \frac{1}{2} w_2 & 0 & w_1 \\
 w_3 & \frac{1}{2} w_3 & -w_1 & 0
 \end{array}
 \]
and we recognize a non-unital simple Jordan superalgebra, called the {\em Kaplansky superalgebra} \cite[p.1381]{MR0498755}.

 \subsubsection{A key identity}
Let $W=N_1$ be as in Section \ref{subsec:Super-symmetric cubic and quadratic forms on a Jordan superalgebra} and let us recall that for any finite-dimensional supercommutative superalgebra $\bbA=\bbA_{\bar 0}\oplus\bbA_{\bar 1}$, we may define $W(\bbA):= (W \otimes \bbA)_{\bar 0}\cong (\bbA \otimes W)_{\bar 0}$, where the isomorphism interchanges right with left coordinates via the ``sign rule''. In this section, we shall work exclusively with left coordinates; accordingly $W^*(\bbA) := (\bbA \otimes W^*)_{\bar 0}$. We will also freely use Einstein's summation convention by summing over repeated indices. 

For any $T \in W(\bbA)$, we write
\begin{equation}
T = t^a w_a = \lambda w_1 + \theta w_2 + \phi w_3\;,
\label{eq:parameterT}
\end{equation}
where the index $a=1,2,3$ and the parameters $t^1=\lambda \in \bbA_{\bar{0}}$, $t^2=\theta\in \bbA_{\bar{1}}$ and $t^3=\phi\in \bbA_{\bar{1}}$.  We extend the definition of $\fC$ and $\fG$ from $W$ to $W(\bbA)$ using the non-trivial components of $\fC = \fC_{abc} w^a w^b w^c$ given by \footnote{Our definition of super-symmetric forms includes a weight in their expression as a sum of tensor products, e.g. $w^2 w^3 = \frac{1}{2}( w^2 \otimes w^3 - w^3 \otimes w^2)$.}
\begin{align} \label{fC-coeffs}
 \fC_{111} = \frac{1}{3}, \quad \fC_{123}  = -\frac{1}{3} = -\fC_{132} = \fC_{231}=\cdots,
 \end{align}
 and $\bbA$-linearity (in the super-sense) on the left, i.e., we have
 \begin{align} \label{E:CG}
 \fC(T^3) := t^c t^b t^a \fC_{abc} = \frac{\lambda^3}{3} + 2\lambda\theta\phi, \quad
 \fG(T^2) := t^b t^a \fG_{ab} = \lambda^2 + 4\theta\phi,
 \end{align}
for any $T\in W(\bbA)$ as in \eqref{eq:parameterT}. Here and in the following, the terms $T^2$ and $T^3$ inside the parentheses are meant to suggest $T$ inserted twice or three times. We will also use notation such as
 \begin{align} \label{E:CG2}
 \fC_c(T^2) := \frac{1}{3} \partial_{t^c} (\fC(T^3)), \quad
 \fC_{bc}(T) := \frac{1}{2} \partial_{t^b} \fC_c(T^2), \quad
 \fC_{abc} := \partial_{t^a} \fC_{bc}(T),
 \end{align}
 so that $\fC(T^3) = t^c \fC_c(T^2) = t^c t^b \fC_{bc}(T) = t^c t^b t^a \fC_{abc}$, as expected. Then a straightforward computation shows that
 \begin{align*}
 3\fC_c(T^2) &=
 \begin{pmatrix}
 \lambda^2 + 2\theta\phi, & 2\lambda\phi, & -2\lambda\theta
 \end{pmatrix},\\
 3\fC_{bc}(T) & =
 \begin{pmatrix}
 \lambda & \phi & -\theta\\
\phi & 0 & -\lambda\\
 -\theta & \lambda & 0
 \end{pmatrix}.
 \end{align*}

In a similar way we consider $T^*\in W^*(\bbA)$ and write $T^* = t^*_a w^a = \mu w^1 + \delta w^2 + \epsilon w^3$,
where $\mu\in \bbA_{\bar{0}}$ and $\delta, \epsilon\in \bbA_{\bar{1}}$. 
We define
 \[
 \fC^*((T^*)^3) = \frac{4}{9} \mu^3 + \frac{2}{3} \mu\delta\epsilon, \quad
 \fG^*((T^*)^2) = \mu^2 + \delta\epsilon
 \]
and likewise introduce tensors of the forms $(\fC^*)^c((T^*)^2)$ etc, explicitly given by
 \begin{align*}
3(\fC^*)^c((T^*)^2) &= \begin{pmatrix}
 \frac{4}{3} \mu^2 + \frac{2}{3} \delta \epsilon \\
 \frac{2}{3} \mu \epsilon \\
 -\frac{2}{3} \mu \delta
 \end{pmatrix},\\
 3 (\fC^*)^{bc}(T^*) &=
 \begin{pmatrix}
 \frac{4}{3} \mu & \frac{1}{3} \epsilon & -\frac{1}{3} \delta\\
 \frac{1}{3} \epsilon & 0 & -\frac{1}{3} \mu\\
 -\frac{1}{3} \delta & \frac{1}{3} \mu & 0
 \end{pmatrix}.
 \end{align*}
 A straightforward verification now yields a supersymmetric version of some key identities for $\fC$ and $\fC^*$ used in \cite{MR3759350}.
 They will be crucial in the symmetry computation in Section \ref{S:G3-PDE-syms}.
  \begin{proposition} \label{P:C-ids} For any $T \in W(\bbA)$ and $T^* \in W^*(\bbA)$, the following identity holds:
 \begin{align}
 &\fC_c(T^2) \fC_a(T^2) (\fC^*)^{ac}(T^*) = \frac{4}{27} \fC(T^3) t^c t^*_c \label{id1}
 \end{align}
 Via differentiation, we obtain:
 \begin{align}
 &\fC_{bc}(T) \fC_a(T^2) (\fC^*)^{ac}(T^*) = \frac{1}{27} \left( 3\fC_b(T^2) t^c t^*_c + \fC(T^3) t^*_b \right) \label{id2}
 \end{align}
 and
 \begin{align}
 &\fC_{dbc} \fC_a(T^2) (\fC^*)^{ac}(T^*) + 2 \fC_{da}(T) (\fC^*)^{ac}(T^*) \fC_{cb}(T) (-1)^{|c|} \nonumber \\
 &\qquad= \frac{1}{9} \left( 2\fC_{db}(T) t^c t^*_c + t^*_d \fC_b(T^2) + \fC_d(T^2) t^*_b \right) \label{id3}\;,
 \end{align}
where $|c|\in \mathbb{Z}_2$ is the parity of $t^c\in\mathbb A$.
\end{proposition}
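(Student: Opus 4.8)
The plan is to prove the fundamental identity \eqref{id1} by a direct coordinate computation and then to deduce \eqref{id2} and \eqref{id3} from it by successive super-differentiation in the left variables $t^b$, $t^d$, keeping the supersymmetry of $\fC$ and $\fC^*$ in play throughout.

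For the base case, I would simply substitute the row and column expressions for $3\fC_c(T^2)$ and $3(\fC^*)^{ac}(T^*)$ displayed just before the statement. Since all products are taken in the supercommutative algebra $\bbA$, the relations $\theta^2 = \phi^2 = \delta^2 = \epsilon^2 = 0$ annihilate most monomials, and a short expansion shows that the left-hand side of \eqref{id1} collapses to a combination of $\lambda^4\mu$, $\lambda^2\mu\,\theta\phi$, $\lambda^3\theta\delta$, $\lambda^3\phi\epsilon$. Expanding $\tfrac{4}{27}\fC(T^3)\,t^c t^*_c$ with $t^c t^*_c = \lambda\mu + \theta\delta + \phi\epsilon$ reproduces the very same four monomials with matching coefficients, so \eqref{id1} holds.

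To pass from \eqref{id1} to \eqref{id2} I would apply $\tfrac14\partial_{t^b}$. On the left, the super-Leibniz rule differentiates each of the two factors $\fC_c(T^2)$ and $\fC_a(T^2)$, and by \eqref{E:CG2} each produces a factor $2\fC_{b\bullet}(T)$; after relabelling the summation indices $a\leftrightarrow c$ and using the supersymmetry $(\fC^*)^{ca} = (-1)^{|a||c|}(\fC^*)^{ac}$ together with the sign rule in $\bbA$, the two contributions are seen to be equal, and they combine to give $4\,\fC_{bc}(T)\fC_a(T^2)(\fC^*)^{ac}(T^*)$. On the right, $\partial_{t^b}\fC(T^3) = 3\fC_b(T^2)$ and $\partial_{t^b}(t^c t^*_c) = t^*_b$, so after the factor $\tfrac14$ one lands exactly on the right-hand side of \eqref{id2}. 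I would then differentiate \eqref{id2} once more with $\partial_{t^d}$: the term where $\partial_{t^d}$ hits $\fC_{bc}(T)$ yields $\fC_{dbc}\fC_a(T^2)(\fC^*)^{ac}$ directly, whereas the term where it hits $\fC_a(T^2)$ gives $2(-1)^{|d|(|b|+|c|)}\fC_{bc}(T)\fC_{da}(T)(\fC^*)^{ac}$, which I would reorganise via $\fC_{bc}(T) = (-1)^{|b||c|}\fC_{cb}(T)$ and the $\bbA$-sign rule into the stated $2(-1)^{|c|}\fC_{da}(T)(\fC^*)^{ac}(T^*)\fC_{cb}(T)$; the right-hand side of \eqref{id3} comes out of $\partial_{t^d}\fC_b(T^2) = 2\fC_{db}(T)$ and $\partial_{t^d}\fC(T^3) = 3\fC_d(T^2)$ after the routine cancellation $\fC_b(T^2)t^*_d = (-1)^{|b||d|}t^*_d\fC_b(T^2)$.

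The substance of the argument is not the computation but the sign bookkeeping. The delicate point is the collapse of the accumulated Koszul sign in the second term of \eqref{id3}, where $|d|(|b|+|c|) + (|b|+|c|)(|d|+|c|) + |b||c| \equiv |c| \pmod 2$ after cancelling all the doubled cross-terms; once this is handled, the parallel cancellations in the first-derivative step and on the right-hand sides are mechanical. This is precisely the place where I expect the main (if modest) obstacle, so I would write out the parities $|1| = 0$, $|2| = |3| = 1$ explicitly to guard against sign errors.
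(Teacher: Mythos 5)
Your proposal is correct and follows exactly the route the paper itself takes: the paper asserts \eqref{id1} by "a straightforward verification" in coordinates and then states that \eqref{id2} and \eqref{id3} follow "via differentiation," which is precisely your direct expansion (the four surviving monomials $\lambda^4\mu$, $\lambda^2\mu\theta\phi$, $\lambda^3\theta\delta$, $\lambda^3\phi\epsilon$ and their coefficients do match) followed by applying $\tfrac14\partial_{t^b}$ and then $\partial_{t^d}$. Your sign bookkeeping is also right, including the key reduction $|d|(|b|+|c|)+(|b|+|c|)(|d|+|c|)+|b||c|\equiv |c| \pmod 2$ and the use of the supersymmetries $\fC_{bc}=(-1)^{|b||c|}\fC_{cb}$ and $(\fC^*)^{ca}=(-1)^{|a||c|}(\fC^*)^{ac}$.
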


\subsection{Lagrangian subspaces along $\cV$}
  \label{sec:lagrangiansubspacealong}
	
 In Section \ref{S:super-V} we constructed the supervariety $\mathcal V\subset \mathbb P(V)$,
which we called $(1|2)$-twisted cubic, and in Section \ref{S:based-osc} we carried out the osculations of $\cV$ at the point $o=[v_1]=[x^3]$ of its  underlying topological manifold, the (classical) twisted cubic.  Using the explicit expressions of the matrices of $\ff$ in Section \ref{S:g0-mat} it is clear that the affine tangent space $$\widehat{T}^{(1)}_o \cV = \tspan\{ v_1, v_2, v_5, v_6 \}$$ is {\em Lagrangian} with respect to $\eta$, i.e., it is $\eta$-null and of maximal dimension $(2|2)$.  

Let $LG(V)$ be the Lagrangian--Grassmannian of the linear supermanifold $V$, considered, as usual, via its functor of points $\bbA\mapsto LG(V)(\bbA):= LG(V\otimes\bbA)$. Here $LG(V\otimes\bbA)$ is the collection of Lagrangian free $\bbA$-modules in $V\otimes\bbA\cong \bbA\otimes V$ of rank $(2|2)$, see \cite{MR849339}. In particular we may regard the $\mathbb A$-module $\widehat{T}^{(1)}_o \cV(\bbA):=\bbA\otimes\widehat{T}^{(1)}_o \cV$ as an element of $LG(V)(\bbA)$.

Since $\cV$ is $G_0$-invariant and the CSpO-structure $[\eta]$ is $G_0$-invariant too, then the collection  of Lagrangian $\bbA$-modules 
$$\widehat\cV := \{ \widehat{T}^{(1)}_\ell \cV \mid \ell=\text{super-point of}\;\cV \} \subset LG(V)$$
given by the affine tangent spaces along $\cV$ is also $G_0$-invariant.  To describe $\widehat{\cV}$ concretely, we consider the local parametrization \eqref{cV-left} of $\cV$ and osculate at any super-point $\ell=\ell(\lambda,\theta,\phi)$.
Furthermore, to facilitate the passage to the super-PDE picture considered in Section \ref{S:G3-PDE}, we will describe both $\cV$ and $\widehat\cV$ in a {\em $\mathrm{CSpO}$-basis}, i.e., a basis of $V$ w.r.t. which $\eta$ is represented by a multiple of
 \begin{align} \label{E:CSpO-eta}
 \left( \begin{array}{cccc|cccc}
 &&&&1 \\
 &&&&&1 \\
 &&&&&&1 \\
 &&&&&&&1 \\ \hline
 -1&&&\\
 &-1&&\\
 &&1&\\
 &&&1
 \end{array} \right).
 \end{align}
 In terms of \eqref{spo-basis}, one such $\mathrm{CSpO}$-basis is $
 \{b_0,\ldots,b_3,b^0,\ldots,b^3\}  =  \{v_1,v_2,v_5,v_6,v_3,v_4,v_7,v_8 \}$.

 First of all, we observe that the local parametrization \eqref{cV-left} can be efficiently re-written using the cubic form $\fC$ as
 \begin{align} \label{E:cV}
 b_0 - t^a b_a - \frac{1}{2} \fC(T^3) b^0 - \frac{3}{2} \fC_a(T^2) b^a,
 \end{align}
with $T=t^a w_a = \lambda w_1 + \theta w_2 + \phi w_3\in W(\bbA)$ as in \eqref{eq:parameterT}. The affine tangent spaces $\widehat{T}^{(1)}_\ell \cV$ are then obtained by supplementing
the first derivatives
\begin{align*}
B_a := b_a + \frac{3}{2} \fC_a(T^2) b^0 + 3 \fC_{ac}(T) b^c
\end{align*}
of \eqref{E:cV} w.r.t. the parameters $t^a$, $a=1,2,3$, with \eqref{E:cV} itself, or, alternatively, with
$$
 B_0:= b_0 + \fC(T^3) b^0 + \frac{3}{2} \fC_a(T^2) b^a\;.
$$
In other words, we have proved the following.
\begin{proposition} \label{P:hV} A local description of $\widehat\cV\subset LG(V)$ near the topological basepoint $o=[v_1]=[x^3]$ is given by the affine tangent spaces $\widehat{T}^{(1)}_\ell \cV = \tspan_{\bbA}\{B_0, B_1, B_2,B_3 \}$ varying the super-point $\ell=\ell(\lambda,\theta,\phi)$ of $\cV$.
% \begin{align} \label{E:hV}
% \widehat{T}^{(1)}_\ell \cV = \tspan_{\bbA}\{B_0, B_1, B_2,B_3 \}
% \end{align}
%varying the super-point $\ell=\ell(\lambda,\theta,\phi)$ of $\cV$.
\end{proposition}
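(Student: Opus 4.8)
The plan is to read off the two spanning sets directly from the parametrization \eqref{E:cV}. Write $P=P(\ell)$ for the vector \eqref{E:cV} representing the super-point $\ell=\ell(\lambda,\theta,\phi)$. By definition the affine tangent space $\widehat{T}^{(1)}_\ell\cV$ is the free $\bbA$-module generated by $P$ together with its first-order partial derivatives $\partial_{t^a}P$ in the three parameters $t^1=\lambda$, $t^2=\theta$, $t^3=\phi$. First I would confirm that \eqref{cV-left} and \eqref{E:cV} represent the same super-point: feeding the nonzero components $\fC_{111}=\tfrac13$ and $\fC_{123}=-\tfrac13$ from \eqref{fC-coeffs} into \eqref{E:CG}--\eqref{E:CG2} reproduces the coefficients of $b^0,b^1,b^2,b^3$ appearing in \eqref{cV-left}, once the $\mathrm{CSpO}$-basis dictionary $\{b_0,\dots,b^3\}=\{v_1,v_2,v_5,v_6,v_3,v_4,v_7,v_8\}$ built from \eqref{spo-basis} is used.

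Next I would differentiate $P$. Since the basis vectors $b_a,b^a$ are constant, the derivations act only on the scalar coefficients, and the two identities $\partial_{t^c}\fC(T^3)=3\fC_c(T^2)$ and $\partial_{t^c}\fC_a(T^2)=2\fC_{ca}(T)$ recorded in \eqref{E:CG2} give $\partial_{t^a}P=-B_a$. Hence $\{P,B_1,B_2,B_3\}$ is a local $\bbA$-basis of $\widehat{T}^{(1)}_\ell\cV$; evaluating at $\lambda=\theta=\phi=0$ recovers $\tspan\{v_1,v_2,v_5,v_6\}$, in agreement with the osculation at $o$ from Section \ref{S:based-osc}.

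Finally, to trade the generator $P$ for $B_0$, I would invoke the Euler relations $\fC(T^3)=t^a\fC_a(T^2)$ and $\fC_c(T^2)=t^b\fC_{bc}(T)$, both immediate from \eqref{E:CG2}. Contracting the $B_a$ against $t^a$ gives $t^aB_a=t^ab_a+\tfrac32\fC(T^3)b^0+3\fC_a(T^2)b^a$, and adding $P$ produces exactly $B_0$, so $B_0=P+t^aB_a$. Therefore $\tspan_\bbA\{B_0,B_1,B_2,B_3\}=\tspan_\bbA\{P,B_1,B_2,B_3\}=\widehat{T}^{(1)}_\ell\cV$, while the Lagrangian condition --- and hence $\widehat{\cV}\subset LG(V)$ --- is already secured by the $G_0$-invariance of $\cV$ and of the CSpO-structure $[\eta]$ noted before the statement. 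The one step demanding genuine care is the parity bookkeeping: because $t^2,t^3$ are odd, $\partial_{t^2}$ and $\partial_{t^3}$ are odd derivations and the Euler identities must be read in their graded form; beyond this the proposition is just a repackaging of the computation preceding it, so no essential obstacle arises.
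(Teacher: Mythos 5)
Your proof is correct and follows essentially the same route as the paper: rewrite the parametrization \eqref{cV-left} in the CSpO-basis via the cubic form as \eqref{E:cV}, take the first derivatives in $t^a$ to obtain the $B_a$, and supplement with the point itself or, equivalently, with $B_0$. Your explicit check that $B_0 = P + t^aB_a$ via the Euler identities $\fC(T^3)=t^a\fC_a(T^2)$ and $\fC_c(T^2)=t^b\fC_{bc}(T)$ merely fills in a detail the paper leaves implicit (``or, alternatively, with $B_0$''), so there is no substantive difference.
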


\begin{rem} \label{R:cV-hV} Since $\fg_0 \subset \fcspo(\fg_{-1})$ is maximal by Proposition \ref{P:max-subalg}, then both $\cV$ and $\widehat\cV$ reduce the structure algebra $\fcspo(\fg_{-1})$ to precisely $\fg_0$.  We now explain how $\cV$ is recoverable from $\widehat\cV$.  We osculate the latter to get the subspaces $\widehat{T}^{(2)}_\ell \cV$.  From \eqref{E:grV}, the second osculating space $\widehat{T}^{(2)}_o \cV$ has associated graded vector space $N_0 \op N_1 \op N_2$, so has codimension one in $V$.  Noting that $\fg_{-2}$ is the root space for the root $-2\alpha_1 - 3\alpha_2 - 3\alpha_3$, then \eqref{E:grV} also implies that the orthogonal w.r.t. $\eta$ of $\widehat{T}^{(2)}_o \cV$ is precisely the line $o$ itself.  By $G_0$-invariance, the orthogonal w.r.t. $\eta$ of $\widehat{T}^{(2)}_\ell \cV$ is $\ell$ itself, and in this way all of $\cV$ is recovered.
 \end{rem}

 In Section \ref{S4}, we further discuss $\cV$ and $\widehat\cV$ and derive our main super-PDE of study.  Proving that these have $G(3)$ supersymmetry relies on key cohomological facts that we establish next.

%%%%%%%%%%%%%%%%%%%%%%%%%%%%%%%%%%%%%%%%%%%%

 \section{Computation of the Spencer cohomology}\label{S3}

 \subsection{Hochschild--Serre spectral sequence}
\label{subsec:specseq}
Let $\fm=\fm_{\bar 0}\oplus\fm_{\bar 1}$ be a finite-dimensional Lie superalgebra (LSA) and $M$ an $\fm$-module. We recall that the
$n$-cochains of the associated Chevalley--Eilenberg complex are the linear maps $\Lambda^n\fm\to\ M$ or, equivalently, the elements of $M\otimes\Lambda^n\fm^*$, where $\Lambda^\bullet$ is meant in the super-sense.

The space of $n$-cochains $C^{n}=C^n(\fm,M)=M\otimes\Lambda^n\fm^*$ has a natural descending filtration
$$
C^n=F^0C^n\supset F^1 C^{n}\supset\cdots\supset F^{n}C^n\supset F^{n+1}C^n=0\;,
$$
where
\begin{equation*}
\begin{aligned}
F^pC^n&=\Big\{c\in C^n\mid c\;\text{vanishes upon insertion of at least}\;n+1-p\;\text{elements of}\;\fm_{\bar 0}\Big\}
\end{aligned}
\end{equation*}
for all $0\leq p\leq n+1$. For simplicity, we will allow the filtration index $p$ to ``go out of bounds'', with the
understanding that the objects in question are either the full object or
zero (that is, $F^{-i}C^{n}=C^n$ and $F^{n+1+i}C^{n}=0$ for all $i>0$).

The Chevalley--Eilenberg differential $\partial:C^{n}(\fm,M) \to
C^{n+1}(\fm,M)$ is compatible with the filtration; this means that $\partial(F^pC^n)\subset F^pC^{n+1}$.
For $n=0,1$ and $2$ it is
explicitly given  by the following expressions:
\begin{align}
  \begin{split}\label{eq:CE0}
    &\partial : C^{0}(\fm,M) \to C^{1}(\fm,M)\\
    &\partial\varphi(X) = (-1)^{x|\varphi|}X\cdot\varphi\,,
  \end{split}
  \\
  \begin{split}\label{eq:CE1}
    &\partial : C^{1}(\fm,M)\to C^{2}(\fm,M)\\
    &\partial\varphi(X,Y) = (-1)^{x|\varphi|}X\cdot\varphi(Y) - (-1)^{y(x+|\varphi|)}Y\cdot\varphi(X) - \varphi([X,Y])\,,
  \end{split}
\\
  \begin{split}\label{eq:CE2}
    &\partial:C^{2}(\fm,M) \to C^{3}(\fm,M)\\
    &\partial\varphi(X,Y,Z) = (-1)^{x|\varphi|}X\cdot\varphi(Y,Z)-(-1)^{y(x+|\varphi|)}Y\cdot\varphi(X,Z) + (-1)^{z(x+y+|\varphi|)} Z\cdot\varphi(X,Y) \\
    & {} \qquad\qquad\qquad - \varphi([X,Y],Z) - (-1)^{x(y+z)} \varphi([Y,Z],X) -(-1)^{z(x+y)} \varphi([Z,X],Y)\,,
  \end{split}
\end{align}
where $x,y,\dots$ denote the parity of elements $X,Y,\dots$ of $\fm$
and $|\varphi|$ the parity of $\varphi\in C^{n}(\fm,M)$, with $n=0,1,2$ respectively.

The Hochschild--Serre spectral sequence  is the spectral sequence $(E_r,\partial_r)_{r\geq 0}$ associated to the cochain complex $C^\bullet$ together with the filtration of subcomplexes $F^{p}C^{\bullet}$. More formally, we have
\begin{equation*}
\begin{aligned}
Z_{r}^{p,q}&=\left\{c\in F^{p}C^{p+q}\mid \partial c\in F^{p+r}C^{p+q+1}\right\}\;,\\
B_{r}^{p,q}&=\partial Z_{r-1}^{p-r+1,q+r-2}\;,
\end{aligned}
\end{equation*}
for all nonnegative integers $p,q,r$, where $q=n-p$ is the
complementary index and $r$ the index of the spectral sequence.
The $r^{\text{th}}$-page of the spectral sequence is bigraded $E_r=\bigoplus E_r^{p,q}$ with the components given by
\begin{equation*}
E_r^{p,q}=\frac{Z^{p,q}_r}{Z_{r-1}^{p+1,q-1}+B^{p,q}_r}
\end{equation*}
and with the differential
$$
\partial_r:E_r^{p,q}\to E_r^{p+r,q+1-r}
$$
that is induced by the action of $\partial$ on $Z^{p,q}_r$. We have $\partial_r^2=0$ and $E_{r+1}\cong H^\bullet(E_r,d_r)$.

Since the chain complex and the filtration are both bounded below,
the spectral sequence converges $E_r^{p,q}\Rightarrow H^{n}(\fm,M)$ to the Chevalley--Eilenberg cohomology. It is not difficult to see that the $0^{\text{th}}$-page of the spectral sequence has components $E_0^{p,q}=M\otimes\Lambda^p(\fm_{\bar 1})^*\otimes\Lambda^q (\fm_{\bar 0})^*$.
We will be interested in
$n=0,1,2$, in which cases
the spectral sequence degenerates at the $4^{\text{th}}$ page. More precisely, we have:
\begin{proposition}
\label{prop:H012}
$H^{0}(\fm,M)\cong E^{0,0}_2$, $H^{1}(\fm,M)\cong E^{1,0}_2\oplus E^{0,1}_3$ and $H^{2}(\fm,M)\cong E^{2,0}_3\oplus E^{1,1}_3\oplus E^{0,2}_4$.
\end{proposition}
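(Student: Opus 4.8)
The plan is to deduce all three isomorphisms from a single structural observation — that the spectral sequence is concentrated in the region $p,q\geq 0$ — together with bookkeeping of the bidegrees of the differentials. First I would record the general convergence input: the filtration $F^\bullet C^n$ is finite in each total degree (one has $F^0C^n=C^n$ and $F^{n+1}C^n=0$), so the spectral sequence is bounded and converges, meaning $E_\infty^{p,q}\cong\gr^p H^{p+q}(\fm,M)$. Since we work over the field $\CC$, the induced filtration on each $H^n(\fm,M)$ splits non-canonically, giving an isomorphism $H^n(\fm,M)\cong\bigoplus_{p\geq 0}E_\infty^{p,n-p}$. Thus the whole statement reduces to identifying each relevant $E_\infty^{p,n-p}$ with a specific finite-page term.

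The key input is the shape of the $0^{\text{th}}$ page recorded in the text, $E_0^{p,q}=M\otimes\Lambda^p(\fm_{\bar 1})^*\otimes\Lambda^q(\fm_{\bar 0})^*$, which vanishes unless $p\geq 0$ and $q\geq 0$. As every later page $E_r^{p,q}$ is a subquotient of $E_0^{p,q}$, one has $E_r^{p,q}=0$ whenever $p<0$ or $q<0$, for all $r\geq 0$. Combined with the bidegree of $\partial_r\colon E_r^{p,q}\to E_r^{p+r,q+1-r}$, this yields two vanishing ranges: the outgoing differential at $(p,q)$ lands in $q$-degree $q+1-r<0$, hence is zero, once $r\geq q+2$; and the incoming differential into $(p,q)$ emanates from $p$-degree $p-r<0$, hence is zero, once $r\geq p+1$. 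Therefore both differentials touching $E_r^{p,q}$ vanish for all $r\geq\max(p+1,q+2)$, whence
\[
E_\infty^{p,q}=E_{\max(p+1,\,q+2)}^{p,q}.
\]

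It then remains only to evaluate this formula at the six bidegrees with $p+q\leq 2$. For $(0,0)$ and $(1,0)$ it gives stabilization at page $2$; for $(0,1)$, $(2,0)$ and $(1,1)$ at page $3$; and for $(0,2)$ at page $4$. Substituting into $H^n\cong\bigoplus_p E_\infty^{p,n-p}$ produces $H^0\cong E_2^{0,0}$, then $H^1\cong E_2^{1,0}\oplus E_3^{0,1}$, and finally $H^2\cong E_3^{2,0}\oplus E_3^{1,1}\oplus E_4^{0,2}$, which are exactly the claimed decompositions; in particular the sequence has degenerated by the fourth page in the total-degree range of interest.

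Since the argument is purely a matter of tracking supports and bidegrees, I do not expect a genuine obstacle. The only points deserving care are confirming convergence despite the filtration index $p$ (the number of odd insertions) being a priori unbounded — this is harmless because for each fixed total degree $n\leq 2$ only the finitely many bidegrees with $p+q=n$, $p,q\geq 0$ contribute, and each stabilizes at a finite page — and invoking the splitting of the filtration on $H^n$, which is automatic over a field.
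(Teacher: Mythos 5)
Your proof is correct and matches the reasoning the paper relies on: the paper states Proposition \ref{prop:H012} without an explicit proof, as the standard consequence of convergence of a first-quadrant spectral sequence (bounded filtration in each total degree, splitting over $\CC$, and the bidegree count $E_\infty^{p,q}=E_{\max(p+1,\,q+2)}^{p,q}$ forced by the vanishing of $E_r^{p,q}$ for $p<0$ or $q<0$). Your write-up simply makes this implicit argument explicit, and the page-by-page evaluation at the six bidegrees with $p+q\leq 2$ is accurate.
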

The following lemma is straightforward.
\begin{lemma}
\label{lem:H^0}
The groups $E_1^{0,0}$ and $E_2^{0,0}$ consist of the trivial modules in $M$ under the action of $\fm_{\bar 0}$ and, respectively, $\fm$.
%\begin{equation*}
%\begin{aligned}
%E_1^{0,0}&=\operatorname{Ker}(\partial_0:M\longrightarrow M\otimes\fm_{\bar 0}^*)\;,\\%E_0^{0,0}\toE_0^{0,1}
%E_2^{0,0}&=\operatorname{Ker}(\partial_1:E_1^{0,0}\longrightarrow E_1^{1,0}=(M\otimes\fm_{\bar 1}^*)^{\fm_{\bar 0}})\;,
%\end{aligned}
%\end{equation*}
\end{lemma}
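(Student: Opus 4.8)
The plan is to read off $E_1^{0,0}$ and $E_2^{0,0}$ directly from the definitions of $Z_r^{p,q}$ and $B_r^{p,q}$, exploiting that the relevant coboundary and lower cocycle groups vanish for degree reasons. The starting observation is that $E_0^{0,0}=M\otimes\Lambda^0(\fms{1})^*\otimes\Lambda^0(\fms{0})^*=M$, and that on a $0$-cochain $m\in M=C^0$ the differential is given by \eqref{eq:CE0}; thus $\partial m=0$ is equivalent to $X\cdot m=0$ for all $X\in\fm$, whereas the weaker condition $\partial m\in F^1C^1$ (cochains vanishing on a single even argument) isolates only $X\cdot m=0$ for $X\in\fms{0}$.

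For $E_1^{0,0}$ I would note that $C^{-1}=0$ and, by the stated convention $F^{n+1}C^n=0$, also $F^1C^0=0$. Hence $Z_0^{1,-1}\subset F^1C^0=0$, and $B_1^{0,0}=\partial Z_0^{0,-1}=0$ since $Z_0^{0,-1}\subset F^0C^{-1}=0$. Therefore $E_1^{0,0}=Z_1^{0,0}=\{\,m\in M:\partial m\in F^1C^1\,\}=\{\,m\in M:X\cdot m=0\ \text{for all}\ X\in\fms{0}\,\}$, which is exactly the maximal trivial $\fms{0}$-submodule of $M$.

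For $E_2^{0,0}$ the same vanishing applies: $Z_1^{1,-1}\subset F^1C^0=0$, and $B_2^{0,0}=\partial Z_1^{-1,0}=0$ because $Z_1^{-1,0}\subset F^{-1}C^{-1}=0$. Moreover $F^2C^1=0$, again by the convention $F^{n+1}C^n=0$, so $E_2^{0,0}=Z_2^{0,0}=\{\,m\in M:\partial m\in F^2C^1\,\}=\{\,m\in M:\partial m=0\,\}$, i.e.\ the subspace annihilated by all of $\fm$, namely the maximal trivial $\fm$-submodule $M^{\fm}$. Equivalently, one may phrase this second step through the induced differential $\partial_1\colon E_1^{0,0}\to E_1^{1,0}$, which sends the class of $m\in M^{\fms{0}}$ to that of $X\mapsto X\cdot m$ with $X\in\fms{1}$; its kernel simply adjoins $\fms{1}$-invariance to the $\fms{0}$-invariance already present, recovering $M^{\fm}$.

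The argument is elementary and no genuine obstacle arises; the only point demanding attention is the filtration bookkeeping, namely verifying that $F^1C^0$, $F^2C^1$, and the negatively-indexed cochain groups all vanish. These vanishings are precisely what collapse each $E_r^{0,0}$ to a plain kernel at the corresponding stage, so that the computation reduces to the two successive invariance conditions claimed.
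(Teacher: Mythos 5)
Your proof is correct: the paper states this lemma without proof (calling it straightforward), and your unwinding of the definitions — identifying $F^1C^0=0$, $F^2C^1=0$ and the vanishing of the negatively-indexed cochain groups, so that $E_1^{0,0}=Z_1^{0,0}=M^{\fm_{\bar 0}}$ and $E_2^{0,0}=Z_2^{0,0}=M^{\fm}$ — is precisely the intended argument. The alternative phrasing via the kernel of $\partial_1\colon E_1^{0,0}\to E_1^{1,0}$ is also verified correctly and adds nothing problematic.
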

The following identifications will be useful to compute $E_1$ and $E_2$ in Section \ref{S:ct-cohom}.
%$H^\bullet(\fm,\fm_{\bar 0};M)$ is the relative cohomology of $\fm$ modulo $\fm_{\bar 0}$ with values in $M$ and

\begin{proposition}\cite[Theorem 1.5.1]{MR874337}
\label{lem:E1}
We have
$E_{1}^{p,q}=H^q(\fm_{\bar 0},M\otimes\Lambda^p(\fm_{\bar 1}^*))$ %\qquad E_2^{p,0}=H^p(\fm,\fm_{\bar 0};M)\;,$$
for all $p,q\geq 0$. If
$\fm_{\bar 0}$ is an ideal of $\fm$, then ${\fm/\fm_{\bar 0}}\cong\fm_{\bar 1}$ is a purely odd abelian LSA and
$$E_2^{p,q}=H^p({\fm/\fm_{\bar 0}},H^{q}(\fm_{\bar 0},M))$$ for all $p,q\geq 0$. Here the action of $\fm/\fm_{\bar 0}$ on $H^{q}(\fm_{\bar 0},M)$ is given by the natural action of $\fm_{\bar 1}$ on $M$.
\end{proposition}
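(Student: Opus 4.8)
The plan is to run the standard Hochschild--Serre argument for the extension determined by the subalgebra $\fm_{\bar 0}$, being careful with the super-signs, and to recognise the filtration $F^\bullet$ as the filtration by ``odd degree''. First I would make the filtration explicit: a cochain lying in $M\otimes\Lambda^a\fm_{\bar 1}^*\otimes\Lambda^b\fm_{\bar 0}^*$ (with $a+b=n$, and $\Lambda$ in the super-sense) vanishes automatically once at least $b+1$ even elements are inserted, hence lies in $F^pC^n$ exactly when $a\geq p$. Therefore $F^pC^n=\bigoplus_{a\geq p}M\otimes\Lambda^a\fm_{\bar 1}^*\otimes\Lambda^{n-a}\fm_{\bar 0}^*$ and $E_0^{p,q}=M\otimes\Lambda^p\fm_{\bar 1}^*\otimes\Lambda^q\fm_{\bar 0}^*$.

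Next I would identify the induced differential $\partial_0\colon E_0^{p,q}\to E_0^{p,q+1}$. The cleanest way is to read off from the bracket and the action the components of $\partial$ according to their effect on the bidegree $(a,b)=(\text{odd},\text{even})$: the maps $\fm_{\bar 0}\otimes\fm_{\bar 0}\to\fm_{\bar 0}$, $\fm_{\bar 0}\otimes\fm_{\bar 1}\to\fm_{\bar 1}$ and $\fm_{\bar 0}\otimes M\to M$ all have type $(0,+1)$, the map $\fm_{\bar 1}\otimes M\to M$ has type $(+1,0)$, and $\fm_{\bar 1}\otimes\fm_{\bar 1}\to\fm_{\bar 0}$ has type $(+2,-1)$. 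Only the first, type-$(0,+1)$, part preserves the odd degree and thus survives as $\partial_0$; via \eqref{eq:CE0}--\eqref{eq:CE2} it is precisely the $\fm_{\bar 0}$-action on $M$, the brackets $[\fm_{\bar 0},\fm_{\bar 0}]\subseteq\fm_{\bar 0}$, and the mixed brackets $[\fm_{\bar 0},\fm_{\bar 1}]\subseteq\fm_{\bar 1}$, the last of which assembles into the coadjoint action of $\fm_{\bar 0}$ on $\Lambda^p\fm_{\bar 1}^*$. This exhibits $\partial_0$ as the Chevalley--Eilenberg differential of $\fm_{\bar 0}$ with coefficients in the $\fm_{\bar 0}$-module $M\otimes\Lambda^p\fm_{\bar 1}^*$, whence $E_1^{p,q}=H^q(\fm_{\bar 0},M\otimes\Lambda^p\fm_{\bar 1}^*)$.

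For the second claim I would first note that the $\mathbb{Z}_2$-grading gives $[\fm_{\bar 1},\fm_{\bar 0}]\subseteq\fm_{\bar 1}$, so the ideal condition $[\fm,\fm_{\bar 0}]\subseteq\fm_{\bar 0}$ forces $[\fm_{\bar 1},\fm_{\bar 0}]=0$; hence $\fm_{\bar 0}$ acts trivially on $\fm_{\bar 1}$, and $\fm/\fm_{\bar 0}\cong\fm_{\bar 1}$ is purely odd abelian. Triviality of this action makes $\Lambda^p\fm_{\bar 1}^*$ a trivial $\fm_{\bar 0}$-module, so the previous step collapses to $E_1^{p,q}=H^q(\fm_{\bar 0},M)\otimes\Lambda^p\fm_{\bar 1}^*$. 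It then remains to observe that the surviving type-$(+1,0)$ part of $\partial$ induces $\partial_1\colon E_1^{p,q}\to E_1^{p+1,q}$, which is the Chevalley--Eilenberg differential of the abelian superalgebra $\fm_{\bar 1}$ with values in $H^q(\fm_{\bar 0},M)$, the action being that induced by $\fm_{\bar 1}$ on $M$ (well defined on cohomology since inner derivations act trivially). Taking $\partial_1$-cohomology then gives $E_2^{p,q}=H^p(\fm/\fm_{\bar 0},H^q(\fm_{\bar 0},M))$.

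The hard part is purely bookkeeping: confirming that the sign conventions of \eqref{eq:CE0}--\eqref{eq:CE2} make $\partial_0$ and $\partial_1$ agree \emph{on the nose} with the Chevalley--Eilenberg differentials of $\fm_{\bar 0}$ and of $\fm_{\bar 1}$, in particular that the symmetric powers $\Lambda^p\fm_{\bar 1}^*$ (``$\Lambda$ in the super-sense'') pair correctly with the coadjoint action, and that $\partial_1$ descends well defined to $\fm_{\bar 0}$-cohomology. As this is precisely the content of \cite[Theorem 1.5.1]{MR874337}, I would in the end invoke that reference rather than reproduce the full sign chase.
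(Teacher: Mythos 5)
Your proposal is correct, but there is nothing in the paper to compare it against step by step: the paper does not prove Proposition \ref{lem:E1} at all, stating it with the citation \cite[Theorem 1.5.1]{MR874337} and referring the reader there for details. Your reconstruction is, in effect, the proof of that cited theorem. The identification $F^pC^n=\bigoplus_{a\geq p}M\otimes\Lambda^a\fm_{\bar 1}^*\otimes\Lambda^{n-a}\fm_{\bar 0}^*$ follows exactly as you say from the paper's definition of the filtration (a cochain of bidegree $(a,b)$ dies upon insertion of $b+1$ even vectors, hence lies in $F^p$ iff $a\geq p$), and your bidegree bookkeeping of the components of $\partial$ — types $(0,+1)$, $(+1,0)$, $(+2,-1)$ — is the right decomposition; note it holds for \emph{any} LSA $\fm$, since $[\fm_{\bar 0},\fm_{\bar 1}]\subseteq\fm_{\bar 1}$ and $[\fm_{\bar 1},\fm_{\bar 1}]\subseteq\fm_{\bar 0}$ are forced by the $\mathbb{Z}_2$-grading, which is why the first claim needs no ideal hypothesis. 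One small simplification you could make: in the ideal case, the appeal to ``inner derivations act trivially on cohomology'' is more than you need. Since the ideal condition gives $[\fm_{\bar 1},\fm_{\bar 0}]=0$, the coefficient action of $\fm_{\bar 1}$ on $C^\bullet(\fm_{\bar 0},M)$ supercommutes on the nose with the $\fm_{\bar 0}$-differential, so it descends to $H^q(\fm_{\bar 0},M)$ immediately, with no general lemma required. As for what each route buys: your argument makes the statement self-contained and verifies that the super-sign conventions \eqref{eq:CE0}--\eqref{eq:CE2} are compatible with the classical Hochschild--Serre mechanism, whereas the paper's citation buys brevity and outsources precisely the sign chase that you, too, ultimately defer to \cite{MR874337}.
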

For more details on the Hochschild--Serre spectral sequence, we refer the reader to \cite{MR874337}.

 \subsection{An exact sequence in cohomology}
\label{subsec:es}
Let $\fg=\fg_{-\mu}\oplus\cdots\oplus\fg_{\mu}$ be a $\mathbb Z$-grading of $\fg=\mathrm{G}(3)$ with depth $\mu>0$ and associated parabolic subalgebra $\fp=\fg_{\geq 0}=\fg_{0}\oplus\cdots\oplus\fg_{\mu}$. We are interested in the Spencer cohomology of $\fg$, i.e., the cohomology associated to the complex $C^{\bullet}(\fm,\fg)$ where the negatively graded part $\fm=\fg_{-\mu}\oplus\cdots\oplus\fg_{-1}$ of $\fg$ acts on $\fg$ via the adjoint representation.

Note that the $\mathbb Z$-degree in $\fg$ extends to the space of cochains by declaring that $\fg_d^*$ has degree $-d$ and that the differential $\partial$ has the degree zero. In particular, the complex breaks up
into the direct sum of complexes for each degree and the group
\begin{equation}
\label{eq:dn}
H^n(\fm,\fg)=\bigoplus_{d\in\mathbb Z} H^{d,n}(\fm,\fg)
\end{equation}
into the direct sum of its homogeneous components.
Any page $E_{r}^{p,q}$ of the spectral sequence decomposes in a similar way.
%-- for simplicity of exposition, we will not introduce a further index for the $\mathbb Z$-degree in this case.
%differentials $\partial_r$ in the spectral sequence have degree zero and

The space $C^{d,n}(\fm,\fg)$ of $n$-cochains of degree $d$ has a natural $\fg_0$-module structure and the same is true for the
spaces of cocycles and coboundaries, as $\partial$ is $\fg_0$-equivariant;
this implies that any $H^{d,n}(\fm,\fg)$ has a representation of $\fg_0$ and hence of the (purely even) Lie algebra $(\fg_0)_{\bar 0}$.
This equivariance is very useful in calculations, as we will have ample opportunity to demonstrate. Any page of the spectral sequence is also a $(\fg_0)_{\bar 0}$-module.
%$E_{r}^{p,q}$ is a $(\fg_0)_{\bar 0}$-module.
\begin{lemma}
\label{lem:centrI}
The centralizer $\xi_{\fg}(\fm)$ of $\fm$ in $\fg$ coincides with the center of $\fm$.
Hence $H^{d,0}(\fm,\fg)=0$ for all $d\geq 0$.
\end{lemma}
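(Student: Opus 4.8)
The plan is to first turn the cohomological statement into the purely algebraic one, and then prove the centralizer identity by combining invariance of the Killing form with simplicity of $\fg=\GGG$. By the formula \eqref{eq:CE0} for the Chevalley--Eilenberg differential, a $0$-cochain $\varphi\in C^0(\fm,\fg)=\fg$ is a cocycle exactly when $[X,\varphi]=0$ for all $X\in\fm$, and there are no $0$-coboundaries; hence $H^0(\fm,\fg)=\xi_{\fg}(\fm)$. Since $\partial$ preserves the $\ZZ$-grading, the homogeneous piece $H^{d,0}(\fm,\fg)$ is precisely $\xi_{\fg}(\fm)\cap\fg_d$. Therefore the ``Hence'' clause is a formal consequence of the first assertion: once we know $\xi_{\fg}(\fm)=\fz(\fm)\subset\fm=\bigoplus_{k<0}\fg_k$, the centralizer has no component in degrees $\geq 0$, so $H^{d,0}(\fm,\fg)=0$ for all $d\geq 0$.

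For the centralizer identity itself, the inclusion $\fz(\fm)\subset\xi_{\fg}(\fm)$ is immediate, and since $\xi_{\fg}(\fm)$ is a graded subalgebra it suffices to treat homogeneous elements. A homogeneous centralizing element of negative degree already lies in $\fm$, hence in $\fz(\fm)$, so the real task is to show that any homogeneous $x\in\xi_{\fg}(\fm)$ of degree $d\geq 0$ vanishes. The first step I would carry out is to upgrade ``$x$ centralizes $\fg_{<0}=\fm$'' to ``$x$ centralizes $\fg_{>0}=\bigoplus_{k>0}\fg_k$''. For $v\in\fg_k$ with $k\geq 1$ and any $w\in\fg_{-(d+k)}\subset\fm$, invariance of the Killing form $B$ gives $B([x,v],w)=\pm B(v,[x,w])=0$, because $[x,w]=0$; as $d+k\geq 1$ and $B$ pairs $\fg_{d+k}$ non-degenerately with $\fg_{-(d+k)}$, this forces $[x,v]=0$. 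Thus $x$ centralizes all of $\fg_{<0}\oplus\fg_{>0}$, uniformly in $d\geq 0$.

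The final step invokes simplicity. I claim $\fg_0=[\fg_{<0},\fg_{>0}]$: the subspace $\fg'=\fg_{<0}\oplus[\fg_{<0},\fg_{>0}]\oplus\fg_{>0}$ is a graded subalgebra (a short verification of the bracket of each pair of summands, using that $[\fg_{-k},\fg_k]\subset\fg_0$), and it is in fact an ideal since $[\fg_0,\fg']\subset\fg'$; because $\fg$ is simple and $\fg'\supset\fg_{<0}\neq 0$, we conclude $\fg'=\fg$, i.e.\ $\fg_0=[\fg_{<0},\fg_{>0}]$. Consequently $x$, which centralizes $\fg_{<0}$ and $\fg_{>0}$, centralizes $[\fg_{<0},\fg_{>0}]=\fg_0$ by the Jacobi identity, hence centralizes all of $\fg$ and lies in $\fz(\fg)=0$. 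This yields $x=0$, completing $\xi_{\fg}(\fm)=\fz(\fm)$ and, with the first paragraph, the lemma.

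The only genuinely delicate point is the low-degree range $d\in\{0,1\}$. For $d\geq 2$ one could argue directly, since $\fg_{-d}$ then consists of iterated brackets of $\fg_{-1}$-generators and invariance immediately gives $B(x,\fg_{-d})=0$; but for $d=0$ the element sits in $\fg_0$, which $B$ pairs with $\fg_0$ rather than with $\fm$, and for $d=1$ the relevant partner $\fg_{-1}$ consists of generators off which no bracket can be peeled. This is exactly why the proof routes through $\fg_{>0}$ and simplicity instead of a one-line pairing estimate. I expect the bracket bookkeeping showing $\fg'$ is an ideal, and keeping the super-signs correct in the invariance identity for $B$, to be the main things to state carefully; both are routine. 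An alternative is to cite transitivity of the grading (equivalently, faithfulness of the $\fg_0$-action on $\fg_{-1}$), but the Killing-form route is preferable here because it handles all degrees $d\geq 0$ at once and does not presuppose transitivity, which is itself essentially the vanishing we are after.
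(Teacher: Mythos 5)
Your proof is correct, but it takes a genuinely different route from the paper's. The paper disposes of the centralizer identity in one line: the ideal of $\fg$ generated by $\xi_{\fg}(\fm)\cap\fp$ is (after a bracket bookkeeping it labels ``easily seen'') contained in $\fp$, hence vanishes by simplicity, forcing $\xi_{\fg}(\fm)\subset\fm$; the cohomological claim is then deduced through the spectral-sequence identifications (Proposition \ref{prop:H012} and Lemma \ref{lem:H^0}) rather than, as you do, directly from $H^{d,0}(\fm,\fg)=\xi_{\fg}(\fm)\cap\fg_d$ read off from \eqref{eq:CE0}. Your argument instead leans on non-degeneracy of the Killing form of $G(3)$ (a fact the paper does assert in Section \ref{subsec:2.2}, so you may cite it) to upgrade centralization of $\fg_{<0}$ to centralization of $\fg_{>0}$ via the perfect pairing $\fg_{d+k}\times\fg_{-(d+k)}$, and then uses simplicity twice more: once to prove the structural identity $\fg_0=\sum_{k>0}[\fg_{-k},\fg_k]$ (your ideal $\fg'$ -- which, to be precise, should be written as $\fg_{<0}\oplus\bigl(\textstyle\sum_{k>0}[\fg_{-k},\fg_k]\bigr)\oplus\fg_{>0}$, since $[\fg_{<0},\fg_{>0}]$ has components in all degrees), and once to conclude $\fz(\fg)=0$. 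What each approach buys: the paper's is shorter and avoids the Killing form altogether, so it applies verbatim to any simple graded Lie superalgebra, including those with degenerate Killing form; yours is fully self-contained (no appeal to the spectral sequence, every bracket computation displayed), handles all degrees $d\geq 0$ uniformly, and extracts the reusable fact that $\fg_0$ is spanned by brackets of opposite-degree elements -- but at the cost of an extra hypothesis that happens to hold for $G(3)$. Your closing diagnosis of why $d=0,1$ force the detour through $\fg_{>0}$ is accurate and is exactly the obstruction the paper's ideal-generation trick also circumvents.
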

\begin{proof}
The ideal of $\fg$ that is generated by the centralizer of $\fm$ in $\fp$ is easily seen to be contained in $\fp$, hence it is trivial by simplicity of $\fg$. The last claim follows readily from Proposition \ref{prop:H012} and Lemma \ref{lem:H^0}.
\end{proof}
There is an interesting and very useful relationship between the Spencer groups \eqref{eq:dn} for the LSA $\fg$ and the classical Spencer groups.
Let
\begin{equation*}
0\longrightarrow K^{n}\stackrel{\imath}{\longrightarrow}\Lambda^n \fm^*\stackrel{\res}{\longrightarrow} \Lambda^n \fm_{\bar 0}^*\longrightarrow 0
\end{equation*}
be the short exact sequence given by the natural restriction map $\res:\Lambda^n \fm^*\to\Lambda^n \fm_{\bar 0}^*$ with kernel
$$K^0=0\;,\qquad K^n=\sum_{1\leq i\leq n}\Lambda^{n-i}\fm_{\bar 0}^*\otimes\Lambda^i\fm_{\bar 1}^*	\qquad\text{for}\qquad n>0\;,$$ and let
\begin{equation}
\label{eq:shortexact}
0\longrightarrow \fg\otimes K^\bullet\stackrel{\imath}{\longrightarrow} C^{\bullet}(\fm,\fg)\stackrel{\res}{\longrightarrow} C^{\bullet}(\fm_{\bar0},\fg)\longrightarrow 0
\end{equation}
be the associated short exact sequence of differential complexes. With some abuse of notation, we give the following.
\begin{definition}
\label{def:complexM1}
The differential complex $C^{\bullet}(\fm_{\bar 1},\fg)=\fg\otimes K^\bullet$ is the subcomplex of $C^{\bullet}(\fm,\fg)$ given by 
$C^{0}(\fm_{\bar 1},\fg)=0$ and the $n$-cochains, $n\geq 1$, that vanish when all entries are in $\fm_{\bar 0}$. 
\end{definition}
It is not difficult to see that every morphism in the sequence \eqref{eq:shortexact} is $(\fg_0)_{\bar 0}$-equivariant. The associated long exact sequence in cohomology, together with Lemma \ref{lem:centrI} and the discussion above it, gives the following general result, which we will extensively use in Section \ref{sec:cohomology}.
\begin{proposition}
\label{prop:longexact}
For all $d\geq 0$, there exists a long exact sequence of vector spaces
\begin{equation}
\begin{split}
\label{sequenza}
0\longrightarrow \xi^d_{\fg}(\fm_{\bar 0})\longrightarrow H^{d,1}(\fm_{\bar 1},\fg)\longrightarrow H^{d,1}(\fm, \fg)\longrightarrow\phantom{cccccccccccccccccccccccc}\,\\
 \phantom{ccc}\longrightarrow H^{d,1}(\fm_{\bar 0},\fg)\longrightarrow H^{d,2}(\fm_{\bar 1},\fg)\longrightarrow H^{d,2}(\fm,\fg)\longrightarrow H^{d,2}(\fm_{\bar 0},\fg)
\end{split}
\end{equation}
where $\xi^d_{\fg}(\fm_{\bar 0})$ is the component of degree $d$ of the centralizer of $\fm_{\bar 0}$ in $\fg$. The morphisms in the sequence are all $(\fg_0)_{\bar 0}$-equivariant.
\end{proposition}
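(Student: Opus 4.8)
The plan is to derive \eqref{sequenza} as the long exact cohomology sequence attached to the short exact sequence of differential complexes \eqref{eq:shortexact}. The standard zig-zag (snake) lemma of homological algebra applies verbatim to any short exact sequence of cochain complexes, so from
\[
0\longrightarrow C^{\bullet}(\fm_{\bar 1},\fg)\stackrel{\imath}{\longrightarrow} C^{\bullet}(\fm,\fg)\stackrel{\res}{\longrightarrow} C^{\bullet}(\fm_{\bar 0},\fg)\longrightarrow 0
\]
one obtains a long exact sequence
\[
\cdots \to H^{n}(\fm_{\bar 1},\fg)\to H^{n}(\fm,\fg)\to H^{n}(\fm_{\bar 0},\fg)\stackrel{\delta}{\to} H^{n+1}(\fm_{\bar 1},\fg)\to\cdots,
\]
where the first two maps are induced by $\imath$ and $\res$ and $\delta$ is the connecting homomorphism. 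Because the $\mathbb Z$-grading splits the whole complex into the direct sum of its homogeneous degree-$d$ pieces and $\partial$ has degree zero, this long exact sequence holds separately in each fixed degree $d$, which accounts for the superscripts $H^{d,\bullet}$ throughout.

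The second step is to identify the three cohomology groups in cochain-degree $0$, so as to truncate the long exact sequence into the eight-term form claimed. First, $C^{0}(\fm_{\bar 1},\fg)=0$ by Definition \ref{def:complexM1}, whence $H^{d,0}(\fm_{\bar 1},\fg)=0$. Second, $H^{d,0}(\fm,\fg)=0$ for all $d\geq 0$ by Lemma \ref{lem:centrI}. Third, since $C^{0}(\fm_{\bar 0},\fg)=\fg$ with differential $v\mapsto\big(X\mapsto X\cdot v\big)$ for $X\in\fm_{\bar 0}$, one has $H^{d,0}(\fm_{\bar 0},\fg)=\xi^{d}_{\fg}(\fm_{\bar 0})$, the degree-$d$ component of the centralizer of $\fm_{\bar 0}$ in $\fg$. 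Substituting these into the initial segment $H^{d,0}(\fm_{\bar 1},\fg)\to H^{d,0}(\fm,\fg)\to H^{d,0}(\fm_{\bar 0},\fg)\stackrel{\delta}{\to} H^{d,1}(\fm_{\bar 1},\fg)$ turns it into $0\to 0\to \xi^{d}_{\fg}(\fm_{\bar 0})\stackrel{\delta}{\to} H^{d,1}(\fm_{\bar 1},\fg)$, so $\delta$ is injective on $\xi^{d}_{\fg}(\fm_{\bar 0})$ and the sequence opens exactly as in \eqref{sequenza}. Continuing through $n=1$ and $n=2$ reproduces the remaining terms and terminates at $H^{d,2}(\fm_{\bar 0},\fg)$.

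For the equivariance assertion, I would invoke that the two complex maps $\imath$ and $\res$ in \eqref{eq:shortexact} are $(\fg_0)_{\bar 0}$-equivariant, as already observed above the statement. The induced maps on cohomology are then automatically $(\fg_0)_{\bar 0}$-equivariant, and the connecting homomorphism $\delta$ is equivariant as well, being assembled naturally from $\partial$ and the degreewise vector-space splitting of \eqref{eq:shortexact}, both of which commute with the $(\fg_0)_{\bar 0}$-action. Since the argument is purely formal homological algebra, no step presents a genuine obstacle; the only points demanding care are the correct evaluation of the three degree-$0$ groups (in particular invoking Lemma \ref{lem:centrI} to annihilate $H^{d,0}(\fm,\fg)$) and checking that the degree decomposition is preserved, so that the long exact sequence may legitimately be run one degree $d$ at a time.
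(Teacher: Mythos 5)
Your proposal is correct and follows essentially the same route as the paper: the long exact cohomology sequence associated to the short exact sequence \eqref{eq:shortexact}, truncated at cochain degree $0$ by identifying $H^{d,0}(\fm_{\bar 1},\fg)=0$, $H^{d,0}(\fm,\fg)=0$ (Lemma \ref{lem:centrI}) and $H^{d,0}(\fm_{\bar 0},\fg)=\xi^d_{\fg}(\fm_{\bar 0})$, all carried out degreewise since $\partial$ preserves the $\mathbb{Z}$-degree. The equivariance argument via the $(\fg_0)_{\bar 0}$-equivariance of $\imath$, $\res$ and the naturality of the connecting homomorphism is likewise exactly what the paper's (much terser) proof relies on.
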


\subsection{Spencer cohomology for \texorpdfstring{$\mathfrak{p}^{\rm IV}_{1}\subset\mathrm{G}(3)$}{} }
 \label{S:ct-cohom}

 Given the contact grading of $\fg = G(3)$, we let $\fp= \fp_1^{\rm IV} = \fg_{\geq 0}$ and
 $\fm = \fg_-$, and consider the space of cocycles $C^\bullet(\fm,\fg)$ and the cohomology groups $H^\bullet(\fm,\fg)$ as in Section \ref{subsec:es}.  We first note that $H^0(\fm,\fg)= \fg_{-2}$ by \eqref{E:contact-m} and Lemma \ref{lem:centrI}, and then turn
 %\begin{align*}
%\label{E:H0-ct}
 %\end{align*}
to compute $H^{d,1}(\fm,\fg)$ for $d \geq 0$ and $H^{d,2}(\fm,\fg)$ for $d > 0$ using spectral sequences.

We recall that
 \[
  E_0^{p,q}=\fg\otimes\Lambda^p(\fms{1})^* \otimes\Lambda^q (\fm_{\bar 0})^*,
 \]
 where $\Lambda^p(\fms{1})^*$ is meant in the super-sense, i.e., it refers to the symmetric $p$-th product of elements.

 \subsubsection{The $E_1$-page}
Since $[\fms{0},\fms{1}] \subset (\fm_{-2})_{\bar{1}} = 0$, an immediate strong property is that
 \begin{align} \label{E:ct-property}
 \mbox{{\em $\fms{1}$ and $(\fms{1})^*$ are trivial as $\fms{0}$-modules}}.
 \end{align}
%For the latter, $(\fms{1})^* = \{ f \in \fm^* : f|_{\fms{0}} = 0 \}$ canonically, so for $x \in \fms{0}$ and $y \in \fm$, we have $(x\cdot f)(y) = -f([x,y]) = 0$.
This fact together with Proposition \ref{lem:E1} directly implies that
%with respect to $\partial_0 : E^{p,q}_0 \to E^{p,q+1}_0$
 \begin{equation}
\label{Epq1}
\begin{split}
E^{p,q}_1 = H^q(\fm_{\bar{0}},\fg \otimes \Lambda^p (\fms{1})^*)
	\cong \Lambda^p (\fms{1})^* \otimes H^q(\fm_{\bar{0}},\fg)\;.
 \end{split}
\end{equation}
 Since $\fms{0} \subset \fg_{\bar{0}} = G(2)\oplus \fsp(2)$ is the negative part of the contact grading of $G(2)$ and $\fsp(2) \subset \fg_0$, we can use Kostant's version of the Bott--Borel--Weil theorem to compute the Lie algebra cohomology group $H^q(\fms{0},\fg)$ as a module for $(\fg_0)_{\bar{0}} \cong \bbC \oplus \fsl(2) \oplus \fsp(2)$.
 
 {\em Let $\bbV_{k,\ell}[r]$ be the even irreducible representation with highest weight $(k,\ell)$ for $\fsl(2) \oplus \fsp(2)$ and with degree $r$ w.r.t. the grading element $\sfZ_1$. We let $\bbU_{k,\ell}[r]$ be the same, but regarded as an odd module.}

Kostant's theorem gives representative lowest weight vectors for each irreducible component of $H^q(\fms{0},\fg)$ and we use square brackets around a given element to denote its cohomology class.  We use the notation $e_\alpha$ to denote the root vector associated to $\alpha\in\Delta$ and attach a parity to $e_\alpha$ in the natural way, e.g., since $\alpha_1+\alpha_2$ is odd (see \eqref{ct-grading}), then $e_{\alpha_1 + \alpha_2}$ is odd too and $e_{\alpha_1 + \alpha_2} \wedge e_{\alpha_1 + \alpha_2}$ refers to a symmetric tensor product. Finally, we let $h_\alpha\in\fh$ be the coroot corresponding to any $\alpha\in\Delta$.

 Note that $G(2)\oplus\fsp(2)$ has simple roots and Cartan matrix given by
 \[
 \begin{cases}
 \widetilde\alpha_1 = \alpha_2 + \alpha_3, \\
 \widetilde\alpha_2 = \alpha_1, \\
 \widetilde\alpha_3 = 2\alpha_3
 \end{cases}, \quad \langle \widetilde\alpha_i, \widetilde\alpha_j^\vee \rangle = \begin{pmatrix} 2 & -1 & 0\\ -3 & 2 & 0\\ 0 & 0 & 2\end{pmatrix},
 \]
so that $\fg \cong G(2)  \op A(1) \op (\bbC^7 \boxtimes \bbC^2)$ as $\fg_{\bar{0}}$-modules, with respective lowest weights (see \eqref{ct-grading}):
 \[
 -2\alpha_1 - 3\alpha_2 - 3\alpha_3 = - 3\widetilde\alpha_1 - 2\widetilde\alpha_2, \quad -2\alpha_3 = -\widetilde\alpha_3, \quad -\alpha_1 - 2\alpha_2 - 3\alpha_3 = - 2\widetilde\alpha_1 -\widetilde\alpha_2 - \frac{1}{2} \widetilde\alpha_3.
 \]
Applying Kostant, we get the groups $H^k(\fms{0},\fg)$ given in Table \ref{F:lwv}.

 \begin{example} Note that $\bbC^7 \boxtimes \bbC^2 = \GA{1}{0}{1}$, i.e., (minus lowest) weight
$$\lambda = \widetilde\lambda_1 + \widetilde\lambda_3 = 2\widetilde\alpha_1 + \widetilde\alpha_2 + \frac{1}{2} \widetilde\alpha_3\;,$$
where $\widetilde\lambda_1, \widetilde\lambda_2, \widetilde\lambda_3$ are the fundamental weights.  Because we work with the contact grading of $G(2)$, we use the Weyl group element $w = (\widetilde{2} \widetilde{1})$.  The affine action of the Weyl group gives
 \[
 w \cdot \lambda = \GA{6}{-4}{1},
 \]
 which is (minus) the lowest weight of $H^2(\fms{0},\bbC^7 \boxtimes \bbC^2)$. A lowest weight vector is
 \[
 \phi= [e_{\widetilde\alpha_2} \wedge e_{\sigma_{\widetilde{2}}(\widetilde\alpha_1)} \otimes e_{w(-\lambda)}] = [e_{\alpha_1} \wedge e_{\alpha_1 + \alpha_2 + \alpha_3} \otimes e_{-\alpha_2 - 2\alpha_3}],
 \]
 where $\sigma_{\widetilde{2}}$ is the simple reflection corresponding to $\widetilde\alpha_2$.
 Hence, $\sfZ_1$ gives $\phi$ degree 2, and thus $H^2(\fms{0},\bbC^7 \boxtimes \bbC^2) \cong \bbU_{6,1}[2]$.
 \end{example}

 \begin{table}[h]
  \[
 \begin{array}{|c|c|l|c|l|} \hline
 k & \Lambda^k (\fms{1})^* & \mbox{Lowest weight vector} & H^k(\fms{0},\fg) & \mbox{Lowest weight vector $\phi$} \\ \hline\hline
 3 & \bbU_{3,3}[3] & e_{\alpha_1 + \alpha_2} \wedge e_{\alpha_1 + \alpha_2} \wedge e_{\alpha_1 + \alpha_2} & \multicolumn{2}{l|}{\mbox{not relevant for our computations}}\\
 & \bbU_{1,1}[3] & \mbox{not relevant for our computations}& \multicolumn{2}{l|}{\mbox{not relevant for our computations}}\\ \hline
 2 & \bbV_{2,2}[2] & e_{\alpha_1 + \alpha_2} \wedge e_{\alpha_1 + \alpha_2} & \bbV_{7,0}[1] & [e_{\alpha_1} \wedge e_{\alpha_1 + \alpha_2 + \alpha_3} \otimes e_{-\alpha_1 - 3\alpha_2 - 3\alpha_3}]\\
 &\bbV_{0,0}[2] & \begin{array}{@{}l} e_{\alpha_1 + \alpha_2} \wedge e_{\alpha_1 + 2\alpha_2 + 3\alpha_3} \\
  \quad + c e_{\alpha_1 + \alpha_2 + 2\alpha_3} \wedge e_{\alpha_1 + 2\alpha_2 + \alpha_3}
 \end{array} & \bbV_{4,2}[2] & [e_{\alpha_1} \wedge e_{\alpha_1 + \alpha_2 + \alpha_3} \otimes e_{-2\alpha_3}]\\
 &&& \bbU_{6,1}[2] & [e_{\alpha_1} \wedge e_{\alpha_1 + \alpha_2 + \alpha_3} \otimes e_{-\alpha_2 - 2\alpha_3}]\\ \hline
 1 & \bbU_{1,1}[1] & e_{\alpha_1 + \alpha_2} & \bbV_{6,0}[0] & [e_{\alpha_1} \otimes e_{-\alpha_1-3\alpha_2-3\alpha_3}] \\
 & && \bbV_{3,2}[1] & [e_{\alpha_1} \otimes e_{-2\alpha_3}]\\
 &&& \bbU_{4,1}[0] & [e_{\alpha_1} \otimes e_{-\alpha_1-2\alpha_2-3\alpha_3}]\\ \hline
 0 & \bbC & 1 & \bbV_{0,0}[-2] & [e_{-2\alpha_1 - 3\alpha_2 - 3\alpha_3}]\\
 &&& \bbV_{0,2}[0] & [e_{-2\alpha_3}]\\
 &&& \bbU_{1,1}[-1] & [e_{-\alpha_1 - 2\alpha_2 - 3\alpha_3}] \\ \hline
 \end{array}
 \]
 \caption{Irreducible $(\fg_0)_{\bar{0}}$-module decompositions and lowest weight vectors}
 \label{F:lwv}
 \end{table}

Taking tensor products as in \eqref{Epq1}, we obtain the $E_1$-page of the spectral sequence as in Table  \ref{F:E1-P1IV}.  Since for $p+q=k \leq 2$, $E_1^{p,q}$ only has degrees $\leq 2$, and we are only interested in $H^{d,k}(\fm,\fg)$ for nonnegative degrees $d \geq 0$, we only display those terms with degrees $0,1,2$.

  \begin{table}[H]
  \begin{tiny}
 \[
 \begin{array}{|@{}c@{}|@{}c@{}|@{}c@{}|@{}c@{}|c|c|}\hline
 {\color{blue} \bbV_{4,2}[2]} + \bbU_{6,1}[2] + {\color{blue} \bbV_{7,0}[1]}
 % (1,2)-position
 & \begin{array}{cccc}
 \mbox{Degree $\leq 2$}:\\
 \bbU_{6,1}[2] + \bbU_{8,1}[2]
 \end{array} & *
 & *
 \\ \hline
 \bbV_{3,2}[1] + {\color{blue} \bbU_{4,1}[0] + \bbV_{6,0}[0]} &
 % (1,1)-position
 \begin{array}{cc}
 \bbU_{2,1}[2] + \bbU_{2,3}[2] + \bbU_{4,1}[2] + \bbU_{4,3}[2]\\
 {\color{blue} \bbV_{3,0}[1]} + \bbV_{3,2}[1] + {\color{blue} \bbV_{5,0}[1] + \bbV_{5,2}[1]}\\
{\color{blue} \bbU_{5,1}[1] + \bbU_{7,1}[1]}
 \end{array} &
 % (2,1)-position
 \begin{array}{c}
 \mbox{Degree $\leq 2$}:\\
 2\bbU_{4,1}[2] +
 \bbU_{2,1}[2] +\bbU_{2,3}[2] \\
 +\bbU_{4,3}[2] +\bbU_{6,1}[2] +\bbU_{6,3}[2]\\
 + \bbV_{6,0}[2] + \bbV_{4,2}[2] +\bbV_{6,2}[2] +\bbV_{8,2}[2]
 \end{array} & *
 \\ \hline
 % (0,0)-position
 \bbV_{0,2}[0] &
 % (1,0)-position
 \begin{array}{c}
 \bbU_{1,1}[1] + \bbU_{1,3}[1] \\
 + \bbV_{0,0}[0] +  \bbV_{0,2}[0] + {\color{blue}  \bbV_{2,0}[0]} + \bbV_{2,2}[0]
 \end{array}&
 % (2,0)-position
 \begin{array}{c}
 \bbV_{0,2}[2] + \bbV_{2,0}[2] + \bbV_{2,2}[2] + \bbV_{2,4}[2]\\
 + 2\bbU_{1,1}[1] + \bbU_{1,3}[1] + {\color{blue} \bbU_{3,1}[1]} + \bbU_{3,3}[1]\\
 + \bbV_{0,0}[0] + \bbV_{2,2}[0]
 \end{array} &
 % (3,0)-position
 \begin{array}{c}
 \mbox{Degree $\leq 2$}:\\
  \bbV_{0,0}[2] + \bbV_{0,2}[2] + \bbV_{2,0}[2] + 2\bbV_{2,2}[2]\\
+ \bbV_{2,4}[2] + \bbV_{4,2}[2] + \bbV_{4,4}[2]\\
 + \bbU_{1,1}[1] + \bbU_{3,3}[1]
 \end{array}\\ \hline
 \end{array}
 \]
 \end{tiny}
 \caption{$E_1$-page (in degrees $0,1,2$) in the $G(3)$-contact case.  Modules in blue clearly survive to the $E_2$-page by $(\fg_0)_{\bar{0}}$-equivariance.}
 \label{F:E1-P1IV}
 \end{table}
The differential $\partial_1 : E_1^{p,q} \to E_1^{p+1,q}$ is $(\fg_0)_{\bar{0}}$-equivariant, so for $p+q \leq 2$, we immediately see that some modules survive to the $E_2$-page by Schur's lemma, e.g., $\bbU_{7,1}[1] \subset E_1^{1,1}$ lies in $\ker(\partial_1)$ but not in $\operatorname{im}(\partial_1)$ (since $\bbU_{7,1}[1]$ does not appear in either $E_1^{2,1}$ or $E_1^{0,1}$), hence $\bbU_{7,1}[1] \subset E_2^{1,1}$. All modules that survive to the $E_2$-page directly by Schur's lemma are colored in blue in Table~\ref{F:E1-P1IV}.

 \subsubsection{The $E_2$-page}

 For the $E_2$-page, we may
%we circumvent working directly with the differentials $\partial_1$ on the $E_1$-page
work directly with
 \[
 E_2^{p,q}=H^p({\fm/\fm_{\bar 0}},H^{q}(\fm_{\bar 0},\fg))
 \]
due to the second part of Proposition \ref{lem:E1}. (Remember that $[\fm_{\bar 0},\fm_{\bar 1}]=0$.)
 Here, $\fm/\fms{0} \cong \fms{1}$ is an abelian LSA, whose action on $H^q(\fms{0},\fg)$ is the one induced just on the coefficients $\fg$ of representative cocycles.  We denote $\partial_1$ simply by
$$
\partial:E_{1}^{p,q}\cong C^p(\fm/\fms{0}, H^q(\fms{0},\fg))\longrightarrow E_{1}^{p+1,q}\cong C^{p+1}(\fm/\fms{0}, H^q(\fms{0},\fg))
$$
and heavily rely on \eqref{ct-grading} and Table \ref{F:lwv}. We also note that $E_{1}^{0,q}=H^q(\fms{0},\fg)$ coincides precisely with the first column of Table  \ref{F:E1-P1IV}.
\begin{proposition}
\label{prop:E2pagecontact}
The $E_2$-page is given by the following Table:
\begin{table}[H]
  \begin{tiny}
 \[
 \begin{array}{|@{}c@{}|@{}c@{}|@{}c@{}|@{}c@{}|c|c|}\hline
% \\ \hline
 \bbV_{4,2}[2] + {\color{blue} \bbV_{7,0}[1]}
 % (1,2)-position
 & * & *
 & *
 \\ \hline
  {\color{blue} \bbU_{4,1}[0] + \bbV_{6,0}[0]} &
 % (1,1)-position
 \begin{array}{cc}
 {\color{blue} \bbV_{3,0}[1]} + {\color{blue} \bbV_{5,0}[1] + \bbV_{5,2}[1]}\\
{\color{blue} \bbU_{5,1}[1] + \bbU_{7,1}[1]}
 \end{array} &
 % (2,1)-position
 \begin{array}{c}
\mbox{Degree $\leq 1$}:\\
-
% \mbox{Degree $\leq 2$}:\\
% \bbV_{3,2}[3] + \bbV_{1,0}[3] +\bbV_{1,2}[3] +\bbV_{1,4}[3] +\bbV_{3,0}[3] \\
% +\bbV_{3,2}[3] +\bbV_{3,4}[3] +\bbV_{5,0}[3] + \bbV_{5,2}[3] +\bbV_{5,4}[3]\\
% \bbU_{4,1}[2] +
% \bbU_{2,1}[2] +\bbU_{2,3}[2] +\bbU_{4,1}[2] \\
% +\bbU_{4,3}[2] +\bbU_{6,1}[2] +\bbU_{6,3}[2]\\
% + \bbV_{6,0}[2] + \bbV_{4,2}[2] +\bbV_{6,2}[2] +\bbV_{8,2}[2]
 \end{array} & *
 \\ \hline
 % (0,0)-position
 - &
 % (1,0)-position
 \begin{array}{c}
 {\color{blue}  \bbV_{2,0}[0]}
 \end{array}&
 % (2,0)-position
 \begin{array}{c}
 {\color{blue} \bbU_{3,1}[1]} \\
 \end{array} &
 % (3,0)-position
 \begin{array}{c}
 \mbox{Degree $\leq 1$}:\\
 -
 \end{array}\\ \hline
 \end{array}
 \]
 \end{tiny}
 \caption{$E_2$-page (in degrees $0,1,2$) in the $G(3)$-contact case.}
 \label{F:E2-P1IV}
 \end{table}
\noindent
Modules in blue automatically survive to the $E_3$-page by $(\fg_0)_{\bar{0}}$-equivariance.
\end{proposition}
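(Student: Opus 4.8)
The plan is to compute the cohomology of the complex $(E_1,\partial_1)$ displayed in Table~\ref{F:E1-P1IV}, exploiting the concrete description of $\partial_1$ coming from the second part of Proposition~\ref{lem:E1}. Since $[\fms{0},\fms{1}]=0$, that result gives $E_2^{p,q}=H^p(\fms{1},H^q(\fms{0},\fg))$, where the abelian purely odd superalgebra $\fms{1}$ acts on the coefficients $\fg$ of representative cocycles by the adjoint action; equivalently, $\partial_1\colon E_1^{p,q}\to E_1^{p+1,q}$ is the Koszul-type differential that wedges a factor of $(\fms{1})^*$ onto a class while bracketing its $\fg$-coefficient against $\fms{1}$. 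In particular $E_2^{0,q}=H^q(\fms{0},\fg)^{\fms{1}}$ is the space of $\fms{1}$-invariant classes. Throughout, $\partial_1$ is $(\fg_0)_{\bar0}\cong\bbC\oplus\fsl(2)\oplus\fsp(2)$-equivariant, so by Schur's lemma its restriction to any irreducible summand of $E_1^{p,q}$ is either zero or an isomorphism onto an isomorphic summand of the target. Recall also that we only need the homogeneous components in degrees $0,1,2$, which is why the tables track exactly those degrees.

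First I would dispose of the ``free'' entries. By $(\fg_0)_{\bar0}$-equivariance, any irreducible summand of $E_1^{p,q}$ whose isomorphism type (its $\fsl(2)\oplus\fsp(2)$-weight $(k,\ell)$ together with its $\sfZ_1$-degree and its parity) occurs neither in $E_1^{p-1,q}$ nor in $E_1^{p+1,q}$ must lie in $\ker\partial_1\setminus\operatorname{im}\partial_1$, hence survives to $E_2$; these are precisely the summands coloured blue in Table~\ref{F:E1-P1IV}, and they account for every blue entry of Table~\ref{F:E2-P1IV}. Moreover the bottom-left corner is immediate: $E_2^{0,0}\cong H^0(\fm,\fg)=\fg_{-2}$ by Proposition~\ref{prop:H012} and Lemma~\ref{lem:centrI}, and as this has degree $-2$ it vanishes in the displayed degrees $\geq0$.

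This reduces the statement to the finitely many \emph{ambiguous} isotypic components, i.e.\ those recurring with matching weight, degree and parity in an adjacent column. Concretely I would have to establish: (i) $\bbU_{6,1}[2]\subset E_1^{0,2}$ and $\bbV_{3,2}[1]\subset E_1^{0,1}$ map injectively under $\partial_1$, so they disappear from $E_2^{0,\bullet}$, the image of $\bbV_{3,2}[1]$ then killing the corresponding summand of $E_1^{1,1}$ as a coboundary; (ii) in the $q=1$ row the four degree-$2$ summands $\bbU_{2,1}[2],\bbU_{2,3}[2],\bbU_{4,1}[2],\bbU_{4,3}[2]$ of $E_1^{1,1}$ inject into $E_1^{2,1}$; and (iii) in the $q=0$ row the summands $\bbU_{1,1}[1],\bbU_{1,3}[1],\bbV_{0,0}[0],\bbV_{2,2}[0]$ of $E_1^{1,0}$ inject into $E_1^{2,0}$, the degree-$2$ vectors $\bbV_{0,2}[2],\bbV_{2,0}[2],\bbV_{2,2}[2],\bbV_{2,4}[2]$ together with $\bbU_{3,3}[1]$ of $E_1^{2,0}$ inject into $E_1^{3,0}$, while the multiplicity-two piece $2\,\bbU_{1,1}[1]\subset E_1^{2,0}$ carries an outgoing $\partial_1$ of rank exactly one. (Given these, $E_2^{2,1}$ and $E_2^{3,0}$ then vanish in the relevant degrees automatically, the latter because the incoming image already exhausts the degree-$1$ part of $E_1^{3,0}$.) Each such statement I would verify by evaluating $\partial_1$ on a lowest-weight representative: the cocycle representatives are read off from Table~\ref{F:lwv}, the wedge factors from a basis of $\Lambda^\bullet(\fms{1})^*$, and the output is obtained by bracketing the $\fg$-coefficient against the odd negative root vectors spanning $\fms{1}$, whose roots are $-(\alpha_1+\alpha_2),\,-(\alpha_1+\alpha_2+2\alpha_3),\,-(\alpha_1+2\alpha_2+\alpha_3),\,-(\alpha_1+2\alpha_2+3\alpha_3)$. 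A single nonzero output certifies injectivity of the whole irreducible by Schur, and the rank-one claim on the multiplicity-two piece can be cross-checked against the Euler characteristic $\sum_p(-1)^p[E_1^{p,q}]=\sum_p(-1)^p[E_2^{p,q}]$ in the Grothendieck group of $(\fg_0)_{\bar0}$-modules.

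The main obstacle is precisely these explicit bracket computations: the representation theory pins down the answer up to the rank of $\partial_1$ on each ambiguous component, but deciding whether a given irreducible actually dies is a genuinely arithmetic question about the structure constants of $G(3)$. The delicate points are the multiplicity-two component, where a single nonzero vector does not suffice and one must compute the rank of the induced map on the multiplicity space, and the summands that die only as coboundaries, where the death in one cell is coupled to an injectivity statement in the neighbouring cell so that both must be secured together. Choosing lowest-weight representatives judiciously, and propagating each calculation across a whole $\fsl(2)\oplus\fsp(2)$- (or $\fosp(1|2)$-) orbit by equivariance, keeps the bookkeeping bounded.
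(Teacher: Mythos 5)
Your proposal is correct and follows essentially the same route as the paper's proof: $(\fg_0)_{\bar 0}$-equivariance plus Schur's lemma settles the blue summands, the remaining ambiguous multiplicity-one summands are killed (or shown to kill neighbours as coboundaries) by evaluating $\partial_1$ on explicit lowest-weight representatives built from Table~\ref{F:lwv} and the odd root vectors of $\fm_{\bar 1}$, and the component $2\,\bbU_{1,1}[1]\subset E_1^{2,0}$ is handled by the same rank-one computation on the multiplicity space, matched against the injective incoming image from $E_1^{1,0}$. The only cosmetic deviation is your use of $E_2^{0,0}\cong H^{0}(\fm,\fg)=\fg_{-2}$ to dispose of $\bbV_{0,2}[0]$, where the paper instead computes $\partial$ of its lowest-weight vector directly; both give the same conclusion.
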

\begin{proof}
We start with
$$E_2^{0,q} = ( H^{q}(\fm_{\bar 0},\fg) )^{\fm/\fms{0}}\;.$$
The module $\bbU_{6,1}[2] \subset H^2(\fms{0},\fg)$ has the l.w.v. $\phi=[e_{\alpha_1} \wedge e_{\alpha_1 + \alpha_2 + \alpha_3} \otimes e_{-\alpha_2 - 2\alpha_3}]$ and, letting $u= e_{-\alpha_1 - 2\alpha_2 - \alpha_3} \in \fms{1}$, we see that $(\partial \phi)(u) = -u\cdot \phi$ is a nonzero multiple of $$[e_{\alpha_1} \wedge e_{\alpha_1 + \alpha_2 + \alpha_3} \otimes e_{-\alpha_1-3\alpha_2 - 3\alpha_3}]\;,$$ that is, the l.w.v. for $\bbV_{7,0}[1] \subset H^2(\fms{0},\fg)$.  Thus, $\bbU_{6,1}[2] \not\subset E_2^{0,2}$. The modules $\bbV_{3,2}[1] \subset H^1(\fms{0},\fg)$ and $\bbV_{0,2}[0] \subset H^0(\fms{0},\fg)$ have l.w.v. $[e_{\alpha_1} \otimes e_{-2\alpha_3}]$ and $[e_{-2\alpha_3}]$ respectively, and we may similarly conclude $\bbV_{3,2}[1] \not\subset E_2^{0,1}$, $\bbV_{0,2}[0] \not\subset E_2^{0,0}$. The left-most column of Table \ref{F:E2-P1IV} is so established, as all other modules in the left-most column of Table \ref{F:E1-P1IV} are blue and survive to the $E_2$-page.

Furthermore, we also immediately infer that $\bbU_{6,1}[2]\subset E_{1}^{1,2}$, $\bbV_{3,2}[1]\subset E_1^{1,1}$ and $\bbV_{0,2}[0]\subset E_1^{1,0}$
are in the image of $\partial$ and therefore do not survive to $E_2$.

%The remaining modules of $E_1^{0,q}$ ($0 \leq q \leq 2$) survive to the $E_2$-page, so the (2,0)

We consider now the cases $(p,q) = (1,0), (1,1)$ and:
%set $$C^{p,q}= C^p(\fm/\fms{0}, H^q(\fms{0},\fg))\;.$$  We will:
 \begin{enumerate}
 \item[(i)] For each $(\fg_0)_{\bar{0}}$-irreducible representation $\bbT$ in $E_1^{p,q}$ that is not yet known whether it survives to $E_2$ or not (they are displayed in Table  \ref{F:ct-E2-lwv}), we write the associated l.w.v. $\phi$.
 \item[(ii)] For each such $\phi$, we examine $\partial\phi \in E_1^{p+1,q}$ using \eqref{eq:CE0}-\eqref{eq:CE2}.  If $\partial \phi \neq 0$ then $\bbT \not\subset E_2^{p,q}$ by Schur's lemma, since $\bbT$ always occurs with multiplicity one in $E_1^{p,q}$.
 \end{enumerate}
The results of these computations are summarized in Table \ref{F:ct-E2-lwv} and, in turn, this establishes $E_2^{p,q}$ in Table \ref{F:E2-P1IV} for $(p,q) = (1,0), (1,1)$. We also see that the modules $\bbU_{1,3}[1]$, $\bbV_{2,2}[0]$, $\bbV_{0,0}[0]$ and (one copy
of) $\bbU_{1,1}[1]$ in $E_1^{2,0}$ are in the image of $\partial$ and therefore do not survive to $E_2$.

We turn to the case $(p,q)=(2,0)$. By the previous discussion, the irreducible modules in $E_1^{2,0}$ that we do not yet know whether they survive to $E_2$ or not are in Table  \ref{F:ct-E2-lwvII}, together with the associated l.w.v. The proof is similar to previous cases and we omit most details. Table \ref{F:ct-E2-lwvII} also implies that $\bbU_{1,1}[1]$ and $\bbU_{3,3}[1]$ in $E_1^{3,0}$
are in the image of $\partial$ and do not survive to $E_2$.

Let us however give some details for verifying $(\partial\phi)(u,u,u_1) \neq 0$ on $2\bbU_{1,1}[1] \subset E^{2,0}_1$.  As $(\fg_0)_{\bar{0}}^{\ss}$-modules, $\fm_{\bar{1}} \cong \bbC^2 \boxtimes \bbC^2 \cong \bbU_{1,1}[-1] \subset H^0(\fm_{\bar{0}},\fg)$.  Take the basis
 \begin{align*}
 & e_1 = x_1 x_2 \leftrightarrow e_{-\alpha_1 - \alpha_2},
 && e_2 = y_1 x_2 \leftrightarrow e_{-\alpha_1 - 2\alpha_2 - \alpha_3}, \\
 & e_3 = x_1 y_2 \leftrightarrow e_{-\alpha_1 - \alpha_2 - 2\alpha_3}, 
 && e_4 = y_1 y_2 \leftrightarrow e_{-\alpha_1 - 2\alpha_2 - 3\alpha_3}, \quad
 \end{align*}
 and let $(\omega^1,\omega^2,\omega^3,\omega^4)$ be the dual basis. The lowering operators are $Y_1 = y_1\partial_{x_1}$ and $Y_2 = y_2 \partial_{x_2}$, which we use to verify that any l.w.v. $\phi \in 2\bbU_{1,1}[1] \subset \bigwedge^2 (\fm_{\bar{1}})^* \otimes \fm_{\bar{1}}$ is of the form:
 \[
 \phi = a \left(\omega^1 \wedge \omega^1 \otimes e_1 
 + \omega^1 \wedge \omega^2 \otimes e_2 
 + \omega^1 \wedge \omega^3 \otimes e_3\right) 
 + \left((a - b)\omega^1 \wedge \omega^4 
 + b \omega^2 \wedge \omega^3 \right) \otimes e_4.
 \]
 Since $[e_1,e_4] \neq 0$, then $ (\partial\phi)(e_1,e_1,e_4) = -2 [e_1, \phi(e_1,e_4)] - [e_4, \phi(e_1,e_1)] = (-3 a + 2b) [e_1, e_4]$ is generically nonzero, while $b=\frac{3}{2} a$ yields a l.w.v. for $\bbU_{1,1}[1] \subset E^{2,0}_1$ in the image of $\partial|_{E^{1,0}_1}$.
%So far, it has been sufficient to determine only the general form of $\phi$ (and not to pin down the relative coefficients $c, c_1$, etc.) in order to prove $\partial \phi \neq 0$. The only exception is $\bbV_{2,0}[2] \subset E_1^{2,0}$, for which we give details.
%From \S \ref{S:eta},  $e_{2\alpha_3}, h_{2\alpha_3} e_{-2\alpha_3}$ correspond to $X_2 = e\partial_f, H_2 = e\partial_e - f\partial_f, Y_2 = f\partial_e$.  Take the basis of $\fms{1}$ given by
  %\begin{align*}
  %E_1 &= e_{-\alpha_1 - \alpha_2} = xe, \quad
  %E_2 = e_{-\alpha_1 - \alpha_2 - 2\alpha_3} = xf, \\
  %E_3 &= e_{-\alpha_1 - 2\alpha_2 - \alpha_3} = ye, \quad
  %E_4 = e_{-\alpha_1 - 2\alpha_2 - 3\alpha_3} = yf.
  %\end{align*}
  %Let $E^1,E^2,E^3,E^4$ be the dual basis.  The lwv for $\bbV_{2,0}[2] \subset C^{2,0}$ is
  %\[
 %\phi = E^1 \wedge E^1 \otimes [X_2] - E^1 \wedge E^2 \otimes [H_2] - E^2 \wedge E^2 \otimes [Y_2].
 %\]
 %Hence, $(\partial \phi)(E_1,E_1,E_4) = E_4 \cdot \phi(E_1,E_1) = [E_4 \cdot X_2] = -[E_3]$ is nonzero in $\bbU_{1,1}[-1] \subset H^0(\fms{0},\fg)$.
\end{proof}
\begin{table}[H]
 \begin{small}
 \[
 \begin{array}{|c|c|l|l|} \hline
 (p,q) & \mbox{$(\fg_{0})_{\bar 0}$-module} & \mbox{Form of l.w.v. $\phi \in E_1^{p,q}$} & \mbox{Remarks}\\ \hline
 (1,0) & \bbU_{1,3}[1] & e_{\alpha_1 + \alpha_2} \otimes [e_{-2\alpha_3}] & (\partial \phi)(u,u) \neq 0\\ \cline{2-4}
 & \bbU_{1,1}[1] & e_{\alpha_1 + \alpha_2} \otimes [h_{2\alpha_3}] + c e_{\alpha_1 + \alpha_2 + 2\alpha_3} \otimes [e_{-2\alpha_3}] & (\partial \phi)(u,u) \neq 0\\ \cline{2-4}
 & \bbV_{2,2}[0] & e_{\alpha_1 + \alpha_2} \otimes [e_{-\alpha_1 - 2\alpha_2 - 3\alpha_3}] & (\partial \phi)(u,u) \neq 0\\ \cline{2-4}
 & \bbV_{0,0}[0] & \begin{array}{@{}l} 
 e_{\alpha_1 + \alpha_2} \otimes [e_{-\alpha_1 - 2\alpha_2 - 3\alpha_3}]\\
 \quad + c_1 e_{\alpha_1 + \alpha_2 + 2\alpha_3} \otimes [e_{-\alpha_1 - 2\alpha_2 - \alpha_3}]\\
 \quad + c_2 e_{\alpha_1 + 2\alpha_2 + \alpha_3} \otimes [e_{-\alpha_1 - \alpha_2 - 2\alpha_3}]\\
 \quad + c_3 e_{\alpha_1 + 2\alpha_2 + 3\alpha_3} \otimes [e_{-\alpha_1 - \alpha_2}]\\
 \end{array} 
% \sum_{\alpha \in \Delta(\fms{1})} c_\alpha e_\alpha \otimes [e_{-\alpha'}], \quad \alpha' = \alpha_{high} - \alpha 
 & (\partial \phi)(u,u) \neq 0\\ \hline
 (1,1) & \bbU_{4,3}[2] & e_{\alpha_1 + \alpha_2} \otimes [e_{\alpha_1} \otimes e_{-2\alpha_3}] & (\partial \phi)(u,u) \neq 0\\ \cline{2-4}
 & \bbU_{4,1}[2] & \begin{array}{@{}l} e_{\alpha_1 + \alpha_2} \otimes [e_{\alpha_1} \otimes h_{2\alpha_3}] \\
 \quad + c e_{\alpha_1 + \alpha_2+2\alpha_3} \otimes [e_{\alpha_1} \otimes e_{-2\alpha_3}]
 \end{array} & (\partial \phi)(u,u) \neq 0\\ \cline{2-4}
 & \bbU_{2,3}[2] & \begin{array}{@{}l} e_{\alpha_1 + \alpha_2} \otimes [e_{\alpha_1 + \alpha_2 + \alpha_3} \otimes e_{-2\alpha_3}] \\
 \quad + c e_{\alpha_1 + 2\alpha_2 + \alpha_3} \otimes [e_{\alpha_1} \otimes e_{-2\alpha_3}]
 \end{array} & (\partial \phi)(u,u) \neq 0\\ \cline{2-4}
 & \bbU_{2,1}[2] & \begin{array}{@{}l} 
 e_{\alpha_1 + \alpha_2} \otimes [e_{\alpha_1 + \alpha_2 + \alpha_3} \otimes h_{2\alpha_3}]
  \\
 \quad + c_1 e_{\alpha_1 + \alpha_2 + 2\alpha_3} \otimes [e_{\alpha_1 + \alpha_2 + \alpha_3} \otimes e_{-2\alpha_3}]\\
 \quad + c_2 e_{\alpha_1 + 2\alpha_2 + 3\alpha_3} \otimes [e_{\alpha_1} \otimes e_{-2\alpha_3}]\\
 \quad + c_3 e_{\alpha_1 + 2\alpha_2 + \alpha_3} \otimes [e_{\alpha_1} \otimes h_{2\alpha_3}]
 \end{array} & (\partial \phi)(u,u) \neq 0\\ \hline
 \end{array}
 \]
 \end{small}
 \caption{The remaining modules in $E_{1}^{p,q}=\Lambda^p (\fms{1})^* \otimes H^q(\fm_{\bar{0}},\fg)$ for the values $(p,q)=(1,0)$ and $(1,1)$. Here $u = e_{-\alpha_1 - \alpha_2}\in\fm_{\bar 1}$ and $c,c_i$ are constants.}
 \label{F:ct-E2-lwv}
 \end{table}
\begin{table}[H]
 \begin{small}
 \[
 \begin{array}{|c|c|l|l|} \hline
 (p,q) & \mbox{$(\fg_{0})_{\bar 0}$-module} & \mbox{Form of l.w.v. $\phi \in E_1^{p,q}$} & \mbox{Remarks}\\ \hline
 %(1,0) & \bbU_{1,3}[1] & e_{\alpha_1 + \alpha_2} \otimes [e_{-2\alpha_3}] & (\partial \phi)(u,u) \neq 0\\
 %& \bbU_{1,1}[1] & e_{\alpha_1 + \alpha_2} \otimes [h_{2\alpha_3}] + c e_{\alpha_1 + \alpha_2 + 2\alpha_3} \otimes [e_{-2\alpha_3}] & (\partial \phi)(u,u) \neq 0\\
 %& \bbV_{2,2}[0] & e_{\alpha_1 + \alpha_2} \otimes [e_{-\alpha_1 - 2\alpha_2 - 3\alpha_3}] & (\partial \phi)(u,u) \neq 0\\
 %& \bbV_{0,0}[0] & \sum_{\alpha \in \Delta(\fms{1})} c_\alpha e_\alpha \otimes [e_{-\alpha'}], \quad \alpha' = \alpha_{high} - \alpha & (\partial \phi)(u,u) \neq 0\\ \hline
 (2,0) & \bbV_{2,4}[2] & e_{\alpha_1 + \alpha_2} \wedge e_{\alpha_1 + \alpha_2} \otimes [e_{-2\alpha_3}] & (\partial \phi)(u,u,u) \neq 0\\ \cline{2-4}
   & \bbV_{2,2}[2] &
   \begin{array}{@{}l}
   e_{\alpha_1 + \alpha_2} \wedge e_{\alpha_1 + \alpha_2} \otimes [h_{2\alpha_3}] \\
   \quad + c e_{\alpha_1 + \alpha_2} \wedge e_{\alpha_1 + \alpha_2+2\alpha_3} \otimes [e_{-2\alpha_3}] \\
   \end{array} & (\partial \phi)(u,u,u) \neq 0\\ \cline{2-4}
   & \bbV_{2,0}[2] &
   \begin{array}{@{}l}
   e_{\alpha_1 + \alpha_2 } \wedge e_{\alpha_1 + \alpha_2} \otimes [e_{2\alpha_3}]\\
   \quad + c_1 e_{\alpha_1 + \alpha_2} \wedge e_{\alpha_1 + \alpha_2 + 2\alpha_3} \otimes [h_{2\alpha_3}] \\
   \quad + c_2 e_{\alpha_1 + \alpha_2 + 2\alpha_3} \wedge e_{\alpha_1 + \alpha_2 + 2\alpha_3} \otimes [e_{-2\alpha_3}] \\
   \end{array} & (\partial \phi)(u,u,u_1) \neq 0\\ \cline{2-4}
 & \bbV_{0,2}[2] & \begin{array}{@{}l} (e_{\alpha_1 + \alpha_2} \wedge e_{\alpha_1 + 2\alpha_2 + 3\alpha_3} \\
  \quad + c e_{\alpha_1 + \alpha_2 + 2\alpha_3} \wedge e_{\alpha_1 + 2\alpha_2 + \alpha_3}) \otimes [e_{-2\alpha_3}]  \end{array} & (\partial \phi)(u,u,u_1) \neq 0\\ \cline{2-4}
 & \bbU_{3,3}[1] & e_{\alpha_1 + \alpha_2} \wedge e_{\alpha_1 + \alpha_2} \otimes [e_{-\alpha_1 - 2\alpha_2 - 3\alpha_3}] & (\partial \phi)(u,u,u) \neq 0\\ \cline{2-4}
 & 2\bbU_{1,1}[1] &
 \begin{array}{@{}l}
 a e_{\alpha_1 + \alpha_2} \wedge e_{\alpha_1 + \alpha_2} \otimes [e_{-\alpha_1 - \alpha_2}] \\
 \quad + a e_{\alpha_1 + \alpha_2} \wedge e_{\alpha_1 + \alpha_2 + 2\alpha_3} \otimes [e_{-\alpha_1 - \alpha_2 - 2\alpha_3}] \\
 \quad + a e_{\alpha_1 + \alpha_2} \wedge e_{\alpha_1 + 2\alpha_2 + \alpha_3} \otimes [e_{-\alpha_1 - 2\alpha_2 - \alpha_3}] \\
 \quad + (a-b) e_{\alpha_1 + \alpha_2} \wedge e_{\alpha_1 + 2\alpha_2 + 3\alpha_3} \otimes [e_{-\alpha_1 - 2\alpha_2 - 3\alpha_3}]\\
 \quad + b e_{\alpha_1 + \alpha_2 + 2\alpha_3} \wedge e_{\alpha_1 + 2\alpha_2 + \alpha_3} \otimes [e_{-\alpha_1 - 2\alpha_2 - 3\alpha_3}]
 \end{array} & \begin{array}{c} (\partial\phi)(u,u,u_1) \neq 0 \\ \mbox{for generic $a,b$} \end{array} \\ \hline
 %(1,1) & \bbU_{4,3}[2] & e_{\alpha_1 + \alpha_2} \otimes [e_{\alpha_1} \otimes e_{-2\alpha_3}] & (\partial \phi)(u,u) \neq 0\\
 %& \bbU_{4,1}[2] & \begin{array}{@{}l} e_{\alpha_1 + \alpha_2} \otimes [e_{\alpha_1} \otimes h_{2\alpha_3}] \\
 %\quad + c e_{\alpha_1 + \alpha_2+2\alpha_3} \otimes [e_{\alpha_1} \otimes e_{-2\alpha_3}]
 %\end{array} & (\partial \phi)(u,u) \neq 0\\
 %& \bbU_{2,3}[2] & \begin{array}{@{}l} e_{\alpha_1 + \alpha_2} \otimes [e_{\alpha_1 + \alpha_2 + \alpha_3} \otimes e_{-2\alpha_3}] \\
 %\quad + c e_{\alpha_1 + 2\alpha_2 + \alpha_3} \otimes [e_{\alpha_1} \otimes e_{-2\alpha_3}]
 %\end{array} & (\partial \phi)(u,u) \neq 0\\
 %& \bbU_{2,1}[2] & \begin{array}{@{}l} e_{\alpha_1 + \alpha_2 + 2\alpha_3} \otimes [e_{\alpha_1 + \alpha_2 + \alpha_3} \otimes e_{-2\alpha_3}] \\
 %\quad + c_1 e_{\alpha_1 + \alpha_2} \otimes [e_{\alpha_1 + \alpha_2 + \alpha_3} \otimes h_{2\alpha_3}]\\
 %\quad + c_2 e_{\alpha_1 + 2\alpha_2 + 3\alpha_3} \otimes [e_{\alpha_1} \otimes e_{-2\alpha_3}]\\
 %\quad + c_3 e_{\alpha_1 + 2\alpha_2 + \alpha_3} \otimes [e_{\alpha_1} \otimes h_{2\alpha_3}]
 %\end{array} & (\partial \phi)(u,u) \neq 0\\ \hline
 \end{array}
 \]
 \end{small}
 \caption{The remaining modules in $E_{1}^{2,0}=\Lambda^2 (\fms{1})^* \otimes H^0(\fm_{\bar{0}},\fg)$. The elements $u= e_{-\alpha_1 - \alpha_2}$ and $u_1 = e_{-\alpha_1 - 2\alpha_2 - 3\alpha_3}$ are in $\fm_{\bar 1}$ and $a,b,c,c_i$ are constants.} 
 \label{F:ct-E2-lwvII}
 \end{table}

 \subsubsection{The main result}
By Proposition \ref{prop:H012} and Proposition \ref{prop:E2pagecontact}, it only remains to understand whether $\bbV_{4,2}[2] \subset E_2^{0,2}$ survives to the $E_4$-page.
We note that this is the only possibly non-trivial contribution to the cohomology group $H^{2,2}(\fm,\fg)$, and that the latter is a representation for the semisimple part $\mathfrak{osp}(3|2)$ of $\fg_0$.
However, a simple LSA does not admit any non-trivial representation that is purely even, hence $\bbV_{4,2}[2]$ does not survive
to the $E_4$-page.

 \begin{theorem}
 \label{prop:P1-cohom}
 Let $\fg=\fg_{-2}\oplus\cdots\oplus\fg_{2}$ be the contact grading of $\fg = G(3)$ with associated parabolic subalgebra $\fp_1^{\rm IV}$. Then:
 \[
  H^{d,1}(\fm,\fg) = \begin{cases}
  0, & d > 0;\\
  \bbV_{6,0}[0] \op \bbV_{2,0}[0]\op\bbU_{4,1}[0], & d=0;
  \end{cases}
 \]
 and
 \[
  H^{d,2}(\fm,\fg) =
  \begin{cases}
  0, & d = 0 \mbox{ or } d > 1;\\
  \bbV_{7,0}[1] \op \bbV_{3,0}[1] \op \bbV_{5,0}[1] \op \bbV_{5,2}[1] \op \bbU_{7,1}[1] \op \bbU_{5,1}[1]  \op \bbU_{3,1}[1], & d=1;
  \end{cases}
 \]
where $\bbV_{k,\ell}[r]$ is the even irreducible representation with highest weight $(k,\ell)$ for $\fsl(2) \oplus \fsp(2)$ and having degree $r$ w.r.t. the grading element $\sfZ_1$ of $\fg$, and $\bbU_{k,\ell}[r]$ is the same, but regarded as an odd module. The cohomology groups
$H^{0,1}(\fm,\fg)$ and $H^{1,2}(\fm\,\fg)$ are irreducible representations for $\mathfrak{osp}(3|2)$, their highest weights w.r.t. a choice of distinguished Borel subalgebra (i.e., corresponding to a Dynkin diagram with just one odd root) are $(4,1)$ and $(5,2)$, respectively.
 \end{theorem}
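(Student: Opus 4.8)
The plan is to read the two cohomology groups directly off the second page of the Hochschild--Serre spectral sequence, which has already been pinned down in Proposition \ref{prop:E2pagecontact} (Table \ref{F:E2-P1IV}), and then to control the handful of higher differentials that remain. By Proposition \ref{prop:H012} we have $H^1(\fm,\fg)\cong E_2^{1,0}\oplus E_3^{0,1}$ and $H^2(\fm,\fg)\cong E_3^{2,0}\oplus E_3^{1,1}\oplus E_4^{0,2}$, so the task reduces to deciding, term by term, whether each $(\fg_0)_{\bar 0}$-module present on $E_2$ survives to the page relevant for that spot. Since every $\partial_r$ is $(\fg_0)_{\bar 0}$-equivariant and strictly preserves the $\sfZ_1$-degree, I expect almost all of these differentials to vanish for purely formal reasons.

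Concretely, I would first dispose of $H^1$. The differential $\partial_2\colon E_2^{0,1}\to E_2^{2,0}$ has source $\bbU_{4,1}[0]\oplus\bbV_{6,0}[0]$ in degree $0$ and target $\bbU_{3,1}[1]$ in degree $1$; the degree mismatch forces $\partial_2=0$, whence $E_3^{0,1}=E_2^{0,1}$, and combined with $E_2^{1,0}=\bbV_{2,0}[0]$ this gives the stated $H^{0,1}$ and the vanishing $H^{d,1}=0$ for $d>0$. The same bookkeeping handles most of $H^2$: one gets $E_3^{2,0}=E_2^{2,0}=\bbU_{3,1}[1]$ (the outgoing $\partial_2$ lands in a zero group, and nothing of degree $1$ maps in), while $E_3^{1,1}=E_2^{1,1}$ and the summand $\bbV_{7,0}[1]\subset E_2^{0,2}$ survives all the way to $E_4$, because in each case the target of the relevant $\partial_2$ or $\partial_3$ is empty in degree $1$. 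Assembling these reproduces exactly the degree-$1$ part of $H^2$ claimed in the theorem.

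The genuinely delicate point, and the step I expect to be the main obstacle, is the fate of $\bbV_{4,2}[2]\subset E_2^{0,2}$: here the target $E_2^{2,1}$ of $\partial_2$ is nonzero in degree $2$, so the formal degree argument is inconclusive. However, $\bbV_{4,2}[2]$ is the only possible contribution to $H^{2,2}(\fm,\fg)$, since all other $E_\infty$-terms in total degree $2$ sit in $\sfZ_1$-degree $1$, and $H^{2,2}(\fm,\fg)$ carries a representation of $\fg_0$, hence of its simple part $\fosp(3|2)$. As $\bbV_{4,2}[2]$ is purely even and nontrivial, while a simple Lie superalgebra admits no nontrivial purely even finite-dimensional representation (its odd part would act by zero, and then by simplicity all of it would), the module cannot persist. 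Thus $H^{2,2}(\fm,\fg)=0$, $E_4^{0,2}=\bbV_{7,0}[1]$, and $H^{d,2}=0$ for $d=0$ and $d>1$.

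Finally, to upgrade the $(\fg_0)_{\bar 0}$-decompositions of $H^{0,1}$ and $H^{1,2}$ to irreducibility over $\fosp(3|2)$, I would locate in each group a vector annihilated by the positive part of a distinguished Borel subalgebra, read off its weight, obtaining $(4,1)$ and $(5,2)$ respectively, and verify that the irreducible $\fosp(3|2)$-module it generates already exhausts the listed $(\fg_0)_{\bar 0}$-content. A branching/dimension count, namely $(10|10)$ for $H^{0,1}$ and $(36|36)$ for $H^{1,2}$, then forces equality, so both cohomology groups are the asserted irreducibles, completing the proof.
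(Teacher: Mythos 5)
Your proposal is correct and follows essentially the same route as the paper: assembling $H^1$ and $H^2$ from Propositions \ref{prop:H012} and \ref{prop:E2pagecontact} via degree/equivariance vanishing of the higher differentials, killing $\bbV_{4,2}[2]$ by the observation that a simple Lie superalgebra (here $\fosp(3|2)$, the semisimple part of $\fg_0$) has no nontrivial purely even representation, and deducing irreducibility by branching under $\fsl(2)\oplus\fsp(2)$ together with the dimension counts $(10|10)$ and $(36|36)$. The only cosmetic difference is in the last step, where the paper runs the dimension argument through a composition series and the classification of small $\fosp(3|2)$-modules in \cite[Table 3.65]{MR1773773}, while you phrase it via a highest weight vector; both amount to the same branching-plus-dimension argument.
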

\begin{proof}
We already proved the first claim.

Consider now a composition series for the $\mathfrak{osp}(3|2)$-module $H^{0,1}(\fm,\fg)$ and note, by dimension reasons, that its irreducible constituents are given by some of the $\mathfrak{osp}(3|2)$-modules of small dimension displayed in \cite[Table 3.65]{MR1773773} (possibly up to a parity change). Under the finer decomposition for the action of $\fsl(2) \oplus \fsp(2)\subset\mathfrak{osp}(3|2)$,
the group $H^{0,1}(\fm,\fg)$ is the direct sum of all the irreducible $\fsl(2) \oplus \fsp(2)$-submodules that appear in its $\mathfrak{osp}(3|2)$-constituents.
A look at such decompositions for the $\mathfrak{osp}(3|2)$-modules in \cite[Table 3.65]{MR1773773} of dimension less than or equal to $\dim H^{0,1}(\fm,\fg)=(10|10)$ immediately implies that the composition series consists of just one module of highest weight $(4,1)$, i.e., $H^{0,1}(\fm,\fg)$ is $\mathfrak{osp}(3|2)$-irreducible. (Note that the labels in \cite[Table 3.65]{MR1773773} are in the opposite order than ours and that $\mathfrak{osp}(3|2)_{\bar 0}$-modules are indicated by their dimensions.)

The argument for $H^{1,2}(\fm,\fg)$ is completely analogous.
\end{proof}

 As $\fg_0$-modules, $H^{0,1}(\fm,\fg) \cong \der_{gr}(\fg_-) / \fg_0 \cong \fcspo(\fg_{-1}) / \fg_0$, so our result $H^{0,1}(\fm,\fg)\cong\bbV_{6,0}[0] \op \bbV_{2,0}[0] \op \bbU_{4,1}[0]$ could have been directly obtained from the modules $S^6 \bbC^2 \op \fsl(2) \op S^4 \bbC^2 \boxtimes \bbC^2$ appearing in the proof of Proposition \ref{P:max-subalg}. Vanishing of $H^{d,1}(\fm,\fg)$ for $d > 0$ implies that for the Tanaka--Weisfeiler prolongation $\pr(\fm,\fg_0)$, we have \cite{MR0266258}:

 \begin{cor} \label{C:pr-ct}
  Let $\fg=\fg_{-2}\oplus\cdots\oplus\fg_{2}$ be the contact grading of $\fg = G(3)$ with associated parabolic subalgebra $\fp_1^{\rm IV}$.  Then $\fg \cong \pr(\fm,\fg_0)$.
 \end{cor}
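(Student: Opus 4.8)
The plan is to obtain the corollary as a direct application of the super-analogue of Tanaka's prolongation theorem \cite{MR0266258}, fed by the positive-degree vanishing established in Theorem~\ref{prop:P1-cohom}. First I would record that $\fg$ is a \emph{transitive} $\mathbb Z$-graded extension of $\fm\oplus\fg_0$. This is immediate from Lemma~\ref{lem:centrI}: if $X\in\fg_k$ with $k\geq 0$ satisfies $[X,\fg_{-1}]=0$, then, since $\fg_{-1}$ bracket-generates $\fm$ (Section~\ref{subsec:2.2}), $X$ centralizes all of $\fm$ and hence lies in the center of $\fm$; but that center is contained in $\fm=\fg_{<0}$, so degree reasons force $X=0$. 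By the universal property of the Tanaka--Weisfeiler prolongation, the transitive graded extension $\fg$ therefore embeds as a graded subalgebra of $\pr(\fm,\fg_0)$, coinciding with it in all degrees $\leq 0$.

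For the reverse inclusion I would run the standard inductive argument of \cite{MR0266258}, which the paper has declared to extend verbatim to the super-setting. Assume $\pr(\fm,\fg_0)_j=\fg_j$ for all $j\leq k-1$ (the base case $j\leq 0$ holds by construction). The degree-$k$ prolongation space consists of the degree-$k$ generalized derivations of $\fm$ compatible with the previously constructed lower piece, and---modulo the contribution of $\fg_k$ itself---the obstruction to enlarging $\fg_k$ is governed by $H^{k,1}(\fm,\fg)$. Since $H^{d,1}(\fm,\fg)=0$ for all $d>0$ by Theorem~\ref{prop:P1-cohom}, we obtain $\pr(\fm,\fg_0)_k=\fg_k$. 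As $\fg$ is finite-dimensional with $\fg_k=0$ for $k>2$ and the cohomology vanishes in every positive degree, the induction terminates and yields $\pr(\fm,\fg_0)\cong\fg$.

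The entire substance of the argument sits in Theorem~\ref{prop:P1-cohom}: once the positive-degree Spencer cohomology is known to vanish, the corollary follows with no genuine obstacle. The only points deserving attention are the transitivity check above and the bookkeeping of signs, namely that the derivation condition defining $\pr(\fm,\fg_0)$ matches the conventions of the Chevalley--Eilenberg differential \eqref{eq:CE1} used to compute $H^{k,1}(\fm,\fg)$. I would also emphasize that the nonvanishing of $H^{0,1}(\fm,\fg)\cong\der_{gr}(\fg_-)/\fg_0$ is irrelevant here: it only records that $\fg_0$ is a proper reduction of $\der_{gr}(\fg_-)$, and it is precisely the positive-degree groups that control the prolongation once this $\fg_0$ is fixed.
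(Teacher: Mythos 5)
Your proposal is correct and follows essentially the same route as the paper: the paper derives the corollary in one line from the vanishing of $H^{d,1}(\fm,\fg)$ for $d>0$ (Theorem \ref{prop:P1-cohom}) by citing Tanaka's prolongation theory \cite{MR0266258}, and your argument simply unpacks that citation — the transitivity check via Lemma \ref{lem:centrI}, the embedding $\fg\hookrightarrow\pr(\fm,\fg_0)$ by maximality, and the degree-by-degree induction in which $H^{k,1}(\fm,\fg)=0$ forces $\pr(\fm,\fg_0)_k=\fg_k$. Your closing remark that the nonvanishing of $H^{0,1}(\fm,\fg)$ is harmless (it only measures $\der_{gr}(\fg_-)/\fg_0$, which is irrelevant once the reduction to $\fg_0$ is fixed) matches the paper's discussion preceding the corollary.
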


%%%%%%%%%%%%%%%%%%%%%%%%%%%%%%%%

\subsection{Spencer cohomology for \texorpdfstring{$\mathfrak{p}^{\rm IV}_{2}\subset\mathrm{G}(3)$}{} }
\label{sec:cohomology}

\subsubsection{The cochain complex for \texorpdfstring{$\mathfrak{p}^{\rm IV}_{2}\subset\mathrm{G}(3)$}{} and the groups \texorpdfstring{$H^1(\fm,\fg)$}{}}
\label{subsec:cohp2}
Theorem \ref{thm:SC2IV} deals with the Spencer cohomology of $\mathrm{G}(3)$ w.r.t. the grading with associated parabolic subalgebra $\mathfrak{p}_2^{\rm IV}$. \\
Proofs will be given in this and next sections, and in Appendix \ref{appendixA}. We depart here with the description of the relevant cochain complex and some intermediate but important results.

Table \ref{tab:p_2^IV} below recollects the components of the $\mathbb Z$-grading $\fg=\fg_{-3}\oplus\cdots\oplus\fg_{3}$ of $\fg=\mathrm{G}(3)$, emphasizing their structure as modules for the (semisimple part of the) reductive LSA $$\fg_0=\mathbb{C} Z\oplus \fsl(2)\oplus\mathfrak{osp}(1|2)\;,$$ where $Z=Z_2$ is the grading element, together with branchings w.r.t. the purely even subalgebra $(\fg_0)_{\bar 0}=\mathbb {C}Z\oplus \fsl(2)\oplus\fsp(2)$ (as already discussed in Section \ref{S:G3-gradings}). Note that the grading is compatible with the decomposition
 \begin{equation}\label{eq:decomp}
\fg = \fg_{\bar 0}\oplus\fg_{\bar 1}
      = (\mathrm{G}(2)\oplus\fsp(2))\oplus (\CC^7\boxtimes\CC^2)\;;
 \end{equation}
more precisely the grading induces the $(2,3,5)$-grading on $\mathrm{G}(2)$, $\fsp(2)$ sits all in degree zero while the odd part $\fg_{\bar 1}$ has no graded components in degrees $\pm 3$. In particular, $Z$ is an element of $\mathrm{G}(2)$ and it precisely coincides with the grading element of the $(2,3,5)$-grading.
\begin{small}
\begin{table}[h]
\begin{center}
\begin{tabular}{c||*{1}{c}|*{1}{c}|*{1}{c}|*{1}{c}||}
\toprule
  {\bfseries Graded Components} & ${\bf \fg_0}$ & ${\bf\fg_{\pm 1}}$ & ${\bf\fg_{\pm 2}}$ & ${\bf\fg_{\pm 3}}$  \\
\midrule[0.02em]\midrule[0.02em]
& $\CC^{1|0}\boxtimes\CC^{1|0}$ &  &  & \\
{As $\fg_0$-module} & $\fsl(2)\boxtimes\CC^{1|0}$\; & $\CC^{2|0}\boxtimes \CC^{1|2}$ & $\CC^{1|0}\boxtimes \CC^{1|2}$ & $\CC^{2|0}\boxtimes \CC^{1|0}$ \\
& $\;\;\;\;\;\;\CC^{1|0}\boxtimes\mathfrak{osp}(1|2)$ & & & \\
\midrule[0.02em]
& $\CC\boxtimes\CC$  &  &  & \\
{Even part as $(\fg_0)_{\bar 0}$-module} & $\fsl(2)\boxtimes\CC$\;\;\;\;\, & $\CC^{2}\boxtimes\CC$\,\,  & $\CC\boxtimes\CC$ & $\CC^{2}\boxtimes\CC$\,\,\\
{}  & \;\;\;\;\;$\CC\boxtimes\fsp(2)$  &  &  & \\
\midrule[0.02em]
& &  & & \\
{Odd part as $(\fg_0)_{\bar 0}$-module}& \,\,$\CC\boxtimes\CC^{2}$ & $\CC^{2}\boxtimes\CC^{2}$ & \,\,$\CC\boxtimes\CC^{2}$ & \\
&  &  &  & \\
\midrule[0.02em]\midrule[0.02em]
{Dimension} & \multicolumn{1}{c|}{$7|2$} & \multicolumn{1}{c|}{$2|4$} & \multicolumn{1}{c|}{$1|2$} & \multicolumn{1}{c||}{$2|0$} \\
\bottomrule
\end{tabular}
\end{center}
\caption{The decomposition of the graded components of $\fg=\mathrm{G(3)}$.}
\label{tab:p_2^IV}
\end{table}
\end{small}
\par
Some of the components obtained by restriction of the bracket of $\mathrm{G(3)}$ to the irreducible $(\fg_0)_{\bar 0}$-modules of Table \ref{tab:p_2^IV} are automatically zero, by $(\fg_0)_{\bar 0}$-equivariance, parity and $\mathbb Z$-degree.
 It is a straightforward matter  using the root system of $\mathrm{G}(3)$ to verify that all other components have ``full rank'' -- i.e., image as large as permitted by Schur's lemma, parity and $\mathbb Z$-degree -- with the sole exception
of the Lie brackets between the irreducible $(\fg_0)_{\bar 0}$-components of $\fg_0$.

This implies the following lemma.
\begin{lemma}
\label{lem:centralizers}
The centralizer of $(\fg_{-2})_{\bar 0}\oplus(\fg_{-3})_{\bar 0}$ in $\fg$ is given by $\fsp(2)\oplus(\fg_{-1})_{\bar 1}\oplus\fg_{-2}\oplus\fg_{-3}$ and the centralizer of $\fm_{\bar 0}$ by $\fsp(2)\oplus(\fg_{-2})_{\bar 1}\oplus \fg_{-3}$.
\end{lemma}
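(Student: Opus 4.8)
The plan is to exploit the $\ZZ$-grading throughout. Since $\ad(x_k)$ shifts $\ZZ$-degree by $k$ while changing parity by the parity of $x_k$, and since both test spaces $(\fg_{-2})_{\bar 0}\oplus(\fg_{-3})_{\bar 0}$ and $\fms{0}$ are homogeneous in both degree and parity, the centralizer condition $[x,s]=0$ decouples completely: writing $x=\sum_k x_k$ with $x_k\in\fg_k$, each homogeneous component $x_k$ must separately annihilate the test space. Thus I would run through the irreducible $(\fg_0)_{\bar 0}$-components of $\fg$ in Table \ref{tab:p_2^IV} one at a time and decide, for each, whether it centralizes. The two basic tools are: (i) the ``full rank'' statement recorded just before the lemma, so that every bracket between two graded components is either forced to vanish by parity/degree/Schur or is as surjective as allowed; and (ii) the decomposition \eqref{eq:decomp}, which shows that the ideal $\fsp(2)=A(1)$ commutes with all of $\mathrm{G}(2)$ and hence with every even graded piece $(\fg_k)_{\bar 0}$, while $(\fg_-)_{\bar 0}$ is exactly the Hilbert--Cartan $(2,3,5)$-symbol.

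For the first centralizer I would argue as follows. Everything in $\fg_{-2}\oplus\fg_{-3}$ and in $(\fg_{-1})_{\bar 1}$ centralizes $(\fg_{-2})_{\bar 0}\oplus(\fg_{-3})_{\bar 0}$ for free, since the relevant brackets land either in $\fg_{\leq -4}=0$ or, by parity, in $(\fg_{-3})_{\bar 1}=0$. On the other hand $(\fg_{-1})_{\bar 0}$ fails, because bracketing with $(\fg_{-2})_{\bar 0}$ is the full-rank $(2,3,5)$-map $\fg_{-1}\otimes\fg_{-2}\to\fg_{-3}$, an isomorphism on even parts. Inside $\fg_0=\CC Z\oplus\fsl(2)\oplus\fosp(1|2)$ I would note that $Z$ acts by the (nonzero) degree, the $\fsl(2)$-factor acts faithfully on the standard module $(\fg_{-3})_{\bar 0}\cong\CC^2\boxtimes\CC$, and $(\fg_0)_{\bar 1}$ maps $(\fg_{-2})_{\bar 0}$ isomorphically onto $(\fg_{-2})_{\bar 1}$; only $\fsp(2)$ survives. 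Finally, for the positive components I would pair each against a suitable test element: $(\fg_1)_{\bar 0}$ and $\fg_3$ against $(\fg_{-3})_{\bar 0}$, $(\fg_2)_{\bar 0}$ against $(\fg_{-2})_{\bar 0}$, and $(\fg_1)_{\bar 1}$, $(\fg_2)_{\bar 1}$ against $(\fg_{-2})_{\bar 0}$, $(\fg_{-3})_{\bar 0}$ respectively, in each case producing a nondegenerate bracket (either a dimension-matching isomorphism or the Killing-form duality $\fg_k\cong(\fg_{-k})^*$). This leaves exactly $\fsp(2)\oplus(\fg_{-1})_{\bar 1}\oplus\fg_{-2}\oplus\fg_{-3}$.

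The second centralizer is then cheap: since $\fms{0}$ contains $(\fg_{-2})_{\bar 0}\oplus(\fg_{-3})_{\bar 0}$, its centralizer lies inside the one just computed, so I only need to intersect with the extra condition of commuting with $(\fg_{-1})_{\bar 0}$. Among the four surviving summands, $\fsp(2)$ still commutes (it centralizes all of $\mathrm{G}(2)$), while $\fg_{-3}$ and $(\fg_{-2})_{\bar 1}$ centralize $(\fg_{-1})_{\bar 0}$ for free, their brackets landing in $\fg_{-4}=0$, resp.\ $(\fg_{-3})_{\bar 1}=0$. The even part $(\fg_{-2})_{\bar 0}$ now drops out, as $[(\fg_{-2})_{\bar 0},(\fg_{-1})_{\bar 0}]$ is again the full-rank $(2,3,5)$-bracket into $(\fg_{-3})_{\bar 0}$, and $(\fg_{-1})_{\bar 1}$ drops out too; the outcome is $\fsp(2)\oplus(\fg_{-2})_{\bar 1}\oplus\fg_{-3}$, as claimed.

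The one genuinely non-formal point, and the step I expect to require actual care rather than dimension counting, is ruling out $(\fg_{-1})_{\bar 1}$ from the second centralizer: here the bracket $(\fg_{-1})_{\bar 1}\otimes(\fg_{-1})_{\bar 0}\to(\fg_{-2})_{\bar 1}$ goes from an $8$-dimensional tensor product onto a $2$-dimensional target, so ``full rank'' alone does not force each nonzero element to act nontrivially. To close this I would use the explicit $\fsl(2)\oplus\fsp(2)$-structure: the bracket factors as the symplectic pairing $\omega$ on the $\fsl(2)$-slots tensored with the identification of the $\fsp(2)$-slots, so that $\eta\in\CC^2_{\fsl}\boxtimes\CC^2_{\fsp}$ annihilates all of $(\fg_{-1})_{\bar 0}\cong\CC^2_{\fsl}\boxtimes\CC$ exactly when the associated linear map $\CC^2_{\fsl}\to\CC^2_{\fsp}$ built from $\omega$ and $\eta$ vanishes, i.e.\ only when $\eta=0$. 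The same recognition of the invariant pairing is what underlies the nondegeneracy claims for $(\fg_1)_{\bar 0}$ and $\fg_3$ in the first part, so I would isolate it once as a small remark on $\fsl(2)\oplus\fsp(2)$-equivariant brackets and reuse it.
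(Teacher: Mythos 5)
Your proof is correct and follows essentially the same route as the paper: the paper deduces the lemma directly from the observation recorded immediately before it, namely that every bracket component between the irreducible $(\fg_0)_{\bar 0}$-modules of Table \ref{tab:p_2^IV} is either forced to vanish by parity, $\mathbb Z$-degree and equivariance or has full rank, and you simply execute the resulting component-by-component check. Your extra care with the bracket $(\fg_{-1})_{\bar 1}\otimes(\fg_{-1})_{\bar 0}\to(\fg_{-2})_{\bar 1}$ --- identifying it via Schur's lemma as (a multiple of) $\omega$ tensored with the identity, so that every nonzero element acts nontrivially --- is precisely the point the paper handles implicitly through its explicit formula $[\be_a,\bep_{b\beta}]=\omega_{ab}\bep_\beta$, so there is no genuine divergence in method.
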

Now, a direct application of Kostant's version of the Bott--Borel--Weil theorem tells us that, as $\fsl(2)$-modules, $H^{d,1}(\fm_{\bar 0},\mathrm{G}(2))=0$ for all $d\geq 0$ and
\begin{equation*}
\begin{aligned}
H^{d,1}(\fm_{\bar 0},\CC)&\cong\begin{cases} 0\;\;\,\,\text{for all}\; d\geq 0, d\neq 1\;\\
\CC^2\;\text{if}\; d=1\end{cases}\\
H^{d,1}(\fm_{\bar 0},\CC^7)&\cong\begin{cases} 0\;\;\,\,\text{for all}\; d>0\;\\
S^2\CC^2\;\;\,\,\text{if}\; d=0\end{cases}
\end{aligned}
\end{equation*}
so that the only non-trivial homogeneous components of $H^{1}(\fm_{\bar 0},\fg)$ are $H^{0,1}(\fm_{\bar 0},\fg)\cong S^2\CC^2\boxtimes \CC^2$
and $H^{1,1}(\fm_{\bar 0},\fg)\cong \CC^2\boxtimes \fsp(2)$. Explicitly:
%recorded for later use:
\begin{equation}
\begin{aligned}
H^{0,1}(\fm_{\bar 0},\fg)&=\Big\{\varphi:(\fg_{-1})_{\bar 0}\to(\fg_{-1})_{\bar 1}\mid [X,\varphi(Y)]=[Y,\varphi(X)]\;\;\text{for all}\;\;X,Y\in(\fg_{-1})_{\bar 0}\Big\}\;,\\
H^{1,1}(\fm_{\bar 0},\fg)&=\fsp(2)\otimes(\fg_{-1})_{\bar 0}^*\;.
\end{aligned}
\end{equation}
The following is a consequence of the above discussion, Proposition \ref{prop:longexact} and Lemma \ref{lem:centralizers}.
\begin{proposition}
\label{prop:3exse}
There exist long exact sequences of $(\fg_0)_{\bar 0}$-modules
\begin{equation}
\begin{split}
\label{sequenzaII}
0\longrightarrow \fsp(2)\longrightarrow H^{0,1}(\fm_{\bar 1},\fg)\longrightarrow H^{0,1}(\fm, \fg)\longrightarrow S^2\CC^2\boxtimes \CC^2
\end{split}
\end{equation}
\begin{equation}
\begin{split}
\label{sequenzaIII}
0\longrightarrow H^{1,1}(\fm_{\bar 1},\fg)\longrightarrow H^{1,1}(\fm, \fg)\longrightarrow \CC^2\boxtimes \fsp(2)
\end{split}
\end{equation}
and
\begin{equation}
\begin{split}
\label{sequenzaIV}
0\longrightarrow H^{d,1}(\fm_{\bar 1},\fg)\longrightarrow H^{d,1}(\fm, \fg)\longrightarrow 0
\end{split}
\end{equation}
for all $d>1$.
%E_1^{0,1} \cong S^2\CC^2\boxtimes \CC^2 + \CC^2\boxtimes\fsp(2)
\end{proposition}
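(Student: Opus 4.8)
The plan is to obtain all three sequences at once by specializing the single long exact sequence of Proposition \ref{prop:longexact} to each relevant degree $d$ and then inserting the explicit values of its two ``input'' terms: the graded piece $\xi^d_{\fg}(\fm_{\bar 0})$ of the centralizer and the classical Spencer group $H^{d,1}(\fm_{\bar 0},\fg)$. Since the claimed sequences only involve cohomology in cochain-degree $1$, I would first truncate the long exact sequence of Proposition \ref{prop:longexact} after its fourth term, retaining only
\[
0\longrightarrow \xi^d_{\fg}(\fm_{\bar 0})\longrightarrow H^{d,1}(\fm_{\bar 1},\fg)\longrightarrow H^{d,1}(\fm,\fg)\longrightarrow H^{d,1}(\fm_{\bar 0},\fg)\,.
\]
All maps here are $(\fg_0)_{\bar 0}$-equivariant, which is exactly the equivariance asserted in the proposition.

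The next step is the bookkeeping of graded degrees. By Lemma \ref{lem:centralizers}, the centralizer of $\fm_{\bar 0}$ in $\fg$ is $\fsp(2)\oplus(\fg_{-2})_{\bar 1}\oplus\fg_{-3}$, whose three summands sit in degrees $0$, $-2$ and $-3$ respectively. Hence $\xi^0_{\fg}(\fm_{\bar 0})=\fsp(2)$ and $\xi^d_{\fg}(\fm_{\bar 0})=0$ for every $d\geq 1$ --- the key observation being that the centralizer has no components in positive degree. For the other end of the sequence I would quote the Kostant (Bott--Borel--Weil) computation carried out just above the statement: the only nonzero homogeneous pieces of $H^1(\fm_{\bar 0},\fg)$ are $H^{0,1}(\fm_{\bar 0},\fg)\cong S^2\CC^2\boxtimes\CC^2$ and $H^{1,1}(\fm_{\bar 0},\fg)\cong\CC^2\boxtimes\fsp(2)$, and in particular $H^{d,1}(\fm_{\bar 0},\fg)=0$ for all $d>1$.

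Substituting these values into the truncated sequence then yields the three cases directly. For $d=0$ the left term is $\fsp(2)$ and the right term is $S^2\CC^2\boxtimes\CC^2$, giving \eqref{sequenzaII}; for $d=1$ the leading term $\xi^1_{\fg}(\fm_{\bar 0})$ vanishes and the right term is $\CC^2\boxtimes\fsp(2)$, giving \eqref{sequenzaIII}; and for $d>1$ both end terms vanish, giving the isomorphism \eqref{sequenzaIV}. There is no genuine obstacle in this argument: all of the analytic content --- the construction of the long exact sequence from the short exact sequence \eqref{eq:shortexact} of complexes, and the Kostant evaluation of $H^1(\fm_{\bar 0},\fg)$ --- has already been dispatched, so the only point requiring care is the degree accounting for the centralizer, namely reading off $\xi^d_{\fg}(\fm_{\bar 0})$ correctly as $\fsp(2)$ in degree $0$ and as zero in every positive degree.
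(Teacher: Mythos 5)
Your proposal is correct and is essentially the paper's own argument: the paper proves this proposition in one line by citing exactly the three ingredients you use — the long exact sequence of Proposition \ref{prop:longexact}, the centralizer computation of Lemma \ref{lem:centralizers} (giving $\xi^0_{\fg}(\fm_{\bar 0})=\fsp(2)$ and $\xi^d_{\fg}(\fm_{\bar 0})=0$ for $d>0$), and the Kostant evaluation of $H^{d,1}(\fm_{\bar 0},\fg)$ stated just above the proposition. Your degree bookkeeping and the truncation of the sequence after the fourth term match the statement precisely, so there is nothing to add.
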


To proceed further, we shall need the explicit form of some of the Lie brackets of $\fg$.
%Note that $\fm\oplus\fg_0$ decomposes into pairwise inequivalent irreducible modules for $(\fg_0)_{\bar 0}$. To describe their Lie brackets,
We fix a symplectic basis
$(\be_1,\be_2)$ for $\CC^{2}\boxtimes\CC$ and $(\bep_1,\bep_2)$ for $\CC\boxtimes\CC^{2}$, normalised to $\omega_{12}=\omega^{12} = 1$, where $\omega$ denotes the symplectic structures on both spaces. We will use small latin indices for $\CC^{2}\boxtimes\CC$, employ Einstein's summation convention and
the Northeast convention
%for the duality given by $\omega$
to raise and lower
indices, when working in components:
\begin{equation*}
\begin{aligned}
X^a&=X_b\omega^{ba}\qquad\text{and}\qquad X_a=\omega_{ab}X^b\;,
\end{aligned}
\end{equation*}
from where it follows that $\omega_{ab} \omega^{ac} = \delta_b^c$. We will work on $\CC\boxtimes\CC^{2}$ likewise  with greek indices.
Finally, we fix a basis $\1$ of $\CC\boxtimes\CC$
%$(\fg_{-2})_{\bar 0}$
and use the short-cut $\bep_{a\alpha}=\be_{a}\boxtimes\bep_{\alpha}$ for elements of $\CC^2\boxtimes\CC^2$.
%$(\fg_{-1})_{\bar 1}$.
The proof of the following result is omitted for the sake of brevity.
\begin{lemma}
The non-trivial components of the Lie brackets of $\fm$ are:
\begin{itemize}
\item[(i)] For all $\be_a,\be_b\in (\fg_{-1})_{\bar 0}$ and $\bep_{a\alpha},\bep_{b\beta}\in (\fg_{-1})_{\bar 1}$ we have
\begin{equation}
\begin{aligned}
{}[\be_a,\be_b]&=\omega_{ab}\1\;,\\
[\be_a,\bep_{b\beta}]&=\omega_{ab}\bep_\beta\;,\\
[\bep_{a\alpha},\bep_{b\beta}]&=\omega_{ab}\omega_{\alpha\beta}\1\;,
\end{aligned}
\end{equation}
where all R.H.S. are in $\fg_{-2}$;
\item[(ii)] For all $\be_a\in (\fg_{-1})_{\bar 0}$, $\bep_{a\alpha}\in (\fg_{-1})_{\bar 1}$ and $\bep_\beta\in(\fg_{-2})_{\bar 1}$ we have
\begin{equation}
\begin{aligned}
{}[\be_a,\1]&=\be_a\;,\\
%[\be_a,\bep_\beta]&=0\;,\\
%[\bep_{a\alpha},\1]&=0\;,\\
[\bep_{a\alpha},\bep_\beta]&=\omega_{\alpha\beta}\be_a\;,
\end{aligned}
\end{equation}
where all R.H.S. are in $\fg_{-3}$.
\end{itemize}
The LSA $\fg_{0}$ acts by derivations on $\fm$ via the natural action of $(\fg_0)_{\bar 0}$ on the irreducible modules of Table \ref{tab:p_2^IV} and the action of any $\bep_\gamma\in(\fg_0)_{\bar 1}$, whose non-trivial components are given by:
\begin{equation}
\begin{aligned}
{}[\bep_\gamma,\be_a]&=\bep_{a\gamma}\;,\\
[\bep_\gamma,\bep_{a\alpha}]&=-2\omega_{\gamma\a}\be_a\;,\\
[\bep_\gamma,\1]&=2\bep_\gamma\;,\\
[\bep_{\gamma},\bep_\beta]&=-\omega_{\gamma\beta}\1\;,\\
\end{aligned}
\end{equation}
where $\be_a\in (\fg_{-1})_{\bar 0}$, $\bep_{a\alpha}\in (\fg_{-1})_{\bar 1}$ and $\bep_\beta\in(\fg_{-2})_{\bar 1}$.
\end{lemma}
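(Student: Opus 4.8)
The plan is to treat every bracket as an $(\fg_0)_{\bar 0}$-equivariant map between the irreducible modules listed in Table~\ref{tab:p_2^IV} and to invoke Schur's lemma. Each graded component $(\fg_k)_{\bar i}$ is the span of the root spaces in \eqref{HC-grading}, and for a pair of irreducible $(\fg_0)_{\bar 0}^{\ss}\cong\fsl(2)\oplus\fsp(2)$ submodules $U,U'$ the restriction of the bracket is an equivariant map $U\otimes U'\to\fg_{k+k'}$ of the appropriate parity and $\ZZ$-degree. Parity and degree already annihilate most pairings (as noted in the text immediately before the lemma), and for the surviving pairings the target graded piece contains the relevant irreducible with multiplicity one; hence each surviving bracket is a scalar multiple of the unique equivariant contraction --- precisely the maps written with $\omega_{ab}$, $\omega_{\alpha\beta}$ or index insertion in the statement. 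This reduces the lemma to (i) showing each displayed bracket is nonzero, and (ii) fixing the scalars.

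For (i), I would exhibit, for each displayed relation, a pair of root vectors whose root sum lies in the target component; since the relevant $\fsl(2)$- and $\fsp(2)$-orbits are irreducible, nonvanishing on one weight vector forces nonvanishing of the whole equivariant map. For instance $-(\alpha_2+\alpha_3)$ and $-(\alpha_1+\alpha_2+\alpha_3)$ sum to $-(\alpha_1+2\alpha_2+2\alpha_3)$, the unique even root of $\fg_{-2}$, giving $[\be_1,\be_2]\neq 0$; likewise $-\alpha_2$ paired with $-(\alpha_1+\alpha_2+2\alpha_3)$ reaches the even root of $\fg_{-2}$, and $\alpha_3$ paired with $-(\alpha_1+2\alpha_2+2\alpha_3)$ reaches $(\fg_{-2})_{\bar 1}$, and so on for the remaining brackets. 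This is exactly the ``full rank'' assertion already made in the text, so it amounts to a finite check against the root system of $\mathrm{G}(3)$.

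For (ii) I would first choose weight bases $\be_a$, $\bep_{a\alpha}$, $\1$, $\bep_\beta$, $\bep_\gamma$ adapted to $\fsl(2)\oplus\fsp(2)$ and normalize the symplectic forms to $\omega_{12}=\omega^{12}=1$. The cleanest route for the $\fg_0$-action is conceptual: by Table~\ref{tab:p_2^IV} one has, as $\fg_0$-modules, $\fg_{-1}\cong\bbC^{2}\boxtimes\bbC^{1|2}$, $\fg_{-2}\cong\bbC^{1|0}\boxtimes\bbC^{1|2}$ and $\fg_{-3}\cong\bbC^{2}\boxtimes\bbC^{1|0}$, where $\bbC^{1|2}$ is the standard $\fosp(1|2)$-module and $\fg_0\cong\bbC Z\oplus\fsl(2)\oplus\fosp(1|2)$. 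Thus the action of $\bep_\gamma\in(\fg_0)_{\bar 1}$ on $\fm$ is nothing but the standard $\fosp(1|2)$-action on these modules, and the relative factors $-2$ and $2$ are exactly those of $\fosp(1|2)$ acting on $\bbC^{1|2}$. The remaining internal scalars of $\fm$ are then fixed either by computing one representative Chevalley structure constant per equivariant map, or, more economically, by imposing that $\fg_0$ act by derivations, i.e.\ by running the super-Jacobi identity with $\bep_\gamma$ against the already-normalized $\fm$-brackets.

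The only real obstacle is the coherent, simultaneous normalization of all the constants. Rescaling any weight vector changes several brackets at once, and the reuse of the symbol $\be_a$ in both $\fg_{-1}$ and $\fg_{-3}$ means that the constant in $[\be_a,\1]=\be_a$ is a relative normalization tying the two copies of the $\fsl(2)$-doublet together; similarly the $\fosp(1|2)$-module structure fixes the ratio between the $-2$ in $[\bep_\gamma,\bep_{a\alpha}]$ and the $2$ in $[\bep_\gamma,\1]$, which cannot be altered independently. I would therefore verify the super-Jacobi identity on a handful of mixed triples --- e.g.\ $(\bep_\gamma,\be_a,\be_b)$, $(\bep_\gamma,\bep_{a\alpha},\1)$ and $(\be_a,\bep_{b\beta},\bep_\gamma)$ --- to confirm that a single coherent choice of bases realizes all the displayed values simultaneously. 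Everything else being forced by equivariance and the $\fosp(1|2)$-module structure, this bookkeeping is essentially the entire content of the (omitted) proof.
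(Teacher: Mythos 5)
Your proposal is correct and follows essentially the approach the paper itself indicates: the authors omit the proof of this lemma ``for the sake of brevity'', but the paragraph immediately preceding it reduces every bracket component to $(\fg_0)_{\bar 0}$-equivariance, parity, $\ZZ$-degree and a ``full rank'' check against the root system of $G(3)$ --- exactly your Schur-plus-root-sum argument (note that surjectivity of the brackets alone would not suffice here, since e.g.\ the trivial module $(\fg_{-2})_{\bar 0}$ is hit by \emph{two} independent equivariant maps, $\omega_{ab}$ and $\omega_{ab}\omega_{\alpha\beta}$, so the root-system verification of each individual component is genuinely needed). Your remaining step --- fixing the scalars coherently via the identification $\fg_{-1}\cong\bbC^2\boxtimes\bbC^{1|2}$, $\fg_{-2}\cong\bbC\boxtimes\bbC^{1|2}$ as $\fosp(1|2)$-modules together with super-Jacobi checks on mixed triples (which the stated constants do pass, e.g.\ $[\bep_\gamma,[\be_a,\bep_{b\beta}]]=[[\bep_\gamma,\be_a],\bep_{b\beta}]+[\be_a,[\bep_\gamma,\bep_{b\beta}]]$ yields $-\omega_{ab}\omega_{\gamma\beta}\1$ on both sides, tying the constant $-2$ in $[\bep_\gamma,\bep_{a\alpha}]$ to the constant $1$ in $[\bep_{a\alpha},\bep_{b\beta}]$) --- is the natural completion of that sketch, and correctly identifies the only non-formal content, namely the simultaneous normalization of all constants across the differently scaled copies of $\bbC^{1|2}$ in degrees $-1$ and $-2$.
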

 We depart by directly computing some of the cohomology groups of Definition \ref{def:complexM1}.
\begin{lemma}
\label{lem:cohomM1}
The cohomology group $H^{0,1}(\fm_{\bar 1},\fg)\cong\fsp(2)$ whereas $H^{d,1}(\fm_{\bar 1},\fg)=0$ for all $d>0$.
\end{lemma}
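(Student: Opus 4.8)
The plan is to compute the cocycles directly, exploiting that the subcomplex $C^\bullet(\fm_{\bar 1},\fg)$ begins in degree $1$. Since $C^0(\fm_{\bar 1},\fg)=0$ there are no coboundaries, so $H^{d,1}(\fm_{\bar 1},\fg)$ is exactly the space of degree-$d$ $1$-cocycles, i.e.\ the linear maps $\varphi\colon\fm_{\bar 1}\to\fg$ (vanishing on $\fm_{\bar 0}$, of degree $d$) with $\partial\varphi=0$. Writing out \eqref{eq:CE1} on the two relevant types of argument, the cocycle condition splits into (a) $\fm_{\bar 0}$-equivariance, $[X,\varphi(Y)]=\varphi([X,Y])$ for $X\in\fm_{\bar 0}$ and $Y\in\fm_{\bar 1}$, and (b) the symmetry relation $[X,\varphi(Y)]+[Y,\varphi(X)]=0$ for $X,Y\in\fm_{\bar 1}$ (here $\varphi([X,Y])=0$, as $[X,Y]\in\fm_{\bar 0}$). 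I will analyse these two conditions using the explicit brackets of the previous lemma together with the decomposition \eqref{eq:decomp}, $\fg_{\bar 0}=\mathrm{G}(2)\oplus\fsp(2)$.

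Set $\psi:=\varphi|_{(\fg_{-1})_{\bar 1}}$ and $\chi:=\varphi|_{(\fg_{-2})_{\bar 1}}$. The key structural input is that $(\fg_{-1})_{\bar 1}$ generates $\fm_{\bar 1}$ over $\fm_{\bar 0}$ via $[\be_a,\bep_{b\beta}]=\omega_{ab}\bep_\beta$. Feeding this into (a) gives, on one hand, that each $\chi(\bep_\alpha)$ is annihilated by all of $\fm_{\bar 0}$ (since $\fm_{\bar 0}$ maps $\bep_\alpha$ into $\fg_{\le -3}\cap\fm_{\bar 1}=0$), so that $\chi$ takes values in the centralizer $\xi_\fg(\fm_{\bar 0})$; and on the other hand the relation $[\be_c,\psi(\bep_{a\alpha})]=\omega_{ca}\chi(\bep_\alpha)$, with $\psi(\bep_{a\alpha})$ in addition annihilated by $(\fg_{-2})_{\bar 0}\oplus(\fg_{-3})_{\bar 0}$. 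By Lemma \ref{lem:centralizers} this centralizer equals $\fsp(2)\oplus(\fg_{-2})_{\bar 1}\oplus\fg_{-3}$, concentrated in degrees $0,-2,-3$. Matching degrees and parities kills $\chi$ outright except in a couple of low-degree cases, and whenever $\chi=0$ the same equivariance places $\psi(\bep_{a\alpha})$ in $\xi_\fg(\fm_{\bar 0})\cap\fg_{-1+d}$; for $d\ge 3$ (and for $d=2$ in the parity-preserving case) this intersection vanishes, forcing $\psi=0$ as well.

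The two genuine cases are those where the image can meet the $\fsp(2)$-summand of the centralizer, namely $d=1$ and $d=2$ with parity-reversing $\varphi$. For $d=2$ I expect to use the ideal splitting: in $[\be_c,\psi(\bep_{a\alpha})]=\omega_{ca}\chi(\bep_\alpha)$ the left-hand side lies in $[\mathrm{G}(2),\mathrm{G}(2)]\subset\mathrm{G}(2)$ while the right-hand side lies in $\fsp(2)$, and $\mathrm{G}(2)\cap\fsp(2)=0$ forces both to vanish, whence $\chi=0$ and then $\psi=0$. For $d=1$ the centralizer argument leaves $\psi$ valued in $\fsp(2)$ with $\chi=0$, and such a $\psi$ is invisible to brackets with $(\fg_{-1})_{\bar 0}$; here the transverse part of condition (b), $[\bep_\beta,\psi(\bep_{a\alpha})]=0$, combined with the faithfulness of the $\fsp(2)$-action on $(\fg_{-2})_{\bar 1}=\CC\boxtimes\CC^2$, forces $\psi=0$. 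This yields $H^{d,1}(\fm_{\bar 1},\fg)=0$ for all $d>0$.

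Finally, in degree $0$ the equivariance (a) forces $\psi=\id\otimes N$ on $(\fg_{-1})_{\bar 1}=\CC^2\boxtimes\CC^2$ and $\chi=N$ on $(\fg_{-2})_{\bar 1}=\CC\boxtimes\CC^2$ for a single endomorphism $N$ of the $\fsp(2)$-factor $\CC^2$, while condition (b) forces $N$ to be $\omega$-symmetric, i.e.\ $N\in S^2\CC^2\cong\fsp(2)$; the parity-reversing cochains in degree $0$ are eliminated by condition (b) in the same way. Hence $H^{0,1}(\fm_{\bar 1},\fg)\cong\fsp(2)$, realised concretely by the inner cocycles $X\mapsto[X,v]$ with $v\in\fsp(2)$. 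I expect the main obstacle to be exactly the $\fsp(2)$-valued cases $d=1,2$: there the even nilradical cannot detect the cocycle, so one must invoke either the $\mathrm{G}(2)\oplus\fsp(2)$ decomposition or the odd--odd relation (b), rather than $\fm_{\bar 0}$-equivariance alone.
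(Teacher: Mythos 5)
Your proposal is correct, and its skeleton is the same as the paper's: since $C^0(\fm_{\bar 1},\fg)=0$ the groups are pure cocycle spaces, the centralizer computation of Lemma \ref{lem:centralizers} combined with degree and parity matching confines the values of $\varphi$ (killing all $d\geq 3$ at once), and the residual degrees $d=2,1,0$ are settled by the explicit brackets. Two of your sub-arguments differ from the paper's, both legitimately. For $d=2$ the paper simply observes that $\varphi|_{(\fg_{-1})_{\bar 1}}$ is valued in $\fg_1$ intersected with the centralizer of $(\fg_{-2})_{\bar 0}\oplus(\fg_{-3})_{\bar 0}$, which is zero, and then the relation $[\be_c,\psi(\bep_{a\alpha})]=\omega_{ca}\chi(\bep_\alpha)$ forces $\chi=0$; your route via the ideal splitting $\fg_{\bar 0}=G(2)\oplus\fsp(2)$ (left side in $G(2)$, right side in $\fsp(2)$, intersection zero) reaches the same conclusion, just less economically. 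The more substantive difference is at $d=0$: the paper only shows the cocycle space embeds into $S^2\CC^2$ and then concludes non-vanishing by invoking the exact sequence \eqref{sequenzaII} together with irreducibility, whereas you exhibit the cocycles $X\mapsto[X,v]$, $v\in\fsp(2)$, explicitly. This is a genuine and worthwhile simplification: these maps lie in the subcomplex precisely because $\fsp(2)$ centralizes $\fm_{\bar 0}$, they are closed because they are coboundaries $\partial v$ in the full complex, and they are nonzero by faithfulness of the $\fsp(2)$-action on $(\fg_{-1})_{\bar 1}$ — in effect you unwind the connecting homomorphism that the paper's exact sequence encodes abstractly, making the lemma self-contained. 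One small slip: the parity-reversing cochains in degree $0$ are killed by your condition (a) — they would have to take values in $(\fg_{-1})_{\bar 0}$, resp.\ $(\fg_{-2})_{\bar 0}$, neither of which lies in the relevant centralizer — not by condition (b) as you wrote; this misattribution is harmless since your centralizer setup already covers it.
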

\begin{proof}
The groups in question consist just of the cocycles $\varphi\in\Hom(\fm_{\bar 1},\fg)$ (remember that $C^{0}(\fm_{\bar 1},\fg)=0$ by definition).
We first note that
$0=\partial\varphi(X,Y) %= [X,\varphi(Y)]- \varphi([X,Y])
=[X,\varphi(Y)]$ whenever
\begin{itemize}
	\item[(i)] $X\in\fm_{\bar 0}$ has degree $-2,-3$ and $Y\in\fm_{\bar 1}$; or
	\item[(ii)] $X\in(\fg_{-1})_{\bar 0}$ and $Y\in(\fg_{-2})_{\bar 1}$.
\end{itemize}
We then have
\begin{equation}
\label{eq:imagesubset}
\begin{aligned}
\varphi(\fg_{-1})_{\bar 1}&\subset\fsp(2)\oplus(\fg_{-1})_{\bar 1}\oplus\fg_{-2}\oplus\fg_{-3}\;,\\
\varphi(\fg_{-2})_{\bar 1}&\subset\fsp(2)\oplus(\fg_{-2})_{\bar 1}\oplus\fg_{-3}\;,
\end{aligned}
\end{equation}
by Lemma \ref{lem:centralizers}, in particular $H^{d,1}(\fm_{\bar 1},\fg)=0$ for all $d>2$.
We now decompose
$\varphi=\varphi_{\bar 0}+\varphi_{\bar 1}$ into its even and odd components and study them separately and for each degree $d=0,1,2$.
\vskip0.03cm\par\noindent
{\underline{Case $d=2$}} In this case $\varphi=\varphi_{\bar 1}:(\fg_{-2})_{\bar 1}\longrightarrow\fsp(2)$ by \eqref{eq:imagesubset} and
$
0=\varphi([X,Y])
$
for all $X\in(\fg_{-1})_{\bar 0}$ and $Y\in(\fg_{-1})_{\bar 1}$, whence $\varphi=0$.
\vskip0.03cm\par\noindent
{\underline{Case $d=1$}}
Similarly $\varphi=\varphi_{\bar 1}:(\fg_{-1})_{\bar 1}\longrightarrow\fsp(2)$ and
$
0=[Y,\varphi(X)]
$
for $X\in(\fg_{-1})_{\bar 1}$ and $Y\in(\fg_{-2})_{\bar 1}$, whence $\varphi=0$.
\vskip0.05cm\par\noindent
{\underline{Case $d=0$}} In this case $\varphi_{\bar 1}=0$ by \eqref{eq:imagesubset}. We work in components and write
$
\varphi=\varphi^{b\beta}{}_{a\alpha}+\varphi^{\beta}{}_{\alpha}
$,
where
\begin{align}
\label{eq:d0-1}
\varphi^{b\beta}{}_{a\alpha}&:(\fg_{-1})_{\bar 1}\longrightarrow (\fg_{-1})_{\bar 1}\;,\\
\label{eq:d0-2}
\varphi^{\beta}{}_{\alpha}&:(\fg_{-2})_{\bar 1}\longrightarrow (\fg_{-2})_{\bar 1}\;.
\end{align}
Now
\begin{align*}
0&=\partial\varphi(\be_a,\bep_{b\beta})=[\be_a,\varphi(\bep_{b\beta})]-\varphi[\be_a,\bep_{b\beta}]\\
&=(\varphi_{a}{}^{\gamma}{}_{b\beta}-\omega_{ab}\varphi^\gamma{}_\beta)\bep_{\gamma}
\end{align*}
for all $\be_a\in(\fg_{-1})_{\bar 0}$ and $\bep_{b\beta}\in(\fg_{-1})_{\bar 1}$. It follows that $\varphi_{a\gamma b\beta}=\omega_{ab}\varphi_{\gamma\beta}$, with the component \eqref{eq:d0-1} completely determined by \eqref{eq:d0-2}. Furthermore
\begin{align*}
0&=\partial\varphi(\bep_{a\alpha},\bep_{b\beta})=[\bep_{a\alpha},\varphi(\bep_{b\beta})]+[\bep_{b\beta},\varphi(\bep_{a\alpha})]\\
&=(\varphi_{a\alpha b\beta}+\varphi_{b\beta a\alpha})\1
\end{align*}
for all $\bep_{a\alpha},\bep_{b\beta}\in(\fg_{-1})_{\bar 1}$, which readily implies $\varphi_{\alpha\beta}=\varphi_{\beta\alpha}$.

The group $H^{0,1}(\fm_{\bar 1},\fg)$ is a $(\fg_0)_{\bar 0}$-module and we have just seen that it either vanishes or it is isomorphic to
$\fsp(2)$. However, it cannot vanish due to the exact sequence \eqref{sequenzaII}.
\end{proof}
\begin{corollary}
\label{cor:esd=0,1}
$H^{d,1}(\fm, \fg)=0$ for all $d>1$ and there exist short exact sequences of $(\fg_0)_{\bar 0}$-modules
\begin{equation}
\begin{split}
\label{sequenzaIIbis}
0\longrightarrow  H^{0,1}(\fm, \fg)\longrightarrow S^2\CC^2\boxtimes \CC^2\;,
\end{split}
\end{equation}
\begin{equation}
\begin{split}
\label{sequenzaIIIbis}
0\longrightarrow H^{1,1}(\fm, \fg)\longrightarrow \CC^2\boxtimes \fsp(2)\;.
\end{split}
\end{equation}
\end{corollary}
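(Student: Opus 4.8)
The plan is to deduce the corollary purely formally from the three long exact sequences of Proposition \ref{prop:3exse}, feeding in the values of $H^{\bullet,1}(\fm_{\bar 1},\fg)$ already computed in Lemma \ref{lem:cohomM1}. No further cohomological work is required: all the substance has been isolated in those two results, and what remains is a short exactness bookkeeping together with one dimension count. I would organize the argument by the degree $d$.

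First I would dispose of the range $d>1$. The sequence \eqref{sequenzaIV} reads $0\to H^{d,1}(\fm_{\bar 1},\fg)\to H^{d,1}(\fm,\fg)\to 0$, so $H^{d,1}(\fm,\fg)\cong H^{d,1}(\fm_{\bar 1},\fg)$, and the latter vanishes for every $d>0$ by Lemma \ref{lem:cohomM1}. Hence $H^{d,1}(\fm,\fg)=0$ for all $d>1$. For $d=1$ I would invoke \eqref{sequenzaIII}: since $H^{1,1}(\fm_{\bar 1},\fg)=0$ again by Lemma \ref{lem:cohomM1}, the leftmost term drops out and the sequence collapses to the injection $0\to H^{1,1}(\fm,\fg)\to\CC^2\boxtimes\fsp(2)$, which is exactly \eqref{sequenzaIIIbis}.

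The only case needing a genuine argument is $d=0$, and I regard it as the main (if modest) obstacle, since here the connecting term $H^{0,1}(\fm_{\bar 1},\fg)$ does not vanish and one must show it contributes nothing to the map out of $\fsp(2)$. The sequence \eqref{sequenzaII} is the four-term
\begin{equation*}
0\longrightarrow \fsp(2)\stackrel{f}{\longrightarrow} H^{0,1}(\fm_{\bar 1},\fg)\stackrel{g}{\longrightarrow} H^{0,1}(\fm,\fg)\stackrel{h}{\longrightarrow} S^2\CC^2\boxtimes\CC^2.
\end{equation*}
The map $f$ is injective, and by Lemma \ref{lem:cohomM1} both its source $\fsp(2)$ and its target $H^{0,1}(\fm_{\bar 1},\fg)$ are three-dimensional; an injective linear map between spaces of equal finite dimension is bijective, so $f$ is onto. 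Exactness at $H^{0,1}(\fm_{\bar 1},\fg)$ then forces $\ker g=\operatorname{im} f=H^{0,1}(\fm_{\bar 1},\fg)$, i.e.\ $g=0$, whence $\ker h=\operatorname{im} g=0$ and $h$ is injective. This produces the short exact sequence \eqref{sequenzaIIbis}, which finishes the proof. The care point here is simply to read off the isomorphism $f$ from the equality of dimensions rather than from any explicit description of the maps.
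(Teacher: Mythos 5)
Your proof is correct and coincides with the argument the paper intends (the corollary is stated without a separate proof precisely because it is this bookkeeping): the vanishing of $H^{d,1}(\fm_{\bar 1},\fg)$ for $d>0$ collapses \eqref{sequenzaIV} and \eqref{sequenzaIII}, while for $d=0$ the injection $\fsp(2)\hookrightarrow H^{0,1}(\fm_{\bar 1},\fg)\cong\fsp(2)$ is forced to be an isomorphism, killing the middle map in \eqref{sequenzaII} and yielding \eqref{sequenzaIIbis}. Your dimension count replaces the paper's implicit appeal to irreducibility of $\fsp(2)$ as a $(\fg_0)_{\bar 0}$-module, but this is an inessential variation of the same step.
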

We are now ready to prove our first main result, namely the vanishing of $H^{1}(\fm, \fg)$ in non-negative degrees. In view of the preceeding corollary, this amounts to understanding the images of the embeddings in \eqref{sequenzaIIbis} and \eqref{sequenzaIIIbis}.

Interestingly, this can be done without any computation.
%We have already established the claim for $d>1$.
Recall that any group $H^{d,1}(\fm,\fg)$ has a representation of the LSA $\fg_0$ and not just of $(\fg_0)_{\bar 0}$. In particular, it carries a representation of the ``di-spin'' algebra $\mathfrak{osp}(1|2)$ \cite{MR0438925}. By a result of %D. \v{Z}.
Djokovi\'{c} and
%G.
Hochschild, all (finite-dimensional) representations of $\mathfrak{osp}(1|2)$ are completely reducible \cite{MR0387363}.
Moreover, any irreducible
representation different from the $1$-dimensional trivial one decomposes, under $\mathfrak{osp}(1|2)_{\bar 0}\cong\fsp(2)$, into the direct sum of {\it two} irreducible
$\fsp(2)$-modules with highest weights which are {\it different} and {\it consecutive} \cite{MR0438925}. %MR0422363, MR0424887.

By Corollary \ref{cor:esd=0,1}
\begin{align}
H^{0,1}(\fm, \fg)&\subset 3\CC^2\\
H^{1,1}(\fm, \fg)&\subset 2S^2\CC^2
\end{align}
as $\fsp(2)$-modules. The above discussion tells us immediately that $H^{d,1}(\fm, \fg)=0$ for $d=0,1$ and we have proved the following.
\begin{theorem}
\label{thm:235H^1}
Let $\fg=\fg_{-3}\oplus\cdots\oplus\fg_{3}$ be the $\mathbb Z$-grading of $\fg=\mathrm{G}(3)$ with the associated parabolic subalgebra $\fp_2^{\rm IV}$. Then
$H^{d,1}(\fm, \fg)=0$ for all $d\geq 0$.
\end{theorem}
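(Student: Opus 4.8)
The plan is to leverage the preparatory results already assembled — above all Corollary \ref{cor:esd=0,1} — and to upgrade the even $(\fg_0)_{\bar 0}$-module structure of the Spencer groups to a full $\mathfrak{osp}(1|2)$-module structure, so that the vanishing becomes a purely representation-theoretic consequence requiring no further cochain-level computation.

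First I would record that the higher degrees are already settled: Corollary \ref{cor:esd=0,1} gives $H^{d,1}(\fm,\fg)=0$ for all $d>1$, leaving only $d=0$ and $d=1$. For these the same corollary furnishes injections of $(\fg_0)_{\bar 0}$-modules $H^{0,1}(\fm,\fg)\hookrightarrow S^2\CC^2\boxtimes\CC^2$ and $H^{1,1}(\fm,\fg)\hookrightarrow\CC^2\boxtimes\fsp(2)$. Restricting the target modules to $\fsp(2)$ (the tensor factor on which $\fsp(2)$ acts) identifies them with $3\,\CC^2$ and $2\,S^2\CC^2$ respectively; hence the $\fsp(2)$-content of $H^{0,1}(\fm,\fg)$ is concentrated in the single highest weight of $\CC^2$, and that of $H^{1,1}(\fm,\fg)$ in the single highest weight of the adjoint $S^2\CC^2$.

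The decisive step is to invoke the odd symmetry. Since each $H^{d,1}(\fm,\fg)$ is a $\fg_0$-module by the $\fg_0$-equivariance of the Spencer differential (Section \ref{subsec:es}), and $\mathfrak{osp}(1|2)$ is the semisimple, parity-mixing summand of $\fg_0$, each group is a finite-dimensional $\mathfrak{osp}(1|2)$-module, hence completely reducible by the theorem of Djokovi\'{c}--Hochschild \cite{MR0387363}. I would then use that every irreducible $\mathfrak{osp}(1|2)$-module is either the trivial one or restricts under $\mathfrak{osp}(1|2)_{\bar 0}\cong\fsp(2)$ to a sum of exactly two irreducibles with \emph{distinct and consecutive} highest weights \cite{MR0438925}. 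Neither option can sit inside $3\,\CC^2$ or $2\,S^2\CC^2$: the trivial module would require a weight-$0$ constituent, absent in both, while a nontrivial summand would require two consecutive weights to appear together, whereas only the single weight $1$ (resp. $2$) is present. Therefore $H^{0,1}(\fm,\fg)=H^{1,1}(\fm,\fg)=0$, and combined with the higher-degree vanishing this yields $H^{d,1}(\fm,\fg)=0$ for all $d\geq 0$.

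The hard part is not any computation but the conceptual move of remembering that the cohomology carries the action of all of $\fg_0$, including its odd part: the even embeddings of Corollary \ref{cor:esd=0,1} leave $3\,\CC^2$ and $2\,S^2\CC^2$ as a priori nonzero possibilities, and it is precisely the rigid two-consecutive-weights shape of the $\mathfrak{osp}(1|2)$-irreducibles — a feature invisible at the level of $\fsp(2)$ alone — that forces the vanishing. The only point demanding care is confirming that the $\fg_0$-action is genuinely present on $H^{d,1}(\fm,\fg)$, which is exactly the general $\fg_0$-equivariance of Spencer cohomology recorded in Section \ref{subsec:es}.
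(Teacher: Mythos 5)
Your proposal is correct and follows essentially the same route as the paper's own proof: reduce to $d=0,1$ via Corollary \ref{cor:esd=0,1}, view $H^{d,1}(\fm,\fg)$ as an $\mathfrak{osp}(1|2)$-module, and rule out both the trivial module and the nontrivial irreducibles (whose $\fsp(2)$-restriction has two distinct consecutive highest weights) inside $3\,\CC^2$ and $2\,S^2\CC^2$. The only difference is expository emphasis; the mathematical content, including the appeal to Djokovi\'{c}--Hochschild complete reducibility, is identical.
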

By \cite{MR0266258} we then have:
 \begin{cor} \label{C:pr-235}
  Let $\fg$ be as in Theorem \ref{thm:235H^1}, then $\fg \cong \pr(\fm)$.
 \end{cor}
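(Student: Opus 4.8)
The plan is to deduce the statement from Tanaka's characterisation of the maximal prolongation in terms of the first Spencer cohomology, as recorded in \cite{MR0266258} and recalled in the introduction to hold verbatim in the super setting. Since $\fg = G(3)$ is a transitive $\mathbb Z$-graded LSA whose negative part is $\fm$ and whose degree-zero part acts faithfully by grade-preserving derivations of $\fm$ (so $\fg_0\subseteq\der_{gr}(\fm)$), the universal property of the maximal prolongation provides a graded homomorphism $\iota\colon\fg\hookrightarrow\pr(\fm)$ restricting to the identity on $\fm$. It therefore suffices to prove surjectivity, i.e. $\fg_k=\pr(\fm)_k$ for all $k\geq 0$, which I would establish by induction on $k$, building the identification one degree at a time.

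For the base case $k=0$ I would invoke $H^{0,1}(\fm,\fg)=0$ from Theorem \ref{thm:235H^1}, which is exactly the assertion $\fg_0=\der_{gr}(\fm)=\pr(\fm)_0$ — precisely the content already recorded in Proposition \ref{P:max-subalg}. For the inductive step, assume $\fg_j=\pr(\fm)_j$ for all $j\leq k$. The key observation is that an element of $\pr(\fm)_{k+1}$ is, by the recursive definition of prolongation, a degree-$(k+1)$ derivation of $\fm$ valued in $\pr(\fm)$, and its value on $\fg_{-i}$ (with $i\geq 1$) lands in $\pr(\fm)_{k+1-i}$, where $k+1-i\leq k$. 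By the inductive hypothesis (and $\pr(\fm)_{<0}=\fm=\fg_{<0}$ trivially) all these target spaces coincide with $\fg_{k+1-i}$, so $\pr(\fm)_{k+1}$ is canonically the space $Z^{k+1,1}(\fm,\fg)$ of degree-$(k+1)$ $1$-cocycles of the Spencer complex. On the other hand, $\iota(\fg_{k+1})$ consists precisely of the maps $\ad X|_{\fm}$ for $X\in\fg_{k+1}$, i.e. the coboundaries $B^{k+1,1}(\fm,\fg)$; here transitivity of $\fg$ ensures $\ad\colon\fg_{k+1}\to\Hom(\fg_{-1},\fg_k)$ is injective, so $\fg_{k+1}\cong B^{k+1,1}(\fm,\fg)$. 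The quotient is $H^{k+1,1}(\fm,\fg)$, which vanishes since $k+1\geq 1$; hence $\fg_{k+1}=\pr(\fm)_{k+1}$.

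Finally I would observe that the induction also forces termination. Once it reaches $\pr(\fm)_4=\fg_4=0$, transitivity of $\pr(\fm)$ propagates vanishing upward: any element of $\pr(\fm)_k$ with $k>4$ acts on $\fg_{-1}$ with values in $\pr(\fm)_{k-1}$, and an empty such action forces the element to be zero. Therefore $\pr(\fm)=\bigoplus_{k=-3}^{3}\fg_k=\fg\cong G(3)$.

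The step requiring the most care — rather than a genuine obstacle — is the identification of $\pr(\fm)_{k+1}$ with $Z^{k+1,1}(\fm,\fg)$ under the inductive hypothesis, since this is exactly where the recursive construction of the prolongation and the cocycle/coboundary structure of the Spencer complex are made to match. The only additional bookkeeping is the Koszul sign convention in the super setting; this is harmless because Tanaka's existence, uniqueness and universality results, together with the cocycle identification above, carry over without change to Lie superalgebras, so the corollary is in the end a direct application of Theorem \ref{thm:235H^1} and \cite{MR0266258}.
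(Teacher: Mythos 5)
Your proof is correct and follows the same route as the paper: the paper's entire proof is the one-line citation ``By \cite{MR0266258} we then have'' applied to Theorem \ref{thm:235H^1}, and what you have written is precisely the standard Tanaka argument behind that citation --- the degree-by-degree identification of $\pr(\fm)_{k+1}$ with $Z^{k+1,1}(\fm,\fg)$, of $\iota(\fg_{k+1})$ with $B^{k+1,1}(\fm,\fg)$ via transitivity, and the use of $H^{d,1}(\fm,\fg)=0$ for $d\geq 0$ to conclude, including the correct base case $\fg_0\cong\der_{gr}(\fm)$ from Proposition \ref{P:max-subalg}. You have simply unpacked the details that the paper delegates to the reference, so there is nothing further to reconcile.
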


\subsubsection{The groups \texorpdfstring{$H^2(\fm,\fg)$}{} and the main result}
\label{subsec:spencoh}
We now turn to the second cohomology group, whose determination is more involved.
The classical Kostant's theorem tells us that
\begin{equation*}
\begin{aligned}
H^{4,2}(\fm_{\bar 0},\mathrm{G}(2))&\cong S^4\CC^2\;,\\
H^{4,2}(\fm_{\bar 0},\CC)&\cong S^2\CC^2\;,\\
H^{3,2}(\fm_{\bar 0},\CC^7)&\cong S^3\CC^2\;,
\end{aligned}
\end{equation*}
as $\fsl(2)$-modules, and this gives, together with the knowledge of the lowest weight vectors, all the non-trivial components of $H^{2}(\fm_{\bar 0},\fg)$:
\begin{equation}
\label{eq:BBWcoh2}
\begin{aligned}
H^{4,2}(\fm_{\bar 0},\fg)&=\Big\{\varphi_{ab}{}^{d}{}_{c}:(\fg_{-1})_{\bar 0}\otimes (\fg_{-3})_{\bar 0}\to\fsl(2)\mid \varphi_{abdc}=\varphi_{(abdc)}
%\;\text{is}\;\text{totally symmetric}
\Big\}\\
&\;\;\;\;+\Big\{\varphi_{ab}{}^{\beta}{}_{\alpha}:(\fg_{-1})_{\bar 0}\otimes (\fg_{-3})_{\bar 0}\to\fsp(2)\mid \varphi_{ab}{}^{\beta}{}_{\alpha}=\varphi_{(ab)}{}^{\beta}{}_{\alpha}\Big\}\\
&\cong S^4\CC^2\boxtimes \CC+ S^2\CC^2\boxtimes \fsp(2)\;,\\
H^{3,2}(\fm_{\bar 0},\fg)&=\Big\{\varphi_{ab}{}^{c\gamma}:(\fg_{-1})_{\bar 0}\otimes (\fg_{-3})_{\bar 0}\to(\fg_{-1})_{\bar 1}\mid \varphi_{abc}{}^{\gamma}=\varphi_{(abc)}{}^{\gamma}
%\;\text{is}\;\text{totally symmetric in latin indices}
\Big\}\\
&\cong S^3\CC^2\boxtimes \CC^2
\;.
\end{aligned}
\end{equation}
By Propositions \ref{prop:longexact}, \ref{prop:3exse} and Theorem \ref{thm:235H^1} we then have:
\begin{proposition}
\label{prop:4spectralsequences}
There exist long exact sequences of $(\fg_0)_{\bar 0}$-modules
\begin{equation}
\begin{split}
\label{sequenzaH^2I}
0\longrightarrow\fsp(2)\otimes(\fg_{-1})_{\bar 0}^*\longrightarrow H^{1,2}(\fm_{\bar 1},\fg)\longrightarrow H^{1,2}(\fm,\fg)\longrightarrow 0
\end{split}
\end{equation}
\begin{equation}
\begin{split}
\label{sequenzaH^2II}
0\longrightarrow H^{3,2}(\fm_{\bar 1},\fg)\longrightarrow H^{3,2}(\fm,\fg)\longrightarrow S^3\CC^2\boxtimes \CC^2
\end{split}
\end{equation}
\begin{equation}
\begin{split}
\label{sequenzaH^2III}
0\longrightarrow H^{4,2}(\fm_{\bar 1},\fg)\longrightarrow H^{4,2}(\fm,\fg)\longrightarrow S^4\CC^2\boxtimes \CC+ S^2\CC^2\boxtimes \fsp(2)
\end{split}
\end{equation}
and
\begin{equation}
\begin{split}
\label{sequenzaH^2VI}
0\longrightarrow H^{d,2}(\fm_{\bar 1},\fg)\longrightarrow H^{d,2}(\fm,\fg)\longrightarrow 0
\end{split}
\end{equation}
for $d=2$ and all $d\geq 5$.
\end{proposition}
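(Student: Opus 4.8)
The plan is to specialize the long exact sequence \eqref{sequenza} of Proposition \ref{prop:longexact} to each fixed degree $d\geq 0$ and then feed in the two inputs already established: the vanishing $H^{d,1}(\fm,\fg)=0$ from Theorem \ref{thm:235H^1}, together with the Kostant computation of the classical groups $H^{\bullet}(\fm_{\bar 0},\fg)$ recorded above (and organized, for the first cohomology, in Proposition \ref{prop:3exse}). First I would write \eqref{sequenza} out in degree $d$, namely
\[
\begin{aligned}
0\to\xi^d_{\fg}(\fm_{\bar 0})&\to H^{d,1}(\fm_{\bar 1},\fg)\to H^{d,1}(\fm,\fg)\to H^{d,1}(\fm_{\bar 0},\fg)\\
&\to H^{d,2}(\fm_{\bar 1},\fg)\to H^{d,2}(\fm,\fg)\to H^{d,2}(\fm_{\bar 0},\fg).
\end{aligned}
\]
Since $H^{d,1}(\fm,\fg)=0$ by Theorem \ref{thm:235H^1}, the arrow into $H^{d,1}(\fm_{\bar 0},\fg)$ is zero, so exactness at that spot forces the arrow $H^{d,1}(\fm_{\bar 0},\fg)\to H^{d,2}(\fm_{\bar 1},\fg)$ to be injective. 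The tail therefore collapses into the four-term $(\fg_0)_{\bar 0}$-equivariant exact sequence
\[
0\to H^{d,1}(\fm_{\bar 0},\fg)\to H^{d,2}(\fm_{\bar 1},\fg)\to H^{d,2}(\fm,\fg)\to H^{d,2}(\fm_{\bar 0},\fg),
\]
valid for every $d\geq 0$.

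It then remains to substitute the explicit classical cohomology. For $H^{d,1}(\fm_{\bar 0},\fg)$ the only nonzero degrees are $d=0$, giving $S^2\CC^2\boxtimes\CC^2$, and $d=1$, giving $\CC^2\boxtimes\fsp(2)=\fsp(2)\otimes(\fg_{-1})_{\bar 0}^*$; for $H^{d,2}(\fm_{\bar 0},\fg)$ the only nonzero degrees are $d=3$, giving $S^3\CC^2\boxtimes\CC^2$, and $d=4$, giving $S^4\CC^2\boxtimes\CC+S^2\CC^2\boxtimes\fsp(2)$, as recorded in \eqref{eq:BBWcoh2}. Reading off the four-term sequence degree by degree now yields all four claims: for $d=1$ the final term $H^{1,2}(\fm_{\bar 0},\fg)$ vanishes, producing the short exact sequence \eqref{sequenzaH^2I}; for $d=3$ the first term $H^{3,1}(\fm_{\bar 0},\fg)$ vanishes, producing \eqref{sequenzaH^2II}; for $d=4$ the first term again vanishes, producing \eqref{sequenzaH^2III}; and for $d=2$ together with every $d\geq 5$ both outer terms vanish, so the middle arrow is an isomorphism, producing \eqref{sequenzaH^2VI}.

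This argument is essentially bookkeeping: the genuinely hard content has been front-loaded into Theorem \ref{thm:235H^1} and the Bott--Borel--Weil/Kostant determination of $H^{\bullet}(\fm_{\bar 0},\fg)$, so I do not expect any real obstacle in the deduction itself. The one point warranting a little care is the identification $H^{1,1}(\fm_{\bar 0},\fg)\cong\fsp(2)\otimes(\fg_{-1})_{\bar 0}^*$, which pins down the first map of \eqref{sequenzaH^2I} as the stated one; this is immediate from the explicit description of $H^{1,1}(\fm_{\bar 0},\fg)$ given above, using that $(\fg_{-1})_{\bar 0}\cong\CC^2$ is symplectically self-dual.
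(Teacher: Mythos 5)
Your proposal is correct and follows exactly the paper's own route: the paper's proof of this proposition is precisely the citation of Proposition \ref{prop:longexact}, Theorem \ref{thm:235H^1} (which kills $H^{d,1}(\fm,\fg)$ and hence makes $H^{d,1}(\fm_{\bar 0},\fg)\to H^{d,2}(\fm_{\bar 1},\fg)$ injective, collapsing \eqref{sequenza} to your four-term sequence), and the Kostant computations of $H^{d,1}(\fm_{\bar 0},\fg)$ and $H^{d,2}(\fm_{\bar 0},\fg)$ recorded in \eqref{eq:BBWcoh2} and above Proposition \ref{prop:3exse}. Your degree-by-degree bookkeeping, including the identification $H^{1,1}(\fm_{\bar 0},\fg)\cong\fsp(2)\otimes(\fg_{-1})_{\bar 0}^*$ pinning down the first map of \eqref{sequenzaH^2I}, matches the intended argument.
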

Sequence \eqref{sequenzaH^2III} can be immediately improved as follows. We first note that $C^{4,2}(\fm,\fg)$, as a $(\fg_0)_{\bar 0}$-module, has a unique irreducible submodule of type $S^4\CC^2\boxtimes\CC$, which consists of the maps as in the first set described in
\eqref{eq:BBWcoh2}. Its elements are closed in the classical complex $C^{\bullet}(\fm_{\bar 0},\fg)$, but they are {\it not} closed in the full complex $C^{\bullet}(\fm,\fg)$: the condition
\begin{align*}
\partial\varphi(\be_a,\be_b,\bep_{c\gamma})=-\varphi_{ab}{}^{d}{}_{c}\bep_{d\gamma}=0
\end{align*}
for all $\be_a\in(\fg_{-1})_{\bar 0}$, $\be_b\in(\fg_{-3})_{\bar 0}$ and $\bep_{c\gamma}\in(\fg_{-1})_{\bar 1}$ yields $\varphi=0$. It follows that $H^{4,2}(\fm,\fg)$ does not contain any irreducible $(\fg_0)_{\bar 0}$-submodule of type $S^4\CC^2\boxtimes\CC$ and that the image of
the restriction map $$\res:H^{4,2}(\fm,\fg)\longrightarrow H^{4,2}(\fm_{\bar0},\fg)$$ is actually included in $S^2\CC^2\boxtimes \fsp(2)$. We proved:
\begin{proposition}
\label{cor:improvement}
There exists a long exact sequence of $(\fg_0)_{\bar 0}$-modules
\begin{equation}
\label{sequenzaH^2IIIimprovement}
\begin{split}
0\longrightarrow H^{4,2}(\fm_{\bar 1},\fg)\longrightarrow H^{4,2}(\fm,\fg)\longrightarrow S^2\CC^2\boxtimes \fsp(2)\;.
\end{split}
\end{equation}
\end{proposition}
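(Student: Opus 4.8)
The plan is to refine sequence \eqref{sequenzaH^2III} of Proposition \ref{prop:4spectralsequences} by showing that the $(\fg_0)_{\bar 0}$-equivariant restriction map $\res\colon H^{4,2}(\fm,\fg)\to H^{4,2}(\fm_{\bar 0},\fg)$ never hits the summand $S^4\CC^2\boxtimes\CC$. Since $(\fg_0)_{\bar 0}=\CC Z\op\fsl(2)\op\fsp(2)$ is reductive, every finite-dimensional $(\fg_0)_{\bar 0}$-module is completely reducible; in particular $H^{4,2}(\fm_{\bar 0},\fg)\cong (S^4\CC^2\boxtimes\CC)\op(S^2\CC^2\boxtimes\fsp(2))$ is a sum of two non-isomorphic irreducibles, so every $(\fg_0)_{\bar 0}$-submodule of it is $0$, one of the two factors, or the whole space. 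The injection $H^{4,2}(\fms{1},\fg)\hookrightarrow H^{4,2}(\fm,\fg)$ and the target are already fixed by \eqref{sequenzaH^2III}, so it suffices to prove that $S^4\CC^2\boxtimes\CC$ is not a composition factor of $H^{4,2}(\fm,\fg)$; the image of $\res$ is then forced into $S^2\CC^2\boxtimes\fsp(2)$.

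First I would locate the $S^4\CC^2\boxtimes\CC$-isotypic component of the cochain space $C^{4,2}(\fm,\fg)=\fg\otimes\Lambda^2\fm^*$ in degree $4$, where the relevant $\fsl(2)$ is the one acting on $\CC^2=(\fg_{-1})_{\bar 0}$ and $S^4\CC^2$ is its spin-$2$ module. A direct weight count shows that $S^4\CC^2\boxtimes\CC$ occurs with multiplicity one, realised precisely by the totally symmetric maps $\varphi_{abdc}=\varphi_{(abdc)}\colon(\fg_{-1})_{\bar 0}\wedge(\fg_{-3})_{\bar 0}\to\fsl(2)$ of \eqref{eq:BBWcoh2}, extended by zero on every pair containing an odd vector. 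Establishing this multiplicity-one statement is the crux of the argument: it guarantees that \emph{every} $S^4$-type cochain is one of these classical maps, so it is enough to test closedness on them.

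Next I would show that no nonzero such $\varphi$ is a cocycle of the full complex. Applying the Chevalley--Eilenberg differential \eqref{eq:CE2} to $\be_a\in(\fg_{-1})_{\bar 0}$, $\be_b\in(\fg_{-3})_{\bar 0}$, $\bep_{c\gamma}\in(\fg_{-1})_{\bar 1}$, all terms vanish except the Lie-derivative term $\bep_{c\gamma}\cdot\varphi(\be_a,\be_b)=[\bep_{c\gamma},\varphi_{ab}]$: the two terms $\be_a\cdot\varphi(\be_b,\bep_{c\gamma})$ and $\be_b\cdot\varphi(\be_a,\bep_{c\gamma})$ die because $\varphi$ annihilates pairs containing an odd vector, while the three bracket terms die because $[\fg_{-1},\fg_{-3}]\subset\fg_{-4}=0$ and because $\varphi$ is supported on $(\fg_{-1})_{\bar 0}\wedge(\fg_{-3})_{\bar 0}$. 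Since $\fsl(2)$ acts on $(\fg_{-1})_{\bar 1}$ through its first tensor factor, this yields
\[
\partial\varphi(\be_a,\be_b,\bep_{c\gamma})=-\varphi_{ab}{}^{d}{}_{c}\,\bep_{d\gamma}\;,
\]
which vanishes identically only when $\varphi=0$. Hence the space of cocycles contains no nonzero $S^4\CC^2\boxtimes\CC$-cochain, and by complete reducibility the subquotient $H^{4,2}(\fm,\fg)$ has no such composition factor.

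Combining the two previous paragraphs, $S^4\CC^2\boxtimes\CC$ does not appear in $H^{4,2}(\fm,\fg)$, so its $\res$-image is a submodule of $S^2\CC^2\boxtimes\fsp(2)$; replacing the codomain of \eqref{sequenzaH^2III} accordingly produces the asserted sequence \eqref{sequenzaH^2IIIimprovement}. The one genuinely delicate point is the multiplicity-one claim for $S^4\CC^2\boxtimes\CC$ in $C^{4,2}(\fm,\fg)$ — everything else reduces to the reductivity of $(\fg_0)_{\bar 0}$ and the short bracket computation above.
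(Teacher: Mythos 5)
Your proposal is correct and takes essentially the same approach as the paper: both identify the unique $S^4\CC^2\boxtimes\CC$ submodule of $C^{4,2}(\fm,\fg)$ with the classical symmetric maps from \eqref{eq:BBWcoh2}, and both rule it out of the cohomology via the identical computation $\partial\varphi(\be_a,\be_b,\bep_{c\gamma})=-\varphi_{ab}{}^{d}{}_{c}\,\bep_{d\gamma}$, which forces $\varphi=0$. The conclusion that the image of $\res$ then lies in $S^2\CC^2\boxtimes\fsp(2)$ matches the paper's argument verbatim.
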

The next one is our main result.
\begin{theorem}
\label{thm:SC2IV}
Let $\fg=\fg_{-3}\oplus\cdots\oplus\fg_{3}$ be the $\mathbb Z$-grading of $\fg=\mathrm{G}(3)$ with the associated parabolic subalgebra $\fp_2^{\rm IV}$. Then:
\begin{equation*}
\begin{aligned}
H^{d,1}(\fm,\fg)&=0\;\text{for all}\;d\geq 0\;,\\
H^{d,2}(\fm,\fg)_{\bar 0}&\cong\begin{cases} 0\;\;\,\,\;\;\,\,\,\;\;\;\;\;\text{for all}\;d>0, d\neq 2,\\
S^2\CC^2\boxtimes\Lambda^2\CC^2\;\text{if}\; d=2\;,\end{cases}\!\!\!\!\!\\
H^{d,2}(\fm,\fg)_{\bar 1}&=0\;\text{for all}\;d>0\;,\\
\end{aligned}
\end{equation*}
Moreover, any cohomology class in $H^{2,2}(\fm,\fg)_{\bar 0}$ admits a canonical representative $\varphi$ with components
\begin{align*}
\varphi_{a b\b}{}^\gamma&:(\fg_{-1})_{\bar 0}\otimes(\fg_{-1})_{\bar 1}\to(\fg_{0})_{\bar 1}\;,\\
\varphi_{a\a b\b}{}^{\fsl(2)}+\varphi_{a\a b\b}{}^{\fsp(2)}&:\Lambda^2(\fg_{-1})_{\bar 1}\to\fsl(2)\oplus\fsp(2)\subset (\fg_{0})_{\bar 0}\;,\\
\varphi_{\a a}{}^{b\b}&:(\fg_{-2})_{\bar 1}\otimes(\fg_{-1})_{\bar 0}\longrightarrow (\fg_{-1})_{\bar 1}\;,\\
\varphi_{a\a b}{}^{\gamma}&:(\fg_{-1})_{\bar 1}\otimes\fg_{-3}\longrightarrow (\fg_{-2})_{\bar 1}\;,
\end{align*}
which are symmetric (resp. skewsymmetric) in lowered latin indices (resp. greek indices) and which all depend on the first component $\varphi_{ab\beta}{}^{\gamma}$ via the cocycle conditions
\begin{equation}
\label{eq:cocyleconditions}
\begin{aligned}
\varphi_{b\b a}{}^\gamma&=-2\varphi_{a b\b}{}^\gamma\;,\\
\varphi_{\a cb\b}&=\varphi_{c b\b\a}\;,\\
\varphi_{a\a b\b}{}^{\fsl(2)}(\be_c)&=
-2\varphi_{cb\b\a}\be_{a}
-2\varphi_{ca\a\b}\be_{b}\;,\\
\varphi_{a\a b\b}{}^{\fsp(2)}(\bep_{\gamma})&=-2\omega_{\a\gamma}\varphi_{a b\b}{}^\delta\bep_{\delta}
-2\omega_{\b\gamma}\varphi_{ba\a}{}^\delta\bep_\delta\;,
\end{aligned}
\end{equation}
where $\be_c\in\CC^2\boxtimes\CC$ and $\bep_\gamma\in\CC\boxtimes\CC^2$.
\end{theorem}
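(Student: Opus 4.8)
The first assertion is precisely Theorem~\ref{thm:235H^1}, so the plan is to concentrate on the second cohomology. The strategy I would follow is to push the computation of $H^{d,2}(\fm,\fg)$ onto the \emph{reduced} groups $H^{d,2}(\fm_{\bar 1},\fg)$ of Definition~\ref{def:complexM1} by means of the exact sequences of Proposition~\ref{prop:4spectralsequences} and Proposition~\ref{cor:improvement}, and then to let the $\mathfrak{osp}(1|2)$-module structure carried by every $H^{d,2}(\fm,\fg)$ (it is a $\fg_0$-module and $\mathfrak{osp}(1|2)\subset\fg_0$) do the remaining bookkeeping, exactly as in the proof of Theorem~\ref{thm:235H^1}.

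Concretely, I would first record the degreewise dictionary supplied by those sequences: for $d=2$ and $d\ge 5$ one has $H^{d,2}(\fm,\fg)\cong H^{d,2}(\fm_{\bar 1},\fg)$ by \eqref{sequenzaH^2VI}, whereas for $d=1,3,4$ the group is squeezed between the reduced group and an explicit piece of the classical Kostant cohomology \eqref{eq:BBWcoh2} (respectively $\fsp(2)\otimes(\fg_{-1})_{\bar 0}^*$ via \eqref{sequenzaH^2I}, $S^3\CC^2\boxtimes\CC^2$ via \eqref{sequenzaH^2II}, and $S^2\CC^2\boxtimes\fsp(2)$ via \eqref{sequenzaH^2IIIimprovement}). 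The computational core is then the direct determination of the reduced groups $H^{d,2}(\fm_{\bar 1},\fg)$ at the cochain level, carried out in an explicit basis adapted to the Lie brackets of $\fm$ given above with careful degree and parity bookkeeping; this is the technical part I would relegate to Appendix~\ref{appendixA}, in the same spirit as Lemma~\ref{lem:cohomM1} settled the $n=1$ case.

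With the $\fsp(2)$-content of each candidate group in hand, the representation theory of $\mathfrak{osp}(1|2)$ finishes the vanishing statements. Since each $H^{d,2}(\fm,\fg)$ is a completely reducible $\mathfrak{osp}(1|2)$-module and every nontrivial irreducible splits under $\fsp(2)\cong\mathfrak{osp}(1|2)_{\bar 0}$ into two \emph{consecutive} highest weights of opposite parity, any homogeneous content of a single nonzero $\fsp(2)$-weight (such as the $nV_1$ or $nV_2$ appearing for $d=1,3,4,\ge 5$) can neither build a nontrivial irreducible nor occur as a trivial module, and is therefore forced to be zero; this simultaneously kills the odd part in every positive degree. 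In degree $2$ the only content that resists this obstruction is the trivial $\fsp(2)$-weight $\Lambda^2\CC^2\cong\CC\cong V_0$, which assembles into a sum of trivial, hence purely even, $\mathfrak{osp}(1|2)$-modules; tensoring with the $\fsl(2)$-factor this yields exactly $H^{2,2}(\fm,\fg)_{\bar 0}\cong S^2\CC^2\boxtimes\Lambda^2\CC^2$ and $H^{2,2}(\fm,\fg)_{\bar 1}=0$.

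It then remains to produce the canonical representative. Here I would start from an arbitrary degree-$2$ cocycle, normalise it modulo coboundaries so that only the four components $\varphi_{ab\beta}{}^\gamma$, $\varphi_{a\alpha b\beta}{}^{\fsl(2)}+\varphi_{a\alpha b\beta}{}^{\fsp(2)}$, $\varphi_{\alpha a}{}^{b\beta}$ and $\varphi_{a\alpha b}{}^\gamma$ survive, and then read off the relations \eqref{eq:cocyleconditions} by evaluating $\partial\varphi=0$ on mixed-parity triples --- e.g. $(\be_a,\be_b,\bep_{c\gamma})$, $(\be_a,\bep_{b\beta},\bep_{c\gamma})$ and $(\bep_{a\alpha},\bep_{b\beta},\bep_\gamma)$ --- and substituting the structure constants given above; the symmetry types and the fact that every component is determined by the leading one $\varphi_{ab\beta}{}^\gamma$ drop out of these identities. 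The hard part will be the cochain-level computation of the reduced groups $H^{d,2}(\fm_{\bar 1},\fg)$ together with the verification that the leading component genuinely survives to cohomology and propagates consistently through \eqref{eq:cocyleconditions}; by contrast, once the $\fsp(2)$-content is known the $\mathfrak{osp}(1|2)$ step is purely formal.
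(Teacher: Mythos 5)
Your plan reproduces the paper's architecture almost verbatim --- Theorem~\ref{thm:235H^1} for the first cohomology, the exact sequences of Propositions~\ref{prop:4spectralsequences} and~\ref{cor:improvement} to pass to the reduced groups, cochain-level computations of those groups relegated to Appendix~\ref{appendixA}, Djokovi\'{c}--Hochschild and $\mathfrak{osp}(1|2)$-theory for the vanishing statements, and normalization of a degree-$2$ cocycle to extract \eqref{eq:cocyleconditions} --- but it contains one genuine gap, and it sits exactly at the point you declare ``purely formal'': the odd part in degree $2$.

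You assert that because $H^{2,2}(\fm,\fg)_{\bar 0}\cong S^2\CC^2\boxtimes\Lambda^2\CC^2$ is trivial as an $\fsp(2)$-module, it must ``assemble into a sum of trivial, hence purely even, $\mathfrak{osp}(1|2)$-modules'', so that $H^{2,2}(\fm,\fg)_{\bar 1}=0$. This does not follow. The $(1|2)$-dimensional irreducible $\mathfrak{osp}(1|2)$-module decomposes under $\fsp(2)\cong\mathfrak{osp}(1|2)_{\bar 0}$ as the trivial module (even) plus the standard module $\CC^2$ (odd): its even constituent is trivial, and its odd constituent has odd highest weight, so it passes every constraint available to you. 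Hence representation theory alone only shows that $H^{2,2}(\fm,\fg)_{\bar 1}$ is either $0$ or $S^2\CC^2\boxtimes\CC^2$ --- three copies of that irreducible glued onto the even part --- which is precisely what the paper records just before Proposition~\ref{prop:SC2IVodd}. To exclude the second alternative the paper carries out a further direct cochain computation (Proposition~\ref{prop:SC2IVodd}): it lists the components of an odd cocycle in $Z^{2,2}(\fm_{\bar 1},\fg)_{\bar 1}$, normalizes by coboundaries (using, among other things, that the first prolongation of $\fsp(2)$ acting on the purely odd $\CC^2$ is trivial), and shows every component vanishes. Your proposal contains no substitute for this step, so as written it does not prove $H^{2,2}(\fm,\fg)_{\bar 1}=0$. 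A smaller caution of the same kind: the ``single nonzero $\fsp(2)$-weight'' criterion is only conclusive when the \emph{whole} group, odd part included, is known to embed into a single-weight module; since the appendix-style computations control only the even parts of the reduced groups, the correct order of argument (the paper's) is that the even parts vanish via the sequences plus Appendix~\ref{appendixA} (for $d=4$ one also needs the second statement of Proposition~\ref{thm:cohomology42}, since an even constituent of weight $2$ paired with an odd constituent of weight $1$ is not excluded by weight parity), and only \emph{then} do the odd parts vanish for $d\neq 2$, because a nontrivial irreducible would force a nonzero even part while odd trivial modules are excluded by the odd-weight parity of odd cochains.
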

%\begin{theorem}
%\label{thm:SC1IV}
%Let $\fg=\fg_{-2}\oplus\cdots\oplus\fg_{2}$ be the $\mathbb Z$-grading of $\fg=\mathrm{G}(3)$ with the associated parabolic subalgebra $\fp_1^{\rm IV}$. Then:
%\begin{equation*}
%\begin{aligned}
%H^{d,1}(\fm,\fg)&=....\;,\\
%H^{d,2}(\fm,\fg)_{\bar 0}&=....\;,\\
%H^{d,2}(\fm,\fg)_{\bar 1}&=....\;,
%\end{aligned}
%\end{equation*}
%where.....\FIXMEA{To be done.}
%\end{theorem}
\begin{proof}
The claim on first cohomology groups has already been established in Theorem \ref{thm:235H^1} whereas the vanishing of $H^{d,2}(\fm,\fg)_{\bar 1}$ in positive degrees uses Djokovi\'{c}--Hochschild theorem and it is postponed to Proposition \ref{prop:SC2IVodd}. It remains to compute the even part of the second cohomology groups, which we do via Propositions \ref{prop:4spectralsequences} and \ref{cor:improvement}
and an explicit description of the groups $H^{d,2}(\fm_{\bar 1},\fg)_{\bar 0}$.

In Appendix \ref{appendixA}, we show that $H^{d,2}(\fm_{\bar 1},\fg)_{\bar 0}=0$
for all $d\geq 3$, so in particular $H^{d,2}(\fm,\fg)_{\bar 0}=0$ for all $d \geq 5$ by \eqref{sequenzaH^2VI}. The even part
of \eqref{sequenzaH^2II} is
\begin{equation*}
\begin{split}
0\longrightarrow H^{3,2}(\fm_{\bar 1},\fg)_{\bar 0}\longrightarrow H^{3,2}(\fm,\fg)_{\bar 0}\longrightarrow 0\;,
\end{split}
\end{equation*}
whence $H^{3,2}(\fm,\fg)_{\bar 0}=0$ as well.

By the first part of Proposition \ref{thm:cohomology42} in Appendix \ref{appendixA} and Proposition \ref{cor:improvement}
we have an exact sequence
$$
0\longrightarrow H^{4,2}(\fm,\fg)_{\bar 0}\longrightarrow S^2\CC^2\boxtimes \fsp(2)\;,
$$
so that $H^{4,2}(\fm,\fg)_{\bar 0}=0$, since $H^{4,2}(\fm,\fg)_{\bar 0}$ does not contain any $(\fg_0)_{\bar 0}$-submodule isomorphic to $S^2\CC^2\boxtimes S^2\CC^2$ by the second part of Proposition \ref{thm:cohomology42}.

It remains to deal with the cases $d=1,2$, which we now study separately.
\vskip0.1cm\par\noindent
{\underline{Case $d=2$}} We have
$H^{2,2}(\fm,\fg)_{\bar 0}\cong H^{2,2}(\fm_{\bar 1},\fg)_{\bar 0}$ by Proposition \ref{prop:4spectralsequences} and we compute the latter.

We first note that there is a non-trivial space
$(\fg_{1})_{\bar 1}\otimes (\fg_{-1})_{\bar 1}^*+(\fg_{0})_{\bar 1}\otimes (\fg_{-2})_{\bar 1}^*$
of $1$-cochains, on which the differential acts faithfully. Indeed, we may use the associated $2$-coboundaries to accomodate any
$2$-cocycle $\varphi\in Z^{2,2}(\fm_{\bar 1},\fg)_{\bar 0}$ to
vanish on $(\fg_{-2})_{\bar 0}\otimes (\fg_{-1})_{\bar 1}$ and $(\fg_{-2})_{\bar 0}\otimes (\fg_{-2})_{\bar 1}$.
%$(\fg_{-2})_{\bar 0}\otimes (\fg_{-1})_{\bar 1}\longrightarrow (\fg_{-1})_{\bar 1}$ and $(\fg_{-2})_{\bar 0}\otimes (\fg_{-2})_{\bar 1}\longrightarrow (\fg_{-2})_{\bar 1}$.
Any such ``normalized'' $2$-cocycle has therefore components
\begin{align*}
\varphi_{a b\b}{}^\gamma&:(\fg_{-1})_{\bar 0}\otimes(\fg_{-1})_{\bar 1}\to(\fg_{0})_{\bar 1}\;,\\
\varphi_{a\a b\b}{}^Z+\varphi_{a\a b\b}{}^{\fsl(2)}+\varphi_{a\a b\b}{}^{\fsp(2)}&:\Lambda^2(\fg_{-1})_{\bar 1}\to(\fg_{0})_{\bar 0}=\mathbb {C}Z\oplus \fsl(2)\oplus\fsp(2)\;,\\
\varphi_{a\a\b}{}^{b}&:(\fg_{-1})_{\bar 1}\otimes (\fg_{-2})_{\bar 1}\longrightarrow (\fg_{-1})_{\bar 0}\;,\\
\varphi_{\a a}{}^{b\b}&:(\fg_{-2})_{\bar 1}\otimes(\fg_{-1})_{\bar 0}\longrightarrow (\fg_{-1})_{\bar 1}\;,\\
\varphi_{a\a b}{}^{\gamma}&:(\fg_{-1})_{\bar 1}\otimes\fg_{-3}\longrightarrow (\fg_{-2})_{\bar 1}\;,\\
\varphi_{\a\b}{}^{\1}&:\Lambda^2 (\fg_{-2})_{\bar 1}\longrightarrow (\fg_{-2})_{\bar 0}\;.
\end{align*}
First of all we note
\begin{align*}
\partial\varphi|_{(\fg_{-2})_{\bar 0}\otimes(\fg_{-1})_{\bar 1}\otimes(\fg_{-2})_{\bar 1}}
&=0\Longrightarrow \varphi_{a\a\b}{}^{b}\be_b=0\Longrightarrow \varphi_{a\a\b}{}^{b}=0\;,\\
\partial\varphi|_{(\fg_{-2})_{\bar 0}\otimes(\fg_{-1})_{\bar 1}\otimes(\fg_{-1})_{\bar 1}}
&=0\Longrightarrow \varphi_{a\a b\b}{}^Z=0\;.
\end{align*}
Now
\begin{align}
\notag
\partial\varphi|_{(\fg_{-2})_{\bar 0}\otimes(\fg_{-1})_{\bar 1}\otimes(\fg_{-1})_{\bar 0}}
&=0\Longrightarrow [\varphi_{a b\b}{}^\gamma\bep_\gamma,\1]+\varphi(\bep_{b\b},[\be_a, \1])=0\\
&\;\;\;\;\;\;\,\notag\Longrightarrow 2\varphi_{a b\b}{}^\gamma\bep_\gamma+\varphi_{b\b a}{}^\gamma\bep_\gamma=0\\
&\;\;\;\;\;\;\,\label{eq:2,2I}\Longrightarrow \varphi_{b\b a}{}^\gamma=-2\varphi_{a b\b}{}^\gamma
\end{align}
and
\begin{align}
\notag
\partial\varphi|_{(\fg_{-1})_{\bar 0}\otimes(\fg_{-1})_{\bar 0}\otimes(\fg_{-1})_{\bar 1}}
&\notag=0\Longrightarrow [\be_a,\varphi_{b c\gamma}{}^\delta \bep_\delta]-[\be_b,\varphi_{a c\gamma}{}^\delta \bep_\delta]\\
&\notag\;\;\;\;\;\;\;\;\;\;\;\;\;\;-\varphi([\be_b,\bep_{c\gamma}],\be_a)+\varphi([\be_a,\bep_{c\gamma}],\be_b)=0
\\
&\;\;\;\;\;\;\,\notag\Longrightarrow \varphi_{b c\gamma}{}^\delta \bep_{a\delta}-\varphi_{a c\gamma}{}^\delta\bep_{b\delta}=
\omega_{ac}\varphi_{\gamma b}{}^{d\delta}\bep_{d\delta}
-\omega_{bc}\varphi_{\gamma a}{}^{d\delta}\bep_{d\delta}
\\
&\;\;\;\;\;\;\,\label{eq:2,2II}\Longrightarrow \varphi_{b c\gamma}{}^\delta \delta^d_a
-\varphi_{a c\gamma}{}^\delta \delta^{d}_b=\omega_{ac}\varphi_{\gamma b}{}^{d\delta}
-\omega_{bc}\varphi_{\gamma a}{}^{d\delta}\;.
\end{align}
In a similar way, one gets
\begin{align}
\label{eq:2,2III}
\partial\varphi|_{(\fg_{-1})_{\bar 0}\otimes(\fg_{-2})_{\bar 1}\otimes(\fg_{-2})_{\bar 1}}
&=0%\Longrightarrow
%\varphi_{\a\b}{}^\1\be_a+\varphi_{\b a}{}^{c\gamma}\omega_{\gamma\a}\be_c+\varphi_{\a a}{}^{c\gamma}\omega_{\gamma\b}\be_{c}
%\varphi_{\a\b}{}^\1\delta^{c}_a=\varphi_{\b a}{}^{c}{}_{\a}+\varphi_{\a a}{}^{c}{}_{\beta}
\Longrightarrow
\omega_{ba}\varphi_{\a\b}{}^\1=\varphi_{\b ab}{}_{\a}+\varphi_{\a a b}{}_{\beta}
\;,\\
\label{eq:2,2IV}
\partial\varphi|_{(\fg_{-2})_{\bar 1}\otimes(\fg_{-1})_{\bar 0}\otimes(\fg_{-1})_{\bar 0}}
&=0\Longrightarrow
%\omega_{ac}\varphi_{\a b}{}^{c\gamma}\bep_{\gamma}-\omega_{bc}\varphi_{\a a}{}^{c\gamma}\bep_{\gamma}=0
%\varphi_{\a b a\gamma}=\varphi_{\a a b\gamma}
\varphi_{\a [a b]\gamma}=0
\;,\\
\label{eq:2,2V}
\partial\varphi|_{(\fg_{-2})_{\bar 1}\otimes(\fg_{-1})_{\bar 1}\otimes(\fg_{-1})_{\bar 0}}
&=0\Longrightarrow
%\varphi_{c b\b}{}^\gamma\omega_{\gamma\a}+\omega_{bd}\omega_{\b\delta}\varphi_{\a c}{}^{d\delta}-\omega_{cb}\varphi_{\b\a}{}^{\1}=0
\varphi_{c b\b\a}-\varphi_{\a cb\b}=\omega_{bc}\varphi_{\a\b}{}^{\1}
\;.
%\partial\varphi|_{(\fg_{-2})_{\bar 1}\otimes(\fg_{-1})_{\bar 1}\otimes(\fg_{-1})_{\bar 1}}
%&=0\Longrightarrow
\end{align}
It is clear by equations \eqref{eq:2,2III}-\eqref{eq:2,2V} that
\begin{align}
\varphi_{\a\b}{}^\1&=0\;,\\
\varphi_{\b ab}{}_{\a}&=-\varphi_{\a a b}{}_{\beta}\;,\\
\label{eq:relation}
\varphi_{\a cb\b}&=\varphi_{c b\b\a}\;,
\end{align}
whence
\begin{equation}
\label{eq:fundsymm}
\varphi_{c b\b\a}=-\varphi_{c b\a\b}=\varphi_{bc\b\a}\;,
\end{equation}
i.e., the first component of the normalized cocyle defines an element of $S^2\CC^2\boxtimes\Lambda^2\CC^2$.

We want to show that \eqref{eq:2,2II} is automatically satisfied. We first plug \eqref{eq:relation} back into \eqref{eq:2,2II}, lower all the indices, use skew-symmetry \eqref{eq:fundsymm} in the greek indices and rearrange terms to arrive at the equation
%\begin{align*}
%\omega_{da}\varphi_{b c\gamma\delta}
%-\omega_{db}\varphi_{a c\gamma\delta} &=
%\omega_{ac}\varphi_{bd\delta\gamma}
%-\omega_{bc}\varphi_{ad\delta\gamma}\\
%&=\omega_{ca}\varphi_{bd\gamma\delta}
%-\omega_{cb}\varphi_{ad\gamma\delta}
%\end{align*}
\begin{align*}
\omega_{da}\varphi_{b c\gamma\delta}-\omega_{ca}\varphi_{bd\gamma\delta}-\omega_{db}\varphi_{a c\gamma\delta}
+\omega_{cb}\varphi_{ad\gamma\delta}=0\;,
\end{align*}
which, suppressing greek indices, is a linear equation on symmetric bilinear forms on $\CC^2$ (w.r.t. latin indices). By $\fsl(2)$-equivariance, this equation is either solved only by  the zero form or the whole $S^2\CC^2$. It is not difficult to check that at least one non-zero form solves the equation, hence all do.

The cocyle conditions
\begin{align}
\notag
\partial\varphi|_{\fg_{-3}\otimes(\fg_{-1})_{\bar 1}\otimes(\fg_{-1})_{\bar 1}}
&=0\notag\Longrightarrow [\varphi_{a\a b\b}{}^{\fsl(2)},\be_c]=\omega_{\a\gamma}\varphi_{b\b c}{}^{\gamma}\be_{a}+
\omega_{\b\gamma}\varphi_{a\a c}{}^{\gamma}\be_{b}\\
&\notag\;\;\;\;\;\;\,\Longrightarrow
[\varphi_{a\a b\b}{}^{\fsl(2)},\be_c]=\varphi_{b\b c\a}\be_{a}+
\varphi_{a\a c\b}\be_{b}\\
&\label{eq:2,2VI}\;\;\;\;\;\;\,\Longrightarrow
[\varphi_{a\a b\b}{}^{\fsl(2)},\be_c]=
-2\varphi_{cb\b\a}\be_{a}
-2\varphi_{ca\a\b}\be_{b}
\;,\\
\notag
\partial\varphi|_{(\fg_{-2})_{\bar 1}\otimes(\fg_{-1})_{\bar 1}\otimes(\fg_{-1})_{\bar 1}}
&=0\Longrightarrow [\varphi_{a\a b\b}{}^{\fsp(2)},\bep_{\gamma}]=\omega_{\a\gamma}\varphi_{b\b a}{}^{\delta}\bep_{\delta}
+\omega_{\b\gamma}\varphi_{a\a b}{}^{\delta}\bep_\delta\\
&\;\;\;\;\;\;\,\label{eq:2,2VII}\Longrightarrow [\varphi_{a\a b\b}{}^{\fsp(2)},\bep_{\gamma}]=-2\omega_{\a\gamma}\varphi_{a b\b}{}^\delta\bep_{\delta}
-2\omega_{\b\gamma}\varphi_{ba\a}{}^\delta\bep_\delta
\;,
\end{align}
determine the remaining components taking values in $\fsl(2)$ and $\fsp(2)$ in terms, again, of the first component of the normalized cocyle. It is a direct matter to verify that the remaining cocycle conditions on $(\fg_{-1})_{i}\otimes(\fg_{-1})_{\bar 1}\otimes(\fg_{-1})_{\bar 1}$, $i\in\mathbb Z_2$, are all automatically satisfied.

In summary, all non-trivial components of the normalized cocyle are determined by the first component
$$\varphi_{ab\b\gamma}\in S^2\CC^2\boxtimes\Lambda^2\CC^2$$ via \eqref{eq:2,2I}, \eqref{eq:relation}, \eqref{eq:2,2VI} and \eqref{eq:2,2VII}.
Hence $H^{2,2}(\fm,\fg)_{\bar 0}\cong S^2\CC^2\boxtimes\Lambda^2\CC^2$.
\vskip0.1cm\par\noindent
{\underline{Case $d=1$}}
We recall that $H^{1,2}(\fm,\fg)_{\bar 0}\cong H^{1,2}(\fm_{\bar 1},\fg)_{\bar 0}/\partial(\fsp(2)\otimes (\fg_{-1})_{\bar 0}^*)$ by Proposition \ref{prop:4spectralsequences} and note that any $2$-cocycle $\varphi\in Z^{1,2}(\fm_{\bar 1},\fg)_{\bar 0}$ has components
\begin{align*}
\varphi_{a}{}^{c\gamma}{}_{b\beta}&:(\fg_{-1})_{\bar 0}\otimes(\fg_{-1})_{\bar 1}\to(\fg_{-1})_{\bar 1}\;,\\
\varphi_{a\alpha b\beta}{}^{c}&:\Lambda^2(\fg_{-1})_{\bar 1}\to(\fg_{-1})_{\bar 0}\;,\\
\varphi_{a\alpha\1}{}^{\beta}&:(\fg_{-1})_{\bar 1}\otimes (\fg_{-2})_{\bar 0}\longrightarrow (\fg_{-2})_{\bar 1}\;,\\
\varphi_{a\alpha \b}{}^{\1}&:(\fg_{-1})_{\bar 1}\otimes (\fg_{-2})_{\bar 1}\longrightarrow (\fg_{-2})_{\bar 0}\;,\\
\varphi_{a}{}^{\gamma}{}_{\beta}&:(\fg_{-1})_{\bar 0}\otimes(\fg_{-2})_{\bar 1}\longrightarrow (\fg_{-2})_{\bar 1}\;,\\
\varphi_{\alpha\beta}{}^a&:\Lambda^2 (\fg_{-2})_{\bar 1}\longrightarrow \fg_{-3}\;.
\end{align*}
Now $H^{1,1}(\fm_{\bar 1},\fg)_{\bar 0}=0$ by Lemma \ref{lem:cohomM1}, whence $Z^{1,1}(\fm_{\bar 1},\fg)_{\bar 0}=B^{1,1}(\fm_{\bar 1},\fg)_{\bar 0}=0$ and the differential $\partial$ acts faithfully on the space of $1$-cochains
$
C^{1,1}(\fm_{\bar 1},\fg)_{\bar 0}=(\fg_{0})_{\bar 1}\otimes (\fg_{-1})_{\bar 1}^*+(\fg_{-1})_{\bar 1}\otimes (\fg_{-2})_{\bar 1}^*
$.
We may use the associated $2$-coboundaries to modify $\varphi\in Z^{1,2}(\fm_{\bar 1},\fg)_{\bar 0}$ in such a way that
\begin{equation}
\begin{split}
\label{eq:twocondnorm}
\varphi_{a\alpha\1}{}^{\beta}&=0\;,\\
\varphi_{a\alpha \b}{}^{\1}&=0\;,
\end{split}
\end{equation}
and, when working in $H^{1,2}(\fm,\fg)_{\bar 0}\cong H^{1,2}(\fm_{\bar 1},\fg)_{\bar 0}/\partial(\fsp(2)\otimes (\fg_{-1})_{\bar 0}^*)$, we may quotient also with $\partial(\fsp(2)\otimes (\fg_{-1})_{\bar 0}^*)$ and arrange for
\begin{equation}
\begin{split}
\label{eq:onecondnorm}
\varphi_{a}{}_{\gamma\beta}=-\varphi_{a}{}_{\beta\gamma}\;.
\end{split}
\end{equation}
We have proved that $H^{1,2}(\fm,\fg)_{\bar 0}$ is isomorphic with the space of $2$-cocyles $\varphi\in Z^{1,2}(\fm_{\bar 1},\fg)_{\bar 0}$
which satisfy \eqref{eq:twocondnorm}-\eqref{eq:onecondnorm}. We now show that this space is trivial.

The non-trivial cocyle conditions are:
\begin{align}
\label{eq:ICC}
\partial\varphi|_{(\fg_{-2})_{\bar 0}\otimes(\fg_{-1})_{\bar 1}\otimes(\fg_{-1})_{\bar 1}}
&=0\Longrightarrow \varphi_{a\a b\b}{}^c=0\;,\\
\label{eq:IICC}
\partial\varphi|_{(\fg_{-2})_{\bar 1}\otimes(\fg_{-1})_{\bar 1}\otimes(\fg_{-1})_{\bar 0}}
&=0\Longrightarrow
 \varphi_{a c\gamma b\beta}-\omega_{cb}\varphi_{a\beta\gamma}-\omega_{ab}\varphi_{\beta\gamma c}=0
\;,\\
\label{eq:IIICC}
\partial\varphi|_{(\fg_{-1})_{\bar 0}\otimes(\fg_{-1})_{\bar 1}\otimes(\fg_{-1})_{\bar 1}}
&=0\Longrightarrow \varphi_{ac\gamma b\beta}+\varphi_{ab\beta c\gamma}=0\;,\\
\label{eq:VICC}
\partial\varphi|_{(\fg_{-1})_{\bar 0}\otimes(\fg_{-1})_{\bar 0}\otimes(\fg_{-1})_{\bar 1}}
&=0\Longrightarrow \varphi_{ba\delta c\gamma}-\varphi_{ab\delta c\gamma}
=\omega_{cb}\varphi_{a\delta\gamma}-\omega_{ca}\varphi_{b\delta\gamma}\;.
\end{align}
It follows that
\begin{align*}
\varphi_{ba\delta c\gamma}-\varphi_{ab\delta c\gamma}
&=\omega_{cb}\varphi_{a\delta\gamma}-\omega_{ca}\varphi_{b\delta\gamma}\\
&=\varphi_{a c\gamma b\delta}-\omega_{ab}\varphi_{\delta\gamma c}
-\varphi_{b c\gamma a\delta}+\omega_{ba}\varphi_{\delta\gamma c}\\
&=\varphi_{a c\gamma b\delta}-\varphi_{b c\gamma a\delta}-2\omega_{ab}\varphi_{\delta\gamma c}\\
&=-\varphi_{a b\delta c\gamma}+\varphi_{ba\delta c\gamma}-2\omega_{ab}\varphi_{\delta\gamma c}\;,
\end{align*}
whence $\varphi_{\alpha\beta}{}^{a}=0$. Furthermore
\begin{align*}
\varphi_{a c\gamma b\beta}&=\omega_{cb}\varphi_{a\beta\gamma}\\
&=\omega_{bc}\varphi_{a\gamma\beta}\\
&=\varphi_{a b\beta c\gamma}
\end{align*}
so that $\varphi_{a}{}^{c\gamma}{}_{b\beta}=0$ and $\varphi_{a}{}^{\gamma}{}_{\beta}=0$.
\end{proof}
Recall that $H^{d,2}(\fm,\fg)$ is naturally an  $\mathfrak{osp}(1|2)$-module and that
its odd part has highest weights under $\mathfrak{osp}(1|2)_{\bar 0}\cong\fsp(2)$ which are necessarily odd integers. By Djokovi\'{c}--Hochschild theorem and the results on $H^{d,2}(\fm,\fg)_{\bar 0}$ of Theorem \ref{thm:SC2IV}, it follows that
$H^{d,2}(\fm,\fg)_{\bar 1}=0$ for all positive $d\neq 2$, and that $H^{2,2}(\fm,\fg)_{\bar 1}$ is either vanishing or $S^2\CC^2\boxtimes\CC^2$.

%We will now show that $H^{2,2}(\fm,\fg)_{\bar 1}=0$ as well.
\begin{proposition}
\label{prop:SC2IVodd}
Let $\fg=\fg_{-3}\oplus\cdots\oplus\fg_{3}$ be the $\mathbb Z$-grading of $\fg=\mathrm{G}(3)$ with the associated parabolic subalgebra $\fp_2^{\rm IV}$. Then $H^{d,2}(\fm,\fg)_{\bar 1}=0$ for all $d>0$.
\end{proposition}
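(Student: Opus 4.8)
The plan is to reduce the whole statement to the single degree $d=2$ and there to eliminate the remaining possibility $H^{2,2}(\fm,\fg)_{\bar 1}\cong S^2\bbC^2\boxtimes\bbC^2$ left open by the discussion preceding the statement. For $d\neq 2$ the vanishing is already automatic: by the Djokovi\'c--Hochschild theorem \cite{MR0387363} each $H^{d,2}(\fm,\fg)$ is a completely reducible $\mathfrak{osp}(1|2)$-module, and since in the SHC grading parity is correlated with the parity of the $\fsp(2)\cong\mathfrak{osp}(1|2)_{\bar 0}$-weight (the odd part $\fg_{\bar 1}=\bbC^7\boxtimes\bbC^2$ carries exactly the odd weights $\pm 1$), every irreducible $\mathfrak{osp}(1|2)$-constituent has \emph{nonzero even part} — the trivial one equals its even part, and a nontrivial one restricts to two consecutive $\fsp(2)$-irreducibles of opposite parity \cite{MR0438925}, one of which has even weights. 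Hence $H^{d,2}(\fm,\fg)_{\bar 0}=0$ forces $H^{d,2}(\fm,\fg)=0$, so in particular $H^{d,2}(\fm,\fg)_{\bar 1}=0$ for all $d>0$ with $d\neq 2$.

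For $d=2$ I would first use \eqref{sequenzaH^2VI} to identify $H^{2,2}(\fm,\fg)\cong H^{2,2}(\fm_{\bar 1},\fg)$ in both parities, and then exploit the general fact that $H^{2,2}(\fm,\fg)$ is \emph{generated by its even part} as an $\mathfrak{osp}(1|2)$-module: each irreducible summand, being either trivial or of the form $V\oplus V'$ under $\fsp(2)$ with the odd elements acting with full rank \cite{MR0438925}, is generated by its even summand. Given this, it suffices to prove that the odd part $(\fg_0)_{\bar 1}=\tspan\{\bep_\gamma\}$ of $\fg_0\cong\bbC Z\oplus\fsl(2)\oplus\mathfrak{osp}(1|2)$ annihilates $H^{2,2}(\fm,\fg)_{\bar 0}$. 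Indeed, if $(\fg_0)_{\bar 1}\cdot H^{2,2}(\fm,\fg)_{\bar 0}=0$ then $H^{2,2}(\fm,\fg)_{\bar 0}$ is already $\mathfrak{osp}(1|2)$-stable (the even factor $\fsp(2)$ preserves parity), and being generating it must equal the whole module, forcing $H^{2,2}(\fm,\fg)_{\bar 1}=0$.

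The concrete step is therefore to take the canonical even representative $\varphi$ supplied by Theorem \ref{thm:SC2IV}, all of whose components are determined by $\varphi_{ab\beta}{}^{\gamma}\in S^2\bbC^2\boxtimes\Lambda^2\bbC^2$ through \eqref{eq:cocyleconditions}, and to compute the Chevalley--Eilenberg action $\bep_\gamma\cdot\varphi$ via \eqref{eq:CE2} and the explicit brackets of the preceding Lemma. Since $\bep_\gamma$ has $\bbZ$-degree $0$, this is again a degree-$2$ cocycle, now odd, and I would exhibit an explicit odd $1$-cochain $\psi$ with $\bep_\gamma\cdot\varphi=\partial\psi$, i.e.\ $\bep_\gamma\cdot[\varphi]=0$. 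Equivariance cuts the work down to a single such verification: checking one odd generator on one $\fsl(2)$-highest-weight even class suffices, because the second odd generator is recovered by bracketing with the $\fsp(2)$-lowering operator (which annihilates the weight-$0$ even class), and the commuting $\fsl(2)$-action, irreducible on the multiplicity space $S^2\bbC^2$, then propagates the vanishing to all of $H^{2,2}(\fm,\fg)_{\bar 0}$.

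I expect the genuine obstacle to be exactly this last verification — producing the primitive $\psi$, or equivalently confirming that the normalized odd cocycle $\bep_\gamma\cdot\varphi$ carries no new cohomology — since the super-signs in \eqref{eq:CE2} interact with the several interlocking components of \eqref{eq:cocyleconditions}. A useful safeguard throughout will be to track the $\fsp(2)$-weight: $\bep_\gamma\cdot\varphi$ necessarily has weight $\pm 1$ and can only land in the (at most $S^2\bbC^2\boxtimes\bbC^2$) odd cohomology, which sharply constrains the admissible coboundary terms and makes the construction of $\psi$ essentially forced.
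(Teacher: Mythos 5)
Your reduction to the single degree $d=2$ is exactly the paper's own: the Djokovi\'{c}--Hochschild theorem plus the correlation between parity and the parity of the $\fsp(2)$-weight, combined with the even-part computation of Theorem \ref{thm:SC2IV}, leaves only the possibility $H^{2,2}(\fm,\fg)_{\bar 1}\cong S^2\bbC^2\boxtimes\bbC^2$ (this is precisely the discussion the paper places just before the proposition). For $d=2$ your route then genuinely diverges from the paper's: you propose to show that the odd part of $\fg_0$ annihilates $H^{2,2}(\fm,\fg)_{\bar 0}$ and to conclude via the fact that, purely odd trivial constituents being excluded by weight parity, the $\mathfrak{osp}(1|2)$-module $H^{2,2}(\fm,\fg)$ is generated by its even part. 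This structural frame is sound: the commutator trick recovering the second odd generator from the first is valid because $\fsp(2)$ acts trivially on the weight-zero even classes, the $\fsl(2)$-equivariance argument legitimately reduces the check to a single highest-weight vector, and using Theorem \ref{thm:SC2IV} is not circular, since its proof invokes the present proposition only for the odd part.

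The gap is that your proof stops exactly where the content begins. Verifying $\bep_\gamma\cdot[\varphi]=0$ --- i.e., computing $\bep_\gamma\cdot\varphi$ from the interlocking components \eqref{eq:cocyleconditions} and exhibiting a primitive $\psi$ --- is announced (``I would exhibit an explicit odd $1$-cochain $\psi$\dots'') but never carried out, and nothing in the structure theory guarantees its outcome in advance: if $H^{2,2}(\fm,\fg)$ were the nontrivial module $S^2\bbC^2\boxtimes(V_0\oplus V_1)$, then $\bep_\gamma$ would act nontrivially on the even classes, so your deferred verification is logically equivalent to the proposition itself rather than a simplification of it. Compare with what the paper actually does at $d=2$: it passes via \eqref{sequenzaH^2VI} to $H^{2,2}(\fm_{\bar 1},\fg)_{\bar 1}$, writes out all components of an \emph{arbitrary} odd cocycle, normalizes three families of components to zero by coboundaries (one normalization resting on the triviality of the first prolongation of $\fsp(2)$ acting on the purely odd $\bbC^2$), and then shows every remaining component vanishes from the cocycle identities --- a finite computation carried through to the end. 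Until you execute your single verification (or, equivalently, redo the paper's normalization-and-vanishing computation for the specific cocycles $\bep_\gamma\cdot\varphi$), what you have is a correct and even attractive reduction, not a proof.
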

\begin{proof}
We only need to show $H^{2,2}(\fm,\fg)_{\bar 1}=0$ and by the exact sequence \eqref{sequenzaH^2VI} this is the same as $H^{2,2}(\fm_{\bar 1},\fg)_{\bar 1}=0$. A cocyle $\varphi\in Z^{2,2}(\fm_{\bar 1},\fg)_{\bar 1}$ has the components
\begin{align*}
\varphi_{a b\b}{}^{Z}+\varphi_{a b\b}{}^{\fsl(2)}+\varphi_{a b\b}{}^{\fsp(2)}&:(\fg_{-1})_{\bar 0}\otimes(\fg_{-1})_{\bar 1}\to(\fg_{0})_{\bar 0}=\mathbb {C}Z\oplus \fsl(2)\oplus\fsp(2)\;,\\
\varphi_{a\a b\b}{}^{\gamma}&:\Lambda^2(\fg_{-1})_{\bar 1}\to(\fg_{0})_{\bar 1}\;,\\
\varphi_{\a}{}^{b\b}{}_{c\gamma}&:(\fg_{-2})_{\bar 1}\otimes(\fg_{-1})_{\bar 1} \longrightarrow (\fg_{-1})_{\bar 1}\;,\\
\varphi_{\1a\a}{}^{b}&:(\fg_{-2})_{\bar 0}\otimes (\fg_{-1})_{\bar 1}\longrightarrow (\fg_{-1})_{\bar 0}\;,\\
\varphi_{\a}{}^b{}_a&:(\fg_{-2})_{\bar 1}\otimes(\fg_{-1})_{\bar 0} \longrightarrow (\fg_{-1})_{\bar 0}\;,\\
\varphi_{a b\b}{}^\1&:\fg_{-3}\otimes (\fg_{-1})_{\bar 1}\longrightarrow (\fg_{-2})_{\bar 0}\;,\\
\varphi_{\a\b}{}^\gamma&:\Lambda^2 (\fg_{-2})_{\bar 1}\longrightarrow (\fg_{-2})_{\bar 1}\;,\\
\varphi_{\1\a}{}^{\1}&:(\fg_{-2})_{\bar 0}\otimes (\fg_{-2})_{\bar 1}\longrightarrow (\fg_{-2})_{\bar 0}\;,\\
\varphi_{a\a}{}^{b}&: \fg_{-3}\otimes(\fg_{-2})_{\bar 1}\longrightarrow \fg_{-3}\;.
\end{align*}
In this case, we may use $2$-coboundaries in $\partial((\fg_{1})_{\bar 0}\otimes (\fg_{-1})_{\bar 1}^*)$
and $\partial((\mathbb {C}Z\oplus \fsl(2))\otimes (\fg_{-2})_{\bar 1}^*)$ to accomodate first for
\begin{align}
\label{eq:B1}
\varphi_{\1a\a}{}^{b}&=0\;,\\
\label{eq:B2}
\varphi_{\a}{}^b{}_a&=0\;.
\end{align}
Using that the first prolongation of $\fsp(2)$ acting on the purely odd space $\mathbb{C}^2$ is trivial (see, e.g. \cite{MR2552682}), one sees that the component $\varphi_{\a\b}{}^\gamma$ of the Spencer differential on $\fsp(2)\otimes (\fg_{-2})_{\bar 1}^*$ is injective, hence an isomorphism and
\begin{align}
\label{eq:B3}
\varphi_{\a\b}{}^\gamma&=0
\end{align}
too. It follows that $H^{2,2}(\fm_{\bar 1},\fg)_{\bar 1}$ is isomorphic to the space of
$2$-cocycles satisfying \eqref{eq:B1}-\eqref{eq:B3}.
%\begin{equation}
%\label{eq:symmetry}
%\varphi_{a b\b}{}^{E}=\varphi_{(a b)\b}{}^{E}\;,\quad \varphi_{a b\b}{}^{\fsl(2)}=\varphi_{(a b)\b}{}^{\fsl(2)}\;,\quad
%\varphi_{a b\b}{}^{\fsp(2)}=\varphi_{(a b)\b}{}^{\fsp(2)}\;.
%\end{equation}
%Since we already know that $H^{2,2}(\fm,\fg)_{\bar 1}$ is either vanishing or isomorphic to $S^2\CC^2\boxtimes\CC^2$, we may restrict our analysis on $(\fg_{0})_{\bar 0}$-modules isomorphic to $S^2\CC^2\boxtimes\CC^2$
%\begin{align*}
%\varphi_{a b\b}{}^{E}+\varphi_{a b\b}{}^{\fsl(2)}+\varphi_{a b\b}{}^{\fsp(2)}&:(\fg_{-1})_{\bar 0}\otimes(\fg_{-1})_{\bar 1}\to(\fg_{0})_{\bar 0}=\mathbb {C}E\oplus \fsl(2)\oplus\fsp(2)\;,\\
%\varphi_{a\a b\b}{}^{\gamma}&:\Lambda^2(\fg_{-1})_{\bar 1}\to(\fg_{0})_{\bar 1}\;,\\
%\varphi_{\a}{}^{b\b}{}_{c\gamma}&:(\fg_{-2})_{\bar 1}\otimes(\fg_{-1})_{\bar 1} \longrightarrow (\fg_{-1})_{\bar 1}\;,\\
%\varphi_{\a}{}^b{}_a&:(\fg_{-2})_{\bar 1}\otimes(\fg_{-1})_{\bar 0} \longrightarrow (\fg_{-1})_{\bar 0}\;,\\
%\varphi_{a b\b}{}^\1&:\fg_{-3}\otimes (\fg_{-1})_{\bar 1}\longrightarrow (\fg_{-2})_{\bar 0}\;,\\
%\varphi_{\a\b}{}^\gamma&:\Lambda^2 (\fg_{-2})_{\bar 1}\longrightarrow (\fg_{-2})_{\bar 1}\;,\\
%\varphi_{\1\a}{}^{\1}&:(\fg_{-2})_{\bar 0}\otimes (\fg_{-2})_{\bar 1}\longrightarrow (\fg_{-2})_{\bar 0}\;,\\
%\varphi_{a\a}{}^{b}&: \fg_{-3}\otimes(\fg_{-2})_{\bar 1}\longrightarrow \fg_{-3}\;.
%\end{align*}

We first note
\begin{align*}
\partial\varphi|_{(\fg_{-2})_{\bar 0}\otimes(\fg_{-1})_{\bar 1}\otimes(\fg_{-1})_{\bar 1}}
&=0\Longrightarrow \varphi_{a\a b\b}{}^{\gamma}=0\;,\\
\partial\varphi|_{(\fg_{-2})_{\bar 1}\otimes(\fg_{-1})_{\bar 0}\otimes(\fg_{-1})_{\bar 0}}
&=0\Longrightarrow \varphi_{\1\a}{}^{\1}=0\;,\\
\partial\varphi|_{(\fg_{-2})_{\bar 1}\otimes(\fg_{-2})_{\bar 0}\otimes(\fg_{-1})_{\bar 0}}
&=0\Longrightarrow \varphi_{a\a}{}^{b}=0\;,\\
\partial\varphi|_{(\fg_{-2})_{\bar 0}\otimes(\fg_{-1})_{\bar 0}\otimes(\fg_{-1})_{\bar 1}}
&=0\Longrightarrow \varphi_{a b\b}{}^\1=-2\varphi_{a b\b}{}^{Z}\;,\\
\partial\varphi|_{(\fg_{-2})_{\bar 1}\otimes(\fg_{-2})_{\bar 1}\otimes(\fg_{-1})_{\bar 1}}
&=0\Longrightarrow \varphi_{\b b\alpha c\gamma}=-\varphi_{\a b\b c\gamma} \;,
\end{align*}
and that the identity with $3$ odd elements of degree $-1$ is automatically satisfied.
%We shall write
%$
%\varphi_{\a a\b b\gamma}=\omega_{\a\beta}\Psi_{ab\gamma}
%$
%for some tensor $\Psi$.

Now
\begin{align*}
\partial\varphi|_{(\fg_{-3})_{\bar 0}\otimes(\fg_{-1})_{\bar 0}\otimes(\fg_{-1})_{\bar 1}}
&=0\Longrightarrow [\varphi_{bc\gamma}{}^{\fsl(2)},\be_a]=3\varphi_{bc\gamma}{}^{Z}\be_a -\varphi_{ac\gamma}{}^\1\be_b\\
&\;\;\;\;\;\;\,\Longrightarrow [\varphi_{bc\gamma}{}^{\fsl(2)},\be_a]=3\varphi_{bc\gamma}{}^{Z}\be_a+2\varphi_{a c\gamma}{}^{Z}
\be_b
\end{align*}
whose pure trace as an endomorphism of $(\fg_{-3})_{\bar 0}$ is $0=3\omega_{af}\varphi_{bc\gamma}{}^{Z}+\omega_{bf}\varphi_{a c\gamma}{}^{Z}-\omega_{ba}\varphi_{f c\gamma}{}^{Z}$. Multiplying by $\omega^{af}$ yields $8\varphi_{bc\gamma}{}^{Z}=0$, so that
\begin{align*}
\varphi_{b c\gamma}{}^{Z}&=0\;,\\
\varphi_{bc\gamma}{}^{\fsl(2)}&=0\;,\\
\varphi_{b c\gamma}{}^{\1}&=0\;.
\end{align*}
Finally
\begin{align*}
\partial\varphi|_{(\fg_{-2})_{\bar 1}\otimes(\fg_{-1})_{\bar 1}\otimes(\fg_{-1})_{\bar 0}}
&=0\Longrightarrow [\varphi_{ba\a}{}^{\fsp(2)},\bep_\beta]=\varphi_{\b b}{}^{\delta}{}_{a\a}\bep_{\delta}\\
&\;\;\;\;\;\;\,\Longrightarrow  \varphi_{ba\a\delta\beta}=\varphi_{\b b\delta a\a}\;,
\end{align*}
which implies the vanishing of the two terms separately, due to symmetry and, respectively, skew-symmetry in the indices $\delta$ and $\beta$.
\end{proof}

 \section{$G(3)$ as the supersymmetry of differential equations}\label{S4}

 \subsection{Super jet-spaces and equation supermanifolds}
 \label{S:jet}

 A {\em contact supermanifold} $(M,\cC)$ is a supermanifold $M$ of dimension $(2p+1|n)$
equipped with a distribution $\cC$ of corank $(1|0)$ that is maximally non-integrable, i.e., locally, $\cC = \ker(\sigma)$ for some (even) $1$-form $\sigma\in\Omega^1(M)$ such that $\eta =\, d\sigma|_\cC$ is non-degenerate.  The latter is a super-skewsymmetric bilinear form on $\cC$, namely, it is skew-symmetric on $\cC_{\bar 0}$ and symmetric on $\cC_{\bar 1}$, and since $\sigma$ is only well-defined up to scale, we refer to the conformal class $[\eta]$ of $\eta$ as a {\it $CSpO$-structure} on $\cC$.

We will be interested in the case where $n=2q$ is even. In this case, there exist Darboux coordinates $(x^i, u, u_i)$ on $M$, i.e., coordinates
w.r.t. which
$$\sigma = du - \sum_{i=1}^{p+q}(dx^i) u_i\;.$$
We remark that $u$ is an even coordinate, whereas $(x^i)$ and  $(u_i)$ consists each of $p$ even and $q$ odd variables.
For any fixed $i=1,\ldots, p+q$, the variables $x^i$ and $u_i$ have the same parity, which we denote by $|i|\in\mathbb{Z}_2$.

Letting $D_{x^i} = \partial_{x^i} + u_i \partial_u$, we have $\iota_{D_{x^i}} \sigma =\iota_{\partial_{u_i}} \sigma = 0$,  where $\iota$ denotes insertion from the left, so the distribution
\begin{equation}
\cC = \langle D_{x^i}, \partial_{u_i}\mid i=1,\ldots,p+q \rangle\;.
\end{equation}
We will often use the shortcuts $\bD_i=D_{x^i}$, $\bU^i= \partial_{u_i}$, and $\cC=\langle \bD_i, \bU^i \rangle$.
It is now clear that $d\sigma = \sum_{i=1}^{p+q}dx^i \wedge du_i$ is non-degenerate on $\cC$. Locally, we refer to $M$ as the {\em first jet-(super)space} $J^1(\bbC^{p|q},\bbC^{1|0})$, where $\{ x^i\}$ are the independent variables.
\begin{definition}
A frame $\mathcal{F}=\{D_i,U^i\mid i=1,\ldots,p+q\}$ of the superdistribution $\cC$ is called a $CSpO$-{\it frame} if $\eta$ is represented w.r.t. $\mathcal F$ by a multiple of \eqref{E:CSpO-eta}.
\end{definition}
\begin{example}
\label{E:flat-CSpO}
The ``flat frame'' $\mathcal{F}_{flat}=\{\bD_i,\bU^i\mid i=1,\ldots,p+q\}$ is always a $CSpO$-frame.
\end{example}

The setting for 2nd order super-PDE is the Lagrange--Grassmann bundle $\widetilde{M}=LG(\cC) \stackrel{\pi}{\to} M$ consisting of the collection of all Lagrangian subspaces in $(\cC,[d\sigma|_\cC])$. As usual, we introduce $\widetilde M$ via its super-points, i.e., 
using the definition of the Lagrangian--Grassmannian $LG(V)$ via the functor $\bbA\mapsto LG(V)(\bbA)$ given in Section \ref{sec:lagrangiansubspacealong} and the flat $CSpO$-frame of Example \ref{E:flat-CSpO}. 

Explicitly, we take bundle-adapted local coordinates $(x^i,u,u_i,u_{ij})$ on $\widetilde{M}$, where the $\{ u_{ij}\}$ correspond to the Lagrangian subspace $L = \tspan_{\bbA}\{ \widetilde{D}_{x^i} \}$ generated by the supervector fields
 \begin{equation}
\label{wD-1}
\begin{split}
\widetilde{D}_{x^i} = \partial_{x^i} + u_i \partial_u + \sum_{j=1}^{p+q}u_{ij} \partial_{u_j}
=\bD_i + \sum_{j=1}^{p+q} u_{ij} \bU^j
\;,
 \end{split}
\end{equation}
for $i=1,\ldots,p+q$. Here $L$ is thought as a super-point of the Lagrange--Grassmann bundle, 
in particular the local coordinates on $\widetilde M$ are thought as elements of $\mathbb A$ of the appropriate parity. We stress that $u_{ij} = (-1)^{|i||j|} u_{ji}$ and that $\widetilde{D}_{x^i}(u_j) = u_{ij}$.

The supermanifold $\widetilde{M}$ inherits a canonical differential system $\widetilde\cC$. 
In terms of the functor of points, we have $\widetilde\cC|_L = (\pi_*)^{-1}(L)$ at any super-point $L$ 
of $\widetilde{M}$. Equivalently, we let
 $$
\sigma_k = du_k -\sum_{i=1}^{p+q}(dx^i) u_{ik}
 $$ 
and get 
 $$
\widetilde{\cC} = \ker\{ \sigma, \sigma_1,\dots, \sigma_{p+q} \} = 
\langle \widetilde{D}_{x^i}, \partial_{u_{ij}}\mid i,j=1,\ldots,p+q \rangle\;.
 $$
Locally, we refer to $\widetilde{M}$ as the {\em second  jet-(super)space} $J^2(\bbC^{p|q},\bbC^{1|0})$.

A vector field $\bS$ on $M$ is a contact vector field if it preserves $\cC$ via the Lie derivative, 
i.e., $\cL_{\bS} \cC \subset \cC$.
Vector fields on $\widetilde M$ that preserve $\widetilde \cC$ are also called contact. 
Any even, resp.\ odd, contact vector field $\bS$ on $M$ canonically prolongs to a unique even, 
resp.\ odd, contact vector field $\widetilde\bS$ on $\widetilde{M}$.
Conversely, we have the (super) Lie-B\"acklund theorem: any contact vector field on $\widetilde{M}$ is 
the (unique) prolongation of a contact vector field on $M$. Indeed, the derived distribution
 $$
\widetilde\cC^2 = [\widetilde\cC, \widetilde\cC] = 
\langle \partial_{x^i} + u_i \partial_u, \partial_{u_i}, \partial_{u_{ij}} \rangle
 $$ 
has associated Cauchy characteristic space 
$\Ch(\widetilde\cC^2) =\langle \partial_{u_{ij}}\mid i,j=1,\ldots, p+q\rangle$ and 
any contact vector field on $\widetilde{M}$ preserves this space, hence it is projectable over $M$.

Fixed a (local) defining $1$-form $\sigma$ of $\cC$, any (local) contact vector field $\bS$ on $M$ is uniquely determined by the {\em generating superfunction}
$f=\iota_{\bS} \sigma$, conversely any (local) superfunction on $M$ determines a contact vector field. The explicit expression of $\bS$ in terms of $f$ is given in Proposition \ref{prop:generating}.
We will write $\bS = \bS_f$ and induce the {\em Lagrange bracket} on superfunctions from the Lie bracket of vector fields via $\bS_{[f,g]}=[\bS_f,\bS_g]$, where $f$ and $g$ are two superfunctions.

 \begin{proposition}
\label{prop:generating}
The contact vector field associated to a superfunction $f = f(x^i,u,u_i)$ on $M$ is given by
 \begin{align}
\label{contact-vf}
 \bS_f &=f \partial_u -\sum_{i=1}^{p+q}(-1)^{|i|(|f|+1)} (\partial_{u_i} f ) D_{x^i} + \sum_{i=1}^{p+q}(-1)^{|i| |f|} (D_{x^i} f) \partial_{u_i}
 \end{align}
and the Lagrange bracket by
\begin{align} \label{lagrange-bracket}
 [f,g] = f \partial_u g - (-1)^{|f||g|} g \partial_u f + \sum_{i=1}^{p+q}(-1)^{|i||f|} (D_{x^i} f ) \partial_{u_i} g -\sum_{i=1}^{p+q} (-1)^{|g|(|f| + |i|)} (D_{x^i} g) \partial_{u_i} f\;,
 \end{align}
where $f$ and $g$ are two superfunctions. Finally, the canonical prolongation $\widetilde\bS_f$ of the contact vector field $\bS_f$ is given by
\begin{align} \label{contact-vf-pr}
\widetilde\bS_f= \bS_f +\sum_{j,k=1}^{p+q} h_{jk} \partial_{u_{jk}}\;,\;\;\text{where}\;\;h_{jk} = (-1)^{(|j|+|k|)|f|} \widetilde{D}_{x^j} \widetilde{D}_{x^k} f\;.
\end{align}
%for all $j,k=1,\ldots,p+q$.
 \end{proposition}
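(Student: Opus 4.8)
The plan is to derive all three formulae from the defining property of a contact vector field, namely that $\bS$ is contact if and only if $\cL_{\bS}\sigma = \lambda\,\sigma$ for some superfunction $\lambda$, together with the non-degeneracy of $\eta = d\sigma|_{\cC}$. First I would set $f = \iota_{\bS}\sigma$ and decompose $\bS = f\,\partial_u + \bS_{\cC}$, using that $\sigma(\partial_u)=1$ and $\bS_{\cC}\in\cC$; since $d\sigma = \sum_i dx^i\wedge du_i$ contains no $du$, the transverse part contributes nothing to $\iota_{\bS}d\sigma$. Applying the super Cartan formula $\cL_{\bS}\sigma = \iota_{\bS}d\sigma + d(\iota_{\bS}\sigma) = \iota_{\bS}d\sigma + df$ and restricting to $\cC$, the condition $\cL_{\bS}\sigma = \lambda\sigma$ becomes $\eta(\bS_{\cC},-) = -\,df|_{\cC}$. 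Non-degeneracy of $\eta$ then shows that $\bS_{\cC}$, and hence the entire bijection between contact fields and generating superfunctions, is uniquely recovered from $f$. Solving this linear system in the frame $\{\bD_i,\bU^i\}$, using $\eta(\bD_i,\bU^j)=\pm\delta_i^j$ and $\eta(\bD_i,\bD_j)=\eta(\bU^i,\bU^j)=0$, together with $df(\bD_i)=D_{x^i} f$ and $df(\bU^i)=\partial_{u_i}f$, produces exactly the coefficients $-\partial_{u_i}f$ and $D_{x^i}f$ of \eqref{contact-vf}, with the Koszul signs fixed by the parities $|i|$ and $|f|$.

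For the Lagrange bracket, I would define $[f,g] = \iota_{\bS_{[f,g]}}\sigma = \iota_{[\bS_f,\bS_g]}\sigma$ and apply the graded Cartan identity $\iota_{[\bS_f,\bS_g]} = \cL_{\bS_f}\,\iota_{\bS_g} - (-1)^{|f||g|}\,\iota_{\bS_g}\,\cL_{\bS_f}$ to $\sigma$. The first term is $\bS_f(g)$, and the second reduces using the relation $\cL_{\bS_f}\sigma = (\partial_u f)\,\sigma$, which follows by reading off the $du$-coefficient in $\cL_{\bS_f}\sigma = \iota_{\bS_f}d\sigma + df$, as $d\sigma$ has no $du$-component. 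A short computation shows that this second term equals $-(-1)^{|f||g|}g\,\partial_u f$, so that the $\partial_u$-contributions of \eqref{lagrange-bracket} appear at once; substituting the explicit expression \eqref{contact-vf} for $\bS_f(g)$ and collecting the remaining terms yields the formula. The only real work is checking that the signs produced by moving $\partial_{u_i}f$ past $D_{x^i}g$ via supercommutativity match the prefactors $(-1)^{|g|(|f|+|i|)}$.

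For the prolongation, I would invoke the super Lie--B\"acklund theorem recalled above, which guarantees that $\bS_f$ admits a unique contact prolongation $\widetilde\bS_f$ to $\widetilde M$ of the same parity and projecting to $\bS_f$; it remains only to identify the coefficients $h_{jk}$ of $\partial_{u_{jk}}$. Writing $\widetilde\bS_f = \bS_f + \sum_{j,k} h_{jk}\,\partial_{u_{jk}}$ and imposing that $\cL_{\widetilde\bS_f}$ preserve the differential ideal generated by $\sigma,\sigma_1,\dots,\sigma_{p+q}$ (equivalently, that $\widetilde\bS_f$ be tangent to $\widetilde\cC = \ker\{\sigma,\sigma_k\}$), the coefficient of each $du_{jk}$ in $\cL_{\widetilde\bS_f}\sigma_k$ forces the relation $h_{jk} = (-1)^{(|j|+|k|)|f|}\,\widetilde D_{x^j}\big(\widetilde D_{x^k} f\big)$, i.e.\ the second truncated total derivative of \eqref{contact-vf-pr}; the symmetry $u_{jk} = (-1)^{|j||k|}u_{kj}$ guarantees that $h_{jk}$ carries the compatible symmetry. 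The main obstacle throughout is not conceptual but the disciplined tracking of Koszul signs in the contraction, Lie-derivative, and commutation identities, which is presumably why the paper records only the final formulae; a convenient safeguard is to check the purely even sector against the classical contact geometry of $J^1(\bbC^{p},\bbC)$ and to verify the purely odd reductions separately.
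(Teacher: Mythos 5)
Your proposal is correct in strategy and, for the first and third formulae, follows essentially the same route as the paper: apply the super Cartan formula to $\cL_{\bS}\sigma$ (resp.\ $\cL_{\widetilde\bS_f}\sigma_k$) and compare coefficients modulo the contact ideal. Your packaging of the first step through the decomposition $\bS = f\partial_u + \bS_{\cC}$ and the $\eta$-duality on $\cC$ is a mildly more invariant version of the paper's computation (the paper writes $\bS = a^i\partial_{x^i} + b\partial_u + c_i\partial_{u_i}$ and solves for $a^i,c_i$ directly from the $dx^i$- and $du_i$-coefficients), and your derivation of the Lagrange bracket from the operator identity $\iota_{[\bS_f,\bS_g]} = \cL_{\bS_f}\iota_{\bS_g} - (-1)^{|f||g|}\iota_{\bS_g}\cL_{\bS_f}$ together with $\cL_{\bS_f}\sigma = (\partial_u f)\,\sigma$ is cleaner than what the paper records (it only says ``a direct computation yields \eqref{lagrange-bracket}''): it produces the two $\partial_u$-terms at once and reduces everything else to the supercommutation sign check you describe, which indeed matches the prefactor $(-1)^{|g|(|f|+|i|)}$.

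One detail in your prolongation step is off, though it does not derail the method: the coefficient of $du_{jk}$ in $\cL_{\widetilde\bS_f}\sigma_k$ does not involve $h_{jk}$ at all. Since $\bS_f$ has no $\partial_{u_{ij}}$-component, the $du$-type terms come only from $d(\iota_{\widetilde\bS_f}\sigma_k) = (-1)^{|k||f|}d(\widetilde{D}_{x^k}f)$ and from inserting the $\partial_{x^i}$-part of $\bS_f$ into $d\sigma_k$, and these cancel identically modulo the ideal; this is exactly the cancellation $du_{ij}\,(\partial_{u_{ij}}\widetilde{D}_{x^k}f) - du_{ki}\,(\partial_{u_i}f) \equiv 0$ appearing in the paper's computation. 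What pins down $h_{jk}$ is the coefficient of $dx^j$, which reads $-(-1)^{|j||f|}h_{jk} + (-1)^{|k||f|}\,\widetilde{D}_{x^j}\widetilde{D}_{x^k}f$ and forces \eqref{contact-vf-pr}. So when carrying out your plan, extract $h_{jk}$ from the $dx^j$-coefficients of $\cL_{\widetilde\bS_f}\sigma_k$; the $du_{jk}$-coefficients only confirm consistency.
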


 \begin{proof}
We will use Einstein's summation convention by summing over repeated indices, and write $\bS = a^i \partial_{x^i} + b \partial_u + c_i \partial_{u_i}$ and $f= \iota_{\bS}\sigma = b - a^i u_i$.  Computing modulo $\sigma$ yields
  \begin{align*}
 0 \equiv \cL_{\bS} \sigma &= d(\iota_{\bS}\sigma) + i_{\bS} d\sigma
 = dx^i (\partial_{x^i} f) + du(\partial_u f) + du_i(\partial_{u_i} f) + a^i du_i - (-1)^{|i|} c_i dx^i \\
% &\equiv dx^i (\partial_{x^i} f + u_i \partial_u f) + du_i (\partial_{u_i} f ) + a^i du_i - (-1)^{|i|} c_i dx^i \\
 &\equiv dx^i \bigl(\partial_{x^i}f + u_i \partial_u f -(-1)^{|i| |f|} c_i\bigr) + 
 du_i\bigl(\partial_{u_i}f +(-1)^{|i|(|f|+1)} a^i\bigr) 
\end{align*}
whence
 \begin{align*}
 \bS&= -(-1)^{|i|(|f|+1)} \Big(\partial_{u_i} f \Big) \partial_{x^i} + \Big(f -(-1)^{|i|(|f|+1)} (\partial_{u_i} f ) u_i\Big) \partial_u + \Big( (-1)^{|i| |f|}(\partial_{x^i} f + u_i \partial_u f)\Big) \partial_{u_i}\;,
 %&= -(-1)^{|i|(|i|+|f|)} (\partial_{u_i} f ) D_{x^i} + f \partial_u + \Big( (-1)^{|i| |f|} D_{x^i} f\Big) \partial_{u_i}.
 \end{align*}
which coincides with \eqref{contact-vf}. 
Then a direct computation yields \eqref{lagrange-bracket}.

%Since the parity $|\widetilde\bS|=|\bS|=|f|$, we have $|h_{jk}|=|f|+|j|+|k|$.
Finally, we find that $\iota_{\widetilde{\bS}_f} \sigma_k = (-1)^{|k||f|} \widetilde{D}_{x^k} f$, and hence
 \begin{align*}
% \iota_{\widetilde{S}_f} \sigma_k &= [ (-1)^{|k| |f|}(\partial_{x^k} f + u_k \partial_u f)]  + (-1)^{|i|(|i|+|f|)} (\partial_{u_i} f ) u_{ik} \\
% &= [ (-1)^{|k| |f|}(\partial_{x^k} f + u_k \partial_u f)]  + (-1)^{|k|(|i|+|f|)} u_{ik} (\partial_{u_i} f ) \\
% &= (-1)^{|k| |f|}(\partial_{x^k} f + u_k \partial_u f + u_{ki} \partial_{u_i} f ) \\
% \iota_{\widetilde{S}_f} d\sigma_k &= \iota_{\widetilde{S}_f} (dx^j \wedge du_{jk}) = -(-1)^{|i|(|i|+|f|)} (\partial_{u_i} f ) du_{ik} - h_{jk} (dx^j) (-1)^{|j|(|j|+|k|)}\\
 0 \equiv \cL_{\widetilde{\bS}_f} \sigma_k &= d(\iota_{\widetilde{\bS}_f} \sigma_k)+\iota_{\widetilde{\bS}_f} d \sigma_k  \\
 &= (-1)^{|k| |f|}d(\widetilde{D}_{x^k} f)- (-1)^{|k||f|} du_{ki} (\partial_{u_i} f )  - (-1)^{|j||f|}dx^j (h_{jk}) \\
% &= - (dx^j) h_{jk} (-1)^{|j||f|}+ (-1)^{|k| |f|}[d(\partial_{x^k} f + u_k \partial_u f + u_{ki} \partial_{u_i} f ) - du_{ki} (\partial_{u_i} f )]\\
 &\equiv - (-1)^{|j||f|}dx^j (h_{jk}) + (-1)^{|k| |f|}\Big(dx^i (\widetilde{D}_{x^i} \widetilde{D}_{x^k} f) + du_{ij} (\partial_{u_{ij}} \widetilde{D}_{x^k} f)- du_{ki} (\partial_{u_i} f )\Big)\\
 &\equiv dx^j \Big(-(-1)^{|j||f|}h_{jk} + (-1)^{|k| |f|} (\widetilde{D}_{x^j} \widetilde{D}_{x^k} f) \Big),
 \end{align*}
where we used that the parity $|h_{jk}|=|f|+|j|+|k|$. This immediately gives \eqref{contact-vf-pr}.
 \end{proof}
 
 \begin{definition}
A {\em 2nd order super-PDE} is a sub-supermanifold $\cE\subset \widetilde{M}$. 
A {\em contact} (or {\em external}) {\em symmetry}  of $\cE$ is a contact vector field $\widetilde\bS$ 
on $\widetilde{M}$ that %, when restricted to $\cE$, 
is tangent to $\cE$.
 \end{definition}

We recall that a vector field on a supermanifold is not determined by its values at points. The definition of the restriction of $\widetilde\bS$  to $\cE$ may easily be given via the action on superfunctions, see e.g. \cite[\S 1.5]{MR2640006}.

 \subsection{The $G(3)$-contact super-PDE}\label{S:G3-PDE}
 
From now on and until the end of Section \ref{S4}, we restrict ourselves to the case where the 
contact supermanifold $(M,\cC)$ has dimension $\dim M=(5|4)$, so the contact distribution $\cC$ 
has rank $(4|4)$. On $M$ we consider coordinates $(x^i,u,u_i)=(x,y,\nu,\tau,u,u_x,u_y,u_\nu,u_\tau)$,
where $x,y,u,u_x,u_y$ are even and $\nu,\tau,u_\nu,u_\tau$ odd, and similarly for the coordinates on 
$\widetilde M$, which is $(9|8)$-dimensional. In the rest of Section \ref{S4}, it is convenient to let the index $i$ run from $0$ to $3$, instead of $1\leq i\leq 4$ as per Section \ref{S:jet}.
%and $i=0,1,2,3$.
%\label{E:G3-x-u}
%

Given {\it any} CSpO-frame 
\begin{equation}
\label{E:nonflat-CSpO}
\mathcal{F}=\{D_i,U^i\mid i=0,\ldots,3\}
\end{equation}
of $\cC$,
%{\it not} necessarily the one of \eqref{E:flat-CSpO},
we may introduce a subvariety $\cV \subset \bbP(\cC)$ of projectivized supervector fields according to the (Zariski-closure of the) parametrization \eqref{E:cV}.  See also the note before Definition \ref{D:cV}.  
Namely, for any fixed $T = t^a w_a=\lambda w_1 + \theta w_2 + \phi w_3\in W(\bbA)$ as in \eqref{eq:parameterT}, we consider the (even) supervector field
\begin{align} \label{bV}
 \bV(T) = D_0 - t^a D_a - \frac{1}{2} \fC(T^3) U^0 - \frac{3}{2} \fC_c(T^2) U^a
 \end{align}
and the $\mathbb A$-submodule $[\bV(T)]=\tspan_{\bbA}\{\bV(T)\}$ of rank $(1|0)$ that it generates. The analogous definition can be given for the super-point at infinity.
\begin{definition}
The subvariety $\cV\subset\bbP(\cC)$ with functor of points $\mathbb A\mapsto\cV(\mathbb A)=\bigcup_{T\in W(\bbA)}[\bV(T)]$ is called the {\it field of $(1|2)$-twisted cubics} associated to the frame \eqref{E:nonflat-CSpO}.
\end{definition}
One can easily check (using Remark \ref{R:cV-hV} and the functorial description of Lie supergroups) 
that the field of $(1|2)$-twisted cubics depends only on the orbit of a CSpO-frame under the natural right action of $G_0=COSp(3|2)$, the connected subgroup of $CSpO(4|4)$ with LSA $\fg_0=\mathfrak{cosp}(3|2)\subset\fcspo(4|4)$.

%Cite Alekseevsky, Cortes, Devachand, Semmelmann
\begin{definition} \label{G(3)-ct-geo}
A {\em $G(3)$-contact supergeometry} $(M,\cC,\mathcal V)$ is the datum of a contact supermanifold $(M,\cC)$ of dimension $(5|4)$
equipped with a field of $(1|2)$-twisted cubics $\cV \subset \bbP(\cC)$ associated to a $G_0$-stable family of CSpO-frames.
\end{definition}
To any $G(3)$-contact supergeometry, we may also associate the collection 
$$\widehat\cV= \{ \widehat{T}^{(1)}_\ell \cV \mid \ell=\text{super-point of}\;\cV \} \subset \widetilde M=LG(\cC)$$
of the affine tangent spaces along $\cV$.
From Proposition \ref{P:hV}, we locally have $\widehat{T}^{(1)}_{[\bV(T)]} \cV = \tspan_{\bbA}\{B_0, B_a \}$, where $a=1,2,3$ and
\begin{equation}
\label{TV}
 \begin{aligned}
 B_0 &= D_0 + \fC(T^3) U^0 + \frac{3}{2} \fC_a(T^2) U^a,\\ %\label{TV0}\\
 B_a &= D_a + \frac{3}{2} \fC_a(T^2) U^0 + 3 \fC_{ac}(T) U^c\;. %
 \end{aligned}
\end{equation}
We recall that  $\cV$ and $\widehat\cV$ provide the same reduction  $G_0\subset CSpO(4|4)$ of structure 
group and that $\cV$ can be canonically recovered from $\widehat\cV$ (see Remark \ref{R:cV-hV}). 
From this and the Lie-B\"acklund theorem, it follows that $(M,\cC,\cV)$ and 
$(\widetilde M,\widetilde\cC,\widehat\cV)$ have the same contact symmetries.

We will now focus on the {\em flat} $G(3)$-contact supergeometry, i.e., the supergeometry $(M,\cC,\cV)$ for which an admissible CSpO-frame is $\mathcal{F}_{flat}=\{\bD_i,\bU^i\mid i=1,\ldots,p+q\}$ (see Example \ref{E:flat-CSpO}).

\begin{theorem}
\label{thm:superPDEG(3)}
The 2nd order super-PDE $\cE=\widehat\cV\subset \widetilde M$ naturally associated to the flat $G(3)$-contact supergeometry is $(6|6)$-dimensional and it is given by the $G(3)$-contact super-PDE system, i.e., the following generalization of the classical $G(2)$-contact PDE system:
 \begin{equation}
\label{G3-PDE}
 \begin{split}
 u_{xx} &= \displaystyle \frac{1}{3} u_{yy}^3 + 2 u_{yy} u_{y\nu} u_{y\tau}, \quad
 u_{xy} = \frac{1}{2} u_{yy}^2 + u_{y\nu} u_{y\tau}, \\
 u_{x\nu} &= u_{yy} u_{y\nu}, \quad u_{x\tau} = u_{yy} u_{y\tau}, \quad u_{\nu\tau} = -u_{yy}.
 \end{split}
\end{equation}
\end{theorem}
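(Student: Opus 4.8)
The plan is to translate the coordinate-free description of $\widehat\cV$ furnished by Proposition \ref{P:hV} directly into the bundle-adapted coordinates $(x^i,u,u_i,u_{ij})$ on $\widetilde M=J^2(\bbC^{2|2},\bbC^{1|0})$. Specializing the frame \eqref{E:nonflat-CSpO} to the flat frame $\mathcal F_{flat}$ amounts to setting $D_i=\bD_i$ and $U^i=\bU^i$, under the index identification $x^0=x$, $x^1=y$, $x^2=\nu$, $x^3=\tau$ dictated by parity (so that $D_1$ is even and $D_2,D_3$ are odd, matching $t^1=\lambda$, $t^2=\theta$, $t^3=\phi$). With this identification the spanning fields \eqref{TV} of $\widehat T^{(1)}_{[\bV(T)]}\cV$ take the form of a generic Lagrangian frame $\widetilde D_{x^i}=\bD_i+u_{ij}\bU^j$ as in \eqref{wD-1}. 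Hence, comparing the coefficients of $\bU^0,\dots,\bU^3$ in $B_0$ and in each $B_a$ reads off the jet coordinates $u_{ij}=\widetilde D_{x^i}(u_j)$ of the super-point $\ell=\ell(\lambda,\theta,\phi)$ as explicit functions of $(\lambda,\theta,\phi)$.

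Next I would substitute the explicit cubic tensors computed in \eqref{E:CG}--\eqref{E:CG2}, namely $\fC(T^3)=\tfrac13\lambda^3+2\lambda\theta\phi$, the row $3\fC_a(T^2)=(\lambda^2+2\theta\phi,\,2\lambda\phi,\,-2\lambda\theta)$, and the matrix $3\fC_{ac}(T)$. Reading off $B_1$ yields
\begin{align*}
u_{yy}=\lambda,\qquad u_{y\nu}=\phi,\qquad u_{y\tau}=-\theta,
\end{align*}
which inverts the parametrization and recovers the relations $\lambda=u_{yy}$, $\phi=u_{y\nu}$, $\theta=-u_{y\tau}$ already recorded after \eqref{eq:CCsupereven}. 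The remaining coefficients of $B_0$ and $B_2,B_3$ then express $u_{xx},u_{xy},u_{x\nu},u_{x\tau},u_{\nu\tau}$ in terms of $(\lambda,\theta,\phi)$, while the diagonal entries of $3\fC_{ac}(T)$ force $u_{\nu\nu}=u_{\tau\tau}=0$, consistently with the automatic vanishing of $u_{ij}=(-1)^{|i||j|}u_{ji}$ for repeated odd indices.

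Finally I would eliminate the parameters. The only point requiring care is the odd-variable sign bookkeeping: substituting $\theta=-u_{y\tau}$, $\phi=u_{y\nu}$ gives $\theta\phi=-u_{y\tau}u_{y\nu}=u_{y\nu}u_{y\tau}$, where the last equality uses anticommutativity of the odd coordinates. With this, the readings become $u_{xx}=\tfrac13 u_{yy}^3+2u_{yy}u_{y\nu}u_{y\tau}$, $u_{xy}=\tfrac12 u_{yy}^2+u_{y\nu}u_{y\tau}$, $u_{x\nu}=u_{yy}u_{y\nu}$, $u_{x\tau}=u_{yy}u_{y\tau}$ and $u_{\nu\tau}=-u_{yy}$, which are precisely \eqref{G3-PDE}. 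For the dimension count, $\widehat\cV$ fibres over $M$ (of dimension $(5|4)$) with fibre the parameter super-space $\ell(\lambda,\theta,\phi)$ of $\cV$, of dimension $(1|2)$, so $\dim\cE=(6|6)$; equivalently the five relations impose three even and two odd conditions on $\widetilde M$ of dimension $(9|8)$.

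Since all of the genuinely difficult work (the construction of $\cV$, $\widehat\cV$, and the explicit cubic form $\fC$) has already been carried out in Section \ref{S2}, the argument is essentially mechanical and I expect no real obstacle. The sole place demanding vigilance is the consistent use of the sign rule for the odd parameters $\theta,\phi$ together with the ordering conventions fixing the CSpO-basis, as this is exactly what produces the term $2u_{yy}u_{y\nu}u_{y\tau}$ in $u_{xx}$ and the sign of $u_{x\tau}$ correctly.
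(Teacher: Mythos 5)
Your proposal is correct and follows essentially the same route as the paper's own proof: both compare the frame \eqref{TV} spanning $\widehat{T}^{(1)}_{[\bV(T)]}\cV$ with the Lagrangian frame \eqref{wD-1}, thereby identifying $u_{ij}$ with the explicit cubic tensors $\fC(T^3)$, $\tfrac32\fC_a(T^2)$, $3\fC_{ab}(T)$, and then eliminate the parameters $(\lambda,\theta,\phi)$. The extra details you supply — the inversion $\lambda=u_{yy}$, $\phi=u_{y\nu}$, $\theta=-u_{y\tau}$, the sign bookkeeping $\theta\phi=u_{y\nu}u_{y\tau}$, and the $(6|6)$ dimension count — are exactly what the paper's terse ``eliminating the parameters'' step leaves implicit, and they check out.
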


\begin{proof}
 Comparing the expressions for $\widehat{T}^{(1)}_{[\bV(T)]} \cV = \tspan_{\bbA}\{ \bB_0, \bB_a \}$ with the bundle-adapted local coordinates of $\widetilde{M}$ defined by \eqref{wD-1}, we observe that $\widehat\cV$ is simply defined by the relations
 \begin{align} \label{G-PDE}
 \begin{pmatrix}
 u_{00} & u_{0a} \\
 u_{a0} & u_{ab}
 \end{pmatrix}
 =
 \begin{pmatrix}
 \fC(T^3) & \frac{3}{2} \fC_a(T^2)\\
 \frac{3}{2} \fC_a(T^2) & 3\fC_{ab}(T)
 \end{pmatrix}\;,
 \end{align}
 or, using the explicit components of $\fC$ from \eqref{E:CG}, by
 \begin{align} \label{PDE-sys}
 \begin{pmatrix}
 u_{xx} & u_{xy} & u_{x\nu} & u_{x\tau}\\
 u_{yx} & u_{yy} & u_{y\nu} & u_{y\tau}\\
 u_{\nu x} & u_{\nu y} & u_{\nu\nu} & u_{\nu\tau}\\
 u_{\tau x} & u_{\tau y} & u_{\tau\nu} & u_{\tau\tau}\\
 \end{pmatrix} =
 \begin{pmatrix}
 \frac{\lambda^3}{3} + 2\lambda \theta \phi & \frac{\lambda^2}{2} + \theta \phi & \lambda\phi & -\lambda\theta\\
 \frac{\lambda^2}{2} + \theta \phi & \lambda & \phi & -\theta\\
 \lambda\phi & \phi & 0 & -\lambda\\
 -\lambda\theta & -\theta & \lambda & 0
 \end{pmatrix}.
 \end{align}
 Eliminating the parameters $\lambda,\theta,\phi$, yields the desired result.
\end{proof}

In the next sections, we will show that the space of contact symmetries of \eqref{G3-PDE} has the maximal possible dimension. Even more, we will see that it is isomorphic to $G(3)$.

 \begin{rem} 
The so-called {\em Goursat PDE} play an important role in Cartan's classical $G(2)$-story \cite{MR1509120}, 
and Yamaguchi \cite{MR1699860} considered their generalization to the other exceptional simple 
Lie algebras $\g$.  In \cite{MR3759350}, a uniform and explicit parametric description for such 
equations with symmetry $\g$ was found.  Remarkably, this further generalizes to the $G(3)$-case as:
 \begin{align}\label{Goursat-param}
u_{00} = t^a t^b u_{ba} - 2\fC(T^3), \quad u_{0a} = t^b u_{ba} - \frac{3}{2} \fC_a(T^2),\quad
  1\leq a,b\leq 3.
 \end{align}
Analogous to \cite{MR3759350}, this system is geometrically obtained by considering the family 
$\widetilde\cV$ of {\em all} Lagrangian subspaces, locally described by \eqref{wD-1}, 
which contain some super-point $\ell$ of $\cV$, locally described by \eqref{bV} w.r.t.\ 
the flat frame $\mathcal{F}_{flat}$ over $M$. In particular, $\widehat\cV \subset \widetilde\cV$.  
Imposing the incidence condition quickly leads to the given expressions. 
 
Equations \eqref{Goursat-param} define a submanifold in 
the second jet-space $\widetilde M$,
%$J^2(\bbC^{2|2},\bbC^{1|0}) \subset \widetilde{M}$, 
parametrized by $\lambda\in\mathbb A_{\bar 0}$ and $\theta,\phi\in\mathbb A_{\bar 1}$. %$T=(t^1,t^2,t^3)=(\lambda,\theta,\phi)$. 
Elimination from the above $(2|2)$ equations 
these $(1|2)$ variables yields the following single $(1|0)$ super-PDE
(we denote $r=u_{xx}, s=u_{xy}, t=u_{yy}, q=u_{\nu\tau}$):
 \begin{equation*}\label{Goursat-explicit}
(12rt^3-12s^2t^2-36rst+32s^3+9r^2)\,(2t^3-6st+3r)^3(2qt^2-4qs+2st-3r)^4\equiv0\!\!\!\mod
\{a_k\} ,
 \end{equation*}
where $0\leq k\leq3$ and $a_0=u_{x\nu}u_{x\tau}$, $a_1=u_{x\nu}u_{y\tau}$,
$a_2=u_{x\tau}u_{y\nu}$, $a_3=u_{y\nu}u_{y\tau}$ define a nilpotent ideal:
$a_k^2=0$, $a_0a_3=a_1a_2$ in the ring of even functions. 
The coefficients of $a_0,a_1,a_2,a_3,a_1a_2$ can be found in the Maple supplement accompanying the arXiv posting of this article.

The arguments in Section \ref{S:G3-PDE-syms} simultaneously establish that
\eqref{Goursat-param} has symmetry superalgebra $G(3)$. 
Evaluation of this super-PDE gives the classical Goursat second order PDE 
$12rt^3-12s^2t^2-36rst+32s^3+9r^2=0$ invariant with respect to $G(2)$.
 \end{rem}

\subsection{Symmetries of the $G(3)$-contact super-PDE} \label{S:G3-PDE-syms}

 In this section, we show:

 \begin{theorem} \label{T:G3-sym} 
The upper bound on the dimension of the contact symmetry algebra $\mathfrak{inf}(M,\cC,\cV)$ of
a locally transitive\footnote{The transitivity hypothesis here can be removed to obtain a stronger result, analogous to that in the (classical) parabolic geometry setting.  Details for achieving this generalization 
will be given in a future work.} $G(3)$-contact supergeometry $(M,\cC,\cV)$ is $(17|14)$. Among these, 
it is reached only for the contact symmetry algebra of the flat $G(3)$-contact supergeometry.
           % , which is isomorphic to $G(3)$.
%This generalization will  A more effective method \cite{MR3604980} to obtain sub-maximally symmetric models also has a super-version, which is though more intricate and will be considered in a separate publication.
\end{theorem}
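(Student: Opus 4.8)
The plan is to derive the bound $(17|14)$ from the Tanaka--Weisfeiler prolongation and then to single out the flat model by a rigidity argument of deformation-theoretic flavour. Write $\mathfrak{s}=\mathfrak{inf}(M,\cC,\cV)$ and fix a topological point $x\in M_o$. Since $(M,\cC,\cV)$ is strongly regular with symbol $\fm=\fg_-$, and the field of $(1|2)$-twisted cubics $\cV$ encodes exactly the reduction of the structure supergroup from $\mathrm{Aut}_{gr}(\fm)$ to $G_0=COSp(3|2)$, with associated LSA $\fg_0=\mathfrak{cosp}(3|2)$ (see Lemma \ref{lem:reductionCSPO} and Remark \ref{R:cV-hV}), the datum $(M,\cC,\cV)$ is a geometric structure of type $(\fm,\fg_0)$. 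Every contact symmetry preserves both the weak derived flag of $\cC$ and the $G_0$-reduction, so $\mathfrak{s}$ carries the filtration by order of vanishing at $x$ adapted to the filtration of $TM$ induced by $\cC$, making it a filtered Lie superalgebra $\mathfrak{s}=\mathfrak{s}^{-2}\supset\cdots\supset\mathfrak{s}^0\supset\mathfrak{s}^1\supset\cdots$.

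First I would invoke the super-analogue of Tanaka's estimate, which is available here because (as recalled above) the construction of $\pr$ extends verbatim to the super-setting: the associated graded $\gr\mathfrak{s}$ embeds as a graded sub-Lie-superalgebra of $\pr(\fm,\fg_0)$, with $\gr_{<0}\mathfrak{s}\subseteq\fm$, $\gr_0\mathfrak{s}\subseteq\fg_0$ and $\gr_{>0}\mathfrak{s}$ landing in the higher prolongation. By Corollary \ref{C:pr-ct} we have $\pr(\fm,\fg_0)\cong G(3)$, whence $\dim\mathfrak{s}=\dim\gr\mathfrak{s}\leq\dim G(3)=(17|14)$. This is the asserted upper bound, and it is attained by the flat supergeometry: the homogeneous model $G(3)/P_1^{\rm IV}$ has contact symmetry superalgebra containing $\fg=G(3)$, which by the bound must equal it (as also confirmed by the explicit symmetry computation of Theorem \ref{T:G3-symII}).

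It then remains to prove that equality forces flatness. If $\dim\mathfrak{s}=(17|14)$, then local transitivity gives $\gr_{<0}\mathfrak{s}=\fm$, and comparison of dimensions with the bound forces $\gr\mathfrak{s}=\pr(\fm,\fg_0)=\fg$; in particular the structure is locally homogeneous, $M\cong\mathfrak{s}/\mathfrak{s}^0$ with $\gr\mathfrak{s}^0\cong\fp_1^{\rm IV}$, so the grading element $\sfZ_1\in\fg_0$ lies in the isotropy. Thus $\mathfrak{s}$ is a filtered Lie superalgebra whose associated graded is the simple $G(3)$, i.e.\ a filtered deformation of $\fg$. Following the deformation approach of \cite{MR1688484,MR3604980}, the fundamental structure functions (leading obstruction to triviality of the deformation) are isotropy-invariant classes in $\bigoplus_{d>0}H^{d,2}(\fm,\fg)$. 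By Theorem \ref{prop:P1-cohom} this space reduces to $H^{1,2}(\fm,\fg)$, on which $\sfZ_1$ acts as the nonzero scalar $1$ (its $\sfZ_1$-degree); since $\sfZ_1$ lies in the isotropy, there are no nonzero invariant classes, so the leading term, and inductively the whole deformation, must vanish. Hence $\mathfrak{s}\cong\fg$ as filtered Lie superalgebras and $(M,\cC,\cV)$ is locally isomorphic to the flat model.

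The hard part is precisely this last rigidity step. In the absence of a theory of Cartan connections and harmonic curvature in the super-setting, the identification of the obstruction to flatness with an isotropy-invariant class in positive-degree $H^2(\fm,\fg)$ has to be carried out directly through the filtered-deformation formalism, rather than by appealing to normality of a Cartan connection. Once this identification is secured, the vanishing is immediate from the fact that $H^{d,2}(\fm,\fg)=0$ for $d\leq 0$ and is concentrated in positive $\sfZ_1$-degree for $d>0$ — which is exactly the role played by the cohomology computation of Theorem \ref{prop:P1-cohom}.
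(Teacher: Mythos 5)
Your proposal is correct and follows the same two-step strategy as the paper: the bound comes from embedding $\gr(\mathfrak{inf}(M,\cC,\cV))$ into $\pr(\fg_-,\fg_0)\cong G(3)$ (Corollary \ref{C:pr-ct}, which rests on the vanishing of $H^{d,1}(\fm,\fg)$ for $d>0$), with local transitivity guaranteeing $\gr_-\mathfrak{inf}(M,\cC,\cV)=\fg_-$ so that the Tanaka--Weisfeiler estimate applies; and maximality then forces $\gr(\mathfrak{inf}(M,\cC,\cV))=\fg$, so the symmetry algebra is a filtered deformation of $G(3)$.

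The one place you diverge is the rigidity step. The paper disposes of it in a single line: since the grading element $\sfZ_1$ belongs to $\fg$, any filtered deformation of $\fg$ is isomorphic to $\fg$ as a filtered algebra. You instead re-derive this through the deformation formalism: the leading term of the deformed bracket defines a class in $H^{d,2}(\fm,\fg)$ for some $d>0$, this class is $\fg_0$-invariant, and $\sfZ_1$ acts on it by the nonzero scalar $d$, so it vanishes and induction trivializes the deformation. This is a legitimate proof --- it is essentially the proof of the standard fact the paper quotes --- but two remarks are in order. First, your appeal to Theorem \ref{prop:P1-cohom} is superfluous: the weight argument annihilates invariant classes in \emph{every} positive degree $d$, whether or not $H^{d,2}(\fm,\fg)$ is explicitly known, so the second Spencer cohomology computation is not needed for this theorem (the paper never uses it here; only the $H^1$-vanishing enters, via Corollary \ref{C:pr-ct}). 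Second, the isotropy-invariance of the leading obstruction class, which you flag as ``the hard part'' but do not verify, is exactly the point requiring an argument: it follows from the Jacobi identity of the deformed bracket with one entry in $\fg_0$, which shows that the $\fg_0$-action on the leading cocycle restricted to $\Lambda^2\fm$ is a coboundary. Without that computation (or direct citation of the grading-element fact, as the paper does), your chain of reasoning would have a gap precisely where you located it.
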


\begin{theorem}
\label{T:G3-symII}
The contact symmetry algebra $\mathfrak{inf}(\widetilde M,\widetilde\cC,\widehat\cV)$ of 
the $G(3)$-contact super-PDE system \eqref{G3-PDE} is isomorphic to $\fg=G(3)$ and 
it is spanned by the following $(17|14)$ symmetries:
 \begin{table}[H]
 \[
 \begin{array}{|c|c|l|} \hline
 \fg_2 && \begin{array}{@{}l} u(u - x u_x - y u_y - \nu u_\nu - \tau u_\tau) - \frac{1}{2} \left(\frac{y^3}{3} + 2y\nu\tau\right) u_x + \frac{1}{2} \left( \frac{4}{9} u_y^3 + \frac{2}{3} u_y u_\nu u_\tau \right) x \nonumber \\
 \qquad + \frac{1}{4} \left(y^2 + 2\nu\tau \right)\left( \frac{4}{3} u_y^2 + \frac{2}{3} u_\nu u_\tau\right) + \frac{1}{3} y\tau u_y u_\tau + \frac{1}{3} y\nu u_y u_\nu
 \end{array} \\ \hline
 \fg_1 && x(u - x u_x - y u_y - \nu u_\nu - \tau u_\tau) - \frac{y^3}{6} - y\nu\tau\\
 && y(u - x u_x - \frac{1}{3} y u_y - \frac{2}{3} \nu u_\nu - \frac{2}{3} \tau u_\tau) + \frac{2}{3} x u_y^2 + \frac{1}{3} x u_\nu u_\tau + \frac{4}{3} \nu\tau u_y \\
 && \nu(u - x u_x - \frac{2}{3} y u_y - \frac{4}{3} \tau u_\tau) - \frac{1}{3} x u_y u_\tau -  \frac{1}{6} y^2 u_\tau \\
 && \tau(u - x u_x - \frac{2}{3} y u_y - \frac{4}{3} \nu u_\nu ) + \frac{1}{3} u_y u_\nu x + \frac{1}{6} y^2 u_\nu \\
 && uu_x - \frac{2}{9} u_y^3 - \frac{1}{3} u_y u_\nu u_\tau \\
 && uu_y + \frac{1}{2} y^2 u_x + \nu\tau u_x - \frac{2}{3} y u_y^2 - \frac{1}{3} y u_\nu u_\tau - \frac{1}{3} \nu u_y u_\nu - \frac{1}{3} \tau u_y u_\tau \\
 && uu_\nu + y\tau u_x - \frac{2}{3} \tau u_y^2 - \frac{1}{3} \tau u_\nu u_\tau - \frac{1}{3} y u_y u_\nu\\
 && uu_\tau - y\nu u_x + \frac{2}{3} \nu u_y^2 + \frac{1}{3} \nu u_\nu u_\tau - \frac{1}{3} y u_y u_\tau \\ \hline
 \fz(\fg_0) && \sfZ := 2u - x u_x - y u_y - \nu u_\nu - \tau u_\tau\\ \hline
 \fg_0^{\ss} & \ff_1 & y u_x - \frac{2}{3} u_y^2 - \frac{1}{3} u_\nu u_\tau, \quad
 \nu u_x + \frac{1}{3} u_y u_\tau, \quad \tau u_x - \frac{1}{3} u_y u_\nu \\ \hline
 & \ff_0 & \sfZ_0 := \frac{3}{2} x u_x + \frac{1}{2} (y u_y + \nu u_\nu + \tau u_\tau),\\
 & &\nu u_\nu - \tau u_\tau, \quad \nu u_\tau, \quad \tau u_\nu, \quad y u_\nu - 2\tau u_y, \quad y u_\tau + 2\nu u_y\\ \hline
 & \ff_{-1} & xu_y + \frac{y^2}{2} + \nu\tau, \quad x u_\nu + y\tau, \quad xu_\tau - y\nu\\ \hline
 \fg_{-1} & & x, y, \nu, \tau, u_x, u_y, u_\nu, u_\tau\\ \hline
 \fg_{-2} & & 1\\ \hline
 \end{array}
 \]
 \caption{Generating functions of the symmetries of the $G(3)$-contact super-PDE system \eqref{G3-PDE}}
 \label{G3-PDE-syms}
 \end{table}
\end{theorem}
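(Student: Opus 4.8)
The plan is to transfer the problem to the contact supermanifold $(M,\cC)$ and then verify the tabulated generators degree by degree against the $\ZZ$-grading. Recall from the discussion preceding Theorem \ref{thm:superPDEG(3)} that $(M,\cC,\cV)$ and $(\widetilde M,\widetilde\cC,\widehat\cV)$ have the same contact symmetries: by the super Lie--B\"acklund theorem, every contact vector field on $\widetilde M$ tangent to $\cE=\widehat\cV$ is the prolongation of a unique contact vector field on $M$ preserving the field of $(1|2)$-twisted cubics $\cV$. Hence $\mathfrak{inf}(\widetilde M,\widetilde\cC,\widehat\cV)\cong\mathfrak{inf}(M,\cC,\cV)$, and by Proposition \ref{prop:generating} I encode each symmetry by its generating superfunction $f=f(x^i,u,u_i)$. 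The invariance condition becomes the requirement that the prolonged field $\widetilde\bS_f$, computed via \eqref{contact-vf}--\eqref{contact-vf-pr}, be tangent to the equation supermanifold $\cE$ defined by \eqref{G3-PDE}; equivalently, that $\bS_f$ preserve the submodules \eqref{bV}--\eqref{TV} spanning $\cV$ and $\widehat\cV$. This is a finite system of determining equations on $f$.

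Next I would check that each of the $(17|14)$ functions in Table \ref{G3-PDE-syms} solves these determining equations, organising the verification by the $\ZZ$-grading \eqref{ct-grading}--\eqref{E:contact-m}. The generators in $\fg_{-2}\oplus\fg_{-1}$ (the constant $1$ and the eight linear functions) together with the $\fg_0$-part reproduce the symbol $\fm$ and the orthosymplectic reduction $\fg_0\cong\mathfrak{cosp}(3|2)$, so their tangency is immediate from the $G_0$-invariance of $\cV$ established in Section \ref{S2}. The substance lies in the positive-degree generators $\fg_1$ and $\fg_2$, whose generating functions carry the cubic form $\fC$ explicitly; here the supersymmetric identities \eqref{id1}--\eqref{id3} of Proposition \ref{P:C-ids} are exactly the algebraic facts needed to collapse the obstruction terms to $0$, precisely as in the classical $G(2)$ computation of \cite{MR3759350}. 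The only genuinely new feature is the disciplined tracking of the sign rule and of the parities $|c|\in\ZZ_2$ once the odd parameters $\theta,\phi$ enter.

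Having confirmed that the table consists of symmetries, linear independence is clear: the functions are homogeneous of distinct $\sfZ_1$-degrees, and within a fixed degree they are distinguished by their leading monomials. This yields $\dim\mathfrak{inf}(\widetilde M,\widetilde\cC,\widehat\cV)\geq(17|14)$. The matching upper bound is Theorem \ref{T:G3-sym} (equivalently, the maximal-prolongation statement $\pr(\fm,\fg_0)\cong G(3)$ of Corollary \ref{C:pr-ct}), so the table spans $\mathfrak{inf}$ and the dimension is exactly $(17|14)$. To identify the algebra with $G(3)$, I would compute the Lagrange brackets \eqref{lagrange-bracket} of the tabulated generating functions and check that the resulting $\ZZ$-graded bracket structure matches that of the contact grading $\fg=\fg_{-2}\oplus\cdots\oplus\fg_2$; since the explicit list already realises $\fm\rtimes\fg_0$ and the total dimension equals that of the maximal prolongation, this filtered algebra must be the maximal prolongation, namely $G(3)$.

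The main obstacle is the explicit tangency verification for $\fg_2$ and the top of $\fg_1$: these are the true prolongation directions, their determining equations are the most involved, and it is only through the third identity \eqref{id3}---together with careful parity bookkeeping in the super-setting---that the obstruction terms cancel. Everything in degree $\leq 0$ is essentially structural and follows from the constructions of Section \ref{S2}, so the computational burden is concentrated in the two top graded pieces.
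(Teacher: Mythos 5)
Your proposal is correct and follows essentially the same route as the paper: transfer of the problem to $(M,\cC,\cV)$ via the super Lie--B\"acklund theorem, verification of the tabulated generating functions with the key identities of Proposition \ref{P:C-ids}, and the upper bound coming from Theorem \ref{T:G3-sym} (i.e.\ Corollary \ref{C:pr-ct}), followed by the filtered-deformation/grading-element argument to pin down $G(3)$. The only difference is one of economy: the paper checks tangency for the single $\fg_2$ generator only (as a symmetry of the cone field $\cV$, condition \eqref{inf-sym}, rather than of $\cE$), and then obtains all of $\fg_1$ and $\fg_0^{\ss}$ for free by Lagrange-bracketing with the trivially verified $\fg_{-1}$ symmetries, whereas you propose to verify the $\fg_1$ generators directly as well.
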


We now turn to the proof of the theorems. We shall work locally and consider Darboux coordinates 
$(x^i, u, u_i)$ on $M$, where $0\leq i\leq 3$. We set
\begin{align*}
\label{eq:degreecoordinates}
\deg(x^i)&=\deg(u_i)=1\;,\quad \deg(\partial_{x^i})=\deg(\partial_{u_i})=-1\;,\\
\deg(u)&=2\;,\quad \deg(\partial_{u})=-2\;,
\end{align*}
and extend the definition of degree to any weight-homogeneous polynomial vector field on $M$ 
in the obvious way. In particular, we may decompose the space $\mathfrak{inf}(M,\cC)$ of all  
contact vector fields on $M$ into the direct sum
$$
\mathfrak{inf}(M,\cC)=\bigoplus_{k\geq -2}\mathfrak{inf}(M,\cC)_k\;,\qquad \mathfrak{inf}(M,\cC)_k=\left\{\bS\in\mathfrak{inf}(M,\cC)\mid\deg(\bS)=k\right\}\;,
$$
of its homogeneous subspaces and we have an associated filtration
$
\mathfrak{inf}(M,\cC)=\mathfrak{inf}(M,\cC)^{-2}\subset \mathfrak{inf}(M,\cC)^{-1}\subset \mathfrak{inf}(M,\cC)^{0}\subset\cdots
$ of $\mathfrak{inf}(M,\cC)$, compatible with the Lie bracket of vector fields.
%$[\mathfrak{aut}(M,\cC)^{p}, \mathfrak{aut}(M,\cC)^{q}]\subset \mathfrak{aut}(M,\cC)^{p+q}$, for all $p,q\in\mathbb Z$.

Now the contact symmetry algebra $\mathfrak{inf}(M,\cC,\cV)$ of a supergeometry $(M,\cC,\cV)$ is not graded in general but it is naturally filtered by the filtration induced as a subspace of $\mathfrak{inf}(M,\cC)$. The associated graded LSA $\fa = \gr(\mathfrak{inf}(M,\cC,\cV))$ has non-positive part $\fa_- \op \fa_0$ contained in $\fg_- \op \fg_0$, where
$\fg=\fg_{-2}\oplus\cdots\oplus\fg_{2}$ is the contact grading of $\fg = G(3)$ with parabolic subalgebra $\fp_1^{\rm IV}$.
By local transitivity, we have $\fa_{-}=\fg_{-}$, so $[\fa_k,\fg_{-1}] \subset \fa_{k-1}$ for all $k>0$, hence $\fa \subset \pr(\fg_-,\fa_0) \subset \pr(\fg_-,\fg_0) \cong \fg$, where the last isomorphism is due to Corollary \ref{C:pr-ct}. So
\begin{align*}
\dim\mathfrak{inf}(M,\cC,\cV)_{\bar 0}&=\dim\fa_{\bar 0}\leq \dim\fg_{\bar 0}=17\;,\\
\dim\mathfrak{inf}(M,\cC,\cV)_{\bar 1}&=\dim\fa_{\bar 1}\leq \dim\fg_{\bar 1}=14\;.
\end{align*}
This proves the first claim of Theorem \ref{T:G3-sym}. 

Assume now $\dim\mathfrak{inf}(M,\cC,\cV)=(17|14)$. Then $\mathfrak{inf}(M,\cC,\cV)$ is a filtered deformation of the graded LSA $\fa = \gr(\mathfrak{inf}(M,\cC,\cV))\subset\fg$, which by dimension reasons is necessarily $\fg=G(3)$. Since the grading element $\sfZ=\sfZ_1$ of the contact grading $\fg=\fg_{-2}\oplus\cdots\oplus\fg_{2}$ of $G(3)$ belongs to $\fg$, any filtered deformation of $\fg$ is actually isomorphic (as a filtered algebra) to $\fg=\fg_{-2}\oplus\cdots\oplus\fg_{2}$.
This proves the uniqueness statement of Theorem \ref{T:G3-sym}.

In view of the discussion above Theorem \ref{thm:superPDEG(3)}, the contact symmetry algebra
$\mathfrak{inf}(\widetilde M,\widetilde\cC,\widehat\cV)$ of the 2nd order super-PDE \eqref{G3-PDE}
is isomorphic to the contact symmetry algebra $\mathfrak{inf}(M,\cC,\cV)$ of the flat $G(3)$-contact supergeometry. Hence, we conclude the proofs of
Theorems \ref{T:G3-sym}--\ref{T:G3-symII} if we verify that the
supervector fields described in Table \ref{G3-PDE-syms} are contact symmetries of \eqref{G3-PDE}. (In particular, this shows that the upper bound on symmetry dimension is realized.)
%The symmetry algebra $\fs$ of \eqref{G3-PDE} becomes filtered with any choice of basepoint.  Its associated graded $\fa = \gr(\fs)$ is a graded algebra with non-positive part $\fa_- \op \fa_0 \subset \fg_- \op \fg_0$ (in the contact grading of $\fg$).  In \S \ref{S:ct-cohom}, we showed that $\fg \cong \pr(\fg_-,\fg_0)$, so $\fa \subset \fg$.  In particular, $\dim(\fs_{\bar{0}}) \leq \dim(\fg_{\bar{0}})$ and $\dim(\fs_{\bar{1}}) \leq \dim(\fg_{\bar{1}})$.
%The existence of the grading element $\sfZ$ in Table \ref{G3-PDE-syms} endows $\fs$ with a grading so that as graded LSA, $\fs \cong \fa \subset \fg$.  The symmetry count then forces $\fs \cong \fg$.
%\begin{rem} In \cite{MR3759350}, the proof that the analogous PDE for the exceptional Lie algebras have the claimed maximal symmetry was done independently from the explicit computation of the symmetries.  There, the former result followed from the existence of (regular, normal) Cartan connections and the confirmation of vanishing harmonic curvature. Regrettably, a theory of ``parabolic super-geometries'' has not been developed until now, so such analogous tools in the super-setting are not available.
 %\end{rem}

We now turn to the explicit computation of the contact symmetries of \eqref{G3-PDE}, but we will in fact carry out a more conceptual calculation of the symmetries of \eqref{G-PDE}, with the cubic form $\fC$ on $W(\bbA)$ only required to satisfy the key identities of Proposition \ref{P:C-ids}.  Similar identities hold for the exceptional Lie algebras, and our calculation is a generalization of \cite[\S 3.4]{MR3759350} to the $G(3)$-case. We will use an index notation with $0 \leq i,j,k \leq 3$, while $1 \leq a,b,c \leq 3$.

%Anticipating $\fs \cong \fg$ via the above arguments, we will only use the notation $\fg$ below (but this is justified only after carrying out the symmetry count).
Using $\fC$ given in \eqref{E:CG}, it is straightforward to check that Table \ref{G3-PDE-syms} is a specialization of the formulas in Table \ref{all-syms}.  Here we use the notation $X = x^a w_a \in W(\bbA)$ and $P = u_a w^a \in W^*(\bbA)$, where $\{ w_a \}$ is a basis of $W$ and $\{ w^a \}$ is the corresponding dual basis in $W^*$.

\begin{table}[h]
 \[
 \begin{array}{|c|c|l|} \hline
 \fg_2 & & u(u - x^i u_i) - \frac{1}{2} \fC(X^3) u_0 + \frac{1}{2} \fC^*(P^3) x^0 + \frac{9}{4} \fC_c(X^2) (\fC^*)^c(P^2)\\ \hline
 \fg_1 & & x^0(u - x^i u_i) - \frac{1}{2} \fC(X^3) \\
 & & x^a(u - x^i u_i) + (-1)^{|a|} \left( \frac{3}{2} (\fC^*)^a(P^2) x^0 + \frac{9}{2} \fC_b(X^2) (\fC^*)^{ba}(P)\right)\\
 & & uu_0 - \frac{1}{2} \fC^*(P^3)\\
 & & uu_a + \frac{3}{2} \fC_a(X^2) u_0 - \frac{9}{2} \fC_{ab}(X) (\fC^*)^b(P^2)\\ \hline
 \fz(\fg_0) & & \sfZ := 2u - x^i u_i\\ \hline
 \fg_0^{\ss} & \ff_1 & x^a u_0 -  \frac{3}{2} (-1)^{|a|} (\fC^*)^a(P^2) \\
 & \fz(\ff_0) & \sfZ_0 := \frac{3}{2} x^0 u_0 + \frac{1}{2} x^c u_c \\
 & \ff_0^{\ss} & \psi^a{}_b := x^a u_b + (-1)^{|a|} ( \frac{1}{3} \delta^a{}_b x^c u_c - 9(-1)^{|a||b|}\fC_{bc}(X) (\fC^*)^{ca}(P) )\\
 & \ff_{-1} & u_a x^0 + \frac{3}{2} \fC_a(X^2) \\ \hline
 \fg_{-1} & & x^i, u_i\\ \hline
 \fg_{-2} & & 1 \\ \hline
 \end{array}
 \]
 \caption{Generating functions of the symmetries of the $G(3)$-contact super-PDE system \eqref{G3-PDE} expressed in terms of the cubic form $\fC$.}
 \label{all-syms}
 \end{table}
The functions listed in $\fg_{-1}, \fg_{-2}$ and $\fz(\fg_0)$ in Table \ref{all-syms} generate the vector fields
 \[
 x^i \partial_u + (-1)^{|i|} \partial_{u_i}, \quad -\partial_{x^i}, \quad \partial_u, \quad 2u \partial_u+x^i \partial_{x^i}  + u_i \partial_{u_i}\;,
 \]
whose prolongations have a trivial action on all the coordinates $u_{ij}$. Clearly, these prolonged vector fields are symmetries of \eqref{G-PDE}. The last vector field acts as a grading element $\sfZ$.  %We note that the last vector is the Euler vector field, which evidently acts as a grading element $Z$.

Let us assume for the moment that the generating function $f$ in $\fg_2$ is a symmetry. Then, by repeatedly
applying $\fg_{-1}$ to $f$ via \eqref{lagrange-bracket} we obtain all the symmetries in $\fg_1$ and $\fg_0$. (In particular,
 $\ff_1$ and $\ff_{-1}$ are spanned by $[x^a,[u_0,f]]$ and $[u_a,[x^0,f]]$ respectively, while $\ff_0$ is spanned by the grading element $\sfZ_0 = [x^0,[u_0,f]] - \frac{1}{2} \sfZ$ of $\ff$ and by $\psi^a{}_b := [x^a,[u_b,f]] - \delta^a{}_b (-1)^{|a|} (\frac{1}{2} \sfZ + \frac{1}{3} \sfZ_0)$.)

We are then left to prove that $f$ is a symmetry of \eqref{G-PDE}, but verifying that directly would require calculating the prolonged vector field via \eqref{contact-vf-pr} and checking infinitesimal invariance, an approach that is computationally very involved.  Instead, we recall that $\cV$ and $\widehat\cV$ have the same contact symmetries and turn to prove that $f$ is a symmetry of the flat $G(3)$-contact supergeometry $(M,\cC,\cV)$:

\begin{proposition} The even function
 \begin{align}
 f &= u(u - x^i u_i) - \frac{1}{2} \fC(X^3) u_0 + \frac{1}{2} \fC^*(P^3) x^0 + \frac{9}{4} \fC_c(X^2) (\fC^*)^c(P^2) \label{top-sym}
  \end{align}
 generates a contact symmetry $\bS_f$ of the flat $G(3)$-contact supergeometry $(M,\cC,\cV)$.
 \end{proposition}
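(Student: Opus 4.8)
The plan is as follows. Since $f$ is even, the generating-function formula \eqref{contact-vf} produces an even contact vector field $\bS_f$, which by construction preserves $\cC$. As every contact vector field rescales the defining $1$-form $\sigma$, it also preserves the conformal class $[\eta]$, hence the $CSpO$-structure on $\cC$. Thus the only remaining point is that $\bS_f$ preserves the field of $(1|2)$-twisted cubics $\cV\subset\bbP(\cC)$, and this is what I would establish.

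First I would reduce the claim to an infinitesimal tangency condition. Because $\bS_f$ preserves $\cC$, the Lie derivative $\cL_{\bS_f}\bV(T)$ of the generating family \eqref{bV} is again a section of $\cC$ for every $T=t^aw_a\in W(\bbA)$. The induced vector field on $\bbP(\cC)$ is tangent to $\cV$ --- equivalently, the flow of $\bS_f$ maps super-points of $\cV$ to super-points of $\cV$ --- precisely when
\[
\cL_{\bS_f}\bV(T)\in\widehat{T}^{(1)}_{[\bV(T)]}\cV=\tspan_{\bbA}\{\bV(T),\,\partial_{t^1}\bV(T),\,\partial_{t^2}\bV(T),\,\partial_{t^3}\bV(T)\}
\]
for all $T$; here I use that the affine tangent space is spanned by the parametrisation \eqref{E:cV} and its first derivatives, as recorded in Proposition \ref{P:hV}. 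So it suffices to exhibit an even coefficient $a(T)$ and coefficients $b^c(T)$ of parity $|c|$ with $\cL_{\bS_f}\bV(T)=a(T)\bV(T)+b^c(T)\,\partial_{t^c}\bV(T)$, and then invoke Zariski-density of the affine chart together with $G_0$-invariance of $\cV$ to cover the super-point at infinity.

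The heart of the matter is then the explicit bracket $[\bS_f,\bV(T)]$, computed with $\bS_f$ from \eqref{contact-vf} and $\bV(T)$ from \eqref{bV} relative to the flat frame $\mathcal{F}_{flat}$. Collecting terms in $\bD_i$, $\partial_u$ and $\bU^i$, the components transverse to $\widehat{T}^{(1)}_{[\bV(T)]}\cV$ are governed by exactly those bilinear and trilinear combinations of $\fC$, $\fC^*$ and their derivatives appearing on the left-hand sides of the identities \eqref{id1}--\eqref{id3}: the coefficient of $\bU^a$ involves $\fC_c(T^2)\fC_a(T^2)(\fC^*)^{ac}(P)$, and its derivatives produce $\fC_{bc}(T)\fC_a(T^2)(\fC^*)^{ac}(P)$ and $\fC_{dbc}\fC_a(T^2)(\fC^*)^{ac}(P)+2\fC_{da}(T)(\fC^*)^{ac}(P)\fC_{cb}(T)(-1)^{|c|}$. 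Substituting Proposition \ref{P:C-ids} collapses each transverse component, and the surviving terms regroup into the required $a(T)\bV(T)+b^c(T)\,\partial_{t^c}\bV(T)$. This mirrors the $G(2)$-computation of \cite[\S 3.4]{MR3759350}, of which it is the super-analogue.

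The main obstacle I anticipate is not conceptual but the super sign bookkeeping: the parity factors $(-1)^{|a|}$ and $(-1)^{|c|}$ threaded through \eqref{contact-vf} and \eqref{id3}, together with the nilpotent odd contributions $2\lambda\theta\phi$ and $\theta\phi$ hidden inside $\fC(T^3)$ and $\fG(T^2)$, must be tracked with care so that the transverse components genuinely cancel and the coefficients $a(T),b^c(T)$ emerge with the correct parities. Identifying the precise grouping into $\bV(T)$ and the tangent directions $\partial_{t^c}\bV(T)$ --- rather than merely checking that the bracket lands in $\cC$ --- is the crux, and it is here that the three identities of Proposition \ref{P:C-ids} do all the work.
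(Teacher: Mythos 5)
Your plan is exactly the paper's proof: reduce to the infinitesimal tangency condition $[\bS_f,\bV(T)]\in\tspan_{\bbA}\{\bV(T),\bB_a\}$ (the $\bB_a$ being, up to sign, the derivatives $\partial_{t^a}$ of the parametrisation, as in Proposition \ref{P:hV}), compute the bracket in the flat frame, and kill the transverse $\bU^i$-components using precisely the identities \eqref{id1}--\eqref{id3} of Proposition \ref{P:C-ids}. The paper carries out the coefficient computation ($\sigma_0=\sigma_c=0$) that you only sketch, but the approach and the key ingredients coincide.
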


 \begin{proof}
Let us first explain how the general form of $f$ has been obtained.  Since $[\fg_2,\fg_{-2}] \subset \fz(\fg_0)$, we may impose the normalization condition $\sfZ = [1,f] = f_u$, which implies $$f = u(u - x^i u_i) + g(x^i,u_i)\;.$$
The condition $[\sfZ,f] = 2f$ is equivalent to $x^i \partial_{x^i} g + u_i \partial_{u_i} g = 4g$, i.e., $g$ is even and homogeneous of degree $4$. We will now show that the vector field
\[
 \bS_f = \Big(x^i u -(-1)^{|i|} \partial_{u_i} g \Big) \partial_{x^i} + \Big(u^2 + g  - u_i (\partial_{u_i} g ) \Big) \partial_u + \Big(u_i u - u_i x^j u_j + \partial_{x^i} g\Big) \partial_{u_i}
 \]
is a contact symmetry of $\cV$ for $f$ as in \eqref{top-sym}. In other words, we consider
$\bV = \bV(T)$ as in \eqref{bV} (for the flat frame) and study the equation
 \begin{align} \label{inf-sym}
 [\bS_f, \bV] \in \widehat{T}^{(1)}_{[\bV]} \cV = \tspan\{ \bV, \bB_a \}, \quad \forall\, T \in W(\bbA),
 \end{align}
 where $\bB_a$ was defined in \eqref{TV} (again, for the flat frame).

 We depart with
 \begin{align}
 [\bS_f,\bD_k] &= -\left(\delta^i_k u + u_k x^i - (-1)^{|i|} \partial_{x^k} \partial_{u_i} g\right) \bD_i - \Big(\partial_{x^k} \partial_{x^i} g\Big) \bU^i,\\
 [\bS_f,\bU^k] &= \Big((-1)^{|i|} \partial_{u_k} \partial_{u_i} g \Big) \bD_i + \Big(-\delta_i^k (u - x^j u_j) + (-1)^{|k|} x^k u_i  - \partial_{u_k} \partial_{x^i} g\Big) \bU^i,
 \end{align}
 and specializing to $f$ as in \eqref{top-sym}, we obtain:
 \begin{align*}
 [\bS_f,\bD_0] &= -\left( u + u_0 x^0 \right) \bD_0 +\left(-u_0 x^c + (-1)^{|c|} \frac{3}{2} (\fC^*)^c(P^2) \right) \bD_c, \\
 %%%%%
 [\bS_f,\bD_a] &= -\left( u_a x^0 + \frac{3}{2} \fC_a(X^2)\right) \bD_0
 +\left(-\delta^c_a u - u_a x^c + (-1)^{|c|}
 9 \fC_{ab}(X) (\fC^*)^{bc}(P) \right) \bD_c \\
 &\qquad + \left( 3\fC_{ac}(X) u_0 - \frac{9}{2} \fC_{acb} (\fC^*)^b(P^2) \right)\bU^c,\\
 %%%%%
 [\bS_f,\bU^0] &= \left( - u + x^j u_j + x^0 u_0\right) \bU^0 + \left( x^0 u_c  + \frac{3}{2} \fC_c(X^2)\right) \bU^c,\\
 %%%%%
 [\bS_f,\bU^a]
 &=
 (-1)^{|c|}\left(  3 (\fC^*)^{ac}(P) x^0 + \frac{9}{2} \fC_b(X^2) (\fC^*)^{bac} \right) \bD_c
 + \left((-1)^{|a|} x^a u_0  - \frac{3}{2} (\fC^*)^a(P^2) \right) \bU^0\\
 &\qquad  + \left(-\delta_c^a (u - x^j u_j) + (-1)^{|a|} x^a u_c  - 9 (-1)^{|a||c|} \fC_{cb}(X) (\fC^*)^{ba}(P)\right) \bU^c.
 \end{align*}
 In view of \eqref{bV}, we have that $[\bS_f,\bV] = \rho^i \bD_i + \mu_i \bU^i$, where:
 \begin{align*}
 \rho^0 &= -\left( u + u_0 x^0 \right) + t^a\left( u_a x^0 + \frac{3}{2} \fC_a(X^2)\right),\\
 \rho^c &= -u_0 x^c + (-1)^{|c|} \frac{3}{2} (\fC^*)^c(P^2) - t^a\left(-\delta^c_a u - u_a x^c + (-1)^{|c|}
 9 \fC_{ab}(X) (\fC^*)^{bc}(P) \right)\\
 &\qquad -\frac{3}{2} \fC_a(T^2) (-1)^{|c|} \left( 3(\fC^*)^{ac}(P) x^0 +  \frac{9}{2} \fC_b(X^2) (\fC^*)^{bac} \right),\\
 \mu_0 &= -\frac{1}{2} \fC(T^3) \left( - u + x^j u_j + x^0 u_0\right)  -\frac{3}{2} \fC_a(T^2)
 \left((-1)^{|a|} x^a u_0  - \frac{3}{2} (\fC^*)^a(P^2) \right), \\
 \mu_c &= -\frac{1}{2} \fC(T^3) \left( x^0 u_c  + \frac{3}{2} \fC_c(X^2)\right) + t^a \left( - 3\fC_{ac}(X) u_0 + \frac{9}{2} \fC_{acb} (\fC^*)^b(P^2) \right) \\
 &\qquad -\frac{3}{2} \fC_a(T^2) \Big(-\delta_c^a (u - x^j u_j) + (-1)^{|a|} x^a u_c  - 9 (-1)^{|a||c|} \fC_{cb}(X) (\fC^*)^{ba}(P)\Big).
 \end{align*}
 Using \eqref{TV}, we then find that $[\bS_f,\bV] - \rho^0 \bV - (\rho^c + \rho^0 t^c) \bB_c =\sigma_i \bU^i$, where
 \begin{align*}
 \sigma_0 = \mu_0 - \rho^0 \fC(T^3) - \frac{3}{2} \rho^c \fC_c(T^2), \qquad
 \sigma_c = \mu_c - \frac{3}{2} \rho^0 \fC_c(T^2)  - 3 \rho^b \fC_{bc}(T).
 \end{align*}
 Thus, \eqref{inf-sym} holds if and only if $\sigma_0 =\sigma_c=0$.  Let us first examine $\sigma_0$:
 \begin{align*}
 \sigma_0
 &= -\frac{1}{2} \fC(T^3) \left( - u + x^j u_j + x^0 u_0\right)  -\frac{3}{2} \fC_c(T^2)
 \left((-1)^{|c|} x^c u_0  - \frac{3}{2} (\fC^*)^c(P^2) \right) \\
 &\qquad - \left(-\left( u + u_0 x^0 \right) + t^c\left( u_c x^0 + \frac{3}{2} \fC_c(X^2)\right)\right) \fC(T^3)\\
 &\quad - \frac{3}{2} \left( -u_0 x^c + (-1)^{|c|} \frac{3}{2} (\fC^*)^c(P^2) - t^a\left(-\delta^c_a u - u_a x^c + (-1)^{|c|}  9 \fC_{ab}(X) (\fC^*)^{bc}(P) \right) \right.\\
 &\qquad \left. -\frac{3}{2} \fC_a(T^2) (-1)^{|c|} \left( 3(\fC^*)^{ac}(P) x^0 + \frac{9}{2} \fC_b(X^2) (\fC^*)^{bac} \right) \right) \fC_c(T^2)
\\
&= x^0 \left( \frac{27}{4} \fC_c(T^2) \fC_a(T^2) (\fC^*)^{ac}(P) - \fC(T^3) t^c u_c \right)\\
 &\qquad + \frac{27}{2} t^a \fC_{ab}(X) \fC_c(T^2) (\fC^*)^{cb}(P) -\frac{1}{2} \fC(T^3) x^c u_c - \frac{3}{2} t^a u_a x^c \fC_c(T^2) \\
 &\qquad + \frac{3}{2} \left( \frac{27}{4} \fC_c(T^2) \fC_a(T^2) \fC_b(X^2) (\fC^*)^{bac} - \fC(T^3) t^c \fC_c(X^2) \right)\;,
 \end{align*}
where the last equation follows from the vanishing of all terms which are either quadratic in $t$ or involve $u$ or $u_0$. The identity $\sigma_0=0$ follows then directly from equations \eqref{id1}-\eqref{id2}.

Finally, we turn to $\sigma_c$:
 \begin{align*}
 \sigma_c
 &= -\frac{1}{2} \fC(T^3) \left( x^0 u_c  + \frac{3}{2} \fC_c(X^2)\right) + t^a \left( -3\fC_{ac}(X) u_0 + \frac{9}{2} \fC_{acb} (\fC^*)^b(P^2) \right) \\
 &\quad -\frac{3}{2} \fC_a(T^2) \left(-\delta_c^a (u - x^j u_j) + (-1)^{|a|} x^a u_c  - 9 (-1)^{|a||c|} \fC_{cb}(X) (\fC^*)^{ba}(P)\right) \\
 &\quad - \frac{3}{2} \left(-\left( u + u_0 x^0 \right) + t^a\left( u_a x^0 + \frac{3}{2} \fC_a(X^2)\right)\right)\fC_c(T^2)\\
 &\quad - 3\left(
 -u_0 x^b + (-1)^{|b|} \frac{3}{2} (\fC^*)^b(P^2) - t^a\left(-\delta^b_a u - u_a x^b + (-1)^{|b|}
 9 \fC_{ad}(X) (\fC^*)^{db}(P) \right) \right.\\
 &\qquad\qquad \left. -\frac{3}{2} \fC_a(T^2) (-1)^{|b|} \left( 3(\fC^*)^{ab}(P) x^0 + \frac{9}{2} \fC_d(X^2) (\fC^*)^{dab}\right)
 \right) \fC_{bc}(T)\;.
 \end{align*}
 The terms linear in $t$ and those involving $u$ or $u_0$ are easily seen to vanish.  The
terms cubic in $t$ and those involving $x^0$ vanish both by \eqref{id2}.  Only the terms quadratic  in $t$ remain, i.e.,
 \begin{align*}
 \frac{2}{3} \sigma_c
 &= -\fC_c(T^2) x^c u_c - x^a \fC_a(T^2)  u_c - 2 t^a u_a x^b \fC_{bc}(T) \\
 &\qquad  + 9 \left( \fC_{cb}(X) \fC_a(T^2) (\fC^*)^{ab}(P)
 + 2 t^d (-1)^{|b|}
 \fC_{de}(X) (\fC^*)^{eb}(P) \fC_{bc}(T)  \right)
 \end{align*}
 and vanishes as a consequence of \eqref{id3}.
 \end{proof}

 \subsection{Cauchy characteristic reduction and the super Hilbert--Cartan equation}\label{CC-HC}
We recall that $\widehat\cV \subset \widetilde{M}$ defined by \eqref{G3-PDE} admits local coordinates
$
 (x,y,u,u_x,u_y, \lambda \,\,|\,\, \nu,\tau, u_\nu,u_\tau, \theta, \phi)
 $. We let $\cE=\widehat\cV$ and pass to a certain (local) quotient $\overline\cE$ of $\cE$ equipped with a distribution $\overline\cH$.

 The tautological system $\widetilde{\cC} = \tspan\{ \widetilde{D}_{x^i}, \partial_{u_{ij}} \}$ on $\widetilde{M}$ induces on $\cE$
 the rank $(3|4)$ distribution
 \begin{align} \label{system-dist}
 \cH = \langle D_x, D_y, \partial_\lambda \,\,|\,\, D_\nu, D_\tau, \partial_{\theta}, \partial_{\phi} \rangle,
 \end{align}
 where the vector fields $D_x,D_y,D_\nu,D_\tau$ are the restrictions of the (truncated) total derivatives of $\widetilde M$ to $\cE$, namely:
 \begin{equation}
\label{eq:totalderivativesxynutau}
\begin{cases}
 D_x = \partial_x + u_x \partial_u + (\frac{\lambda^3}{3} + 2\theta \phi \lambda) \partial_{u_x} + (\frac{\lambda^2}{2} + \theta \phi) \partial_{u_y} + \lambda\phi \partial_{u_\nu} - \lambda\theta \partial_{u_\tau}\;,\\
 D_y = \partial_y + u_y \partial_u + (\frac{\lambda^2}{2} + \theta \phi) \partial_{u_x} + \lambda \partial_{u_y} + \phi \partial_{u_\nu} - \theta \partial_{u_\tau}\;,\\
 D_\nu = \partial_\nu + u_\nu \partial_u + \lambda\phi\partial_{u_x} + \phi \partial_{u_y} - \lambda \partial_{u_\tau}\;,\\
 D_\tau = \partial_\tau + u_\tau \partial_u - \lambda\theta\partial_{u_x} - \theta \partial_{u_y} + \lambda \partial_{u_\nu}.
 \end{cases}
\end{equation}
Note that $D_x,D_y,D_\nu,D_\tau$ supercommute. Let 
 $$
\mathfrak{inf}(\cH)= \{ \bX \in \mathfrak{X}(\cE) : \cL_\bX \cH \subset \cH \}
 $$ 
be the algebra of internal symmetries of $\cH$. It is clear that all contact symmetries of \eqref{G3-PDE} (namely, $G(3)$ as stated in Theorem \ref{T:G3-symII}) induce symmetries of $\cH$, but $\mathfrak{inf}(\cH)$ is in fact larger.  Indeed, the {\em Cauchy characteristic space}
 \[
 \Ch(\cH) = \{ \bX \in \Gamma(\cH) : \cL_\bX \cH \subset \cH \}
 \]
is contained in $\mathfrak{inf}(\cH)$ and it is a module for the space of superfunctions of $\cE$. So, if $\Ch(\cH) \neq 0$, then $\Ch(\cH)$ and $\mathfrak{inf}(\cH)$ are infinite-dimensional. This is the case here:

 \begin{proposition} \label{CC}
 $\Ch(\cH)$ is spanned by $\bC = D_x - \lambda D_y - \theta D_\nu - \phi D_\tau$.
 \end{proposition}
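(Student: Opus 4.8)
The plan is to test the defining condition $\cL_\bX\cH\subseteq\cH$ directly on a general section of $\cH$ and to reduce it to a linear system on the component functions. Write an arbitrary even section as
\[
\bX = a\,D_x + b\,D_y + c\,\partial_\lambda + \alpha\,D_\nu + \beta\,D_\tau + \gamma\,\partial_\theta + \delta\,\partial_\phi,
\]
with $a,b,c$ even and $\alpha,\beta,\gamma,\delta$ odd superfunctions (odd sections are handled by the same computation with parities exchanged). The membership $[\bX,Z]\in\cH$ for each frame field $Z$ of $\cH$ need only be tested modulo $\cH$, and the convenient transversal is $\langle \partial_{u_x},\partial_{u_y}\,|\,\partial_{u_\nu},\partial_{u_\tau}\rangle$: indeed a combination $\sum_i a_i D_{x^i}+c\,\partial_\lambda+\gamma\,\partial_\theta+\delta\,\partial_\phi$ lies in this span only if all its coefficients vanish, so $\cH$ meets the transversal trivially.

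The key simplification is that $D_x,D_y,D_\nu,D_\tau$ supercommute and the coordinate fields $\partial_\lambda,\partial_\theta,\partial_\phi$ commute among themselves; moreover the Leibniz terms $(Z\cdot\mathrm{coeff})\cdot(\text{frame field})$ always stay in $\cH$. Hence, modulo $\cH$, every relevant bracket is built from the ``vertical'' brackets
\[
[\partial_\bullet, D_{x^i}] = \sum_j (\partial_\bullet u_{ij})\,\partial_{u_j}, \qquad \bullet\in\{\lambda,\theta,\phi\},
\]
governed by the three matrices $\partial_\lambda U,\partial_\theta U,\partial_\phi U$ obtained by differentiating the Hessian $U=(u_{ij})$ of \eqref{PDE-sys}. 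These land in the transversal, so the condition $\bX\in\Ch(\cH)$ becomes the vanishing of finitely many explicit $\langle\partial_{u_j}\rangle$-valued expressions that are \emph{linear} in $a,b,c,\alpha,\beta,\gamma,\delta$.

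I would then split the resulting system. Testing against $Z=D_x,D_y,D_\nu,D_\tau$ produces the matrix condition $c\,\partial_\lambda U + \gamma\,\partial_\theta U + \delta\,\partial_\phi U=0$; reading the entries that are constants (the $(y,y)$, $(\tau,y)$ and $(\nu,y)$ positions equal $1$, $-1$, $1$ in the respective matrices) forces $c=\gamma=\delta=0$. Testing against $Z=\partial_\lambda,\partial_\theta,\partial_\phi$ then gives conditions on $a,b,\alpha,\beta$ alone; reading off the single column indexed by $y$ in each of the three matrices yields $b=-\lambda a$, $\alpha=-\theta a$, $\beta=-\phi a$, so that $\bX=a\,\bC$. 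Conversely $\bC\in\Gamma(\cH)$, and a short check that the remaining columns vanish for the coefficients $(1,-\lambda,-\theta,-\phi)$ shows $[\bC,\cH]\subseteq\cH$; since $\cH$ is a module and $[f\bC,Z]=f[\bC,Z]\pm(Zf)\bC\in\cH$, the whole line $\langle\bC\rangle$ lies in $\Ch(\cH)$. Combining the two inclusions gives $\Ch(\cH)=\langle\bC\rangle$.

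The computations are elementary linear algebra once set up; the only genuine care needed is the super-sign bookkeeping, where the even field $\partial_\lambda$ behaves differently from the odd fields $\partial_\theta,\partial_\phi$ (an odd coefficient picks up an extra sign when commuted past an odd $\partial_\bullet$), and where one must confirm the nontrivial cancellations that single out $\lambda=u_{yy}$, $\theta=-u_{y\tau}$, $\phi=u_{y\nu}$ as the correct ``slopes'' making $\bC$ characteristic. This sign-tracking, together with making the transversality claim airtight, is the main---though routine---obstacle.
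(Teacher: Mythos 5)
Your proof is correct and follows essentially the same route as the paper: both write a general section of $\cH$, discard the Leibniz terms (which stay in $\cH$), reduce to the transversal brackets $[\partial_\bullet, D_{x^i}]$, and solve the resulting linear system—first killing the $\partial_\lambda,\partial_\theta,\partial_\phi$ coefficients via brackets with the total derivatives, then pinning $b=-\lambda a$, $\alpha=-\theta a$, $\beta=-\phi a$ via brackets with $\partial_\lambda,\partial_\theta,\partial_\phi$. Your packaging of the computation through the matrices $\partial_\lambda U,\partial_\theta U,\partial_\phi U$ is only a cosmetic reorganization of the same bracket calculations (the paper reads the analogous entries directly, with the sign bookkeeping handled by its $\widetilde{a}_i$ convention).
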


 \begin{proof}
We let $$\bC = a_1 D_x + a_2 D_y + a_3 \partial_\lambda + a_4 D_\nu + a_5 D_\tau + a_6 \partial_{\theta} + a_7 \partial_{\phi}$$ and compute Lie brackets modulo $\cH$. We depart with
 \begin{align*}
 0 \equiv [D_\nu,\bC] \equiv \widetilde{a_5}[D_\nu,\partial_\lambda] + \widetilde{a_6} [D_\nu,\partial_{\theta}] + \widetilde{a_7} [D_\nu,\partial_{\phi}]
 \equiv (- \widetilde{a_5}\phi + \widetilde{a_7} \lambda) \partial_{u_x} + \widetilde{a_5} \partial_{u_\tau} + \widetilde{a_7} \partial_{u_y},
 \end{align*}
 where $\widetilde{a}_i= (a_i)_{\bar{0}} - (a_i)_{\bar{1}}$ for all $a_i = (a_i)_{\bar{0}} + (a_i)_{\bar{1}}$, $i=1,\ldots,7$.  Thus, $a_5 = a_7 = 0$ and similarly $[D_\tau,\bC] \equiv 0$ implies $a_6=0$.  Moving on, we have
 \begin{align*}
 0 &\equiv [\partial_{\theta},\bC] 
  \equiv (2 \widetilde{a_1} \phi\lambda + \widetilde{a_2} \phi - \widetilde{a_4} \lambda) \partial_{u_x} + (\widetilde{a_1} \phi - \widetilde{a_4})\partial_{u_y} -  (\widetilde{a_1}\lambda + \widetilde{a_2})\partial_{u_\tau},
 \end{align*}
 so that $a_2 = -a_1\lambda$ and $a_4 = -a_1 \phi$, since $\lambda$ is even and $\phi$ is odd.
 Finally, we have
 \begin{align*}
 0 &\equiv [\partial_{\phi}, \bC] \equiv (-\widetilde{a_1} \theta \lambda + \widetilde{a_3} \lambda) \partial_{u_x} + (-\widetilde{a_1} \theta + \widetilde{a_3}) \partial_{u_y}
 \end{align*}
 so that $a_3 = -a_1\theta$. The remaining conditions $[D_x,\bC]\equiv[D_y,\bC]\equiv[\partial_\lambda, \bC]\equiv 0$ are all easily verified.
 \end{proof}

 Consider the (local) quotient $\overline\cE = \cE / \Ch(\cH)$, namely the space of integral curves of $\Ch(\cH)$. We notice that $\{x=0\}$ defines a hypersurface transverse to $\Ch(\cH)$, so we can locally identify it with $\overline\cE$ and consider in there the $(5|6)$-coordinates given by
 $
 (y,u,z, u_y, \lambda \,\,|\,\, \nu,\tau, u_\nu, u_\tau, \theta, \phi)
 $, where $z:=u_x$.
 On $\overline\cE$, we have the rank $(2|4)$-distribution induced from \eqref{system-dist}, namely:
 \begin{align} \label{D-bar}
 \overline\cH = \langle D_y, \partial_\lambda \,\,|\,\, D_\nu,D_\tau,\partial_{\theta}, \partial_{\phi} \rangle,
 \end{align}
 where $D_y,D_\nu$ and $D_\tau$ are as in \eqref{eq:totalderivativesxynutau} (with $\partial_{u_x}$ replaced by $\partial_{z}$).  A direct computation shows that the growth vector of $\overline\cH$ is $(2|4, 1|2, 2|0)$ and that the symbol algebra is isomorphic to the negatively graded part of the SHC grading of $G(3)$, as presented in Section \ref{S:G3-gradings}.

 Alternatively, $\overline\cH$ is determined by the Pfaffian system
 \begin{align}
\label{eq:Pfaffiansystem}
 \begin{cases}
  du - (dy) u_y - (d\nu) u_\nu - (d\tau) u_\tau \\
  dz - (dy) \left(\frac{\lambda^2}{2} + \theta \phi\right) - (d\nu)  \lambda \phi + (d\tau) \lambda \theta \\
 du_y - (dy) \lambda - (d\nu) \phi + (d\tau) \theta \\
 du_\nu - (dy) \phi - (d\tau) \lambda \\
 du_\tau + (dy) \theta + (d\nu) \lambda
 \end{cases}\;,
 \end{align}
 which is the pullback of the canonical system
 \[
 \begin{cases}
 du - (dy) u_y - (d\nu) u_\nu - (d\tau) u_\tau \\
 dz - (dy) z_y - (d\nu) z_\nu - (d\tau) z_\tau \\
 du_y - (dy) u_{yy} - (d\nu) u_{y\nu} - (d\tau) u_{y\tau}\\
 du_\nu - (dy) u_{y \nu} + (d\tau) u_{\nu\tau}\\
 du_\tau - (dy) u_{y\tau} - (d\nu) u_{\nu\tau}
 \end{cases}
 \]
 on the mixed jet-space $J^{1,2}(\bbC^{1|2},\bbC^{2|0})$ to the sub-supermanifold
 \begin{align} \label{SHC-param}
 \begin{cases}
 z_y = \frac{\lambda^2}{2} + \theta \phi, \quad z_\nu = \lambda \phi, \quad z_\tau = -\lambda \theta,\\
 u_{yy} = \lambda, \quad u_{y\nu} = \phi, \quad u_{y\tau} = -\theta, \quad u_{\nu\tau} = -\lambda\;.
 \end{cases}
 \end{align}
Eliminating the parameters $\lambda, \theta, \phi$ and relabelling $y$ to $x$, 
we obtain the SHC equation \eqref{SHC-ODE}.
\comm{
what we refer to as the {\em SHC equation}:
 \begin{align} \label{SHC}
 z_x = \frac{(u_{xx})^2}{2} + u_{x\nu} u_{x\tau}, \quad
 z_\nu = u_{xx} u_{x\nu}, \quad
 z_\tau = u_{xx} u_{x\tau}, \quad
 u_{\nu\tau} = -u_{xx},
 \end{align}
 which is a supersymmetric generalization of the classical Hilbert--Cartan equation \eqref{HC-ODE} with (internal) symmetry algebra $G(2)$.
 }
 
\begin{theorem}
\label{T:G3-SHC-sym} The internal symmetry algebra $\mathfrak{inf}(\overline\cE, \overline{\cH})$ of the SHC equation is isomorphic to $G(3)$ and is spanned by the $(17|14)$ symmetries given in Appendix \ref{S:SHC-sym}.
 \end{theorem}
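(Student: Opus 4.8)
The plan is to follow the same two-pronged strategy used for Theorems \ref{T:G3-sym} and \ref{T:G3-symII}: first bound the symmetry dimension from above by the Tanaka--Weisfeiler prolongation, then realize the bound explicitly and pin down the isomorphism type by a filtered-deformation rigidity argument.

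For the upper bound I would use the strong regularity of $\overline\cH$. By the computation preceding the theorem, $\overline\cH$ has growth vector $(2|4,1|2,2|0)$ and its symbol at every point is isomorphic to the negatively graded part $\fm = \fg_-$ of the SHC grading of $G(3)$ associated to $\fp_2^{\rm IV}$. Crucially, for SHC no reduction of the structure algebra is needed, since $\fg_0 \cong \der_{gr}(\fg_-)$ by Proposition \ref{P:max-subalg}. The internal symmetry algebra $\mathfrak{inf}(\overline\cE,\overline\cH)$ is naturally filtered as a subalgebra of all symmetries of $\overline\cH$, and by local transitivity of the flat (SHC) model its associated graded $\fa = \gr(\mathfrak{inf}(\overline\cE,\overline\cH))$ satisfies $\fa_- = \fm$ and $\fa_0 \subset \der_{gr}(\fm)$. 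The transitivity then forces $\fa \subset \pr(\fm,\fa_0) \subset \pr(\fm)$, and Corollary \ref{C:pr-235} gives $\pr(\fm) \cong G(3)$, whence $\dim \mathfrak{inf}(\overline\cE,\overline\cH) \leq (17|14)$.

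For the realization I would exploit the Cauchy characteristic reduction $\cE \to \overline\cE = \cE/\Ch(\cH)$ of Proposition \ref{CC}. Since $\Ch(\cH) = \langle \bC \rangle$ is canonically attached to $\cH = \cD^2$, every contact symmetry of the $G(3)$-contact super-PDE preserves it and therefore descends to an internal symmetry of $(\overline\cE,\overline\cH)$; conversely, the local quotient corresponding to the right arrow of Figure \ref{F:G3-twistor} carries no obstruction and yields an equivalence of categories (Appendix \ref{A0}), so symmetries lift back. This produces an isomorphism $\mathfrak{inf}(\widetilde M,\widetilde\cC,\widehat\cV) \cong \mathfrak{inf}(\overline\cE,\overline\cH)$, and by Theorem \ref{T:G3-symII} the left-hand side is $G(3)$. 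Concretely, the explicit fields of Appendix \ref{S:SHC-sym} are obtained by pushing the generating functions of Table \ref{G3-PDE-syms} forward through the quotient and restricting to the transverse slice $\{x=0\}$; one then checks directly that each resulting supervector field preserves the Pfaffian system \eqref{eq:Pfaffiansystem}, i.e. lies in $\mathfrak{inf}(\overline\cE,\overline\cH)$. These $(17|14)$ fields are linearly independent, so together with the upper bound they span the entire symmetry algebra.

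Finally, to identify the isomorphism type I would argue as in Theorem \ref{T:G3-sym}: $\mathfrak{inf}(\overline\cE,\overline\cH)$ is a filtered deformation of its associated graded $\fa$, which by the dimension count is forced to be the full SHC-graded $\fg = G(3)$. Since the grading element $\sfZ = \sfZ_2$ lies in $\fg$, every filtered deformation of $\fg$ is isomorphic as a filtered algebra to $\fg$ itself, so $\mathfrak{inf}(\overline\cE,\overline\cH) \cong G(3)$. The main obstacle in this program is the faithful transfer of symmetries through the Cauchy characteristic quotient in the super-category --- ensuring the descent map is well defined, injective and surjective despite supervector fields not being determined by their values at topological points --- which is precisely the content delegated to Appendix \ref{A0}; the direct verification that the Appendix \ref{S:SHC-sym} fields preserve \eqref{eq:Pfaffiansystem} is routine but lengthy.
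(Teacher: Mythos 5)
Your proposal is correct and shares the paper's overall skeleton: bound $\dim\mathfrak{inf}(\overline\cE,\overline{\cH})$ by $(17|14)$ via Theorem \ref{thm:235H^1}/Corollary \ref{C:pr-235} together with the filtered-subalgebra argument of Theorem \ref{T:G3-sym}, then realize the bound by descending the $G(3)$ contact symmetries of Theorem \ref{T:G3-symII} through the quotient by $\Ch(\cH)$. The one step you handle genuinely differently is the faithfulness of this descent. The paper's argument is more elementary and self-contained: every contact symmetry of \eqref{G3-PDE} preserves $\operatorname{Vert}(\cE)=\langle \partial_\lambda\,|\,\partial_{\theta},\partial_{\phi}\rangle$, while by Proposition \ref{CC} the generator $\bC$ of $\Ch(\cH)$ does not preserve it; hence no nonzero contact symmetry lies in $\Ch(\cH)$, the projection is injective, and injectivity plus the dimension bound already yields $\mathfrak{inf}(\overline\cE,\overline{\cH})\cong G(3)$. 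You instead invoke the equivalence of categories of Appendix \ref{A0}. This is not circular --- the equivalence theorem there is proved by the two explicit mutually inverse constructions and does not use the present statement (only its corollary does) --- but it is heavier machinery and hides one extra identification: Appendix \ref{A0} relates $\mathfrak{inf}(\overline\cE,\overline{\cH})$ to the internal symmetries of the rank $(2|2)$ distribution $\cD=\langle\bC\rangle\oplus\operatorname{Vert}(\cE)$ of type $M_{12}^{\rm IV}$ on $\cE$, so to reach $\mathfrak{inf}(\widetilde M,\widetilde\cC,\widehat\cV)$ you still need a super Lie--B\"acklund argument that internal symmetries of $\cD$ coincide with contact symmetries of the super-PDE; that argument amounts to exactly the preservation-of-$\operatorname{Vert}(\cE)$ and Cauchy-characteristic considerations the paper uses directly. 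Two smaller remarks: your concluding filtered-deformation step (grading element forces gradedness) is redundant here, since once an injectively embedded copy of $G(3)$ saturates the upper bound the isomorphism type is determined --- that argument is only needed in Theorem \ref{T:G3-sym}, where no explicit embedding is available; and your plan to verify the pushed-forward fields against the Pfaffian system \eqref{eq:Pfaffiansystem} is consistent with how the paper actually produced Appendix \ref{S:SHC-sym} (a direct, computer-assisted check), so it is a legitimate, if lengthy, substitute for quoting Theorem \ref{T:G3-symII}.
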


 \begin{proof} All contact symmetries of the $G(3)$-contact super-PDE system \eqref{G3-PDE} preserve $(\cE,\cH)$ and hence $\Ch(\cH)$.
Therefore, they project to a subalgebra of $\mathfrak{inf}(\overline\cE, \overline{\cH})$.

We claim that they project isomorphically. First note that all contact symmetries of \eqref{G3-PDE}
preserve the vertical bundle $\operatorname{Vert}(\cE) = \langle \partial_\lambda, \partial_{\theta}, \partial_{\phi} \rangle$ over the contact supermanifold $(M,\cC)$. On the other hand, we check using Proposition \ref{CC} that $\Ch(\cH)$ does not preserve $\operatorname{Vert}(\cE)$. It follows that the contact symmetries of \eqref{G3-PDE} are transverse to $\Ch(\cH)$, hence our claim. Note that the subalgebra of $\mathfrak{inf}(\overline\cE,\overline{\cH})$ so determined is isomorphic to $G(3)$ by Theorem \ref{T:G3-symII}. 

Finally, by Theorem \ref{thm:235H^1} and the arguments from the proof of Theorem \ref{T:G3-sym}, 
we obtain that $\dim(\mathfrak{inf}(\overline\cE, \overline{\cH})) \leq (17|14)$.  
Thus, $\mathfrak{inf}(\overline\cE,\overline{\cH}) \cong G(3)$.
 \end{proof}

 \section{Curved supergeometries from $G(3)$-symmetric models}\label{S5}

\subsection{Rigidity of the symbol}

Consider a bracket-generating superdistribution $\cD\subset TM$ without Cauchy characteristics.
Then its symbol superalgebra $\fm=\fg_-$ is {\it fundamental} (i.e., it is generated by $\g_{-1}$) and {\it non-degenerate}  (i.e., it has no central elements in $\g_{-1}$).
In this section we specialize to the case $\dim M=(5|6)$, with the growth vector of $\cD$ given by $(2|4,1|2,2|0)$.

The Lie brackets on the even part $\fm_{\bar 0}$ of $\fm$ consist of the skew-form $\omega:\Lambda^2(\g_{-1})_{\bar0}\to(\g_{-2})_{\bar0}$
as well as the map $\beta:(\g_{-1})_{\bar0}\otimes(\g_{-2})_{\bar0}\to(\g_{-3})_{\bar0}$ that, 
when non-degenerate, serves for a (conformal) identification $(\g_{-1})_{\bar0}\cong(\g_{-3})_{\bar0}$.
We denote the remaining Lie brackets on $\fm$ by
 \begin{gather*}
q:\Lambda^2(\g_{-1})_{\bar1}\to(\g_{-2})_{\bar0}, \quad
\Xi:(\g_{-1})_{\bar0}\otimes(\g_{-1})_{\bar1}\to(\g_{-2})_{\bar1},\quad
\Theta:(\g_{-1})_{\bar1}\otimes(\g_{-2})_{\bar1}\to(\g_{-3})_{\bar0}.
 \end{gather*}
We recall that $\Lambda^\bullet$ is meant in the super-sense, in particular $q$ is a quadratic form. The only non-trivial Jacobi identity is
 \begin{equation}
\label{eq:J.I.}
\Theta(\theta_1,\Xi(e,\theta_2))+\Theta(\theta_2,\Xi(e,\theta_1))=\beta(e,q(\theta_1,\theta_2)),
\end{equation}
for all $e\in(\fg_{-1})_{\bar 0}$ and $\theta_1,\theta_2\in(\fg_{-1})_{\bar 1}$, and the fundamental and non-degeneracy properties are equivalent to:
 \begin{align*}
 &(F1): \omega\neq0 \mbox{ or } q\neq0; \quad
 (F2): \opp{Im}\Xi= (\fg_{-2})_{\bar{1}}; \quad
 (F3): \opp{Im}\beta+\opp{Im}\Theta= (\fg_{-3})_{\bar{0}};\\
 &(N1): \omega(e,\cdot)=0,\, \Xi(e,\cdot)=0 \Rightarrow e=0; \quad
 (N2): q(\theta,\cdot)=0,\, \Xi(\cdot,\theta)=0 \Rightarrow \theta=0.
 \end{align*}
Let us remark that since $\omega$ does not appear in the Jacobi identity \eqref{eq:J.I.}, its value is important
only via properties (F1) and (N1).

Using the following uniform notation for the basis of $\fm$,
\begin{gather} 
(\g_{-1})_{\bar0}=\langle e_1, e_2 \rangle,\quad
(\g_{-2})_{\bar0}=\langle h \rangle,\quad 
(\g_{-3})_{\bar0}=\langle f_1, f_2 \rangle, \label{m-basis1}\\
(\g_{-1})_{\bar1}=\langle \theta_1', \theta_1'', \theta_2', \theta_2'' \rangle, \quad
(\g_{-2})_{\bar1}=\langle \varrho_1,\varrho_2\rangle, \label{m-basis2}
 \end{gather}
we obtain such fundamental, non-degenerate symbols with growth $(2|4,1|2,2|0)$:

\begin{enumerate}
\item[(M1)] SHC symbol algebra:
\begin{align*} \label{SHC-m-brackets}
 \begin{cases}
 \omega(e_1,e_2)=h, \quad \beta(e_1,h)=f_1, \quad \beta(e_2,h)=f_2,\\\\
 \begin{array}{c|cccc}
 q & \theta_1' & \theta_1'' & \theta_2' & \theta_2'' \\ \hline
 \theta_1' & 0 & 0 & h & 0\\
 \theta_1'' & 0 & 0 & 0 & h\\
 \theta_2' & h & 0 & 0 & 0\\
 \theta_2'' & 0 & h & 0 & 0\\
 \end{array} \qquad
 %%%%
 \begin{array}[b]{c|cccc}
 \Xi & \theta_1' & \theta_1'' & \theta_2' & \theta_2''\\
 \hline
 e_1 & 0 & 0 & \varrho_1 & \varrho_2 \\
 e_2 & -\varrho_2 & \varrho_1 & 0 & 0
 \end{array} \qquad
 %%%%
 \begin{array}{c|cc}
 \Theta & \varrho_1 & \varrho_2 \\
 \hline
 \theta_1' & f_1 & 0 \\
\theta_1'' & 0 & f_1\\
 \theta_2' & 0 & -f_2 \\
\theta_2'' & f_2 & 0
\end{array}
\end{cases}
 \end{align*}
 
\item[(M2)] $\opp{rank}(\beta) = 1$:
\begin{align*}
\begin{cases}
 \omega(e_1,e_2)=h, \quad \beta(e_1,h)=f_1, \quad \beta(e_2,h)=0,\\ \\
 \begin{array}{c|cccc}
 q & \theta_1' & \theta_1'' & \theta_2' & \theta_2'' \\ \hline
 \theta_1' & 0 & 0 & h & 0\\
 \theta_1'' & 0 & 0 & 0 & h\\
 \theta_2' & h & 0 & 0 & 0\\
 \theta_2'' & 0 & h & 0 & 0\\
 \end{array} \qquad
 %%%%
 \begin{array}[b]{c|cccc}
 \Xi & \theta_1' & \theta_1'' & \theta_2' & \theta_2''\\
 \hline
 e_1 & \rho_1 & \rho_2 & 0 & 0\\
 e_2 & 0 & 0 & 0 & 0
 \end{array} \qquad
 %%%%
 \begin{array}{c|cc}
 \Theta & \varrho_1 & \varrho_2 \\
 \hline
 \theta_1' & 0 & f_2 \\
\theta_1'' & -f_2 & 0\\
 \theta_2' & f_1 & 0 \\
\theta_2'' & 0 & f_1
\end{array}
\end{cases}
 \end{align*}

\item[(M3)] $q=0$ and $\Theta = 0$:
 \begin{align*}
 \begin{cases}
 \omega(e_1,e_2) = h, \quad \beta(e_1,h) = f_1, \quad \beta(e_2,h) = f_2, \\ \\
 %%%%
 \begin{array}[b]{c|cccc}
 \Xi & \theta_1' & \theta_1'' & \theta_2' & \theta_2''\\
 \hline
 e_1 & \rho_1 & \rho_2 & 0 & 0\\
 e_2 & 0 & 0 & \rho_1 & \rho_2
 \end{array} \qquad
 %%%%
 \end{cases}
 \end{align*}
 
\item[(M4)] $\omega = 0$: $q$, $\beta$, $\Xi$, $\Theta$ are the same as for the SHC symbol algebra.
\end{enumerate}

The SHC symbol algebra is so-called because it is isomorphic to the negatively graded part of the $\mathbb Z$-grading of $G(3)$ associated to $\mathfrak{p}_2^{\rm IV}\subset G(3)$.
In the notation of Appendix \ref{S:SHC-sym}, the basis elements are explicitly given as supervector fields by
 \begin{gather*}
e_1=-D_x,\quad e_2=\p_{u_{xx}}\;,\quad h=\p_{u_x}+u_{xx}\p_z\;,\quad f_1=\p_u\;,\quad f_2=\p_z\;,\\
\theta_1'=D_{\nu}\;,\quad \theta_1''=D_{\tau}\;,\quad
 \theta_2'=\p_{u_{x\nu}}\;,\quad \theta_2''=\p_{u_{x\tau}}\;,\quad\\ 
\varrho_1=\p_{u_\nu}+u_{x\tau}\p_z\;,\quad\varrho_2=\p_{u_\tau}-u_{x\nu}\p_z\;,
 \end{gather*}
where $(x,u,u_x,u_{xx},z|\tau,\nu,u_\tau,u_\nu,u_{x\tau},u_{x\nu})$ are coordinates on a $(5|6)$-supermanifold and the supervector fields $D_x$, $D_\nu$, $D_\tau$ are as in Appendix \ref{S:SHC-sym}.
 %(generated by \eqref{D-bar})
\begin{theorem}\label{RigThm}
%Let $\fm=\g_{-1}\oplus\g_{-2}\oplus\g_{-3}$ be a symbol superalgebra of growth $(2|4,1|2,2|0)$, which is fundamental and non-degenerate.
%Then either $q=0$ or $\omega=0$ or $\beta$ is degenerate or otherwise
%$\fm=\g_-$ is the SHC symbol algebra. 
Any fundamental, non-degenerate symbol superalgebra $\fm=\g_{-1}\oplus\g_{-2}\oplus\g_{-3}$ of growth $(2|4,1|2,2|0)$ is isomorphic to one of the models (M1)--(M4).
%the graded nilpotent LSA
%associated to the parabolic subalgebra $\mathfrak{p}_2^{\rm IV}\subset G(3)$.
 \end{theorem}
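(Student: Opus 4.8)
The plan is to classify the structure tensors $(\omega,\beta,q,\Xi,\Theta)$ up to isomorphism. Since $\fm$ is fundamental, any grading-preserving isomorphism is determined by its restriction to $\fg_{-1}=(\fg_{-1})_{\bar 0}\oplus(\fg_{-1})_{\bar 1}$, which is an arbitrary element of $GL((\fg_{-1})_{\bar 0})\times GL((\fg_{-1})_{\bar 1})$; this induces isomorphisms on $\fg_{-2}$ and $\fg_{-3}$, so equivalently we may act by the full product of general linear groups on all graded components and must bring the tuple of brackets, subject to the Jacobi identity \eqref{eq:J.I.} and to (F1)--(N2), into one of the four normal forms. Because $\omega$ does not appear in \eqref{eq:J.I.} and enters only through (F1) and (N1), I would first classify the ``core'' data $(\beta,q,\Xi,\Theta)$ and restore $\omega$ at the very end.

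Next I would fix a generator $h$ of $(\fg_{-2})_{\bar 0}$ and reformulate the Jacobi identity. Writing $\Xi_e:=\Xi(e,\cdot)$ and viewing $\beta(\cdot,h)\colon(\fg_{-1})_{\bar 0}\to(\fg_{-3})_{\bar 0}$, the identity \eqref{eq:J.I.} becomes
\[
\Theta(\theta_1,\Xi_e\theta_2)+\Theta(\theta_2,\Xi_e\theta_1)=q(\theta_1,\theta_2)\,\beta(e,h),
\]
i.e. the $(\fg_{-3})_{\bar 0}$-valued symmetric form $S_e$ on $(\fg_{-1})_{\bar 1}$ defined by the left-hand side equals $q\otimes\beta(e,h)$. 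The discrete data $\operatorname{rank}(\beta)\in\{0,1,2\}$ and $\operatorname{rank}(q)\in\{0,\dots,4\}$, together with the vanishing or not of $\omega$, are isomorphism invariants and index the strata; the whole proof reduces to showing that only the combinations realized by (M1)--(M4) occur, and to normalizing bases within each.

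The heart of the argument is a sequence of reductions driven by the displayed identity together with (F2), (F3) and the non-degeneracy conditions. First, $\beta=0$ is impossible: then $S_e=0$, so $\Theta(\theta_1,\Xi_e\theta_2)$ is skew in $\theta_1,\theta_2$; using surjectivity $\operatorname{Im}\Xi=(\fg_{-2})_{\bar 1}$ from (F2) and $\bigcap_e\ker\Xi_e=0$, which follows from (N2), one checks that this forces $\operatorname{Im}\Theta\subsetneq(\fg_{-3})_{\bar 0}$, contradicting (F3). Second, I would prove the dichotomy that $q$ is either $0$ or non-degenerate: assuming $0<\operatorname{rank}(q)<4$, pick $\theta_0$ in the radical of $q$, specialize the identity to $S_e(\theta_0,\cdot)=0$, and exploit the smallness of $(\fg_{-2})_{\bar 1}$ and $(\fg_{-3})_{\bar 0}$ together with (N2) to reach a contradiction. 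With this in hand the classification splits cleanly: when $q=0$ condition (F1) forces $\omega\neq0$, while $S_e=0$ together with $\bigcap_e\ker\Xi_e=0$ forces $\Theta=0$ and $\operatorname{rank}(\beta)=2$, which after normalizing $\omega$, $\beta$ and $\Xi$ yields (M3); when $q$ is non-degenerate I would normalize $q$, observe that \eqref{eq:J.I.} then exhibits $\Theta$ as the (conformal) $q$-adjoint of $\Xi$, so that the residual freedom is governed by the orthosymplectic stabilizer of $q$, and subdivide according to $\operatorname{rank}(\beta)=2$ or $1$.

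Finally I would restore $\omega$. In the non-degenerate-$q$ strata (F1) and (N1) hold automatically, so $\omega$ is unconstrained: rescaling gives either $\omega=0$, producing (M4), or $\omega\neq0$ normalized to $\omega(e_1,e_2)=h$, producing (M1) when $\operatorname{rank}(\beta)=2$ and (M2) when $\operatorname{rank}(\beta)=1$. Pairwise non-isomorphism of the four models follows from the invariants $\operatorname{rank}(\beta)$, $\operatorname{rank}(q)$ and the class of $\omega$. I expect the main obstacle to be the non-degenerate-$q$ stratum: proving that $\Xi$ and $\Theta$ are forced, up to the orthosymplectic residual group, into exactly the tabulated shape requires using the full Jacobi identity simultaneously with the $q$-adjunction and the $\fosp$-module structure, and the bookkeeping of the two parities appearing in $\Xi$ and $\Theta$ is where the real care is needed; establishing the dichotomy $\operatorname{rank}(q)\in\{0,4\}$ is the other delicate point.
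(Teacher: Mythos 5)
Your stratification by the ranks of $\beta$, $q$ and the class of $\omega$ is close in spirit to the paper's proof, but the step where you ``restore $\omega$ at the very end'' contains a genuine error which, taken at face value, would contradict the theorem itself. You assert that in the non-degenerate-$q$ strata ``(F1) and (N1) hold automatically, so $\omega$ is unconstrained.'' This fails in the $\operatorname{rank}(\beta)=1$ stratum. Indeed, if $\Xi(\cdot,\theta)$ were invertible for some $\theta$, then \eqref{eq:J.I.} with $\theta_1=\theta_2=\theta$ gives $2\Theta(\theta,\Xi(e,\theta))=\beta(e,q(\theta,\theta))$, and surjectivity of $\Xi(\cdot,\theta)$ forces $\operatorname{Im}\Theta\subset\operatorname{Im}\beta$, whence $\operatorname{rank}(\beta)=2$ by (F3). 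So in the rank-one stratum $\Xi(\cdot,\theta)$ is degenerate for \emph{every} $\theta$, and the determinant/factorization argument using (F2) (the pencil argument in the paper's Step 1) produces a fixed $0\neq e_2$ with $\Xi(e_2,\cdot)=0$ --- exactly as one sees in model (M2). For such $e_2$, condition (N1) then forces $\omega(e_2,\cdot)\neq0$, i.e.\ $\omega\neq0$: $\omega$ is \emph{not} a free decoration there. If it were, ``(M2) with $\omega=0$'' would be a fifth, non-isomorphic model; this is precisely why the list has an $\omega=0$ companion (M4) of (M1) but none of (M2). A related error is your claim that (N2) yields $\bigcap_e\ker\Xi(e,\cdot)=0$: condition (N2) only excludes $\theta$ lying simultaneously in the radical of $q$ and in all the kernels, and model (M2) (whose $q$ is non-degenerate, so (N2) holds) has $\bigcap_e\ker\Xi(e,\cdot)$ of dimension two. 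The moral is that (N1) and (N2) couple $\omega$ and $q$ to the kernel structure of $\Xi$, so the problem cannot be factored as (core data) $\times$ (choice of $\omega$).

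Second, the part you defer --- the non-degenerate-$q$ stratum --- is where essentially all the work lies, and the ``$q$-adjunction'' heuristic does not carry it: for a fixed $e$, identity \eqref{eq:J.I.} determines only the \emph{symmetric} part of the bilinear form $(\theta_1,\theta_2)\mapsto\Theta(\theta_1,\Xi(e,\theta_2))$, leaving its skew part unconstrained. What actually pins down $(\Xi,\Theta)$ is the interplay between different $e$'s together with (F2) and (N2): one must show that $\ker\Xi(e,\cdot)$ is $2$-dimensional and $q$-isotropic for every $e\neq0$, that the kernels of independent $e_1,e_2$ intersect trivially (so $(\fg_{-1})_{\bar1}=\ker\Xi(e_1,\cdot)\oplus\ker\Xi(e_2,\cdot)$, with $\Xi(e_i,\cdot)$ identifying the opposite kernel with $(\fg_{-2})_{\bar1}$), and then evaluate \eqref{eq:J.I.} on a $q$-Witt basis adapted to this splitting to force $\Theta$ and the remaining entries of $\Xi$ into the (M1) shape. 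This is the content of Steps 2--3 of the paper's proof. Likewise, the dichotomy ``$q=0$ or non-degenerate,'' which you propose to establish first by a vague ``smallness'' argument, is obtained in the paper only \emph{after} this kernel structure is available (and, in the rank-one stratum, inside Step 1 by the same mechanism). Without these ingredients your outline is a plan rather than a proof, and with the $\omega$-step as written it proves a false statement.
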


 \begin{proof}
%The proof is more transparent when dimensions are explicit, hence we will use the following notation:
%\begin{gather*}
%(\g_{-1})_{\bar0}=\C^2_{(1)},\quad (\g_{-1})_{\bar1}=\C^4\;,\\
%(\g_{-2})_{\bar0}=\C,\quad (\g_{-2})_{\bar1}=\C^2_{(2)},\quad (\g_{-3})_{\bar0}=\C^2_{(3)}\;.
%\end{gather*}
%The LSA structure is given by the maps
%$\omega:\Lambda^2\C^2_{(1)}\to\C$, $\beta:\C^2_{(1)}\otimes\C\to\C^2_{(3)}$,
%$q:S^2\C^4\to\C$, $\Xi:\C^2_{(1)}\otimes\C^4\to\C^2_{(2)}$ and $\Theta:\C^4\otimes\C^2_{(2)}\to\C^2_{(3)}$,
%related by the Jacobi identity
%\begin{equation}
%\label{eq:J.I.}
%\Theta(\theta_1,\Xi(e,\theta_2))+\Theta(\theta_2,\Xi(e,\theta_1))=\beta(e,q(\theta_1,\theta_2))
%\end{equation}
%for all $e\in\C^2_{(1)}$ and $\theta_1,\theta_2\in\C^4$.  The fundamental property reads
%\begin{itemize}
	%\item[$(i)$ ] $\omega\neq0$ or $q\neq0$, 
	%\item[$(ii)$] $\opp{Im}\Xi=\C^2_{(2)}$,
	%\item[$(iii)$] $\opp{Im}\beta+\opp{Im}\Theta=\C^2_{(3)}$,
%\end{itemize} 
%whereas the non-degeneracy property as 
%\begin{itemize}
	%\item[$(iv)$] $\omega(e,\cdot)=0$ and $\Xi(e,\cdot)=0$ $\Rightarrow e=0$,
	%\item[$(v)$] $q(\theta,\cdot)=0$ and $\Xi(\cdot,\theta)=0$ $\Rightarrow \theta=0$.
%\end{itemize} 
The proof splits into three main steps. Throughout the proof, we will use the notation $K_{e}=\opp{Ker}\Xi(e,\cdot)\subset (\fg_{-1})_{\bar 1}$ for any $e\in(\fg_{-1})_{\bar 0}$.

\medskip\par
\noindent\underline{Step 1}: 
Either (i) $\opp{rank}(\beta)=1$ and $q,\omega$ are non-degenerate, or (ii) $\beta$ is an isomorphism
and $\Xi(\cdot,\theta)$ is an isomorphism for generic $\theta$.
\smallskip\par
Suppose that $\Xi(\cdot,\theta)$ is degenerate for all $\theta\in(\fg_{-1})_{\bar 1}$. 
%Then 
%$$
%\Xi(\cdot,\theta)=\begin{pmatrix} \Xi_{11}(\theta) & \Xi_{12}(\theta) \\ \Xi_{21}(\theta) & \Xi_{22}(\theta) \end{pmatrix}\;,
%$$
%in some bases of $\C^2_{(1)}$ and $\C^2_{(2)}$, 
%where the entries $\Xi_{ij}$ are linear functionals on $(\fg_{-1})_{\bar 1}$.
Then property (F2) 
%and the vanishing of the determinant of $\Xi(\cdot,\theta)$ for all $\theta\in(\fg_{-1})_{\bar 1}$ 
implies that the linear map 
 $$
\Xi:(\fg_{-1})_{\bar 1}\to \operatorname{Hom}((\fg_{-1})_{\bar 0},(\fg_{-2})_{\bar 1})
\cong\operatorname{End}(\C^2)
 $$ 
has rank equal to $2$ and that there is $0\neq e_2\in(\fg_{-1})_{\bar 0}$ 
such that $\Xi(e_2,\theta)=0$ for all $\theta\in(\fg_{-1})_{\bar 1}$.
Indeed, consider the matrix 
of this map %in some bases of $(\fg_{-1})_{\bar 0}$ and $(\fg_{-2})_{\bar 1}$
 $$
\Xi(\cdot,\theta)=\begin{pmatrix} \xi_{11}(\theta) & \xi_{12}(\theta) \\ 
\xi_{21}(\theta) & \xi_{22}(\theta) \end{pmatrix}\;,
 $$
with linear functionals $\xi_{ij}\in(\fg_{-1})_{\bar 1}^*$
%\bigl((\fg_{-1})_{\bar 1}\bigr)^*$ 
as entries.
Since $\Xi\not\equiv0$ at least one entry is non-zero, as a polynomial in $\theta$.
Assume, e.g., $\xi_{12}\neq0$. Then $\det\Xi(\cdot,\theta)\equiv0$ implies that $\xi_{12}$ 
divides $\xi_{11}\!\cdot\!\xi_{22}$. Hence $\xi_{12}$ divides either $\xi_{11}$ or $\xi_{22}$.
In the second case the rows of $\Xi(\cdot,\theta)$ are proportional and consequently 
$\mathop{Im}\Xi$ is a 1-dimensional subspace in $(\fg_{-2})_{\bar 1}$, contradicting (F2).
Therefore we get the first case, in which the columns of $\Xi(\cdot,\theta)$ are proportional,
whence the claim.

%means that
%columns are proportional 
%(if the rows are proportional then the image of $\Xi$ is contained in 1D subspace
%of $\C^2_{(2)}$ contradicting the fundamental property).  
By (N1), this claim forces $\omega\neq0$. Moreover, for any $e_1\in(\fg_{-1})_{\bar 1}\setminus\C e_2$ 
we have that $\Xi(e_1,\cdot):(\fg_{-1})_{\bar 1}\to(\fg_{-2})_{\bar 1}$ is an epimorphism  by (F2), so 
$\dim K_{e_1}=2$. By $(N2)$, any $0\neq\theta\in K_{e_1}$
satisfies $q(\theta,\cdot)\neq0$. Since $\dim((\fg_{-2})_{\bar 0})=1$, then \eqref{eq:J.I.} implies 
$\beta(e_2,h)=0$ for every $0\neq h\in(\fg_{-2})_{\bar 0}$. 

We claim  $\beta(e_1,h)\neq 0$. If not, then $\beta=0$, so by \eqref{eq:J.I.}, $\Theta(\cdot,\Xi(e_1,\cdot))$ is skew and this descends to 
$(\fg_{-1})_{\bar 1} \mod K_{e_1}$, which is $2$-dimensional. Thus $\dim\opp{Im}(\Theta)\leq1$, which contradicts (F3). 

Since $K_{e_1}$ is $q$-isotropic by \eqref{eq:J.I.} and 
$q(\theta,\cdot)\neq0$ for $0\neq \theta\in K_{e_1}$, then $q$ is nondegenerate. 
The corresponding normal form of all brackets is that of the model (M2).
\smallskip

If $\Xi(\cdot,\theta)$ is an isomorphism for some $\theta$ then it is an isomorphism for all $\theta$ in a Zariski-open subset $\mathcal U\subset (\fg_{-1})_{\bar 1}$. In this case, identity \eqref{eq:J.I.}
with $\theta_1=\theta_2=\theta\in\mathcal U$ implies $\opp{Im}\Theta\subset\opp{Im}\beta$, so $\opp{Im}\beta= (\fg_{-3})_{\bar{0}}$ by (F3) and
$\beta$ {\it is an isomorphism}. {\it From now on we consider only this case.}

\medskip\par\noindent
\underline{Step 2}: (i) $\dim K_e=2$ for all $0\neq e\in(\fg_{-1})_{\bar 0}$ and (ii) $K_{e_1}\cap K_{e_2}=\{0\}$ for any two linearly independent vectors $e_1,e_2\in(\fg_{-1})_{\bar 0}$.
\smallskip\par

 Let $0 \neq e\in(\fg_{-1})_{\bar 0}$ be arbitrary.  Clearly $\dim K_e\ge2$, since $\dim (\g_{-1})_{\bar1}=4$ and $\dim(\g_{-2})_{\bar1}=2$.  Moreover, $K_e$ is $q$-isotropic (using
 \eqref{eq:J.I.} and that $\beta$ is an isomorphism).
 
 We first establish (ii).  If (ii) fails, then there exists $0\neq \theta\in (\g_{-1})_{\bar1}$ such that $\Xi(\cdot,\theta)=0$, so $q(\theta,\cdot)\not=0$ by (N2).  By \eqref{eq:J.I.}, for any $\theta_2 \in (\fg_{-1})_{\bar{1}}$, we have the identity
 \[
 \Theta(\theta,\Xi(e,\theta_2)) = \beta(e,q(\theta,\theta_2))
 \]
 of elements $(\fg_{-3})_{\bar{0}}$. Since $\beta$ is an isomorphism, then: 
 \begin{enumerate}
 \item[(a)] fixing $\theta_2 \in (\fg_{-1})_{\bar{1}}$ with $q(\theta,\theta_2) \neq 0$, we see that $\Theta(\theta,\cdot):(\g_{-2})_{\bar1}\to(\g_{-3})_{\bar0}$ is an epimorphism with $\dim(\g_{-2})_{\bar1} = \dim(\g_{-3})_{\bar0}$, so is an isomorphism.
 \item[(b)] $K_e = \{ \theta_2 : q(\theta,\theta_2) = 0 \}$ for any $0 \neq e \in(\fg_{-1})_{\bar 0}$.  Since $q(\theta,\cdot) \neq 0$, then $\dim(K_e) = 3$.
 \end{enumerate}
By \eqref{eq:J.I.}, $K_e$ is a 3-dimensional $q$-isotropic subspace of the 4-dimensional space $(\fg_{-1})_{\bar{1}}$.  Hence, there exists $0 \neq \theta' \in (\fg_{-1})_{\bar{1}}$ with $q(\theta',\cdot) = 0$.  Setting $\theta_1=\theta'$ and $\theta_2=\theta$ in \eqref{eq:J.I.} yields $\Theta(\theta,\Xi(e,\theta')) = 0$ for all $e\in(\fg_{-1})_{\bar 0}$.  By (a), we get $\Xi(\cdot,\theta')=0$, which contradicts (N2). 

This implies claim (ii), and then claim (i) follows easily.

\smallskip

In summary, we may decompose $(\fg_{-1})_{\bar 1}=K_{e_1}\oplus K_{e_2}$
into the direct sum of complementary $q$-isotropic planes, hence $\opp{Ker}(q)$ is even-dimensional.  Moreover, the decomposition implies that $\Xi(e_1,\cdot) : (\fg_{-1})_{\bar{1}} \to (\fg_{-2})_{\bar{1}}$ is injective on $K_{e_2}$, and similarly for $\Xi(e_2,\cdot)$, so
 \begin{align} \label{Xi-rel}
 \Xi(e_1,K_{e_2})=\Xi(e_2,K_{e_1})=(\g_{-2})_{\bar1}.
 \end{align}
(Note that the collection of $q$-isotropic planes $K_e$ determined by $[e]\in\mathbb{P}((\g_{-1})_{\bar0})$ 
is nothing but the  projective line of the so-called $\alpha$-planes, for some choice of orientation.)

\medskip\par\noindent
\underline{Step 3}: Either (i) $q=0$ and $\Theta=0$, or (ii) $q$ is non-degenerate.
\smallskip\par
If $\Theta=0$, then \eqref{eq:J.I.} implies $q=0$ (since $\beta$ is an isomorphism), and moreover (F1) implies $\omega \neq 0$.  Conversely, if $q=0$, then take $e_1,e_2\in(\fg_{-1})_{\bar 0}$ linearly independent as above, and set $e=e_2$, $\theta_2\in K_{e_2}$, and $\theta_1\in K_{e_1}$ in \eqref{eq:J.I.} to get $\Theta(\theta_2,\Xi(e_2,\theta_1))=0$.  By \eqref{Xi-rel}, we have $\Theta(\theta_2,\cdot)=0$. Similarly $\Theta(\theta_1,\cdot)=0$, implying $\Theta=0$. Using the decomposition $(\fg_{-1})_{\bar 1}=K_{e_1}\oplus K_{e_2}$ and
\eqref{Xi-rel}, it is easy to see that the corresponding normal form is that of the model (M3).

Now suppose $q\neq0$. If $q$ is degenerate, we may choose $\theta_2''\in K_{e_2}$ such that 
$q(\theta_2'',\cdot)=0$. Then \eqref{eq:J.I.} with $e=e_2$ implies 
$\Theta(\theta_2'',\Xi(e_2,\theta_1))=0$ for all $\theta_1\in K_{e_1}$, hence
$\Theta(\theta_2'',\cdot)=0$ by \eqref{Xi-rel}. 

Let $\theta_2'$ be any element in $K_{e_2}$ that is not proportional to $\theta_2''$. 
Note that $\Theta(\theta_2',\Xi(e_1,\theta_2''))=0$ by \eqref{eq:J.I.} and the previous argument, 
and that $\Theta(\theta_2',\Xi(e_1,\theta_2'))=0$ since $K_{e_2}$ is $q$-isotropic. It follows from \eqref{Xi-rel} that
$\Theta(\theta_2',\cdot)=0$, thus $\Theta(K_{e_2},\cdot)=0$. One similarly shows $\Theta(K_{e_1},\cdot)=0$, 
 yielding $\Theta=0$, which contradicts the assumption $q\neq0$. Hence $q$ is non-degenerate.

\smallskip

We now turn to the normal form of the Lie brackets when $q$ is non-degenerate.  Fix bases as in \eqref{m-basis1}-\eqref{m-basis2} such that: (1) $\beta(e_i,h) = f_i$, (2) we have a $q$-Witt basis of $(\fg_{-1})_{\bar{1}}$, i.e.,\ $K_{e_1} = \langle \theta_1',\theta_1'' \rangle$ and 
$K_{e_2} = \langle \theta_2',\theta_2'' \rangle$ with $q(\theta_i^\alpha,\theta_j^\beta)=(1-\delta_{ij})\delta^{\alpha\beta}h$, (3) $\rho_\alpha=\Xi(e_1,\theta_2^\alpha)$ (recall equation \eqref{Xi-rel}.) %that $\Xi(e_1,\cdot)$ is injective on $K_{e_2}$).

 Using \eqref{eq:J.I.} with the inputs indicated below, we get the relations
 \begin{align}
 \theta_1^\alpha,\theta_2^\beta,e_1 \quad\Rightarrow\quad & \Theta(\theta_1^\alpha, \rho_\beta) = \delta^\alpha_\beta f_1\;, \label{JI1}\\
 \theta_2^\alpha,\theta_2^\beta,e_1 \quad\Rightarrow\quad & \Theta(\theta_2^\alpha,\rho_\beta) +  \Theta(\theta_2^\beta,\rho_\alpha) = 0\;, \label{JI2}\\ 
 \theta_1^\alpha, \theta_1^\beta,e_2 \quad\Rightarrow\quad & \Theta(\theta_1^\alpha,\Xi(e_2,\theta_1^\beta)) +  \Theta(\theta_1^\beta,\Xi(e_2,\theta_1^\alpha)) = 0\;, \label{JI3}\\ 
 \theta_1^\alpha, \theta_2^\beta,e_2 \quad\Rightarrow\quad & \Theta(\theta_2^\beta,\Xi(e_2,\theta_1^\alpha)) = \delta^{\alpha\beta} f_2\;. \label{JI4}
 \end{align}
 Equations \eqref{JI1}-\eqref{JI2} force $\Theta$ to agree with the corresponding component of the SHC symbol (M1), except for the element $\Theta(\theta_2'',\rho_1) = -\Theta(\theta_2',\rho_2)$.  Then, \eqref{JI1}-\eqref{JI3} imply $\Xi(e_2,\theta_1') = -c\rho_2$ and $\Xi(e_2,\theta_1'') = c\rho_1$ with $c \neq 0$ since $\Xi(e_2,\cdot)|_{K_{e_1}}$ is injective.  Finally, the relations \eqref{JI4} imply that $\Theta(\theta_2',\rho_2) = -\frac{1}{c} f_2 = -\Theta(\theta_2'',\rho_1)$.  
 
 Rescaling $e_2$ and $f_2$, we arrive at the canonical normal form for $q,\beta,\Xi,\Theta$ as in the SHC symbol algebra. The map $\omega$ is either vanishing, in which case we are led to the model (M4), or non-degenerate, in which case
$\omega(e_1,e_2)=\lambda h$ for some $0\neq\lambda\in\C$. Rescaling the generators $e_1,e_2,f_1,f_2,\rho_1,\rho_2$ by $\lambda^{-1/2}$, we precisely get the SHC symbol algebra. 
\end{proof}

 %\begin{itemize}
%\item $\omega(e_1,e_2)=h=q(\theta_1',\theta_2')=q(\theta_1'',\theta_2'')$,
%$\Xi(e_1,\theta_1')=\rho_1$, $\Xi(e_1,\theta_1'')=\rho_2$, $\beta(e_1,h)=f_1$,
%$\Theta(\theta_1',\rho_2)=f_2=-\Theta(\theta_1'',\rho_1)$, $\Theta(\theta_2',\rho_1)=f_1=\Theta(\theta_2'',\rho_2)$.
%\item $q=0$, $\Theta=0$, $\omega\neq0$, $\beta$ is iso and $\Xi$ is anything epimorphic. It can be brought
%to the normal form: $\Theta(e_1,\theta_1')=\rho_1=\Theta(e_2,\theta_2')$,
%$\Theta(e_1,\theta_1'')=\rho_2=\Theta(e_2,\theta_2'')$.
%\item $q$, $\beta$, $\Xi$, $\Theta$ are the same as for $G(3)/\fp_2^{\rm IV}$ but $\omega=0$.
 %\end{itemize}

%Let us call a symbol superalgebra $\fm=\fm_{\bar 0}\oplus\fm_{\bar 1}$ a super-deformation \FIXMEA{Maybe super-extension?} of the HC algebra if its even part $\fm_{\bar 0}$ is isomorphic to the unique fundamental Lie algebra with growth $(2,1,2)$. Let $\fm=\g_{-1}\oplus\g_{-2}\oplus\g_{-3}$ be a fundamental, non-degenerate, symbol superalgebra of growth $(2|4,1|2,2|0)$ that is a super-deformation of the HC algebra. If $q\neq0$, then $\fm$ is the SHC symbol algebra (M1). Superdistributions with symbol (M1) will be called of SHC type.
%\begin{definition}
%\label{def:super-extensionsymbol}
A fundamental, non-degenerate symbol superalgebra $\fm=\fm_{\bar 0}\oplus\fm_{\bar 1}$ of growth $(2|4,1|2,2|0)$ is called a {\it super-extension of the HC symbol} if it has even part $\fm_{\bar 0}$ isomorphic to the unique fundamental graded nilpotent Lie algebra of growth $(2,1,2)$.  
%\end{definition}
By Theorem \ref{RigThm}, such a super-extension $\fm$ is isomorphic either to the model (M1) or (M3). 
\begin{definition} 
\label{def:super-extensiondef}
A (strongly regular) superdistribution $\cD$ on a supermanifold $M=(M_o,\cA_M)$ with the growth vector $(2|4,1|2,2|0)$ 
is called:
\begin{itemize}
\item[(i)] a {\em super-extension} of a generic rank 2 distribution on the 5-dimensional space $M_o$ if its symbol superalgebra is a super-extension of the HC symbol;
\item[(ii)] of {\em SHC type} if its symbol superalgebra is isomorphic to (M1).
\end{itemize} 
\end{definition}
Clearly, any SHC type superdistribution is a super-extension of a generic rank 2 distribution on $M_o$. Since a super-extension $\fm$ of the HC symbol with $q \neq 0$ is isomorphic to (M1), we have:
\begin{cor}\label{Rigidity}
The symbol superalgebra of any superdistribution of SHC type is rigid w.r.t. small deformations of the superdistribution preserving the growth vector.
%graded nilpotent Lie superalgebra
%associated to the parabolic subalgebra $\mathfrak{p}_2^{\rm IV}\subset G(3)$.
\end{cor}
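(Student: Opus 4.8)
The plan is to realize the isomorphism class (M1) as an \emph{open} point inside the finite list of symbols furnished by Theorem \ref{RigThm}, and then to invoke continuity of the structure functions along the deformation. Reading off the bracket tables, the SHC symbol algebra (M1) is singled out among (M1)--(M4) by the simultaneous validity of three non-degeneracy conditions: the skew-form $\omega$ is non-degenerate, the quadratic form $q$ is non-degenerate, and $\beta$ is an isomorphism. Each of these is a Zariski-open condition on the structure constants (namely $\det\omega\neq0$, $\det q\neq0$, $\det\beta\neq0$), and each isolates (M1) from exactly one of the remaining models: $\omega=0$ in (M4), $q=0$ in (M3), and $\opp{rank}(\beta)=1$ in (M2). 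Equivalently, as already observed before the statement, a fundamental non-degenerate symbol is isomorphic to (M1) precisely when it is a super-extension of the HC symbol in the sense of Definition \ref{def:super-extensiondef} (that is, its even part $\fm_{\bar0}$ is the HC symbol, which forces $\omega\neq0$ and $\beta$ an isomorphism) together with $q\neq0$.

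With this characterization in hand, the argument is immediate. First I would let $\cD_0$ be of SHC type, so its symbol is (M1) at every point of $M_o$, and take a small deformation $\{\cD_t\}$ through strongly regular superdistributions of growth $(2|4,1|2,2|0)$. By Theorem \ref{RigThm}, at each point the symbol of $\cD_t$ is isomorphic to one of (M1)--(M4). Since the three open conditions $\det\omega\neq0$, $\det q\neq0$, $\det\beta\neq0$ hold at $t=0$, they persist for $|t|$ small; hence the symbol of $\cD_t$ satisfies all three, and by the characterization above it is again isomorphic to (M1). This is exactly the asserted rigidity, and no genuine computation beyond Theorem \ref{RigThm} is required, since the distinguishing (non-)degeneracies of the four models are already recorded there.

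The one point that will require care is the passage from a deformation of the superdistribution to a deformation of the symbol's structure constants. This is supplied by strong regularity: the weak derived flag has locally constant ranks, so for every member of the family there is a (super)frame adapted to the flag, and the structure functions of $\fm$ are recovered from the frame brackets modulo the lower filtrands. As these brackets depend smoothly on the superdistribution, so do the structure constants, whence the nonvanishing of the relevant determinants is an open condition along the family and the pointwise openness argument applies. Thus the hard part is not the rigidity itself but the bookkeeping that makes ``small deformation of $\cD$'' literally induce a ``small deformation of the structure constants of $\fm$''; once this is granted, rigidity of (M1) follows from Theorem \ref{RigThm} and openness alone.
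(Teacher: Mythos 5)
Your proof is correct and follows essentially the same route as the paper: Corollary \ref{Rigidity} is deduced there from the observation that a super-extension of the HC symbol with $q\neq 0$ must be (M1), which is exactly your characterization of (M1) as the open stratum (non-degenerate $\omega$, $q$, and invertible $\beta$) within the classification of Theorem \ref{RigThm}, combined with persistence of these open conditions under small deformations. Your extra paragraph on recovering the structure constants from adapted frames via strong regularity just makes explicit what the paper leaves implicit.
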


% \subsection{Genericity of the distribution}
\subsection{Rank $(2|4)$ distributions in a $(5|6)$-dimensional superspace}
\subsubsection{Generic rank $(2|4)$ distributions}
In the classical case, a generic rank 2 distribution on a 5-dimensional space has growth vector $(2,1,2)$. 
Such distributions are equivalent to Monge normal form given by the Cartan distribution of the ODE $z_x=f(x,u,u_x,u_{xx},z)$ with $\p_{u_{xx}}^2 f\neq 0$,
so they are parametrized by $1$ function of $5$ variables. 

On the other hand, a generic rank $(2|4)$ distribution on a $(5|6)$-dimensional supermanifold 
has depth 2. More precisely, the growth is $(2|4,3|2)$ as the brackets
$[\cdot,\cdot]:\Lambda^2\cD_{\bar1}\to(\mathcal TM/\cD)_{\bar0}$
and $[\cdot,\cdot]:\cD_{\bar0}\otimes\cD_{\bar1}\to(\mathcal TM/\cD)_{\bar1}$
are both surjective in general, by a direct counting of the rank of the stalks  of the sheaves. 
We conclude that superdistributions of SHC type are {\it not} generic among all
rank $(2|4)$ distributions on a $(5|6)$-dimensional supermanifold.

Let us compute the functional dimension of a generic rank $(2|4)$ distribution on the superspace $M=\C^{5|6}(x^k|\theta^c)$, where $1\leq k\leq 5$ and $1\leq c\leq 6$.
The distribution has generators
 \begin{equation}
\label{eq:generatorsgeneric24}
\begin{aligned}
w_i&=\partial_{x^i}+\sum_{j=3}^5A_i^j(x,\theta)\partial_{x^j}+\sum_{b=5}^6B_i^b(x,\theta)\partial_{\theta^b}
\quad (1\leq i\leq2)\;,\\
\zeta_a&=\partial_{\theta^a}+\sum_{j=3}^5C_a^j(x,\theta)\partial_{x^j}+\sum_{b=5}^6D_a^b(x,\theta)\partial_{\theta^b}
\quad (1\leq a\leq4)\;,
\end{aligned}
 \end{equation}
where $A_i^j,D_a^b$ are even and $B_i^b,C_a^j$ odd superfunctions.
Note that any superfunction on $M$ has a Taylor expansion in the 6 odd coordinates, 
with coefficients being ordinary functions of the 5 even coordinates. 
The total number of the coefficients in a superfunction is $2^6=64$, 
and for  an even or odd superfunction is $\frac12 64=32$.
%the number of a fixed parity coefficients in
Consequently the space of distributions of the form \eqref{eq:generatorsgeneric24} is parametrized by $6\cdot5\cdot32=960$ ordinary functions of 5 variables.

A general (parity-preserving) change of coordinates involves $(5+6)\cdot 32=352$ ordinary functions,
and its action on the space of distributions \eqref{eq:generatorsgeneric24} is generically free:
this is the absence of symmetries for generic distributions of the given type. 
  % \FIXMEA{We shall give a little more explanation on the freeness claim. For example I am not sure that a 
  % similar claim holds for codimension one distributions, so does our claim depends on the codimension?}
Therefore the moduli space of such distributions is parametrized by $960-352=608$ 
ordinary functions of 5 variables. 

\subsubsection{Distributions of SHC type}%with growth vector $(2|4,1|2,2|0)$}
\label{subsec:distributionsofSHCtype}
The moduli space of rank $(2|4)$ distributions of SHC type is smaller, but it is still 
quite difficult to parametrize.
Here, we restrict to the following system of 4 differential equations that generalizes the SHC equation \eqref{SHC-ODE}:
 \begin{equation}\label{FGHK}
z_x=F\;,\quad z_\nu=G\;,\quad z_\tau=H\;,\quad u_{\nu\tau}=K\;,%\tag{$\dagger$}
 \end{equation}
where $F,K$ are even superfunctions (resp., $G,H$ odd superfunctions) on the supermanifold $M=\C^{5|6}(x,u,u_x,u_{xx},z|\nu,\tau,u_\nu,u_\tau,u_{x\nu},u_{x\tau})$. The associated Cartan superdistribution $\cD$ has even generators
 \begin{align} 
\label{FGHK-deformeven}
D_x    &= \p_x+u_x\p_u+u_{xx}\p_{u_x}+F\p_z+u_{x\nu}\p_{u_\nu}+u_{x\tau}\p_{u_\tau},\quad \p_{u_{xx}},
 \end{align}
and odd generators
\begin{equation}
\label{FGHK-deformodd}
 \begin{aligned} 
D_\nu  &= \p_\nu+u_\nu\p_u+u_{x\nu}\p_{u_x}+G\p_z+K\p_{u_\tau},\quad   \p_{u_{x\nu}},\\
D_\tau &= \p_\tau+u_\tau\p_u+u_{x\tau}\p_{u_x}+H\p_z-K\p_{u_\nu},\quad  \p_{u_{x\tau}}.
 \end{aligned}
\end{equation}

Naively, we would expect that such superdistributions are always of SHC type, but this is not the case in general. For instance, the distribution associated to the system
$$
z_x=f(x,u,u_x,u_{xx},z)\;,\quad z_\nu=0\;,\quad z_\tau=0\;,\quad u_{\nu\tau}=0\;,
$$
has growth vector $(2|4,2|2,1|0)$ provided $\p_{u_{xx}} f\neq 0$, and clearly it is  not 
the super-extension of a Monge equation in the specified sense.
 % Cross-differentiating $z_{x,\tau}=0$, we get a new ODE on $u_{\tau}$ that is not 
 % a direct prolongation (its symbol is not in the ideal of the symbols) of the given system.
The superfunctions $F,G,H,K$ which give rise to superdistributions of SHC type %growth $(2|4,1|2,2|0)$ 
are constrained as follows.

\begin{prop} \label{FGHK-constraints}
%The regularity and growth vector constraint on the distribution $\cD$ are equivalent to the inequality $\p_{u_{xx}} \p_{u_{xx}} F\neq0$ and the following system of differential equations:
The superdistribution $\cD$ with generators \eqref{FGHK-deformeven} 
and \eqref{FGHK-deformodd} is of SHC-type 
 % with fundamental, non-degenerate symbol $\fm=\g_{-1}\oplus\g_{-2}\oplus\g_{-3}$ 
 % of growth $(2|4,1|2,2|0)$ 
if and only if $\p_{u_{xx}}^2 F$ is invertible (as a superfunction, i.e., its evaluation to the underlying classical manifold is nowhere vanishing)
and the following differential system is satisfied:
 \begin{align}
\label{eq:firsteqsystem}
D_xG      &=D_\nu F+(D_xK) (\p_{u_{x\tau}} F),\quad D_xH=D_\tau F-(D_xK)(\p_{u_{x\nu}}F),\\
\label{eq:secondeqsystem}
D_\nu G &=D_\tau H=0,\quad D_\tau G+D_\nu H=0,\quad D_\nu K=D_\tau K=0,\\
\label{eq:thirdeqsystem}
\partial_{u_{x\nu}} G &= \partial_{u_{xx}} F=\partial_{u_{x\tau}} H,\quad 
\partial_{u_{x\tau}} G = \partial_{u_{x\nu}} H =0,\quad \partial_{u_{x\nu}} K=\partial_{u_{x\tau}} K=0,\\
\label{eq:fourtheqsystem}
\partial_{u_{xx}} G &= (\partial_{u_{xx}} K) (\partial_{u_{x\tau}} F),\quad \partial_{u_{xx}} H = -(\partial_{u_{xx}} K)(\partial_{u_{x\nu}} F).
\end{align}
 \comm{
is satisfied and one of the following two alternatives hold:
\begin{itemize}
\item[(1)] If $\p_{u_{xx}} K$ is invertible (as a superfunction) then we require $\p_{u_{xx}}^2 F$ to be invertible. In this case the superdistribution is of SHC type;
\item[(2)] If $\p_{u_{xx}} K=0$, then we require $\p_{u_{xx}}^2 F=0$ and \FIXMEA{One condition to be determined}. In this case the superdistribution has symbol (M2) and \eqref{FGHK} is not 
the super-extension of a Monge equation.
\end{itemize}
If $\p_{u_{xx}} K$ is not as in (1) or (2), then the superdistribution is not strongly regular.
 }
\end{prop}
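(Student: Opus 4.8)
The plan is to compute the weak derived flag of $\cD$ directly from the generators \eqref{FGHK-deformeven}--\eqref{FGHK-deformodd} and to match the resulting symbol against the SHC model (M1), reading off the stated constraints as the precise conditions for this match. Writing the six degree $-1$ generators as $e_1=-D_x$, $e_2=\partial_{u_{xx}}$ (even) and $\theta_1'=D_\nu$, $\theta_1''=D_\tau$, $\theta_2'=\partial_{u_{x\nu}}$, $\theta_2''=\partial_{u_{x\tau}}$ (odd), the essential point is to keep track of how the vertical fields $\partial_{u_\nu},\partial_{u_\tau},\partial_z$ distribute between $\g_{-2}$ and $\g_{-3}$. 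First I would set $h=[e_1,e_2]=\partial_{u_x}+(\partial_{u_{xx}}F)\partial_z$ together with $\varrho_1=[\partial_{u_{x\nu}},D_x]=\partial_{u_\nu}+(\partial_{u_{x\nu}}F)\partial_z$ and $\varrho_2=[\partial_{u_{x\tau}},D_x]=\partial_{u_\tau}+(\partial_{u_{x\tau}}F)\partial_z$; these ``automatic'' brackets represent $(\g_{-2})_{\bar0}$ and $(\g_{-2})_{\bar1}$, while $f_1=\partial_u$ and $f_2=\partial_z$ represent the new directions in $\g_{-3}$. Every further bracket is then reduced against the frame $\{h,\varrho_1,\varrho_2,f_1,f_2\}$ before its graded class is read off.

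The core of the proof is a case-by-case evaluation of the remaining degree $-1$ brackets, each of which produces either a \emph{growth condition} (vanishing of the $f_2=\partial_z$ component, which cannot be absorbed since $\partial_z\notin\cD^2$) or a \emph{symbol condition} (the $\g_{-2}$-projection must reproduce $q$ and $\Xi$ of (M1)). Concretely, the $\partial_z$-components of $[D_x,D_\nu]$ and $[D_x,D_\tau]$ give exactly \eqref{eq:firsteqsystem}; those of $[\partial_{u_{xx}},D_\nu]$ and $[\partial_{u_{xx}},D_\tau]$ give \eqref{eq:fourtheqsystem}; and the odd self- and cross-brackets $[D_\nu,D_\nu]$, $[D_\tau,D_\tau]$, $[D_\nu,D_\tau]$ give \eqref{eq:secondeqsystem}. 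Imposing that $q(\theta_1',\theta_2')=q(\theta_1'',\theta_2'')=h$ and $q(\theta_1',\theta_2'')=q(\theta_1'',\theta_2')=0$, as in (M1), forces $\partial_{u_{x\nu}}G=\partial_{u_{xx}}F=\partial_{u_{x\tau}}H$, $\partial_{u_{x\tau}}G=\partial_{u_{x\nu}}H=0$ and $\partial_{u_{x\nu}}K=\partial_{u_{x\tau}}K=0$, i.e.\ \eqref{eq:thirdeqsystem}. Finally $\beta(e_2,h)=[\partial_{u_{xx}},h]=(\partial_{u_{xx}}^2F)\,f_2$ must be a nowhere-vanishing multiple of $f_2$; this is precisely the invertibility of $\partial_{u_{xx}}^2F$, equivalently the statement that $\beta$ is an isomorphism and the even part $\fm_{\bar0}$ is the HC symbol. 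Running this computation in reverse, substituting the whole system back into the brackets, shows that every structure constant matches (M1), which gives sufficiency; reading the conditions off the requirement ``symbol $\cong$ (M1)'' gives necessity.

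Two features require care and constitute the main obstacle. First, several brackets carry genuinely nonzero ``off-model'' pieces, e.g.\ after the growth conditions $[D_x,D_\nu]=(D_xK)\varrho_2$ and $[\partial_{u_{xx}},D_\nu]=(\partial_{u_{xx}}K)\varrho_2$; the first is pure curvature, absorbed by the $(\cA_M)_x$-linear frame change $e_1\mapsto e_1-\tfrac{D_xK}{\partial_{u_{xx}}K}\,e_2$ (legitimate in the stalkwise sense underlying strong regularity) and hence invisible to the symbol, whereas the second shows that $\partial_{u_{xx}}K$ is what distinguishes (M1) from (M2). I therefore must verify that invertibility of $\partial_{u_{xx}}^2F$ already forces $\partial_{u_{xx}}K$ to be invertible: differentiating \eqref{eq:fourtheqsystem} and using $\partial_{u_{x\nu}}K=0$ from \eqref{eq:thirdeqsystem} yields an identity of the form $\partial_{u_{xx}}^2F=(\partial_{u_{xx}}K)\,\partial_{u_{x\nu}}\partial_{u_{x\tau}}F$ (up to sign), so the left side being invertible forces $\partial_{u_{xx}}K$ invertible. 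Consequently $\mathrm{rank}(\beta)=2$ and $q\neq0$, which excludes (M2)--(M4), so Theorem \ref{RigThm} identifies the symbol as (M1). Second, I must confirm that the degree $-2$ brackets ($\beta$ on $e_1$, and $\Theta$) close with $(\g_{-3})$ of rank $(2|0)$ and reproduce the $\Theta$ of (M1); since \eqref{eq:J.I.} is the only nontrivial Jacobi identity for these symbols and $q,\Xi,\beta$ have already been matched, $\Theta$ is then forced and introduces no further constraints. The delicate part is thus the clean separation of growth conditions, symbol structure constants, and curvature, all performed consistently in the stalkwise graded frame.
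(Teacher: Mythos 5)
Your proposal is correct and follows essentially the same route as the paper's proof: you compute the same brackets in the same frame, extract \eqref{eq:firsteqsystem}--\eqref{eq:fourtheqsystem} from the requirement that they reduce correctly against $\bT=[\p_{u_{xx}},D_x]$ and $\bS_1,\bS_2$ (the paper's $\varrho_1,\varrho_2$), exclude (M2)--(M4) via $q,\omega\neq 0$ and the nondegeneracy of $\Xi$, and close with exactly the paper's key identity $\p_{u_{xx}}^2 F=(\p_{u_{xx}}K)(\p_{u_{x\nu}}\p_{u_{x\tau}}F)$ obtained by differentiating \eqref{eq:fourtheqsystem} and using \eqref{eq:thirdeqsystem}. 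The only cosmetic differences are your relabelling of some equations as ``symbol'' rather than ``growth'' conditions and your explicit frame change absorbing the $(D_xK)\varrho_2$ term, which the paper handles implicitly by invoking Theorem \ref{RigThm} to identify the symbol up to isomorphism.
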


\begin{proof}
The supervector field $\bT= [\p_{u_{xx}},D_x]=\p_{u_x}+(\partial_{u_{xx}} F)\p_z$, generates $(\fg_{-2})_{\bar{0}}$ modulo $\cD$. On the other hand, the Lie brackets between the odd generators of $\cD$ are
 \begin{align*}
[\p_{u_{x\nu}},D_\nu]&=\p_{u_x}+(\partial_{u_{x\nu}} G) \p_z+(\partial_{u_{x\nu}} K) \p_{u_\tau}\;,
\quad
[\p_{u_{x\nu}},D_\tau]=(\partial_{u_{x\nu}} H) \p_z - (\partial_{u_{x\nu}} K) \p_{u_\nu}\;,\\
[\p_{u_{x\tau}},D_\tau]&=\p_{u_x}+ (\partial_{u_{x\tau}} H) \p_z - (\partial_{u_{x\tau}} K) \p_{u_\nu}\;,\quad
[\p_{u_{x\tau}},D_\nu]=(\partial_{u_{x\tau}}G) \p_z+ (\partial_{u_{x\tau}} K) \p_{u_\tau}\;,\\
\tfrac12[D_\nu,D_\nu]&=(D_\nu G)\,\p_z+(D_\nu K)\,\p_{u_\tau}\;,\quad
\tfrac12[D_\tau,D_\tau]=(D_\tau H)\,\p_z - (D_\tau K)\,\p_{u_\nu}\;,\\
[D_\nu,D_\tau]&=(D_\tau G+D_\nu H)\,\p_z+(D_\tau K)\,\p_{u_\tau}-(D_\nu K)\,\p_{u_\nu}\;,
 \end{align*}
and it is not difficult to see that these supervector fields are multiples of $\bT$ modulo $\cD$ if and only if  they are multiples of $\bT$. This immediately yields the equations \eqref{eq:secondeqsystem} and \eqref{eq:thirdeqsystem}.

Next, we calculate the Lie brackets between even and odd generators of $\cD$. We first set 
 \begin{gather*}
\bS_1= [\p_{u_{x\nu}},D_x]=  \p_{u_\nu}  + (\partial_{u_{x\nu}}F)\p_z,\quad
\bS_2= [\p_{u_{x\tau}},D_x]= \p_{u_\tau} + (\partial_{u_{x\tau}}F)\p_z,
 \end{gather*}
and note that the equivalence classes of $\bS_1$ and $\bS_2$ modulo $\cD$ generate 
$(\fg_{-2})_{\bar{1}}$. Again, it turns out that the supervector fields
 \begin{align*}
[D_x,D_\nu]&=(D_x G-D_\nu F)\,\p_z+(D_x K)\,\p_{u_\tau},\quad
[D_x,D_\tau]=(D_x H-D_\tau F)\,\p_z-(D_x K)\,\p_{u_\nu},\\
[\p_{u_{xx}},D_\nu]&=(\partial_{u_{xx}}G)\,\p_z+(\partial_{u_{xx}}K)\,\p_{u_\tau},\quad
[\p_{u_{xx}},D_\tau]=(\partial_{u_{xx}}H)\,\p_z-(\partial_{u_{xx}}K)\,\p_{u_\nu},
 \end{align*}
are linear combinations of $\bS_1$ and $\bS_2$ modulo $\cD$ precisely when they are 
linear combinations of $\bS_1$ and $\bS_2$.
This gives \eqref{eq:firsteqsystem} and \eqref{eq:fourtheqsystem}.

%Now, if the symbol of $\cD$ is fundamental, non-degenerate and of growth $(2|4,1|2,2|0)$, then it
%is isomorphic to one of the  

A closer look at the Lie brackets 
determined so far tells us that the maps $q$ and $\omega$ defining the models of Theorem \ref{RigThm} are both non-zero.
Hence the symbol of $\cD$ is isomorphic either to (M1) or (M2). Concerning the map $\Xi$, %:(\fg_{-1})_{\bar 0}\otimes(\fg_{-1})_{\bar 0}\to(\fg_{-2})_{\bar 1}$, 
note that (the equivalence class of) a supervector field $E\in(\cA_M)_x\otimes(\fg_{-1})_{\bar 0}$ satisfies $\Xi(E,\cdot)=0$ if and only if 
$E$ is a multiple of $\partial_{u_{xx}}$ and $\partial_{u_{xx}}K=0$. The last condition follows from the identities 
\begin{align*}
[\p_{u_{xx}},D_\nu]&=(\partial_{u_{xx}}K)\bS_2,\quad
[\p_{u_{xx}},D_\tau]=-(\partial_{u_{xx}}K)\bS_1,\quad [\p_{u_{xx}},\partial_{u_{x\nu}}]=[\p_{u_{xx}},\partial_{u_{x\tau}}]=0.
\end{align*}
 % Strong regularity implies that $\p_{u_{xx}} K$ is either invertible or identically zero 
 % and we consider the two alternatives separately. 
Finally, the map $\beta$
is given the following Lie brackets: 
 $$
[\partial_{u_{xx}},\bT] = (\p_{u_{xx}}^2 F)\p_z\;, \qquad
[D_x,\bT] \equiv -\partial_u\!\!\!\mod\langle\partial_z\rangle\;.
 $$
Thus the invertibility of $\p_{u_{xx}}^2 F$ is a necessary condition for a superdistribution of SHC type, but it is also sufficient
since differentiation by $u_{x\nu}$ of the first identity in \eqref{eq:fourtheqsystem}  yields
$\p_{u_{xx}}^2 F=(\partial_{u_{xx}}K)(\partial_{u_{x\nu}u_{x\tau}}F)$, and
$\partial_{u_{xx}}K$ is invertible too.
 \end{proof}
\begin{rem}
\label{rem:m2versusm1}
Superdistributions of SHC-type are strongly regular by definition. It can be shown that the superdistribution 
$\cD$ with generators \eqref{FGHK-deformeven} 
and \eqref{FGHK-deformodd} is strongly regular and with fundamental, non-degenerate symbol of growth $(2|4,1|2,2|0)$ that is {\em not} of SHC-type if and only if $\p_{u_{xx}} K=\p_{u_{xx}}^2 F=0$ and the symbol is (M2).
This case does not correspond to the super-extension of a Monge equation and it is characterized by additional differential conditions on $(F,G,H,K)$ that we omit due to their size. 

If $\p_{u_{xx}} K$ is neither zero nor invertible, then the superdistribution $\cD$ is not strongly regular.
\end{rem}
 \begin{cor}
The functional freedom of analytic superdistributions as in Proposition \ref{FGHK-constraints}
does not involve any ordinary function of 5 variables.
 \end{cor}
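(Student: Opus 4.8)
The plan is a functional-dimension count for the over-determined system \eqref{eq:firsteqsystem}--\eqref{eq:fourtheqsystem} of Proposition \ref{FGHK-constraints}. A priori the datum $(F,G,H,K)$ consists of two even and two odd superfunctions on $\mathbb{C}^{5|6}$; expanding each in the six odd coordinates $\nu,\tau,u_\nu,u_\tau,u_{x\nu},u_{x\tau}$ produces $32$ Taylor coefficients per superfunction, each an ordinary function of the five even coordinates $(x,u,u_x,u_{xx},z)$, for a total of $128$ functions of five variables. The goal is to show that the constraints force the general analytic solution to be determined by Cauchy data consisting only of functions of at most four variables, so that no function of five variables survives. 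I would organize the argument in two stages, treating first the purely algebraic (fibre-derivative) constraints and then the genuine differential ones.

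First stage: algebraic elimination. The constraints \eqref{eq:thirdeqsystem}--\eqref{eq:fourtheqsystem} involve only the fibre derivatives $\partial_{u_{xx}},\partial_{u_{x\nu}},\partial_{u_{x\tau}}$. Since $u_{x\nu},u_{x\tau}$ are odd, I write $F=F_{00}+u_{x\nu}F_{10}+u_{x\tau}F_{01}+u_{x\nu}u_{x\tau}F_{11}$ with $F_{ab}$ independent of $u_{x\nu},u_{x\tau}$, and similarly for $G,H,K$. The relations \eqref{eq:thirdeqsystem} then say that $K$ is independent of $u_{x\nu},u_{x\tau}$, that $G$ (resp.\ $H$) is independent of $u_{x\tau}$ (resp.\ $u_{x\nu}$) with $\partial_{u_{x\nu}}G=\partial_{u_{x\tau}}H=\partial_{u_{xx}}F$, which forces $F_{10},F_{01},F_{11}$ to be independent of $u_{xx}$; and \eqref{eq:fourtheqsystem} pins down the remaining fibre dependence, yielding in particular the identity $\partial_{u_{xx}}^2F_{00}=(\partial_{u_{xx}}K)\,F_{11}$ already noted in the proof of Proposition \ref{FGHK-constraints}. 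After this elimination the free data reduce to superfunctions on $\mathbb{C}^{5|4}$: the even $F_{00},K$ and the odd $F_{10},F_{01}$, with $G,H$ and $F_{11}$ expressed through them. Crucially, $F_{10},F_{01}$ are already independent of $u_{xx}$, hence their coefficients are functions of only four variables, and since $\partial_{u_{xx}}K$ is invertible the identity above determines $F_{11}=(\partial_{u_{xx}}K)^{-1}\partial_{u_{xx}}^2F_{00}$; imposing that $F_{11}$ be $u_{xx}$-independent is a first differential relation coupling the $u_{xx}$-jets of $F_{00}$ and $K$.

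Second stage: propagation in $u_{xx}$. It remains to remove the $u_{xx}$-dependence of the two genuinely five-variable data $F_{00}$ and $K$. Integrating $\partial_{u_{xx}}^2F_{00}=F_{11}\,\partial_{u_{xx}}K$ with $F_{11}$ independent of $u_{xx}$ expresses $F_{00}$, up to two $u_{xx}$-independent functions, in terms of $K$ and of four-variable data; thus the problem collapses to showing that $K$ is not a free function of five variables. For this I would expand the differential constraints \eqref{eq:secondeqsystem} and \eqref{eq:firsteqsystem} in the four remaining odd variables $\nu,\tau,u_\nu,u_\tau$ and combine them with the $u_{x\nu}$-component of \eqref{eq:fourtheqsystem}, namely $\partial_{u_{xx}}G_{00}=(\partial_{u_{xx}}K)\,F_{01}$ and its $H_{00}$-analogue. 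These relate $\partial_{u_{xx}}K$ to the odd data $G_{00},H_{00},F_{10},F_{01}$, whose $u_{xx}$-jets are in turn controlled by the transport equations $D_\nu K=D_\tau K=0$, $D_\nu G=D_\tau H=0$, $D_\tau G+D_\nu H=0$ and by \eqref{eq:firsteqsystem}. The point to be verified is that this coupled system determines $\partial_{u_{xx}}K$ — and hence the complete $u_{xx}$-dependence of all reduced data — from the restriction of the data to a hypersurface $\{u_{xx}=\mathrm{const}\}$, i.e.\ from functions of the four variables $(x,u,u_x,z)$. Granting this, the general analytic solution is parametrized by functions of at most four variables, proving the corollary.

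The main obstacle is precisely the closing-up of the $u_{xx}$-propagation in the second stage: the odd vector fields $D_\nu,D_\tau$ contain no $\partial_{u_{xx}}$, so the $u_{xx}$-direction is reached only indirectly, through the fibre-derivative identities \eqref{eq:fourtheqsystem} together with the $u_{xx}$-independence of the algebraically determined pieces $F_{10},F_{01},F_{11}$. Checking that these combine with the transport equations to determine $\partial_{u_{xx}}K$ (generically, using invertibility of $\partial_{u_{xx}}^2F$ and of $\partial_{u_{xx}}K$) is the delicate computation, and it is most safely carried out as an explicit formal-integrability check or, equivalently, as a computation of the Cartan characters of \eqref{eq:firsteqsystem}--\eqref{eq:fourtheqsystem}, showing that the top character vanishes while the surviving functional freedom lives in characters associated to four variables and fewer.
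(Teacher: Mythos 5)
Your first stage is sound and coincides with the paper's own reduction: expanding in $u_{x\nu},u_{x\tau}$ and using \eqref{eq:thirdeqsystem}--\eqref{eq:fourtheqsystem} (plus the $u_{x\nu}$-coefficient of $D_\nu K=0$ from \eqref{eq:secondeqsystem}), the paper arrives at the determined system \eqref{eq:systemIdetermined}--\eqref{eq:systemIIIdetermined} on the eight unknowns $F_0,F_1,F_2,F_{12},G_0,H_0,L,K$. One imprecision on your side: $G_0$ and $H_0$ are \emph{not} algebraically expressed through $F_{00},K,F_{10},F_{01}$; only their $u_{xx}$-derivatives are prescribed by \eqref{eq:systemIIdetermined}, so they remain unknowns of the differential system.

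The genuine gap is in your second stage, and it is not merely that the ``delicate computation'' is deferred: the specific mechanism you propose cannot work, because the hypersurface $\{u_{xx}=\mathrm{const}\}$ is \emph{characteristic} for the reduced system. Indeed, after the reduction the only equation in which a derivative of $K$ appears as the solved-for (principal) term is the transport equation $\partial_{u_x}K=-L\,\partial_zK$ of \eqref{eq:systemIIIdetermined}, which involves only directions tangent to $\{u_{xx}=\mathrm{const}\}$; the derivative $\partial_{u_{xx}}K$ occurs only on right-hand sides, multiplied by the unknowns $F_1,F_2,F_{12}$, in the equations \eqref{eq:systemIIdetermined} whose role is to determine $\partial_{u_{xx}}G_0$, $\partial_{u_{xx}}H_0$ and $\partial_{u_{xx}}L$. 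Hence no equation isolates $\partial_{u_{xx}}K$: on that slice it is a \emph{free} normal derivative (the relation $\partial_{u_{xx}}L=-(\partial_{u_{xx}}K)F_{12}$ couples two normal derivatives and determines neither), so propagation off $\{u_{xx}=\mathrm{const}\}$ fails. This is exactly visible in the symbol determinant computed in the paper, $P=p_{u_{xx}}^{56}(p_{u_x}+L|_{o}\,p_z)^8$, which vanishes on the conormal $du_{xx}$. The paper's actual argument is the repaired version of your plan: expand the reduced system in the four odd variables $\nu,\tau,u_\nu,u_\tau$ to get a \emph{square} (64 equations, 64 unknowns) quasi-linear first-order system, compute that $P\not\equiv 0$ so the system is determined, choose Cauchy data on a hypersurface non-characteristic for \emph{both} factors of $P$ (hence transverse to $\partial_{u_{xx}}$ \emph{and} to $\partial_{u_x}+L\partial_z$, so certainly not $\{u_{xx}=\mathrm{const}\}$), and invoke Cauchy--Kovalevskaya: analytic solutions are then parametrized by $64$ functions of $4$ variables, so no function of $5$ variables occurs. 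To fix your proof, replace the slice $\{u_{xx}=\mathrm{const}\}$ by such a tilted non-characteristic hypersurface and actually carry out the symbol computation.
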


 \begin{proof}
For the sake of brevity, we will omit the dependence on even coordinates in this proof. From \eqref{eq:thirdeqsystem}, we obtain the following 
equations
 \begin{gather*}
K=K(\nu,\tau,u_\nu,u_\tau),\\
G=G_0(\nu,\tau,u_\nu,u_\tau)+L(\nu,\tau,u_\nu,u_\tau)u_{x\nu},\\
H=H_0(\nu,\tau,u_\nu,u_\tau)+L(\nu,\tau,u_\nu,u_\tau)u_{x\tau},
 \end{gather*}
where $L,K$ are even superfunctions and $G_0, H_0$ odd superfunctions.
 
If we set $F=F_0(\nu,\tau,u_\nu,u_\tau)+F_1(\nu,\tau,u_\nu,u_\tau)u_{x\nu}+F_2(\nu,\tau,u_\nu,u_\tau)u_{x\tau}
+F_{12}(\nu,\tau,u_\nu,u_\tau)u_{x\nu}u_{x\tau}$, for $F_0,F_{12}$ even and 
$F_1,F_2$ odd superfunctions, we see from \eqref{eq:thirdeqsystem} that 
 \begin{gather}
\label{eq:systemIdetermined}
 L=\partial_{u_{xx}} F_0\;,\quad
 \p_{u_{xx}} F_1 = \p_{u_{xx}} F_2 = \p_{u_{xx}} F_{12}=0\;,
 \end{gather}
and from \eqref{eq:fourtheqsystem} that
\begin{gather}
\label{eq:systemIIdetermined} 
\p_{u_{xx}} G_0 = -(\p_{u_{xx}}K)F_2,\quad 
\p_{u_{xx}} L = -(\p_{u_{xx}}K)F_{12},\quad
\p_{u_{xx}} H_0 = (\p_{u_{xx}}K)F_1.
\end{gather}
Finally, taking the $u_{x\nu}$-coefficient of the equation $D_\nu K=0$ 
in \eqref{eq:secondeqsystem} we get
\begin{gather}
\label{eq:systemIIIdetermined} 
 \p_{u_x} K = -L\,\p_z K.
\end{gather}
 % since \FIXMEA{Where does this equation comes from? Please do detail and recheck the sign.}

The system \eqref{eq:systemIdetermined}-\eqref{eq:systemIIIdetermined} of 8 equations on the 8 unknowns
$F_0,F_1,F_2, F_{12},G_0,H_0,L,K$ is a classical system of PDE: as soon as it is
expanded in the odd variables $\nu,\tau,u_\nu,u_\tau$ it becomes a system of 64 equations 
of the first order on 64 unknown ordinary functions (the coefficients of the expansion). 
We claim that this quasi-linear PDE system is determined.

Indeed, the symbol of the system is a $64\times 64$ matrix $A$ with linear functionals on
$T^*M_o$ as entries (i.e., functions linear in momenta $p_x,p_u,p_{u_x},p_{u_{xx}},p_z$),
and an easy computation shows that its determinant $P=\det A$ is a non-zero homogeneous 
polynomial in momenta. In fact, $P=p_{u_{xx}}^{56}(p_{u_x}+L|_{o} p_z)^8$, where $L|_{o}$ is 
evaluation on $M_o$ of the restriction of $L$ on the 0-jet of a solution.

The locus of $P$ in $\mathbb{P}T^*M_o$ is the characteristic variety of the system (depending on
the 0-jet of a solution) and a (4-dimensional) hypersurface $\Sigma\subset M_o$ is non-characteristic if at every point 
its annihilator is a non-characteristic covector, i.e., $P(\mathop{Ann}T\Sigma)\neq0$.
The Cauchy data are given by arbitrary values of the coefficients of the expansions of $F_0,F_1,\dots,L,K$
on $\Sigma$. 

If $\Sigma$ is analytic, non-characteristic and the Cauchy data are analytic then by
the Cauchy-Kovalevskaya theorem there exists a unique solution to the system. Thus 
(analytic) solutions are determined by 64 functions of 4 variables and the statement is proved.
 \end{proof}

 \begin{rk}
The above corollary also holds in the formal category (i.e., for power series). 
The result implies that the functional dimension drops by passing from classical Monge equations to
super-differential equations \eqref{FGHK} that are of SHC type. Informally, this can be understood on the basis of our finding that $H^{d,2}(\fm,\fg)\cong S^2\C^2$ for $d = 2$ (and trivial for other $d > 0$) for the SHC grading of $\fg=G(3)$. 

Indeed, the Cartan quartic of the underlying generic rank 2 distribution on a 5-dimensional space
should admit a square root, hence must be of Petrov type D  (a pair of double roots), N (a quadruple root) or O (identically zero). Put it differently, not all 
Monge equations are super-extendable to equations of SHC type. Details for achieving this correspondence in the framework 
of parabolic supergeometries will be given elsewhere. % in a future work.
\end{rk}

% \begin{align}
% z_y = f(u_{yy}) + u_{y\nu} u_{y\tau}, \quad
% z_\nu = f'(u_{yy}) u_{y\nu}, \quad
% z_\tau = f'(u_{yy}) u_{y\tau}, \quad
% u_{\nu\tau} = -f'(u_{yy}),
% \end{align}
% or equivalently
% \begin{align}
% \begin{cases}
% z_y = f(\lambda) + \epsilon_1 \epsilon_2, \quad z_\nu = f'(\lambda) \epsilon_2, \quad z_\tau = -f'(\lambda) \epsilon_1\\
% u_{yy} = \lambda, \quad u_{y\nu} = \epsilon_2, \quad u_{y\tau} = -\epsilon_1, \quad u_{\nu\tau} = -f'(\lambda).
% \end{cases}
% \end{align}

% Thus all classical submaximally symmetric (2,3,5)-distributions allow super-defor\-mations with the same supersymmetry.
% Question: Is it true that $\frac12$ of the odd dimension of the supersymmetry corresponds to 4 twistor spinors?

\subsection{Integral submanifolds of the SHC distribution}
\label{subsec:integralsubmnfds}
In this section, we will consider solutions of the SHC equation \eqref{SHC-ODE}. More generally, 
we consider its {\em space of integral submanifolds}, i.e., the (set-theoretic) space consisting of 
all integral submanifolds of the associated Cartan superdistribution.
Namely, let $M=\C^{5|6}(x,u,u_x,u_{xx},z|\nu,\tau,u_\nu,u_\tau,u_{x\nu},u_{x\tau})$ be equipped 
with the Pfaffian system
$\Psi=\Psi_{\bar0}\oplus\Psi_{\bar1}$ given by
 \begin{align*}
\Psi_{\bar0} = \langle
& dz-dx\cdot(\tfrac12u_{xx}^2+u_{x\nu}u_{x\tau})-d\nu\cdot u_{xx}u_{x\nu}-d\tau\cdot u_{xx}u_{x\tau},\\
& du-dx\cdot u_x-d\nu\cdot u_\nu-d\tau\cdot u_\tau,\
du_x-dx\cdot u_{xx}-d\nu\cdot u_{x\nu}-d\tau\cdot u_{x\tau}\rangle,\\
\Psi_{\bar1} = \langle
& du_\nu-dx\cdot u_{x\nu}-d\tau\cdot u_{xx},\ du_\tau-dx\cdot u_{x\tau}+d\nu\cdot u_{xx}\rangle.
 \end{align*}
and consider morphisms $\iota:\C^{p|q}\to M$ such that $\iota^*\Psi=0$
and $\iota$ is an immersion almost everywhere, i.e., the pull-back $\iota^*:\cA_M\to\cA_{\C^{p|q}}$ on the sheaf of superfunctions is surjective at almost every stalk. We recall that  the pull-back between stalks is by definition an {\em even} morphism of superalgebras.

Our space of integral submanifolds has to be compared with the more general notion of {\em superspace of integral submanifolds} \cite{MR1701618}, usually introduced via the functor of points and for which the main r$\hat{\text o}$le is played by families of integral submanifolds parametrized by odd elements in an auxiliary algebra $\mathbb A$ (these are the super-points of such a superspace). Integral submanifolds in our sense are called ``bosonic solutions'' in the mathematical physics literature and it is customary to restrict the analysis to them. 

Before turning to integral submanifolds, we give the following preliminary result.
 \begin{proposition}\label{prop:solSHC}
Even superfunctions $u=u(x\,|\,\nu,\tau)$, $z=z(x\,|\,\nu,\tau)$ satisfy the SHC equation 
\eqref{SHC-ODE} if and only if
\begin{equation}\label{solSHC}
  \begin{aligned}
u& =c_0+c_1x+\tfrac12c_2x^2+\tfrac16c_3x^3+(c_2+c_3x)\nu\tau,\\
z& =c_4+\tfrac12c_2^2x+\tfrac12c_2c_3x^2+\tfrac16c_3^2x^3+c_3(c_2+c_3x)\nu\tau,
  \end{aligned}
\end{equation}
for some constants $c_0,\ldots,c_4\in\C$. 
\end{proposition}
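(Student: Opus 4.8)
The plan is to verify Proposition \ref{prop:solSHC} by directly integrating the SHC equation \eqref{SHC-ODE}, treating it as a system of first-order relations for the even superfunctions $u=u(x\,|\,\nu,\tau)$ and $z=z(x\,|\,\nu,\tau)$. Since $u$ and $z$ are even functions of one even variable $x$ and two odd variables $\nu,\tau$ (with $\nu^2=\tau^2=0$ and $\nu\tau=-\tau\nu$), each admits a finite Taylor expansion in the odd variables. Writing $u = A(x) + B(x)\nu\tau$ and $z = P(x) + Q(x)\nu\tau$ for ordinary functions $A,B,P,Q$ of $x$ (the terms linear in $\nu$ or $\tau$ alone are odd, hence absent from an even superfunction), I would compute all the derivatives appearing in \eqref{SHC-ODE}.

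First I would record the relevant partial derivatives. The second-order derivatives that enter are $u_{xx}=A''+B''\nu\tau$, $u_{x\nu}=\partial_x\partial_\nu u$, $u_{x\tau}=\partial_x\partial_\tau u$ and $u_{\nu\tau}=\partial_\nu\partial_\tau u = B(x)$ (up to a sign depending on the ordering convention for mixed odd derivatives, which I would fix once and use consistently). From $u = A + B\nu\tau$ one gets $u_\nu = B\tau$ (sign aside), hence $u_{x\nu}=B'\tau$ and similarly $u_{x\tau}=-B'\nu$; note these are odd and their product $u_{x\nu}u_{x\tau}$ is proportional to $\tau\cdot\nu$, i.e.\ a multiple of $\nu\tau$. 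The key structural point is that the nonlinear terms $u_{xx}^2$, $u_{x\nu}u_{x\tau}$, $u_{xx}u_{x\nu}$, etc., all collapse dramatically because squares of odd quantities vanish: for instance $u_{xx}^2 = (A')^2 + 2A''B''\nu\tau$ has no higher odd terms, and $u_{x\nu}u_{x\tau}$ is already a pure $\nu\tau$-term with no constant part.

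Next I would substitute these expansions into the four equations of \eqref{SHC-ODE} and separate each into its $\nu\tau$-free part and its $\nu\tau$-coefficient. The constraint $u_{\nu\tau}=-u_{xx}$ immediately gives $B=-A''$. The equation $z_x=\tfrac12 u_{xx}^2+u_{x\nu}u_{x\tau}$ splits into $P'=\tfrac12(A'')^2$ and a $\nu\tau$-part determining $Q'$. The equations $z_\nu=u_{xx}u_{x\nu}$ and $z_\tau=u_{xx}u_{x\tau}$ are purely odd; expanding $z_\nu = Q\tau$ and matching against $u_{xx}u_{x\nu}=(A'')(B'\tau)=-A''A'''\tau$ produces an algebraic/differential relation linking $Q$ to $A$. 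Combining these, I expect to deduce that $A'''$ must be constant — forcing $A$ to be a cubic polynomial $A=c_0+c_1x+\tfrac12 c_2x^2+\tfrac16 c_3 x^3$ — from which $B=-A''=-(c_2+c_3x)$ follows (the sign discrepancy with the stated form is absorbed by the orientation convention on $\nu\tau$), and then $P,Q$ integrate to the claimed expressions with one new constant $c_4$ from $\int P'$. The main obstacle, and the step deserving genuine care, is bookkeeping the Koszul signs in the odd derivatives $\partial_\nu,\partial_\tau$ and in products such as $u_{x\nu}u_{x\tau}$ and $z_\nu$, since a sign error would corrupt the differential relations; I would fix a single convention (e.g.\ left derivatives, $\partial_\nu(\nu\tau)=\tau$) at the outset and apply it uniformly. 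Once the signs are pinned down, checking that \eqref{solSHC} indeed solves \eqref{SHC-ODE} is a short direct substitution, and the uniqueness (that these are the only solutions) follows from the integration argument above, giving the stated five-constant family.
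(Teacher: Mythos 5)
Your overall strategy — expand $u=A(x)+B(x)\nu\tau$, $z=P(x)+Q(x)\nu\tau$ in the odd variables and substitute — is exactly the paper's proof (which consists of precisely this one line). However, your plan for extracting the crucial finiteness constraint has a genuine gap. The fourth equation $u_{\nu\tau}=-u_{xx}$ is an identity of \emph{superfunctions} and therefore has two components, not one: since $u_{\nu\tau}$ is a function of $x$ alone (a double odd derivative of $u=A+B\nu\tau$ kills the $\nu\tau$-term), while $u_{xx}=A''+B''\nu\tau$, the equation forces both $B=\pm A''$ \emph{and} $B''=0$. You kept only the first piece. The second piece, $B''=0$, combined with $B=\pm A''$, gives $A''''=0$, i.e.\ $A$ cubic — and this is the \emph{only} place in the system where the polynomial constraint arises. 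Discarding it, as your proposal does, loses the proposition.

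Your fallback — that combining $Q=\pm A''A'''$ (from the $z_\nu,z_\tau$ equations) with the $\nu\tau$-part of the $z_x$-equation forces $A'''$ to be constant — does not work, and the sign you chose is not innocuous. In the convention where \eqref{solSHC} is actually a solution, the $\nu\tau$-coefficient of $u$ is $B=+A''$ (compare $c_2+c_3x$ with $A''$). Then $Q=A''A'''$, while the $\nu\tau$-part of $z_x=\tfrac12u_{xx}^2+u_{x\nu}u_{x\tau}$ reads $Q'=A''B''+(B')^2=A''A''''+(A''')^2$, which is identically $\tfrac{d}{dx}(A''A''')$: the combination is an identity, not a constraint, so you would be left with an arbitrary function $A$ and could never conclude the five-constant family. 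With your sign $B=-A''$, the same combination instead yields $(A''')^2=0$, i.e.\ $A'''=0$, which kills the $c_3$-terms and contradicts \eqref{solSHC}. So the sign discrepancy cannot be ``absorbed by the orientation convention'': in any single fixed convention your combining step either produces nothing or produces the wrong answer, and the constraint must come from the $\nu\tau$-component of $u_{\nu\tau}=-u_{xx}$. Once that component ($B''=0$, $B=A''$) is restored, the rest of your integration of $P$ and $Q$ goes through and recovers \eqref{solSHC}.
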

\begin{proof}
Expand $u$ and $z$ in the odd coordinates $\nu,\tau$ and 
substitute into \eqref{SHC-ODE}. 
%u=a(x)+b(x)\nu\tau
%z=c(x)+d(x)\nu\tau
%This implies that $c,d$ are expressed through $a,b$ and one integration constant, 
%while we also have $b(x)=a''(x)$ and $a(x)$ is a cubic polynomial.
\end{proof}

We will consider integral submanifolds up to re-parametrization of the source $\C^{p|q}$
and shortly refer to them as ``cointegrals''. (This nomenclature stems from the fact that
``integrals'' are morphisms $\jmath:M\to\C^{s|t}$ that are constant on cointegrals.) 
 % cointegrals of specific dimensions exist in plentiful and give a finer structure than 
 % (generically nonexistent) integrals
We will denote generic variables of $M$ by $s\in\left\{x,u,u_x,u_{xx},z\right\}$ (even) and
$\xi\in\left\{\nu,\tau,u_\nu,u_\tau,u_{x\nu},u_{x\tau}\right\}$ (odd).

We consider various cases separately, depending on the dimension of the source $\C^{p|q}$.
When the space of $(p|q)$-cointegrals is reducible, we describe the regular stratum of biggest 
functional dimension. It is not difficult to obtain the complete stratification, but we will not pursue 
this here for simplicity of exposition.

\medskip

\noindent\underline{$(0|0)$-cointegrals}.
Just to start with: these are the points of the classical manifold $M_o=\C^5$,
%(x,u,u_x,u_{xx},z)$ 
so they are parametrized by 5 constants.

\medskip

\noindent\underline{$(1|0)$-cointegrals}.
In this case $\iota^*\xi=0$ for any odd variable, so cointegrals 
are just the integral curves $\iota:\C\to M_o$ of
the classical HC Pfaffian system. It is well-known that they are parametrized by 1 function of 1 variable. 
%(Cointegrals are considered up to reparametrization of the source.)

\medskip

\noindent\underline{$(0|1)$-cointegrals}.
Let $\theta$ be the odd coordinate of the source. In this case, 
$\iota^*\xi=c_{\xi}\theta$ for $c_\xi\in\C$
and there exists a point $o\in M_o$ such that
$\iota^*s=s|_o$ is evaluation at $o$. By reparametrization of the source, the constants $c_\xi$ have to be considered up to an overall scale.
%Then $\iota^*\nu=c_\nu\theta$,
%$\iota^*\tau=c_\tau\theta$, etc. 
 % (the subscript is an index, not a partial derivative)

The condition $\iota^*\Psi=0$ is then equivalent to the system
 $$
c_\nu c_{u_{x\nu}}+c_\tau c_{u_{x\tau}}=0,\quad c_{u_\nu}=(u_{xx}|_o)\,c_\tau,\quad 
c_{u_\tau}=-(u_{xx}|_o)\,c_\nu,
 $$
hence it is encoded in a projective quadric of dimension 2. Taking into account the choice of the point $o\in M_o$,
the space of $(0|1)$-cointegrals is parametrized by $5+2=7$ constants.

%\medskip
%\noindent\underline{$(2|0)$-cointegrals}.
%This space is empty, just as in the classical case.

\medskip

\noindent\underline{$(1|1)$-cointegrals}.
The source has coordinates $(t|\theta)$ and we set
$\iota^*s=s(t)$ and $\iota^*\xi=c_\xi(t)\theta$. Note that the $(1|0)$-cointegrals $\gamma=\iota_o:\C\to M_o$
obtained by composing $\iota:\CC^{1|1}\to M$ with the natural embedding of $\C=\C^{1|0}$ in $\C^{1|1}$ are parametrized by 1 function of 1 variable. 

We restrict to the generic case and reparametrize the coordinate $t$ so that  $x(t)=t$ locally. Finally, reparametrizing the odd coordinate $\theta$ implies that the $6$ functions $c_{\xi}(t)$ have to be considered up to overall $t$-dependent scale.

%and identify $t$ with $x$.

Using the coefficients of the $1$-form $dt$ in the system of equations $\iota^*\Psi_{\bar0}=0$, we get the classical HC constraints
$$
u_x(t)=u'(t)\;,\quad u_{xx}(t)=u''(t)\;,\quad z'(t)=\frac12(u''(t))^2\;,
$$
and if
%$\iota^*x=e(t)$, $\iota^*u=\psi(t)$, $\iota^*z=\phi(t)$, $\iota^*\nu=c_\nu(t)\theta$, $\iota^*\tau=c_\tau(t)\theta$, etc.
we use the coefficients 
of $d\theta$ in the systems $\iota^*\Psi_{\bar0}=0$ and $\iota^*\Psi_{\bar1}=0$, we get
a section of the bundle of quadrics over $\gamma(\C)\subset M_o$:
 $$
c_\nu(t)c_{u_{x\nu}}(t)+c_\tau(t)c_{u_{x\tau}}(t)=0,\quad
c_{u_\nu}(t)=u''(t)c_\tau(t),\quad c_{u_\tau}(t)=-u''(t)c_\nu(t).
 $$
Finally, if we combine these identities with the coefficients of $dt$ in $\iota^*\Psi_{\bar1}=0$, we arrive at the following equations
 $$
c_{u_{x\nu}}(t)=u'''(t)c_\tau(t),\quad c_{u_{x\tau}}(t)=-u'''(t)c_\nu(t).
 $$
In summary, the $(1|1)$-cointegrals are parametrized by the function $u(t)$ (together with the quadrature required to get $z(t)$)
and the function $[c_\nu(t):c_\tau(t)]\in\mathbb{P}^1_{|\gamma}$, therefore
by 2 functions of 1 variable.

\medskip

\noindent\underline{$(0|2)$-cointegrals}.
We let $\theta_1,\theta_2$ be the two odd coordinates of the source and note that the image of 
the underlying classical morphism $\iota_o:\C^{0|0}\to M_o$ is just a point $o\in M_o$. Then
$$\iota^*s=s|_o+c_s\,\theta_1\theta_2\qquad\text{and}\qquad
\iota^*\xi=c_\xi^k\theta_k=c_\xi^1\theta_1+c_\xi^2\theta_2,\quad(k=1,2),$$
where $c_s,c^k_\xi\in\C$ for all even and odd coordinates of $M$, respectively.
The system $\iota^*\Psi=0$ may be expanded in the $\theta_k$'s and $d\theta_k$'s, turning into a system of 20 equations on 22 unknowns constants.
The unknowns are the coordinates of $o\in M_o$ and the constants $c_s$ and $c^k_\xi$.
%The count is as follows: we have 4 equations corresponding to the coefficients $d\theta_i\cdot\theta_j$ for each 3 even generators of $\Psi$ and $2+2$ coefficients of $d\theta_i$, $d\theta_i\cdot\theta_1\theta_2$ for each 2 odd generators of $\Psi$;

If $c_x\neq 0$, it turns out that this system is generated by the following 11 equations:
 \begin{gather*}
c_u=(u_x|_o)\,c_x+(u_{xx}|_o)\,(c_\nu^1c_\tau^2-c_\nu^2c_\tau^1),\quad
c_{u_x}=(u_{xx}|_o)\,c_x+\tfrac{c_{u_{xx}}}{c_x}\,(c_\nu^1c_\tau^2-c_\nu^2c_\tau^1),\\
c_z=\tfrac12(u_{xx}|_o)^2c_x+\tfrac{c_{u_{xx}}}{c_x}\,(u_{xx}|_o)\,(c_\nu^1c_\tau^2-c_\nu^2c_\tau^1),\\
c^k_{u_\nu}=(u_{xx}|_o)\,c^k_\tau,\ c^k_{u_\tau}=-(u_{xx}|_o)\,c^k_\nu,\
c^k_{u_{x\nu}}=\tfrac{c_{u_{xx}}}{c_x}\,c^k_\tau,\ c^k_{u_{x\tau}}=-\tfrac{c_{u_{xx}}}{c_x}\,c^k_\nu,\quad (k=1,2).
 \end{gather*}
If $c_x=0$, the number of independent equations increases, so this is not the generic case. 

Taking into account the reparametrization group $GL(2)$ associated to the source, we conclude that the number 
of independent constants is $7$. They are given by the coordinates of $o\in M_o$ and the constants $c_x$ and $c_{y_{xx}}$. 
%the free parameters $c_\nu^1,c_\nu^2,c_\tau^1,c_\tau^2$ can be parametrized out. 
Thus $(0|2)$-cointegrals depend on 7 constants.

\medskip

\noindent\underline{$(1|2)$-cointegrals}.
A straightforward computation says that the operation of
jet-prolongation sets up a bijective correspondence between the
(unparametrized) regular cointegrals and the
solutions of the SHC equation \eqref{SHC-ODE}. By Proposition \ref{prop:solSHC}, the space of cointegrals depends on 5 constants.

\medskip

\noindent\underline{$(p|q)$-cointegrals with $p>1$ or $q>2$}.
In all the remaining cases, the space of cointegrals is empty. If $p>1$, this follows 
just as in the classical case: the even part $\fm_{\bar 0}$ of the SHC symbol $\fm=\fg_-$ is non-degenerate, i.e., it has no central elements in $(\g_{-1})_{\bar 0}$. 
%The former follows from the classical non-holonomy of HC (and the injectivity condition), 
The explicit brackets of $\fm$ show that there are no Abelian 3-dimensional subspaces in $(\fg_{-1})_{\bar1}$, hence the claim for $q>2$.
%while the latter stems from the superalgebra structure of the symbol algebra $\g$: 
\vskip0.3cm\par
\begin{rem}
We showed that solutions of the SHC equation \eqref{SHC-ODE} correspond to the integral submanifolds
of the largest possible dimension. On the other hand, the $(1|0)$-cointegrals correspond to the integral curves of the classical HC equation. 
This fact can be regarded as (an another) confirmation that \eqref{SHC-ODE} is a super-extension of the HC equation.
\end{rem}

Now we show that the functional dimension count persists in the curved setting.

 \begin{theorem}\label{thm:cointegrals}
The space of cointegrals associated to an analytic superdistribution of SHC type on a $(5|6)$-dimensional supermanifold $M=(M_o,\cA_M)$
has the same functional dimension as for the SHC equation \eqref{SHC-ODE}. Namely, the only non-trivial spaces of cointegrals
are as follows:
  \begin{center}
\begin{tabular}{c|cccccc}
 Type $(p|q)$ & $(0|0)$ & $(1|0)$ & $(0|1)$ & $(1|1)$ & $(0|2)$ & $(1|2)$ \\
 \hline
 Generators & 5 const & 1 funct & 7 const & 2 funct & 7 const & 5 const
\end{tabular}
 \end{center}
where ``const" means complex constant and ``func" means ordinary function of 1 variable.
 \end{theorem}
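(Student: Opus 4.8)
The plan is to reduce the curved statement to the flat computation already carried out in Section~\ref{subsec:integralsubmnfds}, exploiting the rigidity of the SHC symbol (Corollary~\ref{Rigidity}) together with the Cauchy--Kovalevskaya technique used in the proof of the corollary following Proposition~\ref{FGHK-constraints}. The key observation is that the functional-dimension count of cointegrals is governed by the symbol algebra $\fm$, which by Theorem~\ref{RigThm} is isomorphic to the SHC symbol (M1) for \emph{any} SHC-type superdistribution. Hence the \emph{algebraic} constraints on the tangent directions of an integral submanifold at each point are identical, fibrewise, to those computed in the flat case, and only the \emph{integrability} (the dependence of these constraints along the submanifold) can differ between the flat and curved settings.

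First I would set up, for each admissible source dimension $(p|q)$ with $p\le 1$ and $q\le 2$, the defining system $\iota^*\Psi=0$ for a morphism $\iota:\C^{p|q}\to M$, where $\Psi$ is the Pfaffian system annihilating the SHC-type superdistribution $\cD$ (written in the generators \eqref{FGHK-deformeven}--\eqref{FGHK-deformodd} subject to the constraints of Proposition~\ref{FGHK-constraints}). Expanding $\iota^*$ in the odd source coordinates and separating the coefficients of the $1$-forms exactly as in the flat analysis, one obtains in each case a quasi-linear first-order PDE system on the component functions of $\iota$. The purely pointwise (algebraic, non-differential) equations among these are dictated solely by the structure constants of $\fm$, so by rigidity they coincide with the flat equations; this immediately pins down the number of free constants in the $(0|0)$, $(0|1)$, and $(0|2)$ strata and the projective-quadric structure appearing there.

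Next I would handle the strata $(1|0)$, $(1|1)$, and $(1|2)$, where genuine $t$-dependence enters and a Cauchy--Kovalevskaya argument is needed. The strategy, mirroring the corollary after Proposition~\ref{FGHK-constraints}, is to expand every superfunction in the finitely many odd coordinates of $M$, turning $\iota^*\Psi=0$ into a determined analytic PDE system on ordinary functions; one then computes the symbol matrix, verifies that its determinant is a nonzero polynomial in the momenta (so the system is determined with a genuine characteristic variety), and invokes Cauchy--Kovalevskaya to conclude that analytic cointegrals are parametrized by the stated number of functions of one variable. For the top stratum $(1|2)$, I would instead appeal directly to the jet-prolongation bijection between regular cointegrals and solutions of the curved SHC equation, reducing the count to an analogue of Proposition~\ref{prop:solSHC}; the constraint equations of Proposition~\ref{FGHK-constraints} guarantee the solution space is finite-dimensional of the same dimension. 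Finally, the vanishing of all cointegrals for $p>1$ or $q>2$ is purely symbol-theoretic: non-degeneracy of $\fm_{\bar 0}$ rules out $p>1$, and the explicit brackets of (M1) show there is no abelian $3$-dimensional subspace in $(\fg_{-1})_{\bar 1}$, ruling out $q>2$.

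The main obstacle I anticipate is establishing that the symbol matrices for the genuinely differential strata remain \emph{determined} (nonzero symbol determinant) in the curved case rather than just the flat case. In the flat computation the symbol determinant factored explicitly as $p_{u_{xx}}^{56}(p_{u_x}+L|_o p_z)^8$; in the curved setting the analogous determinant will have coefficients depending on the $0$-jet of the solution and on the curved structure functions $F,G,H,K$. The hard part is to show this determinant cannot degenerate to the zero polynomial under the constraints \eqref{eq:firsteqsystem}--\eqref{eq:fourtheqsystem}, which is where invertibility of $\p_{u_{xx}}^2 F$ and of $\p_{u_{xx}}K$ (established in Proposition~\ref{FGHK-constraints}) must be leveraged to guarantee that the leading behaviour of the symbol is controlled exactly as in the flat model, so that the characteristic variety and hence the Cauchy data count are unchanged.
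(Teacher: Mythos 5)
Your treatment of the purely algebraic strata agrees with the paper's: the counts for $(0|0)$, $(0|1)$, $(0|2)$ come from the point of $M_o$ plus $q$-isotropic lines, respectively isotropic planes, in $(\fg_{-1})_{\bar 1}$, and the vanishing for $p>1$ or $q>2$ is exactly the paper's symbol-theoretic argument (non-degeneracy of $\fm_{\bar 0}$, and no abelian $3$-dimensional subspaces in $(\fg_{-1})_{\bar 1}$). The gap is in your mechanism for the function-valued strata. For $(1|0)$ and $(1|1)$ you propose to show the expanded system $\iota^*\Psi=0$ is a \emph{determined} analytic system with nonzero symbol determinant and then invoke Cauchy--Kovalevskaya to get ``the stated number of functions of one variable.'' This cannot work: the source has a single even variable $t$, so after expansion in the odd source coordinates the system is a system of ODEs, and a determined ODE system with CK/existence-uniqueness yields solutions parametrized by \emph{constants} (initial data at a point), never by functions of one variable. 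The answers $1$ function for $(1|0)$ and $2$ functions for $(1|1)$ force these systems to be \emph{underdetermined}, so the determined-symbol computation you model on the corollary after Proposition~\ref{FGHK-constraints} (which concerns the moduli of the structure functions $F,G,H,K$ in $5$ even variables, a genuinely different situation) is the wrong tool. The paper instead argues: for $(1|0)$, these are the classical integral curves of the underlying generic rank $2$ distribution on $M_o$ ($1$ function, a classical fact about $(2,3,5)$-distributions); for $(1|1)$, a cointegral is a classical integral curve together with an odd direction $\vartheta$ along it, and the abelian condition, read through the bracket component $\Xi$, forces $\vartheta$ to lie in the $\alpha$-plane $v\boxtimes\C^2$ attached to the even direction $v$, giving one more function and hence $2$ in total.

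For the top stratum $(1|2)$ there is a related weakness: you reduce to ``an analogue of Proposition~\ref{prop:solSHC},'' but that proposition rests on solving \eqref{SHC-ODE} explicitly, which is unavailable for a curved system \eqref{FGHK}. The paper's actual mechanism is \emph{involutivity}: the prolongation of the curved system has the same structure as that of the SHC equation, and involutivity gives a unique solution for each choice of Cauchy data, which consists of the $5$ constants $z,u,u_x,u_{xx},u_{xxx}$. Your phrase that the constraints of Proposition~\ref{FGHK-constraints} ``guarantee the solution space is finite-dimensional of the same dimension'' is precisely the assertion that needs proof, and involutivity (or an equivalent formal integrability argument) is the missing ingredient supplying it.
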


 \begin{proof}
Let us recall that the $-1$ degree component $\fg_{-1}=(\fg_{-1})_{\bar 0} \oplus(\fg_{-1})_{\bar 1}$  of the SHC symbol (M1) is the direct sum of two modules for $(\fg_0)_{\bar0}=\fsl(2)\oplus\fsp(2)$:
$$(\fg_{-1})_{\bar0}\cong\CC^2\boxtimes\CC\quad\text{and}\quad(\fg_{-1})_{\bar1}\cong\CC^2\boxtimes\CC^2\;.$$
The Lie brackets between the elements of $\fg_{-1}$ are encoded in the symplectic form $\omega$ and the maps $q,\Xi$ as in (M1). In particular, the quadratic form $q:\Lambda^2(\g_{-1})_{\bar1}\to(\g_{-2})_{\bar0}$ is given by $q(v_1\boxtimes w_1,v_2\boxtimes w_2)=\langle v_1,v_2\rangle \langle w_1,w_2\rangle$,
with $\langle - ,- \rangle$ the symplectic form on each factor of $(\fg_{-1})_{\bar 1}$.

The case of $(p|q)$-cointegrals with $p>1$ or $q>2$ is proved in the same way as for the SHC equation 
and the case $(p|q)=(0|0)$ is immediate. For $(p|q)=(1|0)$ we get classical integral curves of a generic 
rank 2 distribution on $M_o$. % which are parametrized by 1 function of 1 variable.

To treat the case of $(0|1)$-cointegrals, we need to consider the $q$-isotropic lines in 
$(\fg_{-1})_{\bar1}$. They form the 2-parametric family $[v]\boxtimes[w]$, which coupled with 
the 5 parameters needed to describe the point $o\in M_o$ gives 7 constants.

Next, $(1|1)$-cointegrals are given by a classical integral curve (parametrized by 1 function) and 
an odd supervector field along it. Using $\Xi$, it is not difficult to see that the $2$-dimensional 
subspace of $\fg_{-1}$ generated by $v\boxtimes\1\in(\fg_{-1})_{\bar 0}$ and 
$\vartheta\in(\fg_{-1})_{\bar 1}$ is Abelian if and only if $\vartheta$ belongs to the 
$\alpha$-plane $v\boxtimes\CC^2$ corresponding to $v$. (Recall that the isotropic planes in 
a 4-dimensional complex metric space $(\C^4,q)$ form two $\mathbb{P}^1$, i.e., 
the $\alpha$-planes $v\boxtimes\CC^2$ and the $\beta$-planes $\CC^2\boxtimes w$.) 
Thus $\vartheta$ is given by another 1 function and the claimed functional dimension follows.

Similarly, the space of $(0|2)$-cointegrals is parametrized by the 5 parameters required to describe 
$o\in M_o$ and an isotropic plane in $(\fg_{-1})_{\bar 1}$, which is parametrized by 2 constants.

Finally, $(1|2)$-cointegrals are parametrized by 5 constants. This counting does not depend 
on a particular superdistribution of SHC type, as 
the involutive prolongation of the system \eqref{FGHK} has the same type of equations as the SHC equation.
Involutivity implies that there exists a unique solution for any Cauchy data, 
which can be parametrized by the 5 constants $z,u,u_x,u_{xx},u_{xxx}$. 
 \end{proof}

%%%%%%%%%%%%%%%%%%%%%%%%%%%%%

 \subsection{Super-deformation and submaximally supersymmetric models}

Here we discuss locally homogeneous superdistributions of SHC-type
with submaximal symmetry dimension, along the lines of \cite{MR2949641}. 
The condition of local homogeneity can be relaxed,
but this involves the development of new techniques, which will be considered in a separate 
work.
\subsubsection{A bound on the dimension of transitive symmetry Lie superalgebras}

Model distributions with transitive symmetry superalgebras can be obtained by deformation theory as follows.

 \begin{prop}
\label{prop:homogeneousrealization}
Let $\fg = \bigoplus_{k \in \ZZ} \fg_k$ be a $\mathbb Z$-graded LSA 
such that $H^{d,1}(\g_-,\g)=0$ for all $d \geq 0$, where $\g_-$ is the
negatively graded part of $\fg$, and %(resp.\ $H^1_+(\g_-,\g)=0$).
$\fh$ a filtered LSA whose associated graded $\opp{gr}(\fh)$ embeds into 
$\g$ (with filtration $\g^j=\oplus_{k\ge j}\g_k$) and $\opp{gr}(\fh)_-=\g_-$. 
Let $H$ be a connected Lie supergroup with LSA $\fh$ and 
$H^0$ the connected closed % (at the level of the underlying Lie group)
subgroup of $H$ with the algebra $\fh^0\subset \fh$. 

Then the homogeneous supermanifold $M=H/H^0$ equipped with the $H$-invariant distribution 
$\cD$ determined by $\cD|_o\cong\fh^{-1}\!\!\mod\fh^0$
 % (resp.\ with a reduction of the structure group)
has symmetry superalgebra $\mathfrak{inf}(M,\cD)\supset\fh$. Moreover: 
\begin{itemize}
\item[(i)] $\dim \mathfrak{inf}(M,\cD)\leq \dim\fg$, with equality
if and only if $\mathfrak{inf}(M,\cD)=\fg$,
\item[(ii)] if $\fh$ cannot be embedded into $\g$ as a filtered Lie algebra, then 
$\mathfrak{inf}(M,\cD)\neq\fg$. 
\end{itemize}
\end{prop}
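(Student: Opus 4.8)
The plan is to prove this as an application of the Tanaka--Weisfeiler prolongation theory, exactly as in the classical setting, but carried out in the super-category. The key structural input is the hypothesis $H^{d,1}(\g_-,\g)=0$ for all $d\geq 0$, which by the super-version of Tanaka's theorem (cf. \cite{MR0266258} and the discussion in Section \ref{sec:introduction}) forces $\pr(\g_-)\cong\g$, i.e.\ $\g$ is the maximal prolongation of its own negatively graded part $\g_-$. This is what makes the dimension bound $\dim\mathfrak{inf}(M,\cD)\leq\dim\g$ meaningful.

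First I would verify the containment $\mathfrak{inf}(M,\cD)\supset\fh$, which is immediate: by construction $\cD$ is $H$-invariant, so every element of $\fh$ acts by an infinitesimal symmetry of $(M,\cD)$. Next I would establish the dimension bound in (i). Since $\opp{gr}(\fh)_-=\g_-$ and $\cD|_o\cong\fh^{-1}\!\!\mod\fh^0$, the superdistribution $\cD$ is strongly regular with symbol $\g_-$; hence its symmetry superalgebra $\mathfrak{inf}(M,\cD)$ is filtered, and its associated graded $\opp{gr}(\mathfrak{inf}(M,\cD))$ is a transitive graded subalgebra of the Tanaka--Weisfeiler prolongation $\pr(\g_-)$. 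By the vanishing hypothesis and the super-Tanaka theorem, $\pr(\g_-)\cong\g$, so
\begin{equation*}
\dim\mathfrak{inf}(M,\cD)=\dim\opp{gr}(\mathfrak{inf}(M,\cD))\leq\dim\pr(\g_-)=\dim\g.
\end{equation*}
The equality clause follows because equality of dimensions forces the inclusion $\opp{gr}(\mathfrak{inf}(M,\cD))\hookrightarrow\g$ to be onto, and then $\mathfrak{inf}(M,\cD)$ is a filtered deformation of $\g$; since $\g$ contains its own grading element $\sfZ$ (which lies in $\fg_0$), any such filtered deformation is isomorphic, as a filtered algebra, to $\g$ itself, giving $\mathfrak{inf}(M,\cD)=\fg$. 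This is precisely the argument already used in the proof of Theorem \ref{T:G3-sym}, and I would cite it rather than repeat it.

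For part (ii), I would argue by contraposition. Suppose $\mathfrak{inf}(M,\cD)=\g$. Then $\fh\subset\mathfrak{inf}(M,\cD)=\g$ as a \emph{filtered} subalgebra, because the inclusion $\fh\hookrightarrow\mathfrak{inf}(M,\cD)$ is compatible with the filtrations (the filtration on $\fh$ is the one induced by that of $\mathfrak{inf}(M,\cD)$, which in turn restricts from the filtration $\g^j$ of $\g$). Thus $\fh$ embeds into $\g$ as a filtered Lie superalgebra, contradicting the hypothesis. Hence $\mathfrak{inf}(M,\cD)\neq\g$.

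The main obstacle I anticipate is making the super-Tanaka prolongation step fully rigorous, i.e.\ justifying that $\opp{gr}(\mathfrak{inf}(M,\cD))$ is genuinely a transitive \emph{graded} subalgebra of $\pr(\g_-)$ in the super-setting, where supervector fields are not determined by their values at topological points. The passage from a filtered symmetry superalgebra to its graded object, and the identification of the latter with a subalgebra of the algebraic prolongation, relies on the strong regularity of $\cD$ (Definition in Section \ref{sec:introduction}) and on the fact that the Tanaka construction extends verbatim to Lie superalgebras, as noted in the excerpt. I would lean on that stated extension and on the explicit cohomological vanishing to bypass any delicate point-evaluation issues, so the remaining verification is essentially bookkeeping about compatibility of filtrations.
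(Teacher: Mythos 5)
Your proposal is correct and takes essentially the same approach as the paper: the paper's (much terser) proof likewise observes that $\mathfrak{inf}(M,\cD)$ inherits a filtration whose associated graded embeds into $\fg$ (i.e.\ it is a filtered deformation of a graded subalgebra of $\fg$, via Tanaka prolongation and the vanishing of $H^{d,1}(\fg_-,\fg)$), and then invokes the fact that a filtered deformation of a graded LSA containing the grading element is actually graded, which yields both the equality clause of (i) and claim (ii). Your appeal to the argument in the proof of Theorem \ref{T:G3-sym} is exactly the mechanism the authors have in mind.
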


This proposition allows a straightforward generalization for the case $H^{d,1}(\g_-,\g)=0$ for all $d > 0$, which
concerns distributions with a reduction of the structure group $G_0$.

Claim (i) follows from the fact that $\mathfrak{inf}(M,\cD)$ inherits a natural filtration such that the associated graded LSA embeds into $\fg$, in other words $\mathfrak{inf}(M,\cD)$ is a filtered 
deformation of a graded subalgebra of $\fg$. In addition, if a filtered deformation of a graded LSA 
includes the grading element then it is actually graded and this implies claim (ii). 
%The only problem can be a (local) realization since $H^0\subset H$ has to be
%a (locally) closed subgroup. This should be checked separately, and without it the statement implies
%only an upper bound. 
%The second follows from the extension functor construction that, 
%in the case of existent embedding, gives a larger symmetry. In fact, this can be easily derived
%from the existence of the grading element: the distribution becomes locally isomorphic to the 
%standard distribution $\g_{-1}$ on $\exp\g_-$.
%\comm{
%In fact, the Spencer cohomology group $H^2(\fm,\fh)$ encodes filtered deformations of 
%graded subalgebras $\fh\subset\g$ with $\fm\subset\fh$. This persists also in the super-setting 
%\cite{MR1688484}, and we apply it as follows. 
%}

 \begin{theorem}\label{UpperBound}
Let $\cD$ be a superdistribution of SHC-type on a $(5|6)$-dimensional supermanifold $M$ such that
the symmetry superalgebra $\mathfrak{inf}(M,\cD)$ acts locally transitively on $M$.
If $\mathfrak{inf}(M,\cD)\neq G(3)$, then $\dim\mathfrak{inf}(M,\cD)\leq(10|8)$. %!in strong sense!
 \end{theorem}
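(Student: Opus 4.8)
The plan is to run the deformation-theoretic ``symmetry gap'' argument of Kruglikov--The \cite{MR3604980,MR2949641} in the super-setting, feeding it the Spencer cohomology of the SHC grading computed in Theorem \ref{thm:SC2IV}. Write $\fs=\mathfrak{inf}(M,\cD)$ and let $\fm=\fg_-$ be the SHC symbol, where $\fg=G(3)$ carries its $\fp_2^{\rm IV}$-grading. By local transitivity the filtration of $\fs$ induced by the weak derived flag of $\cD$ is exhausted at the bottom by $\fs$ itself, and the associated graded $\fa=\gr(\fs)$ is a $\ZZ$-graded subalgebra of $\pr(\fm)\cong\fg$ (Corollary \ref{C:pr-235}) with $\fa_-=\fm=\fg_-$. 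If $\fa=\fg$, then arguing exactly as in the proof of Theorem \ref{T:G3-sym} (transitivity gives $\fa=\pr(\fm,\fg_0)=\fg$, and any filtered deformation containing the grading element $\sfZ=\sfZ_2$ is graded), we would obtain $\fs\cong G(3)$, contrary to hypothesis. Hence $\fa\subsetneq\fg$ and $\cD$ is non-flat, so its harmonic (leading) curvature at a point $o\in M_o$ is a nonzero class in $\bigoplus_{d>0}H^{d,2}(\fm,\fg)$.

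First I would locate this curvature. By Theorem \ref{thm:SC2IV} and Proposition \ref{prop:SC2IVodd}, the only non-vanishing positive-degree component of $H^2(\fm,\fg)$ is the even module $H^{2,2}(\fm,\fg)\cong S^2\CC^2\boxtimes\Lambda^2\CC^2$, the binary quadratic ``square root'' of Cartan's quartic. Thus the harmonic curvature is a nonzero even vector $\kappa\in S^2\CC^2\boxtimes\Lambda^2\CC^2$. By homogeneity the isotropy fixes the full curvature, and since $\fa_0$ preserves the grading it must annihilate the leading term: $\fa_0\subseteq\operatorname{ann}_{\fg_0}(\kappa)$. The second, decisive, input is prolongation rigidity: the vanishing $H^{d,1}(\fm,\fg)=0$ for all $d>0$ (Theorem \ref{thm:235H^1}) is precisely the condition making the SHC grading prolongation-rigid, which forces $\fa_+=0$ for any non-flat homogeneous model, so that $\dim\fs=\dim\fg_-+\dim\fa_0$.

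It then remains to bound $\dim\operatorname{ann}_{\fg_0}(\kappa)$, where $\fg_0=\CC\sfZ\oplus\fsl(2)\oplus\fosp(1|2)$. Since $H^{2,2}(\fm,\fg)_{\bar1}=0$, the odd generators of $\fosp(1|2)$ send the purely even module $S^2\CC^2\boxtimes\Lambda^2\CC^2$ into its zero odd part, while $\fosp(1|2)_{\bar0}=\fsp(2)$ acts through the trivial factor $\Lambda^2\CC^2$; hence $\fosp(1|2)$ annihilates $\kappa$ entirely, contributing $(3|2)$. On $\CC\sfZ\oplus\fsl(2)$ the element $\sfZ$ scales $\kappa$ by its nonzero degree and $\fsl(2)$ acts on $\kappa\in S^2\CC^2$ as on a binary quadratic form, so a direct check gives $\dim\operatorname{ann}_{\CC\sfZ\oplus\fsl(2)}(\kappa)=1$ for distinct roots and $=2$ in the degenerate case $\kappa=x^2$, annihilated by $\langle\sfZ-H_1,\,X_1\rangle$. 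Therefore $\dim\operatorname{ann}_{\fg_0}(\kappa)\le(2{+}3\,|\,2)=(5|2)$, whence
\[
\dim\fs\le\dim\fg_-+(5|2)=(5|6)+(5|2)=(10|8),
\]
as claimed. Achievability of this bound is not needed here; it is exhibited by the explicit models of Theorem \ref{super-sub-max}.

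The main obstacle will be justifying the two structural inputs without the (not-yet-developed) machinery of super parabolic geometry: that the leading curvature of a non-flat homogeneous SHC-structure is a well-defined nonzero class in $H^{2,2}(\fm,\fg)$ fixed by $\fa_0$, and that prolongation rigidity forces $\fa_+=0$. I would supply both directly from the filtered-deformation formalism of Proposition \ref{prop:homogeneousrealization} and \cite{MR1688484}: the leading obstruction of the deformation $\fs$ of $\fa$ is a $\ZZ$-homogeneous $2$-cocycle whose class is nonzero in the lowest positive degree with $H^{\ast,2}(\fm,\fg)\neq0$, namely $d=2$; and the vanishing of $\fa_+$ follows because any $0\neq x\in\fa_k$ with $k>0$, together with $\fa_-=\fg_-$ and $H^{k,1}(\fm,\fg)=0$, would reconstitute enough of $\fg_{\geq0}$ to contradict minimality of the curvature degree, exactly as in \cite{MR3604980}.
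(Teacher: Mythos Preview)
Your approach is the Kruglikov--The gap strategy, which is genuinely different from what the paper does. The paper never invokes a harmonic curvature or prolongation rigidity; instead it classifies (up to conjugacy) all graded subalgebras $\fa\subsetneq\fg$ with $\fa_-=\fg_-$, $\sfZ_2\notin\fa$, and $\dim\fa_{\bar0}>10$ or $\dim\fa_{\bar1}>8$, obtaining three families (Cases~1--3 in the proof), and then shows by explicit and partly Maple-assisted computation that none of them admits a nontrivial filtered deformation. Hence any $\fh$ with $\dim\fh>(10|8)$ satisfies $\fh\cong\gr(\fh)\subset\fg$ as a graded subalgebra, forcing the model to be flat and $\mathfrak{inf}(M,\cD)=G(3)$.

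Your argument has a genuine gap at the prolongation-rigidity step. The assertion that ``$H^{d,1}(\fm,\fg)=0$ for all $d>0$ is precisely the condition making the SHC grading prolongation-rigid'' is a conflation: vanishing of $H^{d,1}$ says only that $\pr(\fm,\fg_0)=\fg$, whereas prolongation rigidity in the sense of \cite{MR3604980} is the logically independent statement that $\pr(\fm,\operatorname{ann}_{\fg_0}(\kappa))_+=0$ for every nonzero $\kappa\in H^2_+(\fm,\fg)$. The paper's own Case~1 subalgebra \eqref{Tablecase(1)} shows this is not automatic: it has $\fa_-=\fg_-$, $\dim\fa_0=(5|2)$ equal to your maximal annihilator dimension (so if $\fa_0\subset\operatorname{ann}(\kappa)$ at all, then $\fa_0=\operatorname{ann}(\kappa)$ for a degenerate $\kappa$), it contains $e_{-\alpha_1}$ and the full $\fosp(1|2)$ exactly as your annihilator does, yet $\dim\fa_1=(1|2)\neq0$ and $\dim\fa=(11|10)$. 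Since $\fa\subset\pr(\fm,\fa_0)$, either prolongation rigidity fails outright and your bound degrades to at least $(11|10)$, or $\fa_0\not\subset\operatorname{ann}(\kappa)$ for any $\kappa\neq0$ and you still need the unavailable super-parabolic machinery to force $\gr(\fh)_0\subset\operatorname{ann}(\kappa)$ for non-flat models. Your proposed workaround does not close this: the leading obstruction of a filtered deformation of $\fa$ lives in $H^{2}(\fa,\fa)$, and neither its identification with a nonzero $\fa_0$-invariant class in $H^{2,2}(\fm,\fg)$ nor your claim about ``reconstituting enough of $\fg_{\geq0}$'' follows from \cite{MR1688484} alone. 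Bringing the bound from $(11|10)$ down to $(10|8)$ genuinely requires the case-by-case filtered-deformation rigidity checks that the paper carries out.
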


 \begin{proof}
We set $\fh=\mathfrak{inf}(M,\cD)$ and note that the graded Lie algebra $\fa=\opp{gr}(\fh)\subset\g$, where $\g=G(3)$ is equipped with the
SHC $\mathbb Z$-grading. We will tacitly identify $\fh$ and $\fa$ as $\mathbb Z_2$-graded vector spaces and denote the Lie brackets of $\fh$ by
$[-,-]=[-,-]_0+[-,-]_+:\fa\otimes\fa\to\fa$, where 
\begin{itemize}
\item $[-,-]_0$ is the fixed (zero-degree) Lie bracket of $\fa$, and
\item $[-,-]_+$ comprises the deformation components, which are of {\em positive} total degree.
\end{itemize}
By the transitivity assumption we have $\fa_-=\g_-$, which
implies $[\fa_{\ge0},\g_-]\subset\fa$. We will use this fact extensively and rely on the
root decomposition discussed in Section \ref{S:G3-gradings}. See also Table \ref{tab:p_2^IV} and the discussion before Lemma \ref{lem:centralizers}. 

Let us first note that if $\fa_3\neq0$ $\Rightarrow$ $(\fa_2)_{\bar0}\neq0$ $\Rightarrow$ $\fa_1=\g_1$. 
%$(\fa_1)_{\bar0}=(\g_1)_{\bar0}$. 
Also if $(\fa_2)_{\bar1}\neq0$ $\Rightarrow$ $(\fa_1)_{\bar0}=(\g_1)_{\bar0}$. Thus if $\fa_+\neq0$ we get $\fa_1\neq0$. One can similarly check that if $G(2)\subset\fa$ $\Rightarrow$ $\fa=\g$. 

\medskip\par\noindent
\underline{\it Step 1}: 
We first classify all graded subalgebras $\fa\subset\g$ with the indicated properties such that
(i) either $\dim\fa_{\bar 0}>10$ or $\dim\fa_{\bar 1}>8$, and (ii) $\fa$ does not include 
the grading element. 
 % The is a linear combination of the coroots $h_{\alpha_1}$, $h_{\alpha_2}$ and $h_{\alpha_3}$ 
 % with {\em all} non-zero coefficients. The classification is as follows.

\vskip0.1cm\par\noindent
{\underline{Case 1}.} 
If $(\fa_1)_{\bar0}\neq0$, we may conjugate by the Lie group 
$(G_{0})_{\bar 0}\cong SL(2)\times Sp(2) \times\C^\times$ so that $e_{\a_2+\a_3}\in(\fa_1)_{\bar0}$. 
Then $e_{-\a_1}\in(\fa_0)_{\bar0}$ and $e_{\pm\a_3}\in(\fa_0)_{\bar1}$, 
which in turn implies $e_{\a_2},e_{\a_2+2\a_3}\in(\fa_1)_{\bar1}$ and $\fsp(2)\subset(\fa_0)_{\bar0}$.   
Moreover $(\fa_0)_{\bar0}$ contains the 2-dimensional subspace 
of the Cartan subalgebra of $\fg$ generated by $h_{\alpha_2}$ and $h_{\alpha_3}$. 
 % Note that upon addition of the grading element, this subspace is the whole Cartan subalgebra. 

This yields a subalgebra $\fa$ of $\fg$ of $\dim\fa=(11|10)$, whose non-negative part is displayed 
in \eqref{Tablecase(1)}. It is not difficult to see that $\fa$ cannot be extended to 
 % a regular (in Dynkin's sense, i.e., normalized by the Cartan subalgebra of $\fg$) proper 
 % subalgebra of $\fg$, hence it cannot be further extended to 
a proper graded subalgebra of $\fg$. 
 \begin{align}\label{Tablecase(1)}
\;\;\;\;\;\;\;\; \begin{array}{|c|c|c|} \hline
 k & \text{even part} &  \text{odd part} \\ \hline\hline
 1 & e_{\alpha_2 + \alpha_3} & e_{\alpha_2},\ e_{\alpha_2 + 2\alpha_3}\\ \hline
 0 & h_{\alpha_2},\ h_{\alpha_3},\ e_{-\alpha_1},\ e_{\pm 2\alpha_3} & e_{\pm \alpha_3}\\ \hline
\end{array}
 \end{align}
 
\vskip0.1cm\par\noindent
{\underline{Case 2}.} 
If $(\fa_1)_{\bar0}=0$ but $(\fa_1)_{\bar1}\neq0$, then $\fa_2=\fa_3=0$ and $(\fa_1)_{\bar1}$ is abelian. 
In this case, we may conjugate by $(G_0)_{\bar 0}$ so that either 
$e_{\alpha_2}+e_{\alpha_1+\alpha_2+2\alpha_3}\in(\fa_1)_{\bar1}$ or $e_{\a_2}\in(\fa_1)_{\bar1}$. 

If $e_{\alpha_2}+e_{\alpha_1+\alpha_2+2\alpha_3}\in(\fa_1)_{\bar1}$, then also
$e_{-\alpha_3}\in(\fa_0)_{\bar1}$ and hence $e_{\alpha_1+\alpha_2+\alpha_3}\in(\fa_1)_{\bar0}\neq0$, 
which contradicts our assumption. Therefore $e_{\a_2}\in(\fa_1)_{\bar1}$, whence 
$e_{-\a_1},e_{-2\a_3},h_{\a_2}\in(\fa_0)_{\bar0}$ and $e_{-\a_3}\in(\fa_0)_{\bar1}$, 
giving a subalgebra $\mathfrak b\subset\fa$ of $\dim\mathfrak b=(8|8)$. 

In the case $\fa$ satisfies (ii) and the first condition of (i), which in our case is 
$6=\dim(\fa_0)_{\bar0}>\dim(\mathfrak b_0)_{\bar0}+2$, then  
$(\fa_1)_{\bar1}\supsetneq (\mathfrak b_1)_{\bar1}$ implying the second condition of (i). 
Also in the case $2=\dim(\fa_0)_{\bar1}>\dim(\mathfrak b_0)_{\bar1}$ we get 
$(\fa_1)_{\bar1}\supsetneq (\mathfrak b_1)_{\bar1}$. Thus we can always assume 
$(\fa_1)_{\bar1}\supsetneq (\mathfrak b_1)_{\bar1}$, in which case the abelian condition on $(\fa_1)_{\bar1}$
gives a non-zero element $c_1e_{\alpha_1+\alpha_2}+c_2e_{\alpha_2+2\alpha_3}\in (\fa_1)_{\bar1}$.
If $c_2\neq0$, then $e_{\alpha_2+\alpha_3}\in (\fa_1)_{\bar0}\neq0$, which contradicts our assumption.

Consequently $e_{\alpha_1+\alpha_2}\in (\fa_1)_{\bar1}$, whence 
$e_{\a_1}, h_{\alpha_1}\in(\fa_0)_{\bar0}$. This yields a subalgebra $\fa$ with $\dim\fa = (10|9)$, 
which cannot be further extended to a proper graded subalgebra of $G(3)$ with $(\fa_1)_{\bar0}=0$.
Its non-negative part is:
 \begin{align}\label{Tablecase(2)}
\;\;\;\;\;\;\;\; \begin{array}{|c|c|c|} \hline
 k & \text{even part} &  \text{odd part} \\ \hline\hline
 1 & & e_{\a_2},\ e_{\alpha_1+\alpha_2}\\ \hline
 0 & e_{\pm\alpha_1},\ h_{\alpha_1},\ h_{\alpha_2},\ e_{-2\alpha_3} & e_{-\alpha_3}\\ \hline
\end{array}
 \end{align}

\vskip0.1cm\par\noindent
{\underline{Case 3}.} 
If $\fa_+=0$ then $\dim\fa\leq(11|8)$. In the case of equality
$(\fa_0)_{\bar0}=\fsl(2)\oplus\fsp(2)$ and $(\fa_0)_{\bar1}=(\g_0)_{\bar1}$ or $(\fa_0)_{\bar1}=0$. 
The two sub-cases have respectively $\dim\fa=(11|8)$ and $\dim\fa=(11|6)$. 
 %For all other cases $\dim\fa\leq(10|8)$.

\medskip\par\noindent
\underline{\it Step 2}: 
We will now study filtered deformations of the above algebras $\fa$ with big dimensions. 
Note that in each case the subalgebra $\tilde\fa_{\bar0}=\fa_{\bar0}\cap G(2)$ 
has $\dim\tilde\fa_{\bar0}>7$, so by the classical sub-maximal bound \cite{MR1509120,MR3604980}
we conclude that $\tilde\fa_{\bar0}\simeq\fa_{\bar0}/(\fsp(2)\cap\fa_{\bar0})\subset G(2)$
has no filtered deformation as the quotient algebra, and so it will be fixed under super-deformation.

\vskip0.1cm\par\noindent
{\underline{Case 1}.} 
In this case, $\dim\fa=(11|10)$ and $\fa_{\bar0}=\tilde\fa_{\bar0}\op\fsp(2)$ as the direct sum of 
two ideals, where $\tilde\fa_{\bar0}=G(2)_{-}\oplus G(2)^{ss}_0\subset G(2)$ is the opposite parabolic 
of the contact $\mathbb Z$-grading of $G(2)$ with the ``reduced'' Levi factor 
$G(2)^{ss}_0=\langle e_{-\alpha_2-\alpha_3},h_{\alpha_2+\alpha_3},e_{\alpha_2+\alpha_3}
\rangle\cong \fsl(2)$. The Levi subalgebra of $\fa_{\bar0}$ is  
$\mathfrak{l}=G(2)^{ss}_0\oplus\fsp(2)\cong \fsl(2)\oplus\fsp(2)$. 

Since the semi-simple factor is rigid, the even subalgebra $\fa_{\bar0}$ will get no
deformation. Thus $\fh_{\bar 0}=\fa_{\bar0}$ remains graded. By the Whitehead lemma
$H^1(\mathfrak{l},\mathbb{M})=0$ for any semisimple Lie algebra $\mathfrak{l}$
and its finite-dimensional module $\mathbb{M}$. Applying this to $\mathfrak{l}=\fsl(2)\oplus\fsp(2)$ 
and $\mathbb{M}=\mathop{End}\fa_{\bar1}$ we conclude that the brackets of 
$\mathfrak{l}$ with $\fa_{\bar1}$ can be assumed non-deformed (graded).

In order to study the remaining Lie brackets of the filtered deformation $\fh$, we exploit 
$\mathfrak{l}$-equivariancy of the bracket $[-,-]_+:\fa\otimes\fa\to\fa$,
decompose into irreducible $\mathfrak{l}$-modules
 $$
\fa_{\bar0}=(\C\boxtimes\C)\oplus(S^3\C^2\boxtimes\C)\oplus
\underbrace{(S^2\C^2\boxtimes\C)\oplus(\C\boxtimes S^2\C^2)}_{\mathfrak{l}},\quad
\fa_{\bar1}=(\C^2\boxtimes\C^2)\oplus(S^2\C^2\boxtimes\C^2),
 $$
and note that these subspaces of $\fa$ are not graded (except for $\fsp(2)$). 

Let us first study filtered deformation of the map $[-,-]:
(S^3\C^2\boxtimes\C)\otimes\fa_{\bar1}\to\fa_{\bar1}$.
Observe that there are unique (up to constant) $\mathfrak{l}$-equivariant maps 
\begin{align*}
& (S^3\C^2\boxtimes\C)\otimes(S^2\C^2\boxtimes\C^2)
=(\C^2\boxtimes\C^2)\oplus(S^3\C^2\boxtimes\C^2)\oplus(S^5\C^2\boxtimes\C^2)
\longrightarrow(\C^2\boxtimes\C^2),\\
& (S^3\C^2\boxtimes\C)\otimes(\C^2\boxtimes\C^2)
=(S^2\C^2\boxtimes\C^2)\oplus(S^4\C^2\boxtimes\C^2)
\longrightarrow(S^2\C^2\boxtimes\C^2),
 \end{align*}
and that the first map coincides with a component of $[-,-]_0$, therefore it has total degree 0. 
This is not true for the second map, which is graded of degree 3,
as follows from the fact that with the insertion of $w\in S^3\C^2\boxtimes\C$ the first map
$[w,-]: S^2\C^2\boxtimes\C^2 \to \C^2\boxtimes\C^2$ is conjugate to the second map 
$[w,-]: \C^2\boxtimes\C^2 \to S^2\C^2\boxtimes\C^2$ (the modules are self-dual).

Because $[(S^3\C^2\boxtimes\C),(S^3\C^2\boxtimes\C)]=\C\boxtimes\C$ generates
the rest of $\fa_{\bar0}$, this gives an $\fa_{\bar0}$-equivariant map
$[-,-]_+:\fa_{\bar0}\otimes\fa_{\bar1}\to\fa_{\bar1}$ and hence an $\fa_{\bar0}$-equivariant
candidate $[-,-]=[-,-]_0+\epsilon[-,-]_+$ for the new bracket 
with deformation parameter $\epsilon\in\C$.

In a similar way, we consider the $\mathfrak{l}$-equivariant bracket $[-,-]_+:\Lambda^2\fa_{\bar1}\to\fa_{\bar0}$ and
decompose $\Lambda^2\fa_{\bar1}$ into $\mathfrak{l}$-irreducibles. 
The relevant $\mathfrak{l}$-irreducible modules appear 
with multiplicity 1,
 \begin{align*}
S^2(S^2\C^2\boxtimes\C^2)&\longrightarrow\mathfrak{l},\\
S^2(\C^2\boxtimes\C^2)&\longrightarrow \C\boxtimes\C,\\
(S^2\C^2\boxtimes\C^2)\otimes(\C^2\boxtimes\C^2)&\longrightarrow S^3\C^2\boxtimes\C,
 \end{align*}
and the unique $\mathfrak{l}$-equivariant projections have zero degree. Therefore 
$[-,-]_+|_{\Lambda^2\fa_{\bar1}}$ vanishes.

Now we verify the Jacobi identity with one even and two odd arguments, more precisely
investigate equivariance of the middle map above with respect to $w\in S^3\C^2\boxtimes\C$.
This gives $\epsilon=0$ at once, and we conclude that $\fa=\fh$ as LSA. 

 % In summary, this case gives rise to the flat structure with maximal symmetry superalgebra 
 % $\fg=G(3)$, contradicting our assumptions.

\vskip0.1cm\par\noindent
{\underline{Case 3}.} 
Next we consider the third case, where $\dim\fa=(11|8)$ or $(11|6)$. 
Then $\fa_{\bar0}=\tilde\fa_{\bar0}\op\fsp(2)$ as the direct sum of two ideals, where 
$\tilde\fa_{\bar0}=G(2)_{-}\oplus G(2)^{ss}_0\subset G(2)$ is the opposite parabolic of the 
HC $\mathbb Z$-grading of $G(2)$ with ``reduced'' Levi subalgebra 
$G(2)^{ss}_0=\langle e_{-\alpha_1},h_{\alpha_1},e_{\alpha_1}\rangle\cong \fsl(2)$.

As in Case 1, $\fh_{\bar 0}=\fa_{\bar0}$ is non-deformed, as well as the Lie brackets of 
$\mathfrak{l}$ with $\fa_{\bar1}$. However here the Levi subalgebra 
$\mathfrak{l}=G(2)^{ss}_0\oplus\fsp(2)\cong \fsl(2)\oplus\fsp(2)=(\fa_0)_{\bar0}$ 
is graded in zero degree.

The following decompositions of compontents of $\fa$ into irreducible modules under the adjoint 
action of $\mathfrak{l}$ are compatible with the grading:
  \begin{equation}\label{eq:decom}
\begin{array}{l}
\fa_{\bar0}=(\C^2\boxtimes\C)_{-3}\oplus(\C\boxtimes\C)_{-2}\oplus(\C^2\boxtimes\C)_{-1}\oplus
\underbrace{(S^2\C^2\boxtimes\C)_0\oplus(\C\boxtimes S^2\C^2)_0}_{\mathfrak{l}},
\vspace{-7pt}\\
\fa_{\bar1}=(\C\boxtimes\C^2)_{-2}\oplus(\C^2\boxtimes\C^2)_{-1}\oplus(\C\boxtimes\C^2)_0,
 \end{array}
 \end{equation}
and the last (zero grading) term in $\fa_{\bar1}$ has to be omitted when $\dim\fa=(11|6)$. 
We note that, contrary to Case 1, this case exhibits non-trivial multiplicities, and we will get 
many candidates for the $\mathfrak l$-equivariant map $[-,-]_+:\fa\otimes\fa\to\fa$. 

First of all, we  remark that $(\C^2\boxtimes\C)_{-1}$ generates $(\fa_{\bar0})_{-}$, therefore
any $\mathfrak{l}$-equivariant map $[-,-]_+:\fa_{\bar0}\otimes\fa_{\bar1}\to\fa_{\bar1}$ is completely 
determined by its restriction $[-,-]_+:(\C^2\boxtimes\C)_{-1}\otimes\fa_{\bar1}\to\fa_{\bar1}$
due to the Jacobi identities. We then compute
 \begin{alignat*}{1}
(\C^2\boxtimes\C)_{-1}\otimes(\C\boxtimes\C^2)_0 &=  
 (\C^2\boxtimes\C^2)_{-1},\\
(\C^2\boxtimes\C)_{-1}\otimes(\C^2\boxtimes\C^2)_{-1} &=  
 \underline{(\C\boxtimes\C^2)}{}_{-2\,\vee\,0}\oplus(S^2\C^2\boxtimes\C^2)_{\times},\\
(\C^2\boxtimes\C)_{-1}\otimes(\C\boxtimes\C^2)_{-2} &=  
 \underline{(\C^2\boxtimes\C^2)}{}_{-1}.
 \end{alignat*}
Due to multiplicity the middle term has two possibilities: the bracket of degree 0
(values in degree -2) that restricts the non-deformed bracket $[-,-]_0$ of $\fa$, 
and the bracket $[-,-]_+$ of degree $2$ (values in degree 0) that is a deformation
(this case is vacuous when $(\fa_0)_{\bar1}=0$). 

Above we indicate with a cross the irreducible modules not relevant for our arguments 
(kernel of the projection: they do not arise in the decomposition of $\fa$) 
and underline those which may contribute to 
$[-,-]_+:(\C^2\boxtimes\C)_{-1}\otimes\fa_{\bar1}\to\fa_{\bar1}$, namely 
 \begin{align*}
\alpha=[-,-]_+&:(\C^2\boxtimes\C)_{-1}\otimes(\C^2\boxtimes\C^2)_{-1}\to(\C\boxtimes\C^2)_{0}\\
\beta=[-,-]_+&:(\C^2\boxtimes\C)_{-1}\otimes(\C\boxtimes\C^2)_{-2}\to(\C^2\boxtimes\C^2)_{-1}
 \end{align*}
By changing the $\mathfrak l$-module decompisition 
$\fa_{\bar1}= (\fa_{\bar1})_{-2}\oplus (\fa_{\bar1})_{-1}\oplus (\fa_{\bar1})_0$ in \eqref{eq:decom} to 
$\fa_{\bar1}= \mathop{Im}(\alpha)\oplus (\fa_{\bar1})_{-1}\oplus (\fa_{\bar1})_0$ 
we may ignore $\alpha$ and study the possibility of $\fa_{\bar1}$ being $\fa_{\bar0}$ module
as if we had $\alpha=0$; note that $\alpha$ plays no role when $\dim\fa=(11|6)$. 
With this trick we have only one deformation parameter $\epsilon\in\C$ in the next computation.
 
We consider the basis \eqref{m-basis1}-\eqref{m-basis2} of 
the negatively-graded part $\fh_-$ of $\fh$ and focus on its 
Lie brackets (only the non-trivial relations are shown):
\begin{gather*}
 \left.\begin{aligned}
{}[e_1,e_2]=h,\ [e_1,h]=f_1,\ [e_2,h]=f_2,\ [\theta_1'',\theta_2'']=h,\ [\theta_1',\theta_2']=h\\
[e_1,\theta_2']=\rho_1,\ [e_1,\theta_2'']=\rho_2,\ [e_2,\theta_1'']=\rho_1,\ [e_2,\theta_1']=-\rho_2\\
[\theta_1'',\rho_2]=f_1,\ [\theta_1',\rho_1]=f_1,\ [\theta_2',\rho_2]=-f_2,\ [\theta_2'',\rho_1]=f_2
\end{aligned}\right\}=\text{SHC symbol}
 \\
[e_1,\rho_1]=\epsilon\theta_1'',\ [e_1,\rho_2]=-\epsilon\theta_1',\ 
[e_2,\rho_1]=-\epsilon\theta_2',\ [e_2,\rho_2]=-\epsilon\theta_2'',\ 
[h,\rho_1]=-2\epsilon\rho_1,\ [h,\rho_2]=-2\epsilon\rho_2,\\
[h,\theta_1'']=\epsilon\theta_1'',\ [h,\theta_1']=\epsilon\theta_1',\ 
[h,\theta_2']=\epsilon\theta_2',\ [h,\theta_2'']=\epsilon\theta_2'',\\
[f_1,\theta_2']=3\epsilon\rho_1,\ [f_1,\theta_2'']=3\epsilon\rho_2,\ 
[f_2,\theta_1'']=3\epsilon\rho_1,\ [f_2,\theta_1']=-3\epsilon\rho_2,\\ 
[f_1,\rho_1]=-3\epsilon^2\theta_1'',\ [f_1,\rho_2]=3\epsilon^2\theta_1',\ 
[f_2,\rho_1]=3\epsilon^2\theta_2',\ [f_2,\rho_2]=3\epsilon^2\theta_2'',
 \end{gather*}
Computing $0=[[e_1,f_1],\theta_2'']=-6\epsilon^2\theta_1'$ via the Leibniz identity, 
we conclude that $\epsilon=0$. Thus $\beta=0$ in the splitting of $\fa_{\bar1}$ where $\alpha=0$,
so we get one deformation parameter $\epsilon_1$ entering $\alpha,\beta$. 
Returning to the original grading, we get explicit expressions of the new brackets 
$\fh_{\bar0}\otimes\fh_{\bar1}\to\fh_{\bar1}$ via $\epsilon_1$ (we indicate only non-trivial relations): 
 \begin{gather*}
[w_{-1},\theta_0] = c(w,\theta)_{-1},\
[w_{-1},\zeta_{-1}] = c(w,\zeta)_{-2}+\epsilon_1c(w,\zeta)_0,\
[w_{-1},\theta_{-2}] = -\epsilon_1c(w,\theta)_{-1},\\
[h_{-2},\theta_0] = c(h,\theta)_{-2}+\epsilon_1c(h,\theta)_0,\
[h_{-2},\theta_{-2}] = -\epsilon_1c(h,\theta)_{-2}-\epsilon_1c(h,\theta)_0.
 \end{gather*}
Here $w\in\C^2\boxtimes\C$, $h\in\C\boxtimes\C$, $\theta\in\C\boxtimes\C^2$
and $\zeta\in\C^2\boxtimes\C^2$ are elements of the modules entering decomposition
\eqref{eq:decom}, subscript indicating the grading to distinguish them. 
The bilinear map $c(-,-)_p$ denotes the contraction of the corresponding modules
taking values in the (odd) module of grading $p$. % uniquely given by the parity and type
It reads off the graded bracket $[-,-]_0$ of $\fa$.

We now deal with the $\mathfrak l$-equivariant map $[-,-]_+:\Lambda^2\fa_{\bar1}\to\fa_{\bar0}$ 
in a similar way. Namely, we have the following decompositions into $\mathfrak{l}$-irreducible modules:
 \begin{alignat*}{1}
S^2(\C\boxtimes\C^2)_0 &= (\C\boxtimes S^2\C^2)_0,\\
(\C^2\boxtimes\C^2)_{-1}\otimes(\C\boxtimes\C^2)_0 &=  
 (\C^2\boxtimes\C)_{-1}\oplus(\C^2\boxtimes S^2\C^2)_{\times},\\
(\C\boxtimes\C^2)_{-2}\otimes(\C\boxtimes\C^2)_0 &=  
 (\C\boxtimes\C)_{-2}\oplus \underline{(\C\boxtimes S^2\C^2)}{}_{0},\\
S^2(\C^2\boxtimes\C^2)_{-1} &=  
 (\C\boxtimes\C)_{-2}\oplus(S^2\C^2\boxtimes S^2\C^2)_{\times},\\
(\C\boxtimes\C^2)_{-2}\otimes(\C^2\boxtimes\C^2)_{-1} &=  
 \underline{(\C^2\boxtimes\C)}{}_{-3\,\vee\,-1}\oplus(\C^2\boxtimes S^2\C^2)_{\times},\\
S^2(\C\boxtimes\C^2)_{-2} &=  
 \underline{(\C\boxtimes S^2\C^2)}{}_0.
 \end{alignat*} 
Again irrelevant terms are indicated by a cross and the candidate terms for deformation are underlined.
If $\dim\fa=(11|6)$, the first three lines disappear, so the last two underlined terms
are the only potential contributions to the deformation. Similarly to the above, we arrive at explicit 
expressions of the new brackets $\Lambda^2\fh_{\bar1}\to\fh_{\bar0}$ via deformation parameters 
$\epsilon_2,\epsilon_3,\epsilon_4$:
 \begin{gather*}
[\theta'_0,\theta''_0] = c(\theta',\theta'')_0,\
[\zeta_{-1},\theta_0] = c(\zeta,\theta)_{-1},\
[\theta'_{-2},\theta''_0] = c(\theta',\theta'')_{-2}+\epsilon_2c(\theta',\theta'')_0,\\
[\zeta'_{-1},\zeta''_{-1}] = c(\zeta',\zeta'')_{-2},\
[\theta_{-2},\zeta_{-1}] = c(\theta,\zeta)_{-3}+\epsilon_3c(\theta,\zeta)_{-1},\
[\theta'_{-2},\theta''_{-2}] = \epsilon_4c(\theta',\theta'')_0.
 \end{gather*}
For $\dim\fa=(11|6)$ the relations with $\theta_0$ and the deformation
parameters $\epsilon_1,\epsilon_2$ disappear.
 % and we get only two deformation parameters $\epsilon_3,\epsilon_4$.

Now we investigate $\fa_{\bar0}$-equivariance of the new brackets. In the case $\dim\fa=(11|8)$
the Leibniz rule for $\mathop{ad}_w$, $w\in(\fa_{\bar0})_{-1}\simeq\C^2\boxtimes\C$, 
applied to the first line of brackets implies $\epsilon_2=\epsilon_3=-\epsilon_1$.
Then applied to the second line of brackets it gives $\epsilon_1=0$, $\epsilon_4=\epsilon_1^2=0$
and hence $\epsilon_2=\epsilon_3=0$. In the case $\dim\fa=(11|8)$ we get only the second
line of brackets, and they similarly imply $\epsilon_3=0$ and then $\epsilon_4=0$.

Thus we again conclude that the deformation is trivial, so that $\fa=\fh$ as LSA. .

\vskip0.1cm\par\noindent
{\underline{Case 2}.} 
In the remaining second case $\fa_{\bar0}=\tilde\fa_{\bar0}\op\mathfrak{b}(2)$, where 
$\mathfrak{b}(2)=\langle h_{\alpha_2},e_{-2\alpha_3}\rangle$
and $\tilde\fa_{\bar0}=G(2)_{-}\oplus G(2)^{ss}_0\subset G(2)$ is the opposite parabolic of the HC 
$\mathbb Z$-grading of $G(2)$ as in Case 3, i.e., with the ``reduced'' Levi subalgebra 
$G(2)^{ss}_0=\langle e_{-\alpha_1},h_{\alpha_1},e_{\alpha_1}\rangle\cong \fsl(2)$. 
We stress that $\mathfrak{b}(2)$ is {\em abstractly isomorphic} to a Borel subalgebra of $\fsp(2)$, 
but it is actually {\it not} contained in $\fsp(2)$ because the coroot $h_{\alpha_2}$ sits diagonally 
w.r.t.\ the decomposition $\fg_{\bar0}=G(2)\oplus\fsp(2)$. 
 % $h_{\alpha_3}\in\fsp(2)$ while $h_{\alpha_1}$ and the grading element of $G(3)$ belong to $G(2)$

This case is the most involved, as $[h_{\alpha_2},e_{\pm\alpha_1}]\neq 0$ and 
$\fa_{\bar0}=\tilde\fa_{\bar0}\op\mathfrak{b}(2)$ is not a decompositon into ideals. 
Moreover, the Levi factor of $\fa_{\bar0}$ is just  $\mathfrak l=G(2)^{ss}_0\cong\mathfrak\fsl(2)$ 
and its representation theory is less restrictive than in the previous cases.
Thus we exploit the representation theory of $\fsl(2)$ but also use a brute 
force computation. Those are done in Maple (available in the arXiv supplement)
and rely on linear algebra over $\mathbb{Q}$ only, so no rigor suffers. 

We will now summarize the computations.
Following the same strategy as before we show that the Lie brackets on $\fa_{\bar0}$ 
are rigid. Indeed, the quotient algebra $\tilde\fa_{\bar0}=\fa_{\bar0}\!\mod\mathfrak{b}(2)$ 
as well as the subalgebra $\mathfrak{b}(2)\subset(\fa_0)_{\bar0}$ are non-deformed, 
and the $\fsl(2)$ module structure 
 $$
\fa_{\bar0}=(\C^2)_{-3}\oplus(\C)_{-2}\oplus(\C^2)_{-1}\oplus(S^2\C^2)_0\oplus(\C\oplus\C)_0 
 $$
is rigid. % (the latter summand is $\mathfrak{b}(2)$) 
Thus the only contribution to the filtered deformation  may arise from the positive degree brackets 
$(\C^2)_{-3}\otimes(\C^2)_{-1}\to(\C\oplus\C)_{-1}$ and $\Lambda^2(\C^2)_{-3}\to(\C\oplus\C)_{-1}$. 
These carry four parameters (by $\fsl(2)$-equivariance), 
which have to vanish due to the Jacobi identity.

Next we deform the even-odd brackets, i.e., the representation of $\fa_{\bar0}$ over $\fa_{\bar1}$.
As an $\fsl(2)$-module 
 $$
\fa_{\bar1}=(\C\oplus\C)_{-2}\oplus(\C^2\oplus\C^2)_{-1}\oplus(\C)_0\oplus(\C^2)_1
 $$
and we employ the $\fsl(2)$-equivariance of the brackets $\fa_{\bar0}\otimes\fa_{\bar1}\to\fa_{\bar1}$.
These satisfy the module structure constraints if and only if $\fa_{\bar0}\oplus\fa_{\bar1}$
with trivial brackets on $\fa_{\bar1}$ is a Lie algebra.
The Jacobi identity constrains the parameters of the semi-direct product 
$\fa_{\bar0}\ltimes\fa_{\bar1}$ as follows:
 \begin{gather*}
[e_1,e_2] = h,\ [e_1,h] = f_1,\ [e_2, h] = f_2,\ 
[u, e_2] = e_1,\ [l, e_1] = e_2,\ [s, e_1] = e_1,\ [s, e_2] = -e_2,\\
[u, f_2] = f_1,\ [l, f_1] = f_2,\ [s, f_1] = f_1,\ [s, f_2] = -f_2, 
[u, l] = s,\ [s, u] = 2u,\ [s, l] = -2l,\ [r, n] = 2n,\\
[r,e_1] = \tfrac14e_1,\ [r,e_2] = \tfrac14e_2,\ [r,h] = \tfrac12h,\ [r,f_1] = \tfrac34f_1, [r,f_2] = \tfrac34f_2,\
[u,\theta_2''] = \theta_1',\ [l,\theta_1'] = \theta_2'',\\ [s,\theta_1'] = \theta_1',\ [s,\theta_2''] = -\theta_2'',\ 
[u,\theta_2'] =-\theta_1'',\ [l,\theta_1''] =-\theta_2',\ [s,\theta_1''] =\theta_1'',\ [s, \theta_2'] = -\theta_2',\ 
[u, \xi_2] = \xi_1,\\ 
[l, \xi_1] = \xi_2,\ [s, \xi_1] = \xi_1,\ [s, \xi_2] = -\xi_2,\ 
[n, \theta_1'] = \epsilon_2\xi_1+\theta_1'',\ [n, \theta_2''] = \epsilon_2\xi_2-\theta_2',\\ 
[n, \rho_2] = -\rho_1+2\epsilon_1\zeta,\ 
[r, \rho_1] = \tfrac32\rho_1-\epsilon_1\zeta,\ [r, \rho_2] = -\tfrac12\rho_2+\epsilon_3\zeta,\ 
[r, \theta_1'] = -\tfrac34\theta_1'+\epsilon_4\xi_1,\\
[r, \theta_2''] = -\tfrac34\theta_2''+\epsilon_4\xi_2,\ [r, \theta_1''] = \tfrac54\theta_1''+\tfrac12\epsilon_2\xi_1,\ 
[r, \theta_2'] = \tfrac54\theta_2'-\tfrac12\epsilon_2\xi_2,\ [r, \zeta] = \zeta,\\ 
[r, \xi_1] = \tfrac34\xi_1,\ [r, \xi_2] = \tfrac34\xi_2,\ 
[e_1, \xi_2] = \zeta=-[e_2, \xi_1],\
[e_1, \zeta] = \epsilon_2\xi_1+\theta_1'',\ [e_2, \zeta] = \epsilon_2\xi_2-\theta_2',\\
[e_1, \theta_2''] = \rho_2+\tfrac23(\epsilon_4-\epsilon_3)\zeta=- [e_2, \theta_1'],\
[e_1, \theta_2'] = \rho_1-(2\epsilon_1-\epsilon_2)\zeta=[e_2, \theta_1''],\\
[e_1, \rho_1] = 2\epsilon_1\epsilon_2\xi_1+2\epsilon_1\theta_1'',\ [e_2, \rho_1] = 2\epsilon_1\epsilon_2\xi_2-2\epsilon_1\theta_2',\
[e_1, \rho_2] = \tfrac23\epsilon_3\theta_1'',\ [e_2, \rho_2] = -\tfrac23\epsilon_3\theta_2',\\
[h, \xi_1] = -\epsilon_2\xi_1-\theta_1'',\ [h, \xi_2] = -\epsilon_2\xi_2+\theta_2',\ [h, \zeta] = 4\epsilon_1\zeta-2\rho_1,\
[h, \theta_1''] = \epsilon_2\xi_1+\epsilon_2\theta_1'',\\ 
[h, \theta_1'] = -\tfrac23\epsilon_4\theta_1''-\tfrac13\epsilon_2(\epsilon_3+2\epsilon_4)\xi_1,\ 
[h, \theta_2''] = \tfrac23\epsilon_4\theta_2'-\tfrac13\epsilon_2(\epsilon_3+2\epsilon_4)\xi_2,\ 
[h, \theta_2'] = -\epsilon_2\xi_2+\epsilon_2\theta_2',\\ 
[h, \rho_1] = 8\epsilon_1^2\zeta-4\epsilon_1\rho_1,\ 
[h, \rho_2] = -\tfrac43\epsilon_3\rho_1+\tfrac83\epsilon_1\epsilon_3\zeta,\ 
[f_1, \xi_2] = 3\rho_1-6\epsilon_1\zeta=-[f_2, \xi_1],\\ 
[f_1, \theta_2''] = 2\epsilon_4\rho_1-4\epsilon_1\epsilon_4\zeta=-[f_2, \theta_1'],\
[f_1, \theta_2'] = 3\epsilon_2\rho_1-6\epsilon_1\epsilon_2\zeta=[f_2, \theta_1'']\;.
 \end{gather*}
Here $\fsl(2)=\langle u,s,l\rangle$, $\mathfrak{b}(2)=\langle r,n\rangle$,
$(\fa_0)_{\bar1}=\langle \zeta\rangle$, $(\fa_1)_{\bar1}=\langle \xi_1,\xi_2\rangle$,
and the other notations are as in (M1). The parameters $\epsilon_1,\epsilon_2,\epsilon_3,\epsilon_4$ 
satisfy the
following branching condition: either $\epsilon_2=0$ or else $\epsilon_2=1$, $\epsilon_4=0$. 
We now turn to the Lie superalgebra bracket between odd elements.

%In the second branch no $\fa_{\bar0}$-equivariant map $\Lambda^2\fa_{\bar1}\to\fa_{\bar0}$ exists.

%In the first branch such a map $\Lambda^2\fa_{\bar1}\to\fa_{\bar0}$ 
%(or $S^2\fa_{\bar1}\to\fa_{\bar0}$ in terms of the Lie algebra)

 In the second branch no $\fa_{\bar0}$-equivariant map $\Lambda^2\fa_{\bar1}\to\fa_{\bar0}$ exists.
In the first branch such a map %$\Lambda^2\fa_{\bar1}\to\fa_{\bar0}$ 
%(or $S^2\fa_{\bar1}\to\fa_{\bar0}$ in terms of the Lie algebra)
exists if and only if $\epsilon_4=0$, in this case it is unique and survives 
the Jacobi identity for all odd elements. In other words, we get a two-parametric
filtered deformation of dimension $(10|9)$. However this deformation is trivial: 
the change of basis $\rho_1\mapsto\rho_1-2\epsilon_1\zeta$, $\rho_2\mapsto\rho_2-\frac23\epsilon_3\zeta$
eliminates the parameters from the expression for the brackets.

 % ... we omit details of this long calculation. 
\vskip0.1cm\par\noindent
{\underline{\it Summary}.} 
No nontrivial filtered deformations of $\fa$ exist, and all cases give rise to the flat 
structure with maximal supersymmetry, contrary to the assumption. Hence the claim.
\end{proof}

 \subsubsection{Realization of the supersymmetry bound}
Consider the following system of PDE involving one arbitrary function $f$ of 1 variable:
\begin{equation}\label{sMonge}
z_x = f(u_{xx}) + u_{x\nu}u_{x\tau},\quad
z_{\tau} = f'(u_{xx}) u_{x\tau},\quad
z_{\nu}  = f'(u_{xx}) u_{x\nu},\quad
u_{\tau\nu} = f'(u_{xx}).
\end{equation}
According to Proposition \ref{FGHK-constraints}, the associated Cartan superdistribution $\cD$ on the superspace $M=\C^{5|6}(x,u,u_x,u_{xx},z|\tau,\nu,u_\tau,u_\nu,u_{x\tau},u_{x\nu})$ is of SHC type 
when $f''\neq 0$, and in this case 
it shall be considered as a super-extension of the classical family of generic rank 2 
distributions on a 5-dimensional space with Monge normal form $z_x = f(u_{xx})$.

We will now see that super-extensions of the classical submaximally symmetric models 
given by the choice $f(s)=\frac1m s^m$, namely
\begin{equation}\label{m-eqn}
 z_x=\tfrac1mu_{xx}^m+u_{x\nu}u_{x\tau},\quad
 z_{\tau}=u_{xx}^{m-1}u_{x\tau},\quad
 z_{\nu}=u_{xx}^{m-1}u_{x\nu},\quad
 u_{\tau\nu}=u_{xx}^{m-1},
 \end{equation}
do realize the upper bound of Theorem \ref{UpperBound}.

 \begin{theorem}\label{super-sub-max}
The internal symmetry superalgebra of the SHC type equation \eqref{sMonge}
for $f(s)=\frac1m s^m$ with $m\neq-1,0,\frac13,\frac23,1,2$,
as well as $f(s)=\ln s$ and $f(s)=\exp s$ has $\dim=(10|8)$.
 \end{theorem}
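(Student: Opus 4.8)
The plan is to combine the upper bound of Theorem \ref{UpperBound} with an explicit realization, reducing the even part to a classical fact. First I would record the defining data of \eqref{sMonge} in the notation of Proposition \ref{FGHK-constraints}, namely $F=f(u_{xx})+u_{x\nu}u_{x\tau}$, $G=f'(u_{xx})u_{x\nu}$, $H=f'(u_{xx})u_{x\tau}$ and $K=f'(u_{xx})$, and verify that this quadruple satisfies the differential system \eqref{eq:firsteqsystem}--\eqref{eq:fourtheqsystem} together with invertibility of $\partial_{u_{xx}}^2F=f''(u_{xx})$. Since $f''\neq0$ for every $f$ in the list (one has $f''(s)=(m-1)s^{m-2}$ for $f=\tfrac1m s^m$ with $m\neq1$, $f''=-s^{-2}$ for $\ln$, and $f''=e^{s}$ for $\exp$), Proposition \ref{FGHK-constraints} shows that the associated Cartan superdistribution $\cD$ is of SHC type; in particular it is a super-extension in the sense of Definition \ref{def:super-extensiondef} of the classical generic rank $2$ distribution underlying the Monge equation $z_x=f(u_{xx})$ on the reduced manifold $M_o$.

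For the upper bound I would first observe that the models \eqref{m-eqn} are homogeneous, so $\mathfrak{inf}(M,\cD)$ acts locally transitively and Theorem \ref{UpperBound} is applicable once non-flatness is checked. Non-flatness is inherited from the body: were $\mathfrak{inf}(M,\cD)\cong G(3)$, the distribution $\cD$ would be locally equivalent to the flat SHC distribution \eqref{SHC-ODE}, whose even part is the flat Hilbert--Cartan $(2,3,5)$ distribution with $14$-dimensional symmetry $G(2)$. But the classical rank $2$ distribution underlying $z_x=f(u_{xx})$ is Cartan's Monge model, which is flat precisely for $f=\tfrac1m s^m$ with $m\in\{-1,\tfrac13,\tfrac23,2\}$ (and degenerate for $m\in\{0,1\}$, where $f''\equiv0$), while it has symmetry dimension exactly $7$ for every admissible $f$ in our list, including the logarithmic and exponential cases \cite{MR1509120,MR3604980}. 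Since $7\neq14$, the superdistribution $\cD$ is curved, whence Theorem \ref{UpperBound} yields $\dim\mathfrak{inf}(M,\cD)\leq(10|8)$.

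It then remains to produce $(10|8)$ independent infinitesimal symmetries. The ten even symmetries I would exhibit as the $7$-dimensional symmetry algebra of the classical submaximal model $z_x=f(u_{xx})$, canonically extended so as to act on the odd coordinates compatibly with the orthosymplectic reduction, together with the three generators of $\fsp(2)\subset\fg_0$ acting on the odd pairs $(\nu,\tau)$, $(u_\nu,u_\tau)$, $(u_{x\nu},u_{x\tau})$; these survive because the fundamental quadratic curvature of Theorem \ref{thm:SC2IV} lies in $S^2\C^2\boxtimes\Lambda^2\C^2$ and is $\fsp(2)$-invariant. The eight odd symmetries I would write down explicitly as the odd supervector fields deforming the flat odd part $\fg_{\bar1}$, checking each is an internal symmetry by prolonging it through the total derivatives \eqref{eq:totalderivativesxynutau} of the relevant curved frame and verifying tangency to $\cD$ (the explicit formulae being collected as in Appendix \ref{S:SHC-sym}). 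As these $(10|8)$ vector fields are manifestly linearly independent, the lower bound $\dim\mathfrak{inf}(M,\cD)\geq(10|8)$ follows, and with the upper bound we conclude $\dim\mathfrak{inf}(M,\cD)=(10|8)$.

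The main obstacle is the explicit construction and verification of the eight odd symmetries: unlike the even part, these cannot be read off from a classical model and must be obtained by solving the super internal-symmetry determining equations for the curved distribution, with careful bookkeeping of the parity-dependent signs in the prolongation formula \eqref{contact-vf-pr}. A secondary point requiring care is the uniform treatment of the logarithmic and exponential cases, whose scaling weights differ from those of the power-law family, so that the embedding of the classical $7$-dimensional algebra and the normalization of the grading element must be adjusted case by case; I would handle these directly through the determining-equation computation rather than by a limiting argument from the powers $\tfrac1m s^m$.
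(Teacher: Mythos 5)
Your proposal follows essentially the same route as the paper's proof: verify SHC type via Proposition \ref{FGHK-constraints} (using $f''\neq 0$), rule out $\mathfrak{inf}(M,\cD)\cong G(3)$ by reducing to the classical classification of Monge equations (the paper argues that $G(3)$ supersymmetry would induce $G(2)$ symmetry of the underlying classical distribution, since $G(3)_{\bar 0}=G(2)\oplus A(1)$ with $A(1)$ acting trivially on the body, which happens only for $m=-1,\tfrac13,\tfrac23,2$), then apply Theorem \ref{UpperBound} for the upper bound $(10|8)$ and realize it by explicitly solving the super determining equations — exactly what the paper does (via Maple), finding the even part to be the classical $7$-dimensional algebra plus $\fsp(2)=\langle V_8,V_9,V_{10}\rangle$ and eight odd generators. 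The only cosmetic differences are that the paper obtains local transitivity from the explicit generators rather than asserting homogeneity in advance, and it makes no appeal to Theorem \ref{thm:SC2IV} for the even part (your justification there is heuristic, but harmless, since you, like the paper, ultimately re-verify everything through the determining-equation computation).
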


 \begin{proof}
It is obvious that for $m\neq0,1$ we have $f''\neq 0$, hence the distribution is of SHC type. 
 % the maximal symmetry dimension is $(17|14)$, which, in the locally transitive case, 
 % is attained only for $G(3)$ symmetric structures.
We claim that for $m\neq -1,\frac13,\frac23,2$, the symmetry superalgebra $\mathfrak{inf}(M,\cD)$ 
is not $\fg=G(3)$. Otherwise, the superdistribution $\cD$ would be flat and the underlying classical 
distribution of HC type would be flat too. Indeed, the supersymmetry $G(3)$ of $\cD$ reduces 
to the symmetry $G(2)$ of the underlying distribution
(recall that $G(3)_{\bar0}=G(2)\oplus A(1)$ but the second factor belongs to the kernel of the action), 
which is possible only for 
$m= -1,\frac13,\frac23,2$, see \cite{MR3254311}. 

%symmetric w.r.t. $\fg_{\bar0}=G(2)\oplus A(1)$.
%The second factor belongs to the kernel of the action, but $G(2)$ acts effectively and 

We will now give the explicit realization of $\mathfrak{inf}(M,\cD)$ for $m\neq -1,0,\frac13,\frac23,1,2$, 
and at the same time show that the local transitivity assumption required in Theorem \ref{T:G3-sym} is 
satisfied. In particular $\dim\mathfrak{inf}(M,\cD)\leq (10|8)$ by Theorem \ref{UpperBound}.
%the upper bound given in
%for the symmetry dimension. 
%bound explicitly

A supervector field on $M=\C^{5|6}(x,u,u_x,u_{xx},z|\tau,\nu,u_\tau,u_\nu,u_{x\tau},u_{x\nu})$ is in $\mathfrak{inf}(M,\cD)$ if and only if
its coefficients satisfy an appropriate system of differential equations in superspace.
As soon as it is
expanded in the odd variables, this gives a large overdetermined system of PDE on ordinary functions of 5 variables, 
which we solve bringing the system to involution.
%$$X=\sum_{i=1}^5H^i\p_{s^i}+\sum_{j=1}^6\Upsilon^j\p_{\theta^j}\;,$$
%where $(s^i,\theta^j)$
%are the above even and odd coordinates, $H^i$ are polynomials in $\theta$ 
%with coefficients smoothly depending on $s$ of parity $|X|$ and $\Upsilon^j$ are polynomials 
%in $\theta$ with coefficients smoothly depending on $s$ of parity $|X|+1$. 
To write down the explicit expression of the generators of $\mathfrak{inf}(M,\cD)$, we find convenient to relabel the odd coordinates
$\theta_1=\tau$, $\theta_2=\nu$, $\theta_3=u_\tau$, $\theta_4=u_\nu$, $\theta_5=u_{x\tau}$, $\theta_6=u_{x\nu}$ and set
$\theta_{ij}=\theta_i\theta_j$, $\theta_{ijk}=\theta_i\theta_j\theta_k$. Here are the even generators
\begin{align*}
V_1 =&\, \p_{x},\
V_2 = \p_{z},\
V_3 = \p_{u_x}+x\p_{u},\
V_4 = \p_{u},
 \\
V_5 =&\,
 x\p_{x}+2u\p_{u}+u_x\p_{u_x}+z\p_{z}+2\theta_{1}\p_{\tau}+2\theta_{4}\p_{u_{\nu}}-\theta_{5}\p_{u_{x\tau}}
+\theta_{6}\p_{u_{x\nu}},
 \\
V_6 =&\,
 u\p_{u}+u_x\p_{u_x}+u_{xx}\p_{u_{xx}}+mz\p_{z}
 +\theta_{1}\p_{\tau}-(m-1)\theta_{2}\p_{\nu}+m\theta_{4}\p_{u_{\nu}}+m\theta_{6}\p_{u_{x\nu}},
 \\
V_7 =&\,
 u_{xx}^{m-1}\p_{x}-(z-u_xu_{xx}^{m-1}+\theta_{36}-\theta_{45})\p_{u}
 +(\tfrac{m-1}{m}u_{xx}^m-\theta_{56})\p_{u_x}\hphantom{aaaaaaaaaaaaaaaaaaaa}
 \\
 & +u_{xx}^{m-1}(\tfrac{m-1}{m(2m-1)}u_{xx}^{m}-\theta_{56})\p_{z}
 +\theta_{6}\p_{\tau}-\theta_{5}\p_{\nu}+u_{xx}^{m-1}(\theta_{5}\p_{u_{\tau}}+\theta_{6}\p_{u_{\nu}}),
 \\
V_8 =&\,
 \theta_{1}\p_{\tau}-\theta_{2}\p_{\nu}-\theta_{3}\p_{u_{\tau}}+\theta_{4}\p_{u_{\nu}}
 -\theta_{5}\p_{u_{x\tau}}+\theta_{6}\p_{u_{x\nu}},
 \\
V_9 =&\,
 \theta_{2}\p_{\tau}-\theta_{3}\p_{u_{\nu}}-\theta_{5}\p_{u_{x\nu}},\
V_{10} = \theta_{1}\p_{\nu}-\theta_{4}\p_{u_{\tau}}-\theta_{6}\p_{u_{x\tau}},
 \end{align*}
and the odd generators
 \begin{align*}
U_1 =&\, \p_{u_{\tau}}-\theta_{1}\p_{u},\
U_2 = \p_{u_{\nu}}-\theta_{2}\p_{u},\
U_3 = \p_{\tau},\
U_4 = \p_{\nu},
 \\
U_5 =&\,
 \p_{u_{x\tau}}+x\p_{u_{\tau}}-x\theta_{1}\p_{u}-\theta_{1}\p_{u_x}-\theta_{4}\p_{z},\
U_6 = \p_{u_{x\nu}}+x\p_{u_{\nu}}-x\theta_{2}\p_{u}-\theta_{2}\p_{u_x}+\theta_{3}\p_{z},
 \\
U_7 =&\,
 (\theta_{4}-(2m-1)u_{xx}^{m-1}\theta_{1})\p_{x}
 +(u_x\theta_{4}+(2m-1)((z-u_xu_{xx}^{m-1})\theta_{1}+\theta_{136}-\theta_{145}))\p_{u}\\
 & -(2m-1)(\tfrac{m-1}{m}u_{xx}^m\theta_{1}-\theta_{156})\p_{u_x}-2u_{xx}\theta_{6}\p_{u_{xx}}
 -u_{xx}^{m-1}(\tfrac{m-1}{m}u_{xx}^{m}\theta_{1}-(2m-1)\theta_{156})\p_{z}\\
 & -(2m-1)\theta_{16}\p_{\tau}
 +(u_x+(2m-1)\theta_{15})\p_{\nu}-(\tfrac{m-1}{m}u_{xx}^m-2m\theta_{56})\p_{u_{x\tau}}\\
 & -(2m-1)(z+u_{xx}^{m-1}\theta_{15})\p_{u_{\tau}}-(2m-1)u_{xx}^{m-1}\theta_{16}\p_{u_{\nu}},\\
U_8 =&\,
 (\theta_{3}+(2m-1)u_{xx}^{m-1}\theta_{2})\p_{x}
 +(u_x\theta_{3}+(2m-1)((u_xu_{xx}^{m-1}-z)\theta_{2}+\theta_{245}-\theta_{236}))\p_{u}\\
 & +(2m-1)(\tfrac{m-1}{m}u_{xx}^m\theta_{2}-\theta_{256})\p_{u_x}-2u_{xx}\theta_{5}\p_{u_{xx}}
 +u_{xx}^{m-1}(\tfrac{m-1}{m}u_{xx}^{m}\theta_{2}-(2m-1)\theta_{256})\p_{z}\\
 & +(u_x+(2m-1)\theta_{26})\p_{\tau}-(2m-1)\theta_{25}\p_{\nu}+(\tfrac{m-1}{m}u_{xx}^m-2m\theta_{56})\p_{u_{x\nu}}\\
 & +(2m-1)u_{xx}^{m-1}\theta_{25}\p_{u_{\tau}}+(2m-1)(z+u_{xx}^{m-1}\theta_{26})\p_{u_{\nu}}.
 \end{align*}
We note that the even part $\mathfrak{inf}(M,\cD)_{\bar 0}$ is given by the direct sum of 
$\fsp(2)=\langle V_8,V_9,V_{10}\rangle$ and the $7$-dimensional complementary subalgebra 
$\langle V_1,\dots,V_7\rangle$, corresponding to the classical submaximal symmetry of a 
the Monge equation $z_x=\frac1mu_{xx}^m$. 

The above expressions hold for a generic value of the parameter $m$ and they only change a bit 
for $m=\tfrac12$, see \cite{MR3254311}. The case $f(s)=\ln s$ (as well as $f(s)=\exp s$, 
which classically corresponds to the case $m=\frac12$) 
is treated similarly and we omit the details.
\end{proof}

 \subsubsection{Special values of the parameter $m$}
The exceptional values of $m$ in Theorem \ref{super-sub-max} are the same as in the classical case.
The function $f(s)=\frac1ms^m$ in \eqref{sMonge} 
is not defined for $m=0$, but throughout this section we will replace it with the function $f(s)=s^m$.

For the underlying $(2,1,2)$ even distribution the values $m=0,1$ correspond to 
distributions of infinite type. Indeed, the distribution fails to be totally non-holonomic
and has isomorphic maximal leaves, whence infinite-dimensional symmetry.
The values $m=-1,\frac13,\frac23,2$ correspond to flat distributions with symmetry $G(2)$.
For all other values of $m$, as well as for $f(s)=\ln s$ and $f(s)=\exp s$, the symmetry dimension
is equal to 7.

In the super-case, the values $m=0,1$ again correspond to infinite-dimensional symmetry,
but for a different reason. In fact, the distribution is totally non-holonomic with the
growth vector is $(2|4,1|2,2|0)$, and its symbol superalgebra is isomorphic to (M2).
This has an Abelian even ideal $\langle e_2,h,f_1,f_2\rangle$, in coordinates corresponding 
to $\langle \p_u,\p_{u_x},\p_{u_{xx}},\p_z\rangle$. Thus the classical argument for infinite type
fails here, yet we can explicitly demonstrate the claim as follows.

For $m=1$, equation \eqref{m-eqn} becomes
 $$
 z_x= u_{xx}+u_{x\nu}u_{x\tau},\quad
 z_{\tau}=u_{x\tau},\quad
 z_{\nu}=u_{x\nu},\quad 
 u_{\tau\nu}=1
 $$
whose solutions are $u=\phi(x)+\nu\tau$, $z=\phi'(x)+c$, for an arbitrary function $\phi(x)$ and constant 
$c$. The vector field $\xi=\psi(x)\p_u+\psi'(x)\p_{u_x}+\psi''(x)\p_{u_{xx}}+\psi'(x)\p_{z}$
is a symmetry for any function $\psi(x)$, thus the symmetry superalgebra is infinite-dimensional.

For $m=0$, the corresponding equation is
 $$
 z_x= 1+u_{x\nu}u_{x\tau},\quad
 z_{\tau}=0,\quad
 z_{\nu}=0,\quad
 u_{\tau\nu}=0
 $$
whose solutions are $u=\phi(x)$, $z=x+c$, for an abitrary function $\phi(x)$ and constant $c$. 
The vector field $\xi=\psi(x)\p_u+\psi'(x)\p_{u_x}+\psi''(x)\p_{u_{xx}}$ is a symmetry for 
any function $\psi(x)$. 

We already know that the value $m=2$ corresponds to the SHC equation with symmetry superalgebra $G(3)$.
The other three values $m=-1,\frac13,\frac23$ are however special: a direct computation shows that
the symmetry superalgebra has dimension $(10|8)$ in each case! Hence, a maximally symmetric classical 
rank 2 distribution can be super-extended to one with the submaximal supersymmetry dimension.
 % How is it possible that a super-extension does not have the maximal supersymmetry in this case?
 % This seems to contradict our crucial intuition that $H^2_+\cong S^2\C^2$ is a square root of 
 % the binary quartic since there are no non-zero elements in $S^2\C^2$ that square to zero under 
 % the symmetrization map $S^2\C^2\otimes S^2\C^2\to S^4\C^2$. What is going on here?

However a maximally symmetric rank 2 classical distribution can also be extended to a superdistribution 
of SHC type with maximal supersymmetry $G(3)$. In fact, there are super-extensions of the Monge 
equations $z_x=\frac1mu_{xx}^m$ for $m=-1,\frac13,\frac23$ with supersymmetry $G(3)$. 
To get one, take a classical equivalence of any of these cases with the $m=2$ case \cite{MR3254311}
and apply its super-extension to the SHC equation. 

For instance, the Legendre transformation 
 $$
(x,\tau,\nu,u,u_x,u_\tau,u_\nu)\mapsto(u_x,\tau,\nu,u-xu_x,-x,u_\tau,u_\nu)$$
maps the SHC equation to the following system:
 $$
 z_x=u_{xx}^{-1}-u_{xx}^{-1}u_{x\nu}u_{x\tau},\quad
 z_\tau=-u_{xx}^{-2}u_{x\tau},\quad
 z_\nu=-u_{xx}^{-2}u_{x\nu},\quad
u_{\tau\nu}=2u_{xx}^{-1}-u_{xx}^{-1}u_{x\nu}u_{x\tau}.
 $$
This is a super-extension of the equation $z_x=-u_{xx}^{-1}$ with supersymmetry $G(3)$.

 % Thus we conclude that a distribution of HC type with maximal $G(2)$ symmetry can be extended to 
 % a superdistribution of SHC type and maximal $G(3)$ supersymmetry as well as to one with a 
 % submaximal supersymmetry of dimension $(10|8)$.

\appendix

\section{Parabolic $G(3)$-supergeometries and equivalences}\label{A0}

Here we discuss the 19 supergeometries associated to parabolic subgroups $P_\cA\subset G$, where
$G$ is a Lie supergroup corresponding to $\fg=G(3)$.  The flat models $M_\cA=G/P_\cA\stackrel{\text{loc}}\simeq\exp\fm$ are shown in 
Figure \ref{F:G3-supergeometries}, and in Table \ref{Table:19sgeo} below we specify the type of the left-invariant distribution $\cD$ given by 
$\fg_{-1}$. %on $\exp\fm\stackrel{\text{loc}}\simeq M_\cA$ for each $\cA$
In particular, we indicate  its depth $\mu_\cD$ and growth vector. %$(\dim\fg_{-1},\dim\fg_{-2},\dots)$.

In the curved case, the geometry of type $(G,P_\cA)$ is given by a distribution $\cD$ on $M$ 
with symbol as in the flat case and with a possible reduction of structure group to $G_0$
(and perhaps even higher order reductions). The latter is
related to the computation of the cohomology group $H^1(\fm,\fg)$ and will not be discussed here.
Specific bracket relations are encoded by the roots associated to each Dynkin diagram,
and will not be indicated.
 \begin{table}[h]
  \[
 \begin{array}{|l|l|c|c|l|} \hline
\mbox{Type of }P_\cA & \mbox{Structure group }G_0 & \dim M_\cA\! & \mu_\cD & 
\mbox{Growth vector } \\ \hline\hline
P_1^{\rm I} & G(2)\times\C^\times &  (1|7) & 2 & (0|7,1|0) \\
P_3^{\rm I}=P_3^{\rm II}=P_2^{\rm III} & GL(2|1) &  (6|5) & 2 & (4|3,2|2) \\
P_1^{\rm III}=P_1^{\rm IV} & COSp(3|2) &  (5|4) & 2 & (4|4,1|0) \\ 
P_1^{\rm II}=P_3^{\rm III}=P_3^{\rm IV} & GL(2|1) &  (6|5) & 3 & (2|2,2|2,2|1) \\
P_{2}^{\rm IV} & GL(2) \times OSp(1|2) &  (5|6) & 3 & (2|4,1|2,2|0) \\ 
P_2^{\rm I}=P_2^{\rm II} & GL(2)\times SL(1|1) &  (6|6) & 4 & (2|2,1|1,2|2,1|1) \\ \hline
P_{13}^{\rm I} & GL(2)\times\C^\times &  (6|7) & 4 & (4|2,1|3,0|2,1|0) \\
P_{12}^{\rm III} & GL(2)\times\C^\times &  (6|7) & 4 & (0|5,5|0,0|2,1|0) \\ 
P_{13}^{\rm II}=P_{23}^{\rm III} & GL(1|1)\times\C^\times &  (7|6) & 5 & (2|2,1|1,1|1,2|1,1|1) \\
P_{13}^{\rm III}=P_{13}^{\rm IV} & GL(1|1)\times\C^\times &  (7|6) & 5 & (2|2,2|2,1|1,1|1,1|0) \\
P_{12}^{\rm IV} & COSp(1|2)\times\C^\times &  (6|6) & 5 & (2|2,1|2,1|2,1|0,1|0) \\ 
P_{12}^{\rm I} & GL(2)\times\C^\times &  (6|7) & 6 & (2|1,1|2,2|1,0|2,0|1,1|0) \\
P_{23}^{\rm I}=P_{23}^{\rm II} & GL(1|1)\times\C^\times &  (7|6) & 6 & (2|1,1|1,1|1,1|1,1|1,1|1) \\
P_{23}^{\rm IV} & GL(2)\times\C^\times &  (6|7) & 6 & (0|3,3|0,0|3,1|0,0|1,2|0) \\ 
P_{12}^{\rm II} & GL(2)\times\C^\times &  (6|7) & 7 & (0|3,2|0,0|1,1|0,0|2,3|0,0|1) \\ \hline
P_{123}^{\rm III} & \C^\times\times\C^\times\times\C^\times &  (7|7) & 7 & 
(1|2,1|2,1|1,2|0,1|1,0|1,1|0) \\ 
P_{123}^{\rm I} & \C^\times\times\C^\times\times\C^\times &  (7|7) & 8 & 
(2|1,1|1,1|1,1|1,1|1,0|1,0|1,1|0) \\ 
P_{123}^{\rm IV} & \C^\times\times\C^\times\times\C^\times &  (7|7) & 8 & 
(1|2,2|1,1|1,0|2,1|0,0|1,1|0,1|0)  \\ 
P_{123}^{\rm II} & \C^\times\times\C^\times\times\C^\times &  (7|7) & 9 & 
(1|2,1|1,1|0,0|1,1|0,0|1,1|1,2|0,0|1) \\ 
\hline
  \end{array}
 \]
 \caption{Distributions on generalized flag supermanifolds $M_\cA=G/P_\cA$}
 \label{Table:19sgeo}
 \end{table} 

For example, the first case represents a $(G(3),P_1^{\rm I})$ type geometry via a purely odd contact structure.  If $\sigma$ is a local defining 1-form for the rank $(0|7)$ contact distribution $\cC$, then $[d\sigma|_\cC]$ is a conformal metric (in the classical sense).  A reduction of the structure group to $G(2)\times\C^\times\subset CO(7)$ can be encoded by the additional choice of a conformal class of generic 3-forms on $\cC$.
%This is given by a choice of conformal metric (in the classical sense) on the rank $7$ distribution, and a reduction of the structure group to $G(2)\times\C^\times\subset CO(7)$ can be encoded by the additional choice of a conformal class of generic 3-forms on the distribution.

Let us focus on the geometries highlighted on
Figure \ref{F:G3-supergeometries}. The geometries of types $M_1^{\rm IV}$ and 
$M_2^{\rm IV}$ are well studied in this paper. To understand the type $M_{12}^{\rm IV}$, we
consider the roots organized by parity and grading for the parabolic subalgebra $\fp_{12}^{\rm IV}$:
  \begin{align} \label{posroots-IV-12}
 \begin{array}{|c|c|c|} \hline
 k & \Delta_{\bar{0}}(k) &  \Delta_{\bar{1}}(k) \\ \hline\hline
 0 & \pm 2\alpha_3 & \pm \alpha_3\\\hline
 1 & \alpha_1,\ \alpha_2 + \alpha_3 & \alpha_2,\ \alpha_2 + 2\alpha_3, \\ \hline
 2 & \alpha_1 + \alpha_2 + \alpha_3 & \alpha_1 + \alpha_2,\ \alpha_1 + \alpha_2 + 2\alpha_3 \\ \hline
 3 & \alpha_1 + 2 \alpha_2 + 2 \alpha_3 & \alpha_1 + 2\alpha_2 + \alpha_3,\ 
       \alpha_1 + 2\alpha_2 + 3\alpha_3 \\ \hline
 4 & \alpha_1 + 3 \alpha_2 + 3\alpha_3 & \\ \hline
 5 & 2\alpha_1 + 3 \alpha_2 + 3\alpha_3 & \\ \hline
 \end{array}
 \end{align}

The bracket structure is given 
by the addition of roots. In particular, we note that $\fm_{\bar0}$ is trivial as a module over $\fsp(2)\subset(\fg_0)_{\bar0}$, 
while $\fm_{\bar1}$ consists of 3 standard representations. On a supermanifold of dimension $(6|6)$, a distribution with growth vector $(2|2,1|2,1|2,1|0,1|0)$ 
and the given symbol is said to be of type $M_{12}^{\rm IV}$.

Not all geometries in Table \ref{Table:19sgeo} are different. For example, we have the following:

 \begin{theorem}
There is an equivalence of categories between the germs of distributions of type 
$M_{12}^{\rm IV}$ (as above) and $M_2^{\rm IV}$ (i.e., of SHC-type).
 \end{theorem}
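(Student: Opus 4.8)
The plan is to realize the equivalence as a pair of mutually inverse, canonically defined functors: the right-hand (unobstructed) arrow of Figure~\ref{F:G3-twistor}, given by Cauchy characteristic reduction, and a reconstruction functor built from the intrinsic field of $\alpha$-planes of an SHC-type distribution.

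\textbf{Forward functor (Cauchy reduction).} Given a germ of distribution $\cD$ of type $M_{12}^{\rm IV}$, I would first work at the symbol level with $\Pi^{\rm IV}$ and grading element $\sfZ=\sfZ_1+\sfZ_2$. One checks from the root system that $\cD^2$ has rank $(3|4)$ and that its Cauchy characteristic space $\Ch(\cD^2)$ is the rank $(1|0)$ even line spanned by the field $\bC$ of \eqref{eq:CCsupereven}, namely the root space $\fg_{-\alpha_1}$: indeed $[\fg_{-\alpha_1},(\fg^{12}_{-2})]=0$, so that as a vector space $\fm^{12}=\fm^{2}\oplus\fg_{-\alpha_1}$, with $\fm^2$ a subalgebra and $\fg_{-\alpha_1}$ the Cauchy direction. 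Since $\Ch(\cD^2)$ is even of rank $(1|0)$ it is involutive, so the local leaf space $\overline M=M/\Ch(\cD^2)$ is a $(5|6)$-supermanifold carrying $\overline\cD:=\cD^2/\Ch(\cD^2)$ of rank $(2|4)$. Computing the reduced weak derived flag then identifies the growth of $\overline\cD$ as $(2|4,1|2,2|0)$ and its symbol as the SHC algebra (M1); thus $F(\cD):=\overline\cD$ is of $M_2^{\rm IV}$-type.

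\textbf{Backward functor (reconstruction).} Given $\overline\cD$ of SHC-type on $\overline M$, by Corollary~\ref{Rigidity} its symbol is everywhere isomorphic to (M1), so the bracket $\Xi\colon(\overline\cD)_{\bar0}\otimes(\overline\cD)_{\bar1}\to(\overline\cD^2/\overline\cD)_{\bar1}$ and the resulting field of $q$-isotropic $\alpha$-planes $K_e=\ker\Xi(e,\cdot)\subset(\overline\cD)_{\bar1}$, indexed by $[e]\in\bbP((\overline\cD)_{\bar0})$ (see Theorem~\ref{RigThm}, Step~2), are intrinsic. I set $M:=\bbP((\overline\cD)_{\bar0})$, the $\bbP^1$-bundle $\pi\colon M\to\overline M$ of even lines in the distribution (a $(1|0)$-dimensional fibre), and define the tautological distribution $\cD:=(d\pi)^{-1}(\langle e\rangle\oplus K_e)$ at $([e],x)$. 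This has rank $(1|0)+(1|2)=(2|2)$, needs no auxiliary connection, and is manifestly canonical; I put $\overline F(\overline\cD):=(M,\cD)$. The one point to verify is that $(M,\cD)$ has type $M_{12}^{\rm IV}$, i.e.\ the symbol-level assertion that the tautological distribution attached to (M1) together with the line field has symbol $\fm^{12}$.

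\textbf{Mutual inverseness.} For $F\circ\overline F\cong\id$ I would show $\cD^2=\pi^{-1}\overline\cD$ with $\Ch(\cD^2)$ the vertical bundle: the essential input is that bracketing the vertical direction $\partial_\lambda$ against the tautological and odd parts of $\cD$ moves $e$ inside $\bbP((\overline\cD)_{\bar0})$, hence moves $K_e$, and since any two distinct $\alpha$-planes satisfy $K_{e_1}\oplus K_{e_2}=(\overline\cD)_{\bar1}$ (Theorem~\ref{RigThm}, Step~2) the first derived distribution fills out all of $\pi^{-1}\overline\cD$; then $F\overline F(\overline\cD)=\pi^{-1}\overline\cD/(\text{vertical})=\overline\cD$. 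For $\overline F\circ F\cong\id$, given $\cD$ of type $M_{12}^{\rm IV}$ I define $\Phi\colon M\to\bbP((F\cD)_{\bar0})$, $m\mapsto([e_m],\pi(m))$, where $e_m$ is the image of $(\cD|_m)_{\bar0}$ modulo $\Ch(\cD^2)$; the required identity $\Phi_*\cD=(\text{tautological distribution})$ reduces to the symbol relation $(\fg^{12}_{-1})_{\bar1}=K_{e}$, which holds by the root computation (e.g.\ $[\fg_{-(\alpha_2+\alpha_3)},\fg_{-\alpha_2}]=[\fg_{-(\alpha_2+\alpha_3)},\fg_{-(\alpha_2+2\alpha_3)}]=0$). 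As both functors are assembled from intrinsic data, they carry local equivalences to local equivalences, and together with the two natural isomorphisms this yields the asserted equivalence of categories.

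\textbf{Main obstacle.} The delicate step is the backward direction: establishing that the reconstructed tautological distribution has symbol exactly $\fm^{12}$, with the full depth-$5$ growth $(2|2,1|2,1|2,1|0,1|0)$, for an \emph{arbitrary} curved SHC-type $\overline\cD$ and not merely the flat model. This requires checking, with careful tracking of parities, that iterated brackets of the triple $\{\text{vertical},\ \text{tautological},\ K_e\}$ neither collapse nor overshoot the prescribed graded dimensions. Uniformity over curved inputs is precisely what forces the use of the rigidity of the SHC symbol (Corollary~\ref{Rigidity}) together with strong regularity, which guarantee the $\alpha$-plane field is globally defined and reduce the computation to the single fixed symbol $\fm^{12}$; the explicit curved normal forms of Proposition~\ref{FGHK-constraints} would make this verification concrete should a purely algebraic argument prove insufficient.
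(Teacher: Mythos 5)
Your proposal is correct and follows essentially the same route as the paper's proof: one direction is the (local) quotient of the derived distribution $\hat{\cD}_2$ by its Cauchy characteristic space, and the other is the projectivized prolongation $\hat{M}=\bbP(\cD_{\bar 0})$ carrying the tautological line, the vertical direction, and the odd plane $\ker\Xi(\ell,\cdot)$ — exactly the paper's two mutually inverse constructions. Your additional sketch of the mutual-inverse verification (via $K_{e_1}\oplus K_{e_2}=(\overline\cD)_{\bar 1}$ and the symbol-level relation $[\fg_{-\alpha_1},\fm_{-2}]=0$) only makes explicit what the paper leaves as ``straightforward to verify''.
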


 \begin{proof}
This follows from the two mutually inverse constructions discussed next.

\smallskip

 \noindent{\bf From $M_2^{\rm IV}$ to $M_{12}^{\rm IV}$:} 
Consider a rank $(2|4)$ distribution $\cD$ of SHC-type on a supermanifold $M$ of dimension $(5|6)$.
We define its prolongation $\hat{M}=\mathbb{P}\cD_{\bar0}$ via the functor of points by the formula
$$\hat{M}=\{(x,\ell)\mid x=\text{super-point of}\;M,\;\ell=\text{rank $(1|0)$ free submodule of}\;\cD_{\bar0}|_x\}\;.$$ This is a supermanifold of dimension $(6|6)$.
Let $\pi:\hat{M}\to M$ be the natural projection. We define a distribution $\hat{\cD}$ on $\hat{M}$ 
by the formulae $\hat{\cD}_{\bar0}=\pi_*^{-1}(\ell)$ and 
$\hat{\cD}_{\bar1}=\mathop{\rm Ker}(\Xi(\ell,\cdot))$, where $\Xi:(\g_{-1})_{\bar0}\otimes (\g_{-1})_{\bar1}\to (\g_{-2})_{\bar1}$ is a component of the bracket. The rank of $\hat{\cD}$ is $(2|2)$ and it is straightforward to verify that 
$(\hat{M},\hat{\cD})$ is of type $M_{12}^{\rm IV}$.

\smallskip

 \noindent{\bf From $M_{12}^{\rm IV}$ to $M_2^{\rm IV}$:} 
Consider a rank $(2|2)$ distribution $\hat{\cD}$ of type $M_{12}^{\rm IV}$ on a manifold $\hat{M}$ 
of dimension $(6|6)$. From the symbol of this distribution (obtained from \eqref{posroots-IV-12}), we see that the derived distribution
$\hat{\cD}_2=[\hat{\cD},\hat{\cD}]$ has a Cauchy characteristic in $\hat{\cD}_{\bar0}$.
Let $\pi:\hat{M}\to M$ be a (local) quotient by it and define $\cD=\hat{\cD}_2/\mathop{\rm Ch}(\hat{\cD}_2)$.
The rank of $\cD$ is $(2|4)$ and it is straightforward to verify that $(M,\cD)$ is of SHC-type. 
 \end{proof}

 \begin{corollary}
For $\g=G(3)$ with $\fm = \fg_-$ associated to $\fp_{12}^{\rm IV}$, we have: $H^{d,1}(\fm,\g)=0$ for all $d \geq 0$.
 \end{corollary}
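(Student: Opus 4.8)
The plan is to deduce the corollary directly from the preceding equivalence of categories together with the already-established vanishing result for the SHC grading. The final statement concerns $\fm = \fg_-$ for the parabolic $\fp_{12}^{\rm IV}$, whereas Theorem~\ref{thm:235H^1} gives $H^{d,1}(\fm,\fg)=0$ for all $d\geq 0$ for the parabolic $\fp_2^{\rm IV}$. The bridge between the two is the interpretation of $H^{d,1}(\fm,\fg)$ in terms of the Tanaka--Weisfeiler prolongation: by the theory of \cite{MR0266258}, the vanishing of $H^{d,1}(\fm,\fg)$ in all degrees $d\geq 0$ is equivalent to the statement $\pr(\fm)\cong\fg$, i.e.\ $\fg$ is the maximal prolongation of its own negatively graded part. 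This is precisely Corollary~\ref{C:pr-235} in the SHC case.

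First I would reduce the corollary to the statement $\pr(\fm)\cong G(3)$, where now $\fm=\fg_-$ is the negatively graded part associated to $\fp_{12}^{\rm IV}$, which has growth vector $(2|2,1|2,1|2,1|0,1|0)$ as recorded in \eqref{posroots-IV-12}. Since the parabolics $\fp_{12}^{\rm IV}$ and $\fp_2^{\rm IV}$ both sit inside $G(3)$ and $\fp_{12}^{\rm IV}\subset\fp_2^{\rm IV}$, the grading elements are $\sfZ_1+\sfZ_2$ and $\sfZ_2$ respectively. The key observation is that the equivalence of categories just proved identifies germs of $M_{12}^{\rm IV}$-distributions with germs of SHC-type ($M_2^{\rm IV}$) distributions; since both sides have the same maximal symmetry model $G(3)$ and the Tanaka prolongation computes the symmetry of the flat model, the maximal prolongations must agree. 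Concretely, the two mutually inverse constructions in the theorem's proof are local and functorial, hence they intertwine the respective symmetry superalgebras; applied to the flat models they give $\pr(\fm_{12})\cong\pr(\fm_2)\cong G(3)$, the last isomorphism being Corollary~\ref{C:pr-235}.

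The cleanest route, which I would make precise, is to argue at the level of symmetries of the flat homogeneous models. The flat $M_2^{\rm IV}$-geometry is $G(3)/P_2^{\rm IV}$ and its symbol prolongs maximally to $G(3)$; the equivalence of categories sends this to the flat $M_{12}^{\rm IV}$-geometry $G(3)/P_{12}^{\rm IV}$ and, being an equivalence, preserves the automorphism (symmetry) superalgebra. Therefore the maximal symmetry dimension of $M_{12}^{\rm IV}$-geometries equals $\dim G(3)=(17|14)$ and is realized by the flat model, whose infinitesimal symmetries are exactly $\pr(\fm_{12})$. Transitivity of the flat model forces $\pr(\fm_{12})_{-}=\fm_{12}$, so by the bound $\pr(\fm_{12})\subseteq G(3)$ and equality of dimensions we get $\pr(\fm_{12})\cong G(3)$. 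Unwinding the Tanaka dictionary once more then yields $H^{d,1}(\fm,\fg)=0$ for all $d\geq 0$.

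The main obstacle is making rigorous the claim that the equivalence of categories literally preserves the prolongation, rather than merely the symmetry dimension of individual (possibly curved) objects. One must check that the two constructions $M_2^{\rm IV}\leftrightarrow M_{12}^{\rm IV}$ are compatible with the graded structures at the symbol level, so that a grade-preserving derivation of $\fm_{12}$ corresponds to one of $\fm_2$ and vice versa; equivalently, that the construction $\hat M=\mathbb P\cD_{\bar 0}$ and the Cauchy-characteristic quotient are inverse \emph{as filtered/graded} operations. This is essentially bookkeeping with the root data in \eqref{HC-grading} and \eqref{posroots-IV-12}, but it is the step where care is needed. An alternative that sidesteps this issue entirely would be a direct cohomological computation for $\fp_{12}^{\rm IV}$ using the Hochschild--Serre spectral sequence of Section~\ref{subsec:specseq} and the exact sequence of Proposition~\ref{prop:longexact}, paralleling the $\fp_2^{\rm IV}$ computation; I would keep this in reserve as a cross-check, but the equivalence-of-categories argument is far shorter and is the approach I would present.
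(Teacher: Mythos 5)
Your proposal is correct and follows essentially the same route as the paper: the paper's proof likewise combines the equivalence of categories (which identifies $\mathfrak{inf}(M,\cD)$ with $\mathfrak{inf}(\hat M,\hat\cD)$), Theorem \ref{T:G3-SHC-sym} giving $G(3)$ as the symmetry of the flat SHC model, and the fact that Tanaka--Weisfeiler prolongations reproduce the symmetries of flat models, to conclude $pr(\fm)\cong G(3)$ and hence the cohomological vanishing. Your extra dimension-count invoking the bound $\pr(\fm_{12})\subseteq G(3)$ is unnecessary (and would be circular if relied upon), but your direct identification $\pr(\fm_{12})=\mathfrak{inf}(\hat{\cD}_{\mathrm{SHC}})=G(3)$ already closes the argument exactly as in the paper.
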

\begin{proof}
From the established equivalence of categories, we conclude  
$\mathfrak{inf}(M,\cD)=\mathfrak{inf}(\hat{M},\hat{\cD})$. Moreover, 
$G(3)$ is the maximal transitive symmetry algebra for distributions of type $M_{12}^{\rm IV}$.
We apply this to the flat distributions to conclude
that $G(3)=\mathfrak{inf}(\cD_\text{SHC})=\mathfrak{inf}(\hat{\cD}_\text{SHC})=pr(\fm)$, where 
the first equality follows from Theorem \ref{T:G3-SHC-sym} and the last equality from the fact that
Tanaka--Weisfeiler prolongations reproduce the symmetries of flat models. 
 \end{proof}

In other words, the geometry of type $M_{12}^{\rm IV}$ is given by a naked distribution
(that is without reduction of the structure group) on a supermanifold $\hat{M}$ of dimension $(6|6)$ 
with the given symbol. This explains and enhances the twistor correspondence from the 
introduction, see the right arrow in Figure \ref{F:G3-twistor}. 
 
We illustrate this correspondence using the equation \eqref{FGHK}, with the SHC-type 
constraints of Proposition \ref{FGHK-constraints} in force.
The distribution $\cD$ is given by \eqref{FGHK-deformeven}-\eqref{FGHK-deformodd} and the corresponding rank $(2|2)$ distribution $\hat{\cD}$
 on $\hat{M}$ of type $M_{12}^{\rm IV}$ is 
 $$
\hat{\cD} = \langle D_x+\lambda \partial_{u_{xx}},\ \partial_\lambda \,|\,
D_\nu+ (D_xK+\lambda\partial_{u_{xx}}K) \partial_{u_{x\nu}},\ 
D_\tau- (D_xK+\lambda\partial_{u_{xx}}K) \partial_{u_{x\tau}}\rangle.
 $$
Here $\lambda$ is the coordinate on the (projective) line bundle $\pi:\hat{M}\to M$. 
 
In particular, for the super-extension \eqref{sMonge} of the Monge equation, the
lifted distribution $\hat{\cD}$ has the same symmetries as $\cD$ (given via $f=f(u_{xx})$), 
and thus is subject to the submaximal result of Theorem \ref{super-sub-max}.

\section{Vanishing of the groups \texorpdfstring{$H^{d,2}(\fm_{\bar 1},\fg)_{\bar 0}$}{} for \texorpdfstring{$d\geq 3$}{}}\label{appendixA}

\begin{proposition}
\label{thm:techI}
The group $H^{d,2}(\fm_{\bar 1},\fg)_{\bar 0}=0$ for all $d\geq 5$.
\end{proposition}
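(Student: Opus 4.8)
The plan is to compute $H^{d,2}(\fm_{\bar 1},\fg)_{\bar 0}$ directly by analyzing the cochain complex $C^{\bullet}(\fm_{\bar 1},\fg)$ of Definition \ref{def:complexM1} in the relevant degrees. Recall that a $2$-cochain in $C^{2}(\fm_{\bar 1},\fg)$ is an element of $\fg\otimes K^2$, where $K^2 = \fm_{\bar 0}^*\otimes\fm_{\bar 1}^*\oplus\Lambda^2\fm_{\bar 1}^*$ (in the super-sense), i.e.\ a map $\Lambda^2\fm\to\fg$ that vanishes when both arguments lie in $\fm_{\bar 0}$. The first step is to enumerate, for each fixed degree $d\geq 5$, the possible homogeneous components of such a cochain landing in $\fg$; since $\fg$ has graded components only in degrees $-3,\ldots,3$ and $\fm_{\bar 1}=(\fg_{-1})_{\bar 1}\oplus(\fg_{-2})_{\bar 1}$ while $\fm_{\bar 0}=(\fg_{-1})_{\bar 0}\oplus(\fg_{-2})_{\bar 0}\oplus(\fg_{-3})_{\bar 0}$ (using Table \ref{tab:p_2^IV}), the total degree constraint forces the number of admissible input/output combinations to shrink rapidly as $d$ grows. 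For $d\geq 5$ I expect very few such components to survive, so that the even $2$-cochain space $C^{d,2}(\fm_{\bar 1},\fg)_{\bar 0}$ itself becomes small.

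The second step is to write out the relevant Chevalley--Eilenberg differentials \eqref{eq:CE1}--\eqref{eq:CE2} on these components, using the explicit brackets of $\fm$ and the $\fg_0$-action recorded in the Lemma preceding Proposition \ref{prop:3exse}. The strategy is to show that in each admissible degree the coboundary operator $\partial:C^{d,1}(\fm_{\bar 1},\fg)_{\bar 0}\to C^{d,2}(\fm_{\bar 1},\fg)_{\bar 0}$ is surjective onto the space of cocycles, or, more directly, that there are simply no nonzero cocycles to begin with. Because everything is $(\fg_0)_{\bar 0}$-equivariant and $(\fg_0)_{\bar 0}$ contains the reductive algebra $\mathbb{C}Z\oplus\fsl(2)\oplus\fsp(2)$, I would exploit Schur's lemma: decompose both the relevant $1$- and $2$-cochain spaces into $(\fg_0)_{\bar 0}$-irreducibles, match multiplicities, and verify injectivity/surjectivity of $\partial$ componentwise. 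The grading element $Z$ already pins down the degree, and the $\fsl(2)\oplus\fsp(2)$-types (built from tensor powers of $\mathbb{C}^2$ as in \eqref{eq:BBWcoh2}) further separate the summands, so most of the verification reduces to checking that a handful of equivariant maps between standard modules are nonzero.

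The main obstacle I anticipate is the borderline case $d=5$, where the degree is just barely inside the range permitting a nonzero even $2$-cochain; there the output must sit in $\fg_3$ or $\fg_2$ and the inputs are maximally ``negative'', leaving the least room to maneuver and the highest risk of an accidental cocycle. The cleanest resolution is likely to exhibit an explicit $1$-cochain whose coboundary hits any candidate cocycle, or to apply a Jacobi-type cocycle condition forcing the sole surviving component to vanish (as in the analogous arguments of Theorem \ref{thm:SC2IV} and Proposition \ref{prop:SC2IVodd}). I would organize the proof so that the degree bookkeeping from the first step isolates $d=5$ as essentially the only nontrivial case, handle it by one such vanishing argument, and dispatch all $d\geq 6$ uniformly by observing that the admissible cochain space is already zero for dimensional/degree reasons.
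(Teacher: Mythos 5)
Your overall strategy (degree bookkeeping on the components of a $2$-cochain in $C^{\bullet}(\fm_{\bar 1},\fg)$, then cocycle conditions and $(\fg_0)_{\bar 0}$-equivariance to force vanishing) is the right one, and it is essentially what the paper does. However, there is a concrete error in your degree accounting that would make the proof fail as organized. You claim that $d=5$ is "essentially the only nontrivial case" and that all $d\geq 6$ can be dispatched because "the admissible cochain space is already zero for dimensional/degree reasons." This is false: the even cochain spaces are nonzero in degrees $6$ and $7$ as well. For instance, in degree $7$ there are components
\begin{equation*}
\varphi_{\alpha\beta}{}^{a}:\Lambda^2 (\fg_{-2})_{\bar 1}\longrightarrow \fg_{3}\;,\qquad
\varphi_{a\alpha}{}^{\beta}:\fg_{-3}\otimes (\fg_{-2})_{\bar 1}\longrightarrow (\fg_{2})_{\bar 1}\;,
\end{equation*}
and in degree $6$ one has, among others, $(\fg_{-2})_{\bar 1}\otimes(\fg_{-1})_{\bar 1}\to\fg_{3}$, $\Lambda^2(\fg_{-2})_{\bar 1}\to(\fg_2)_{\bar 0}$ and $\fg_{-3}\otimes(\fg_{-2})_{\bar 1}\to(\fg_1)_{\bar 1}$. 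All of these are \emph{even} maps (e.g.\ odd $\otimes$ odd $\to$ even has even parity), so they genuinely contribute to $C^{d,2}(\fm_{\bar 1},\fg)_{\bar 0}$ for $d=6,7$. The correct cutoff where pure degree/parity reasons suffice is $d\geq 8$: the only degree-$8$ candidate $\fg_{-3}\otimes(\fg_{-2})_{\bar 1}\to\fg_3$ is an odd map, and nothing exists in degree $\geq 9$.

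Consequently the cases $d=5,6,7$ must each be treated by explicit cocycle-condition computations, exactly as the paper does: in each of these degrees one lists the admissible components and shows, by evaluating $\partial\varphi$ on suitable triples of elements of $\fm$ (e.g.\ $\partial\varphi|_{(\fg_{-1})_{\bar 0}\otimes(\fg_{-2})_{\bar 1}\otimes(\fg_{-2})_{\bar 1}}=0$ and $\partial\varphi|_{\fg_{-3}\otimes(\fg_{-2})_{\bar 1}\otimes(\fg_{-2})_{\bar 0}}=0$ in degree $7$), that every component vanishes, i.e.\ $Z^{d,2}(\fm_{\bar 1},\fg)_{\bar 0}=0$. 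Note also that no coboundary normalization is needed in these degrees (your alternative of proving surjectivity of $\partial$ on $1$-cochains is unnecessary here, since the cocycle space itself is already zero). So your proposal needs to be repaired by replacing the claimed vacuity of $d\geq 6$ with three parallel hands-on computations in degrees $5$, $6$ and $7$.
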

\begin{proof}
The claim is immediate  by degree reasons if $d\geq 8$. We now consider the remaining degrees $d=5,6,7$ separately and
show that $Z^{d,2}(\fm_{\bar 1},\fg)_{\bar 0}=0$ in all of these cases.
%otherwise $H^{d,2}(\fm_{\bar 1},\fg)_{\bar 0}=Z^{d,2}(\fm_{\bar 1},\fg)_{\bar 0}$ and
%\vskip0.05cm\par\noindent
%{\underline{Case $d=8$}} In this case $\varphi:\fg_{-3}\otimes(\fg_{-2})_{\bar 1}\to \fg_{3}$ and
%$0=\partial\varphi(X,Y,Z)=[Z,\varphi(X,Y)]$
%for all $X\in\fg_{-3}$, $Y\in(\fg_{-2})_{\bar 1}$, $Z\in(\fg_{-1})_{\bar 0}$, whence $Z^{8,1}(\fm_{\bar 1},\fg)=0$.
\vskip0.05cm\par\noindent
{\underline{Case $d=7$}}
The components of an even $2$-cocycle $\varphi\in Z^{7,2}(\fm_{\bar 1},\fg)_{\bar 0}$ are given by
\begin{align*}
%\varphi_{a b\beta}{}^{c}&:\fg_{-3}\otimes (\fg_{-1})_{\bar 1}\longrightarrow \fg_{3}\;,\\
%\varphi_{\1\alpha}{}^{a}&:(\fg_{-2})_{\bar 0}\otimes (\fg_{-2})_{\bar 1}\longrightarrow \fg_{3}\;,\\
\varphi_{\alpha\beta}{}^{a}&:\Lambda^2 (\fg_{-2})_{\bar 1}\longrightarrow \fg_{3}\;,\\
%\varphi_{a\alpha}{}^\1+
\varphi_{a\alpha}{}^{\beta}&:\fg_{-3}\otimes (\fg_{-2})_{\bar 1}\longrightarrow (\fg_{2})_{\bar 1}\;,
\end{align*}
and we then note that
\begin{align*}
\partial\varphi|_{(\fg_{-1})_{\bar 0}\otimes(\fg_{-2})_{\bar 1}\otimes(\fg_{-2})_{\bar 1}}
&=0\Longrightarrow \varphi_{\a \b a}=0\;,\\
%\partial\varphi|_{(\fg_{-2})_{\bar 0}\otimes(\fg_{-2})_{\bar 1}\otimes(\fg_{-2})_{\bar 1}}
%&=0\Longrightarrow \varphi_{\1\beta}{}^a \bep_{a\alpha}+\varphi_{\1\alpha}{}^a \bep_{a\beta}=0\Longrightarrow
%\varphi_{\1\alpha}{}^a=0
%\;,\\
%\partial\varphi|_{\fg_{-3}\otimes(\fg_{-1})_{\bar 1}\otimes(\fg_{-2})_{\bar 0}}
%&=0\Longrightarrow \varphi_{a b\beta}{}^{c}=0\;,\\
\partial\varphi|_{\fg_{-3}\otimes(\fg_{-2})_{\bar 1}\otimes(\fg_{-2})_{\bar 0}}
&=0\Longrightarrow %\varphi_{a\alpha}{}^\1=
\varphi_{a\alpha}{}^{\beta}=0\;.
\end{align*}
Hence $Z^{7,2}(\fm_{\bar 1},\fg)_{\bar 0}=0$.
\vskip0.05cm\par\noindent
{\underline{Case $d=6$}} The components of an even $2$-cocycle $\varphi\in Z^{6,2}(\fm_{\bar 1},\fg)_{\bar 0}$ are given by
\begin{align*}
\varphi_{\a b\b}{}^c&:(\fg_{-2})_{\bar 1}\otimes(\fg_{-1})_{\bar 1}\to\fg_{3}\;,\\
\varphi_{a b\beta}{}^{\a}&:\fg_{-3}\otimes (\fg_{-1})_{\bar 1}\longrightarrow (\fg_{2})_{\bar 1}\;,\\
\varphi_{\1\alpha}{}^{\beta}&:(\fg_{-2})_{\bar 0}\otimes (\fg_{-2})_{\bar 1}\longrightarrow (\fg_{2})_{\bar 1}\;,\\
\varphi_{\alpha\beta}{}^\1&:\Lambda^2 (\fg_{-2})_{\bar 1}\longrightarrow (\fg_{2})_{\bar 0}\;,\\
\varphi_{a\alpha}{}^{b\beta}&:\fg_{-3}\otimes (\fg_{-2})_{\bar 1}\longrightarrow (\fg_{1})_{\bar 1}\;,
\end{align*}
and they all vanish since
\begin{align*}
\partial\varphi|_{\fg_{-3}\otimes\fg_{-3}\otimes(\fg_{-1})_{\bar 1}}
&=0\Longrightarrow \varphi_{bc\b}{}^{\a}\bep_{a\a}-\varphi_{ac\b}{}^{\a}\bep_{b\a}=0\Longrightarrow \varphi_{a b\beta}{}^{\a}=0\;,\\
\partial\varphi|_{(\fg_{-2})_{\bar 1}\otimes(\fg_{-2})_{\bar 1}\otimes(\fg_{-2})_{\bar 1}}
&=0\Longrightarrow \varphi_{\a\b}{}^\1 \bep_{\gamma}+\varphi_{\gamma\a}{}^\1 \bep_\b+\varphi_{\b\gamma}{}^\1 \bep_\a=0\Longrightarrow \varphi_{\a\b}{}^\1=0\;,\\
\partial\varphi|_{\fg_{-3}\otimes(\fg_{-1})_{\bar 0}\otimes(\fg_{-1})_{\bar 1}}
&=0\Longrightarrow \omega_{cd}\varphi_{a\a}{}^{b\b}\bep_{b\b}=0\Longrightarrow \varphi_{a\alpha}{}^{b\beta}=0\;,\\
\partial\varphi|_{\fg_{-3}\otimes(\fg_{-2})_{\bar 0}\otimes(\fg_{-2})_{\bar 1}}
&=0\Longrightarrow  \varphi_{\1\a}{}^\b \bep_{c\b}=0\Longrightarrow \varphi_{\1\a}{}^\b=0
\;,\\
\partial\varphi|_{\fg_{-3}\otimes(\fg_{-2})_{\bar 1}\otimes(\fg_{-1})_{\bar 1}}
&=0\Longrightarrow \omega_{dc}\varphi_{\a b\b}{}^c=0\Longrightarrow \varphi_{\a b\b}{}^c=0\;.
\end{align*}
Hence $Z^{6,2}(\fm_{\bar 1},\fg)_{\bar 0}=0$.

\vskip0.05cm\par\noindent
{\underline{Case $d=5$}} We still work in cocycle components, writing
\begin{align*}
\varphi_{a\a b\b}{}^{c}&:\Lambda^2(\fg_{-1})_{\bar 1}\to\fg_{3}\;,\\
\varphi_{a\a\b}{}^{\1}&:(\fg_{-1})_{\bar 1}\otimes (\fg_{-2})_{\bar 1}\longrightarrow (\fg_{2})_{\bar 0}\;,\\
\varphi_{\1 a\a}{}^{\b}&:(\fg_{-2})_{\bar 0}\otimes (\fg_{-1})_{\bar 1}\longrightarrow (\fg_{2})_{\bar 1}\;,\\
\varphi_{a}{}^{\beta}{}_\alpha&:(\fg_{-1})_{\bar 0}\otimes(\fg_{-2})_{\bar 1} \longrightarrow (\fg_{2})_{\bar 1}\;,\\
\varphi_{a}{}^{c\gamma}{}_{b\b}&:\fg_{-3}\otimes (\fg_{-1})_{\bar 1}\longrightarrow (\fg_{1})_{\bar 1}\;,\\
\varphi_{\a\b}{}^{a}&:\Lambda^2 (\fg_{-2})_{\bar 1}\longrightarrow (\fg_{1})_{\bar 0}\;,\\
\varphi_{\1\a}{}^{b\b}&:(\fg_{-2})_{\bar 0}\otimes (\fg_{-2})_{\bar 1}\longrightarrow (\fg_{1})_{\bar 1}\;,\\
\varphi_{\a a}{}^{\b}&: (\fg_{-2})_{\bar 1}\otimes\fg_{-3}\longrightarrow (\fg_{0})_{\bar 1}\;,
\end{align*}
and show that they all vanish. We depart with the following chain of equations:
\begin{align*}
%\partial\varphi|_{(\fg_{-2})_{\bar 1}\otimes(\fg_{-2})_{\bar 1}\otimes(\fg_{-2})_{\bar 1}}
%&=0\Longrightarrow \varphi_{\a\a}{}^a\bep_{a\a}=0\;\text{(no summation on}\;\a)\\
\partial\varphi|_{(\fg_{-2})_{\bar 1}\otimes(\fg_{-2})_{\bar 1}\otimes(\fg_{-2})_{\bar 1}}
&=0\Longrightarrow \varphi_{\a\b}{}^a\bep_{a\gamma}+\varphi_{\gamma\a}{}^a\bep_{a\b}+\varphi_{\b\gamma}{}^a\bep_{a\a}=0\\
&\;\;\;\;\;\;\,\Longrightarrow \varphi_{\a\b}{}^a=0\;,\\
%\partial\varphi|_{(\fg_{-2})_{\bar 1}\otimes(\fg_{-2})_{\bar 1}\otimes(\fg_{-1})_{\bar 0}}
%&=0\Longrightarrow \varphi_{a}{}^{\beta}{}_\a (\bep_\beta\otimes\bep_\a)=0\;\text{(no summation on}\;\a)\\
\partial\varphi|_{(\fg_{-2})_{\bar 1}\otimes(\fg_{-2})_{\bar 1}\otimes(\fg_{-1})_{\bar 0}}
&=0\Longrightarrow \varphi_{a}{}^{\gamma}{}_\a (\bep_\gamma\otimes\bep_\b)+\varphi_{a}{}^{\gamma}{}_\b (\bep_\gamma\otimes\bep_\a)=0\\
&\;\;\;\;\;\;\,\Longrightarrow \varphi_{a}{}^{\gamma}{}_\a=0\;,\\
\partial\varphi|_{\fg_{-3}\otimes(\fg_{-2})_{\bar 1}\otimes(\fg_{-1})_{\bar 0}}
&=0\Longrightarrow \varphi_{\a a}{}^{\b}\bep_{c\b}=0\Longrightarrow \varphi_{\a a}{}^{\b}=0\;,\\
\partial\varphi|_{\fg_{-3}\otimes(\fg_{-1})_{\bar 1}\otimes(\fg_{-1})_{\bar 0}}
&=0\Longrightarrow \omega_{dc}\varphi_{a}{}^{c\gamma}{}_{b\b}\bep_{\gamma}=0\Longrightarrow \varphi_{a}{}^{c\gamma}{}_{b\b}=0\;,\\
\partial\varphi|_{\fg_{-3}\otimes(\fg_{-1})_{\bar 1}\otimes(\fg_{-1})_{\bar 1}}
&=0\Longrightarrow \varphi_{a\a b\b}{}^{c}(\be_c\otimes\be_d)=0\Longrightarrow \varphi_{a\a b\b}{}^{c}=0\;.
\end{align*}
It remains to deal with the three components with the index $\1$:
\begin{align*}
\partial\varphi|_{\fg_{-3}\otimes(\fg_{-2})_{\bar 0}\otimes(\fg_{-2})_{\bar 1}}
&=0\Longrightarrow \omega_{cb}\varphi_{\1\a}{}^{b\b}\bep_{\b}=0\Longrightarrow \varphi_{\1\a}{}^{b\b}=0\;,\\
\partial\varphi|_{\fg_{-3}\otimes(\fg_{-2})_{\bar 0}\otimes(\fg_{-1})_{\bar 1}}
&=0\Longrightarrow \varphi_{\1 a\a}{}^{\b}\bep_{c\b}=0\Longrightarrow \varphi_{\1 a\a}{}^{\b}=0\;,\\
\partial\varphi|_{\fg_{-3}\otimes(\fg_{-2})_{\bar 1}\otimes(\fg_{-1})_{\bar 1}}
&=0\Longrightarrow \varphi_{a\a\b}{}^{\1}\be_c=0\Longrightarrow \varphi_{a\a\b}{}^{\1}=0\;.
\end{align*}
Hence $Z^{5,2}(\fm_{\bar 1},\fg)_{\bar 0}=0$.
\end{proof}
\begin{proposition}
\label{thm:cohomology32}
The group $H^{3,2}(\fm_{\bar 1},\fg)_{\bar 0}=0$.
\end{proposition}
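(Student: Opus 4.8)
The plan is to mirror the strategy used in Proposition~\ref{thm:techI}, working directly in cocycle components and showing that $Z^{3,2}(\fm_{\bar 1},\fg)_{\bar 0}=0$; since every $2$-coboundary of degree $3$ would have to come from a $1$-cochain of degree $3$ in $C^{\bullet}(\fm_{\bar 1},\fg)_{\bar 0}$, we must be slightly more careful here than in the higher-degree cases, because such $1$-cochains need not vanish a priori. First I would enumerate the admissible components of an even $2$-cocycle $\varphi\in Z^{3,2}(\fm_{\bar 1},\fg)_{\bar 0}$ of degree $3$, dictated by $\mathbb Z$-degree, parity, and the definition $C^{\bullet}(\fm_{\bar 1},\fg)=\fg\otimes K^\bullet$ (so at least one argument lies in $\fm_{\bar 1}$). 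Using Table~\ref{tab:p_2^IV} and the explicit brackets of $\fm$, the relevant components are the maps landing one $\mathbb Z$-degree up from their arguments, e.g.
\[
\varphi_{a\a b\b}{}^{\gamma}:\Lambda^2(\fg_{-1})_{\bar 1}\to(\fg_{1})_{\bar 1},\quad
\varphi_{a\a\b}{}^{b}:(\fg_{-1})_{\bar 1}\otimes(\fg_{-2})_{\bar 1}\to(\fg_{0})_{\bar 0},
\]
together with the analogous components pairing $(\fg_{-1})_{\bar 0}$ or $\fg_{-3}$ with odd inputs and landing in $(\fg_0)_{\bar 1}$, $(\fg_{-1})_{\bar 1}$, $(\fg_{-2})_{\bar 0}$, etc.

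Next I would run the same chain of cocycle-evaluation arguments as in the $d=5,6,7$ cases: evaluating $\partial\varphi=0$ on suitably chosen triples of homogeneous elements — one degree-$(-3)$ element together with odd low-degree elements — to force each component to vanish in turn. The key mechanism is that restricting $\partial\varphi$ to a triple containing $\fg_{-3}$ (or $(\fg_{-2})_{\bar 0}$) typically produces a single term in which an isomorphism-like bracket (such as $[\fg_{-3},(\fg_{-1})_{\bar 1}]\to(\fg_{-2})_{\bar 1}$, or the $\omega$-, $q$-, $\Xi$-brackets from the lemma on the brackets of $\fm$) acts injectively on the unknown, immediately yielding its vanishing. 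I would order the evaluations so that the components are eliminated in a cascade, starting from those paired with $\fg_{-3}$ and $(\fg_{-2})_{\bar 0}$ (which are the cleanest) and ending with $\varphi_{a\a b\b}{}^{\gamma}$ on $\Lambda^2(\fg_{-1})_{\bar 1}$, exactly as the higher-degree proofs proceed from the index-$\1$ components inward.

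The main obstacle I expect is the $d=3$ case being the borderline degree where genuine coboundaries live, so unlike the purely cocycle-theoretic arguments for $d\ge 5$, I will need to confirm that no surviving cocycle component is merely forced to equal a coboundary rather than zero outright. Concretely, the delicate step is handling the component valued in $(\fg_0)_{\bar 0}=\mathbb C Z\oplus\fsl(2)\oplus\fsp(2)$ and the component valued in $(\fg_0)_{\bar 1}$, where the $\fg_0$-action enters nontrivially and a naive evaluation gives a relation rather than immediate vanishing; there I would exploit $\fsp(2)$- and $\fsl(2)$-equivariance together with the triviality of the first prolongation of $\fsp(2)$ on the odd space $\mathbb C^2$ (as used in Proposition~\ref{prop:SC2IVodd}) to close the argument. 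Once every component is shown to vanish, $Z^{3,2}(\fm_{\bar 1},\fg)_{\bar 0}=0$ and hence $H^{3,2}(\fm_{\bar 1},\fg)_{\bar 0}=0$, completing the proof.
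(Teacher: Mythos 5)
Your overall plan (enumerate components and cascade through evaluations of $\partial\varphi=0$) matches the mechanics of the paper's proof, but your stated goal is false and the argument cannot close as written. You propose to show $Z^{3,2}(\fm_{\bar 1},\fg)_{\bar 0}=0$, and your final sentence concludes from exactly this. That space is \emph{not} zero: the even degree-$3$ $1$-cochains form the nonzero space $(\fg_{2})_{\bar 1}\otimes((\fg_{-1})_{\bar 1})^*\oplus(\fg_{1})_{\bar 1}\otimes((\fg_{-2})_{\bar 1})^*$ (of dimension $8+8=16$), and $\partial$ is injective on it, because $C^{0}(\fm_{\bar 1},\fg)=0$ by Definition \ref{def:complexM1}, so $Z^{3,1}(\fm_{\bar 1},\fg)=H^{3,1}(\fm_{\bar 1},\fg)=0$ by Lemma \ref{lem:cohomM1}. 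Hence $B^{3,2}(\fm_{\bar 1},\fg)_{\bar 0}$ is a nonzero subspace of $Z^{3,2}(\fm_{\bar 1},\fg)_{\bar 0}$, and no cocycle identity, equivariance argument, or prolongation argument can force all components of an arbitrary cocycle to vanish --- the coboundaries themselves are counterexamples. Concretely, the obstruction sits in the two components valued in $(\fg_0)_{\bar 1}$, namely $\varphi_{\1 a\a}{}^{\b}:(\fg_{-2})_{\bar 0}\otimes(\fg_{-1})_{\bar 1}\to(\fg_0)_{\bar 1}$ and $\varphi_{\a a}{}^{\b}:(\fg_{-2})_{\bar 1}\otimes(\fg_{-1})_{\bar 0}\to(\fg_0)_{\bar 1}$; these are precisely where the differentials of the above $1$-cochains land, so they cannot be proved zero for a general cocycle.

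The missing idea is the normalization the paper performs \emph{first}: since $\partial$ acts faithfully on the $1$-cochains of degree $3$, one may add a coboundary to any given cocycle so as to arrange $\varphi_{\1 a\a}{}^{\b}=0$ and $\varphi_{\a a}{}^{\b}=0$; this replaces the false statement ``every cocycle is zero'' by ``every cocycle is cohomologous to a normalized one,'' and only then does the cascade of evaluations of $\partial\varphi=0$ kill all remaining components, giving $H^{3,2}(\fm_{\bar 1},\fg)_{\bar 0}=0$. Your remark that you ``will need to confirm that no surviving cocycle component is merely forced to equal a coboundary'' has the logic inverted: components are \emph{allowed} to be coboundary contributions, and one quotients them out rather than proving them zero. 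Two smaller points. First, your sample component on $\Lambda^2(\fg_{-1})_{\bar 1}$ has the wrong parity: an even cochain sends two odd arguments to an even vector, so it is valued in $(\fg_1)_{\bar 0}$, not $(\fg_1)_{\bar 1}$. Second, the genuinely delicate closing step is not the triviality of the first prolongation of $\fsp(2)$ (that device is used for the odd degree-$2$ case in Proposition \ref{prop:SC2IVodd}); it is that the identities on triples containing $\fg_{-3}$ force the residual component $\varphi_{a}{}^{c\gamma}{}_{b\b}$ to have the form $\omega_{\a\gamma}\Psi_{abc}$ with $\Psi\in S^3(\CC^2)^*$ totally symmetric, after which the $\fsp(2)$-valued part of $\partial\varphi|_{(\fg_{-1})_{\bar 0}\otimes(\fg_{-1})_{\bar 1}\otimes(\fg_{-1})_{\bar 1}}=0$ yields $6\Psi=0$ and hence the vanishing of the last three components.
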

\begin{proof}
The components of a cocyle $\varphi\in Z^{3,2}(\fm_{\bar 1},\fg)_{\bar 0}$ are given by
\begin{align*}
\varphi_{a\a b\b}{}^{c}&:\Lambda^2(\fg_{-1})_{\bar 1}\to(\fg_{1})_{\bar 0}\;,\\
\varphi_{a\a b}{}^{c\gamma}&:(\fg_{-1})_{\bar 1}\otimes(\fg_{-1})_{\bar 0}\to(\fg_{1})_{\bar 1}\;,\\
\varphi_{a\a \b}{}^{Z}+\varphi_{a\a \b}{}^{\fsl(2)}+\varphi_{a\a \b}{}^{\fsp(2)}&:(\fg_{-1})_{\bar 1}\otimes (\fg_{-2})_{\bar 1}\longrightarrow (\fg_{0})_{\bar 0}=\mathbb {C}Z\oplus \fsl(2)\oplus\fsp(2)\;,\\
\varphi_{\1 a\a}{}^{\b}&:(\fg_{-2})_{\bar 0}\otimes (\fg_{-1})_{\bar 1}\longrightarrow (\fg_{0})_{\bar 1}\;,\\
\varphi_{\a a}{}^{\b}&:(\fg_{-2})_{\bar 1}\otimes (\fg_{-1})_{\bar 0}\longrightarrow (\fg_{0})_{\bar 1}\;,\\
\varphi_{a}{}^{c\gamma}{}_{b\b}&:\fg_{-3}\otimes (\fg_{-1})_{\bar 1}\longrightarrow (\fg_{-1})_{\bar 1}\;,\\
\varphi_{\a\b}{}^{a}&:\Lambda^2 (\fg_{-2})_{\bar 1}\longrightarrow (\fg_{-1})_{\bar 0}\;,\\
\varphi_{\1\a}{}^{b\b}&:(\fg_{-2})_{\bar 0}\otimes (\fg_{-2})_{\bar 1}\longrightarrow (\fg_{-1})_{\bar 1}\;,\\
\varphi_{a}{}^{\gamma}{}_{\b}&: \fg_{-3}\otimes(\fg_{-2})_{\bar 1}\longrightarrow (\fg_{-2})_{\bar 1}\;,
\end{align*}
and there is a non-trivial space
$(\fg_{2})_{\bar 1}\otimes (\fg_{-1})_{\bar 1}^*+(\fg_{1})_{\bar 1}\otimes (\fg_{-2})_{\bar 1}^*$
of $1$-cochains, which we may use to arrange for $\varphi_{\1 a\a}{}^{\b}=0$ and $\varphi_{\a a}{}^{\b}=0$.

We depart with
\begin{align*}
\partial\varphi|_{(\fg_{-1})_{\bar 0}\otimes(\fg_{-1})_{\bar 0}\otimes(\fg_{-2})_{\bar 1}}
&=0\Longrightarrow \varphi_{\1\a}{}^{b\b}=0\;,\\
\partial\varphi|_{(\fg_{-1})_{\bar 1}\otimes(\fg_{-1})_{\bar 1}\otimes(\fg_{-2})_{\bar 0}}
&=0\Longrightarrow \varphi_{a\a b\b}{}^{c}=0\;,\\
\partial\varphi|_{(\fg_{-1})_{\bar 0}\otimes(\fg_{-2})_{\bar 0}\otimes(\fg_{-2})_{\bar 1}}
&=0\Longrightarrow \varphi_{a}{}^{\gamma}{}_{\b}=0\;,\\
\partial\varphi|_{(\fg_{-1})_{\bar 0}\otimes(\fg_{-2})_{\bar 1}\otimes(\fg_{-2})_{\bar 1}}
&=0\Longrightarrow \omega_{ab}\varphi_{\a\b}{}^{a}\1=0\Longrightarrow \varphi_{\a\b}{}^{a}=0\;,\\
\partial\varphi|_{(\fg_{-1})_{\bar 1}\otimes(\fg_{-2})_{\bar 0}\otimes(\fg_{-2})_{\bar 1}}
&=0\Longrightarrow [\varphi_{a\a \b}{}^{Z},\1]=0\Longrightarrow \varphi_{a\a \b}{}^{Z}=0\;,
\end{align*}
and continue with
\begin{align*}
\partial\varphi|_{(\fg_{-1})_{\bar 1}\otimes(\fg_{-2})_{\bar 1}\otimes(\fg_{-2})_{\bar 1}}
&=0\Longrightarrow [\varphi_{a\a \b}{}^{\fsp(2)},\bep_\gamma]+[\varphi_{a\a \gamma}{}^{\fsp(2)},\bep_\b]=0\\
&\;\;\;\;\;\;\,\Longrightarrow (\varphi_{a\a \b}{}^{\delta}{}_\gamma+\varphi_{a\a \gamma}{}^{\delta}{}_\b)\bep_\delta=0\;,\\
&\;\;\;\;\;\;\,\Longrightarrow \varphi_{a\a \b\delta\gamma}+\varphi_{a\a \gamma\delta\b}=0\;,\\
\end{align*}
which implies
\begin{align*}
\varphi_{a\a \b\delta\gamma}&=-\varphi_{a\a \gamma\delta\b}=-\varphi_{a\a \gamma\b\delta}=\varphi_{a\a \delta\b\gamma}\\
&=\varphi_{a\a \delta\gamma\b}=-\varphi_{a\a \b\gamma\delta}=-\varphi_{a\a \b\delta\gamma}
\end{align*}
and $\varphi_{a\a \b}{}^{\fsp(2)}=0$. It remains to deal with the components $\varphi_{a\a b}{}^{c\gamma}$, $\varphi_{a\a \b}{}^{\fsl(2)}$ and $\varphi_{a}{}^{c\gamma}{}_{b\b}$.

We have:\footnote{We will use the symbol $\propto$ to denote ``proportional to'' with nontrivial constant of proportionality.}
\begin{align}
\partial\varphi|_{(\fg_{-1})_{\bar 1}\otimes(\fg_{-1})_{\bar 1}\otimes\fg_{-3}}
&\notag=0\Longrightarrow [\varphi_{d}{}^{c\gamma}{}_{b\b}\bep_{c\gamma},\bep_{a\a}]+[\varphi_{d}{}^{c\gamma}{}_{a\a}\bep_{c\gamma},\bep_{b\b}]=0\;,\\
&\notag\;\;\;\;\;\;\,\Longrightarrow \omega_{ac}\omega_{\a\gamma}\varphi_{d}{}^{c\gamma}{}_{b\b}+\omega_{bc}\omega_{\b\gamma}\varphi_{d}{}^{c\gamma}{}_{a\a}=0\;,\\
&\;\;\;\;\;\;\,\Longrightarrow \varphi_{d a\a b\b}=-\varphi_{db\b a\a}\;,\\
\partial\varphi|_{(\fg_{-1})_{\bar 1}\otimes(\fg_{-2})_{\bar 1}\otimes\fg_{-3}}
&\notag=0\Longrightarrow [\varphi_{c}{}^{d\delta}{}_{a\a}\bep_{d\delta},\bep_{\b}]+[\varphi_{a\a \b}{}^{\fsl(2)},\be_c]=0\;,\\
&\notag\;\;\;\;\;\;\,\Longrightarrow [\varphi_{a\a \b}{}^{\fsl(2)},\be_c]=\varphi_{c}{}^{d}{}_{\b a\a}\be_d\;,\\
&\label{eq:propI}\;\;\;\;\;\;\,\Longrightarrow \varphi_{c d\b a\a}=\varphi_{a\a \b d c}\;,\\
\partial\varphi|_{(\fg_{-1})_{\bar 0}\otimes(\fg_{-1})_{\bar 1}\otimes\fg_{-3}}
&=0\notag\Longrightarrow [\be_a, \varphi_{c}{}^{d\delta}{}_{b\b}\bep_{d\delta}]+[\be_c,\varphi_{b\b a}{}^{d\delta}\bep_{d\delta}]=0\;\\
&\label{eq:propII}\;\;\;\;\;\;\,\Longrightarrow \varphi_{b\b a c\delta}\propto\varphi_{c a\delta b\b}\;,
\end{align}
which  imply, in turn, strong symmetry properties on the component $\varphi_{a}{}^{c\gamma}{}_{b\b}$:
%since $\varphi_{a\a \b}{}^{\fsl(2)}$ takes values into $\fsl(2)$:
\begin{align*}
\varphi_{ac\gamma b\b}&=
%\notag=-\varphi_{ab\b c\gamma}=-\varphi_{ba\b c\gamma}=\varphi_{bc\gamma a\b}\\
%&=\varphi_{cb\gamma a\b}=-\varphi_{ca\b b\gamma}=
-\varphi_{ac\b b\gamma}=\varphi_{ab\gamma c\beta}=\varphi_{ca\gamma b\beta}\;.
\end{align*}
In other words $\varphi_{a}{}^{c\gamma}{}_{b\b}$ is totally symmetric in the latin indices and skew in the greek ones;
the components $\varphi_{a\a \b}{}^{d}{}_{c}$ and $\varphi_{a\a b}{}^{c\gamma}$ satisfy the very same property due to \eqref{eq:propI} and \eqref{eq:propII}.

We shall write
$$
\varphi_{a\a b c\gamma}=\omega_{\a\gamma}\Psi_{abc}\;,
$$
for some totally symmetric tensor $\Psi\in S^3(\mathbb{C}^2)^*$. The component of $\partial\varphi|_{(\fg_{-1})_{\bar 0}\otimes(\fg_{-1})_{\bar 1}\otimes(\fg_{-1})_{\bar 1}}=0$ that takes values in $\fsp(2)\subset (\fg_{0})_{\bar 0}$ tells us that
\begin{align*}%[\bep_{b\b},\varphi_{c\gamma a}{}^{d\delta}\bep_{d\delta}]+[\bep_{c\gamma},\varphi_{b\b a}{}^{d\delta}\bep_{d\delta}]
&\varphi_{c\gamma a b}{}^{\delta}(\bep_{\b}\otimes\bep_{\delta}+\bep_{\delta}\otimes\bep_{\b})+\varphi_{b\b a c}{}^{\delta}
(\bep_{\gamma}\otimes\bep_{\delta}+\bep_{\delta}\otimes\bep_{\gamma})=0\\
%&\Longrightarrow \varphi_{c\gamma a b}{}^{\delta}\delta^\a_\beta\delta^\epsilon_\delta+\varphi_{c\gamma a b}{}^{\delta}\delta^\a_\delta\delta^\epsilon_\beta+
%\varphi_{b\b a c}{}^{\delta}\delta^\a_\gamma\delta^\epsilon_\delta+\varphi_{b\b a c}{}^{\delta}\delta^\a_\delta\delta^\epsilon_\gamma=0\\
&\Longrightarrow
\varphi_{c\gamma a b}{}^{\epsilon}\delta^\a_\beta+\varphi_{c\gamma a b}{}^{\a}\delta^\epsilon_\beta+
\varphi_{b\b a c}{}^{\epsilon}\delta^\a_\gamma+\varphi_{b\b a c}{}^{\alpha}\delta^\epsilon_\gamma=0
\end{align*}
and, summing over $\a=\b$ and multiplying by $\omega_{\theta\epsilon}$, we arrive at
\begin{align*}
2\varphi_{c\gamma a b\theta}+\varphi_{c\gamma a b\theta}+
\varphi_{b\gamma a c\theta}-\omega_{\gamma\theta}\varphi_{b\a a c}{}^{\alpha}=0\Longrightarrow 6\Psi_{abc}=0\;.
\end{align*}
Hence $\varphi_{a\a b}{}^{c\gamma}$ identically vanishes, as well as $\varphi_{a\a \b}{}^{\fsl(2)}$ and $\varphi_{a}{}^{c\gamma}{}_{b\b}$.
\end{proof}
\begin{proposition}
\label{thm:cohomology42}
The group $H^{4,2}(\fm_{\bar 1},\fg)_{\bar 0}=0$ and $H^{4,2}(\fm,\fg)_{\bar 0}$ does not contain any $(\fg_0)_{\bar 0}$-submodule isomorphic to $S^2\CC^2\boxtimes S^2\CC^2$.
\end{proposition}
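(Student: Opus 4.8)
The plan is to compute $H^{4,2}(\fm_{\bar 1},\fg)_{\bar 0}$ directly in cocycle components, exactly in the style of Propositions \ref{thm:techI} and \ref{thm:cohomology32}, and then to read off the claim about $S^2\CC^2\boxtimes S^2\CC^2$ from the explicit shape of the surviving representative. First I would list all the homogeneous components of an even $2$-cochain $\varphi\in C^{4,2}(\fm_{\bar 1},\fg)_{\bar 0}$: these are the maps on pairs of elements of $\fm_{\bar 1}$ (remembering $C^{\bullet}(\fm_{\bar 1},\fg)$ vanishes on $\fm_{\bar 0}$-only inputs) whose total $\sfZ_2$-degree is $4$ and which are parity-even. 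Concretely I expect components such as $\Lambda^2(\fg_{-1})_{\bar 1}\to(\fg_2)_{\bar 0}$, $(\fg_{-1})_{\bar 1}\otimes(\fg_{-2})_{\bar 1}\to(\fg_1)_{\bar 0}$, $(\fg_{-1})_{\bar 1}\otimes(\fg_{-1})_{\bar 1}\to$ image in degree-$2$ pieces, together with the various components landing in $(\fg_0)_{\bar 1}$, $(\fg_{-1})_{\bar 1}$, $(\fg_{-2})_{\bar 1}$ paired against $\fg_{-3}$ and $(\fg_{-2})_{\bar 0}$. The bookkeeping is the most error-prone part, so I would fix once and for all the basis conventions of the Lemma giving the brackets of $\fm$ and use Einstein summation with the Northeast convention throughout.

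Next I would impose the cocycle conditions $\partial\varphi=0$ on the various triples of arguments drawn from $\fm_{\bar 1}$ and $\fm_{\bar 0}$, processing them in an order engineered so that each new equation kills one more component (or expresses it in terms of an earlier one). The template is visible in the $d=3,5,6,7$ computations: testing against triples that include elements of $(\fg_{-2})_{\bar 0}$ or $\fg_{-3}$ typically forces an isolated component to vanish because $\beta$ (i.e.\ the bracket with $(\fg_{-2})_{\bar 0}$) and the bracket with $\fg_{-3}$ are injective on the relevant spaces. I would first clear the components landing in high positive degree via the triples $\fg_{-3}\otimes\fg_{-3}\otimes(\cdots)$ and $(\fg_{-2})_{\bar 1}^{\otimes 3}$, then work downward. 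The $\mathbb{Z}_2$-parity and $(\fg_0)_{\bar 0}$-equivariance of $\partial$ let me split off the $\mathbb{C}Z$, $\fsl(2)$ and $\fsp(2)$ pieces of any $(\fg_0)_{\bar 0}$-valued component separately, as was done for $\varphi^Z,\varphi^{\fsl(2)},\varphi^{\fsp(2)}$ in Theorem \ref{thm:SC2IV}.

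For the two distinct assertions I would proceed slightly differently. The vanishing of $H^{4,2}(\fm_{\bar 1},\fg)_{\bar 0}$ I expect to follow by showing $Z^{4,2}(\fm_{\bar 1},\fg)_{\bar 0}=B^{4,2}(\fm_{\bar 1},\fg)_{\bar 0}$: after using the available $1$-cochains (the coboundary space $\partial C^{3,1}(\fm_{\bar 1},\fg)_{\bar 0}$) to normalize a couple of components to zero, every remaining component should be forced to vanish by the cocycle conditions, exactly as in Proposition \ref{thm:cohomology32}. The second, weaker claim about $H^{4,2}(\fm,\fg)_{\bar 0}$ is what is actually needed for the exact sequence \eqref{sequenzaH^2IIIimprovement} in the proof of Theorem \ref{thm:SC2IV}, so strictly I only need: no cohomology class supported in the $(\fg_0)_{\bar 0}$-valued $\fsp(2)$-component can carry an $S^2\CC^2\boxtimes S^2\CC^2$ summand. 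I would extract the would-be $S^2\CC^2\boxtimes S^2\CC^2$ piece — the totally symmetric-in-latin, symmetric-in-greek part of the component $\Lambda^2(\fg_{-1})_{\bar 1}\to\fsp(2)$ — and show that the cocycle identity on $\fg_{-3}\otimes(\fg_{-1})_{\bar 1}\otimes(\fg_{-1})_{\bar 1}$ (the same mechanism that produced $6\Psi_{abc}=0$ in Proposition \ref{thm:cohomology32}) forces the relevant symmetric tensor to zero.

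The main obstacle I anticipate is purely combinatorial rather than conceptual: in degree $4$ the number of cochain components is larger than in the surrounding degrees, and several components land in the genuinely nonabelian $(\fg_0)_{\bar 0}=\mathbb{C}Z\oplus\fsl(2)\oplus\fsp(2)$, so the relevant cocycle conditions involve honest $\fsl(2)$- and $\fsp(2)$-actions (taking traces, multiplying by $\omega^{ab}$) rather than the simpler ``insert-and-vanish'' identities of degrees $5,6,7$. Untangling the $\fsl(2)$-valued and $\fsp(2)$-valued components simultaneously, while keeping track of which components have been gauged away by coboundaries, is where care is required; I would organize this by first isolating the $\mathbb{C}Z$-trace (which tends to vanish immediately, as in the $Z$-component computations above), then the $\fsp(2)$-part via symmetrization in greek indices, and finally the $\fsl(2)$-part, at which point the $S^2\CC^2\boxtimes S^2\CC^2$ statement drops out.
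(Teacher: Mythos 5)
Your plan for the first assertion, the vanishing of $H^{4,2}(\fm_{\bar 1},\fg)_{\bar 0}$, is essentially the paper's: list the degree-$4$ components of an even cocycle, gauge away what you can with the $1$-cochains $(\fg_2)_{\bar 1}\otimes(\fg_{-2})_{\bar 1}^*$, and kill the rest through cocycle conditions against triples involving $(\fg_{-2})_{\bar 0}$ and $\fg_{-3}$. That part is sound.

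The second assertion is where your proposal has a genuine gap. You propose to extract the would-be $S^2\CC^2\boxtimes S^2\CC^2$ piece from the symmetric part of the component $\Lambda^2(\fg_{-1})_{\bar 1}\to\fsp(2)$. But that component has degree $2$, not $4$ (inputs of total degree $-2$, output of degree $0$); it occurs in the $d=2$ computation of Theorem \ref{thm:SC2IV} and is simply absent from $C^{4,2}$. More importantly, the dangerous copy of $S^2\CC^2\boxtimes S^2\CC^2$ does not live in $C^{\bullet}(\fm_{\bar 1},\fg)$ at all: once $H^{4,2}(\fm_{\bar 1},\fg)_{\bar 0}=0$ is known, what must be ruled out (via the exact sequence \eqref{sequenzaH^2IIIimprovement}) is that the restriction map $H^{4,2}(\fm,\fg)\to H^{4,2}(\fm_{\bar 0},\fg)$ hits the classical Kostant module $S^2\CC^2\boxtimes\fsp(2)$, whose representative $\psi_{ab}{}^{\beta}{}_{\alpha}:(\fg_{-1})_{\bar 0}\otimes(\fg_{-3})_{\bar 0}\to\fsp(2)$ has purely even inputs and is therefore invisible to the subcomplex $C^{\bullet}(\fm_{\bar 1},\fg)$ you plan to work in. The paper's last step consequently works in the full complex: it decomposes $C^{4,2}(\fm,\fg)_{\bar 0}\cong C^{4,2}(\fm_{\bar 0},\fg)_{\bar 0}\oplus C^{4,2}(\fm_{\bar 1},\fg)_{\bar 0}$ as $(\fg_0)_{\bar 0}$-modules, identifies the nine copies of $S^2\CC^2\boxtimes S^2\CC^2$ in the isotypic component (one of which is $\psi$), reruns the arguments from \eqref{eq:step2} to \eqref{eq:step8} to reduce any cocycle there to the pair $\psi_{ab}{}^{\beta}{}_{\alpha}$ and $\varphi_{a\a}{}^{\b}{}_b$, and then uses the mixed cocycle identity on $(\fg_{-1})_{\bar 0}\otimes(\fg_{-1})_{\bar 1}\otimes(\fg_{-3})_{\bar 0}$ — which couples the two via $\psi_{ac}{}^{\delta}{}_{\beta}\delta^{d}_b\propto\varphi_{b\b}{}^{\delta}{}_a\delta^{d}_c$ — to force both to vanish. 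Without some such mixed even–odd identity your strategy cannot touch the classical class, so the second claim would remain unproved.
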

\begin{proof} The proof is divided in three main steps. We show $H^{4,2}(\fm_{\bar 1},\fg)_{\bar 0}=0$ in the first two steps and conclude with the last claim of the proposition.
\vskip0.05cm\par\noindent
{\underline{\it First step}}
The components of a cocyle $\varphi\in Z^{4,2}(\fm_{\bar 1},\fg)_{\bar 0}$ are given by
\begin{equation}
\label{eq:componentsrestriction}
\begin{aligned}
\varphi_{a\a b\b}{}^{\1}&:\Lambda^2(\fg_{-1})_{\bar 1}\to(\fg_{2})_{\bar 0}\;,\\
\varphi_{a\a}{}^{\b}{}_b&:(\fg_{-1})_{\bar 1}\otimes(\fg_{-1})_{\bar 0}\to(\fg_{2})_{\bar 1}\;,\\
\varphi_{\a b\b}{}^{c}&:(\fg_{-2})_{\bar 1}\otimes(\fg_{-1})_{\bar 1} \longrightarrow (\fg_{1})_{\bar 0}\;,\\
\varphi_{\1 a\a}{}^{b\b}&:(\fg_{-2})_{\bar 0}\otimes (\fg_{-1})_{\bar 1}\longrightarrow (\fg_{1})_{\bar 1}\;,\\
\varphi_{\a a}{}^{b\b}&:(\fg_{-2})_{\bar 1}\otimes (\fg_{-1})_{\bar 0}\longrightarrow (\fg_{1})_{\bar 1}\;,\\
\varphi_{a b\b}{}^{\gamma}&:\fg_{-3}\otimes (\fg_{-1})_{\bar 1}\longrightarrow (\fg_{0})_{\bar 1}\;,\\
\varphi_{\a \b}{}^{Z}+\varphi_{\a \b}{}^{\fsl(2)}+\varphi_{\a \b}{}^{\fsp(2)}&:\Lambda^2 (\fg_{-2})_{\bar 1}\longrightarrow (\fg_{0})_{\bar 0}=\mathbb {C}Z\oplus \fsl(2)\oplus\fsp(2)\;,\\
\varphi_{\1 \a}{}^{\b}&:(\fg_{-2})_{\bar 0}\otimes (\fg_{-2})_{\bar 1}\longrightarrow (\fg_{0})_{\bar 1}\;,\\
\varphi_{a\a}{}^{b\b}&:\fg_{-3}\otimes(\fg_{-2})_{\bar 1} \longrightarrow (\fg_{-1})_{\bar 1}\;,
\end{aligned}
\end{equation}
and we may arrange for $\varphi_{\1 \a}{}^{\b}=0$ using the space of $1$-cochains $(\fg_{2})_{\bar 1}\otimes (\fg_{-2})_{\bar 1}^*$.
We then immediately note that
\begin{align}
\label{eq:step1}
\partial\varphi|_{(\fg_{-2})_{\bar 0}\otimes(\fg_{-2})_{\bar 1}\otimes(\fg_{-2})_{\bar 1}}
&=0\Longrightarrow \varphi_{\a \b}{}^{Z}=0\;.
%\partial\varphi|_{(\fg_{-1})_{\bar 1}\otimes(\fg_{-1})_{\bar 1}\otimes(\fg_{-2})_{\bar 0}}
%&=0\Longrightarrow [\varphi_{a\a b\b}{}^{\1}\1,\1]=-[\varphi_{\1 b\b}{}^{c\gamma}\bep_{c\gamma},\bep_{a\a}]-[\varphi_{\1 a\a}{}^{c\gamma}\bep_{c\gamma},\bep_{b\b}]\;.
\end{align}
We now turn to study the identity
\begin{equation}
\label{eq:step2}
\partial\varphi|_{(\fg_{-1})_{\bar 1}\otimes(\fg_{-1})_{\bar 1}\otimes(\fg_{-2})_{\bar 0}}=0\;.
\end{equation}
The component taking values in $\fsp(2)\subset (\fg_{0})_{\bar 0}$ says
\begin{align*}
&\omega_{ca}\varphi_{\1 b\b}{}^{c\gamma}(\bep_{\gamma}\otimes\bep_{\a}+\bep_{\a}\otimes\bep_{\gamma})
+\omega_{cb}\varphi_{\1 a\a}{}^{c\gamma}(\bep_{\gamma}\otimes\bep_{\b}+\bep_{\b}\otimes\bep_{\gamma})=0\\
&\Longrightarrow
\varphi_{\1 b\b a}{}^{\epsilon}\delta^{\theta}_\a
+\varphi_{\1 b\b a}{}^{\theta}\delta^{\epsilon}_\a
+\varphi_{\1 a\a b}{}^{\epsilon}\delta^{\theta}_\b
+\varphi_{\1 a\a b}{}^{\theta}\delta^{\epsilon}_{\b}
=0\\
%&\Longrightarrow
%3\varphi_{\1 b\b a}{}^{\epsilon}
%+\varphi_{\1 a\beta b}{}^{\epsilon}
%+\varphi_{\1 a\a b}{}^{\a}\delta^{\epsilon}_{\b}
%=0\\
&\Longrightarrow
3\varphi_{\1 b\b a \epsilon}
+\varphi_{\1 a\beta b\epsilon}
+\omega_{\epsilon\b}\varphi_{\1 a\a b}{}^{\a}
=0\;.
\end{align*}
Multiplying by $\omega^{\beta\epsilon}$ and summing over $\beta$ and $\epsilon$, we arrive at
%$\varphi_{\1 b\b a}{}^{\beta}=-\varphi_{\1 a\b b}{}^{\beta}$ and
%\begin{equation*}
%\label{eq:symmetrizingI}
$\varphi_{\1 b\b a \epsilon}=-\varphi_{\1 a\b b \epsilon}$.
%\end{equation*}
An analogous argument for the component taking values in $\fsl(2)\subset (\fg_{0})_{\bar 0}$ yields
%\begin{equation*}
%\label{eq:symmetrizingI}
$\varphi_{\1 b\b a \epsilon}=-\varphi_{\1 b\epsilon a \b}$
%\end{equation*}
while the component in $\mathbb CZ\subset (\fg_{0})_{\bar 0}$ says that $\varphi_{a\a b\b}{}^{\1}$ is proportional to $\varphi_{\1a\a b\b}$.
In particular $\varphi_{\1 b\b a \epsilon}$ and $\varphi_{a\a b\b}{}^{\1}$ are both skewsymmetric, in the latin and greek indices separately.

Now
\begin{align}
\label{eq:step3}
\partial\varphi|_{(\fg_{-1})_{\bar 1}\otimes(\fg_{-2})_{\bar 0}\otimes(\fg_{-2})_{\bar 1}}
&=0\Longrightarrow %[\varphi_{\b a\a}{}^{c}\be_c,\1]+[\varphi_{\1 a\a}{}^{c\gamma}\bep_{c\gamma},\bep_\beta]=0\;.
\varphi_{\b a\a c} \propto \varphi_{\1 a\a c\beta}\;,
\end{align}
so that $\varphi_{\b a\a c}$ is skewsymmetric, in the latin and greek indices. Furthermore
\begin{align}
\partial\varphi|_{(\fg_{-1})_{\bar 1}\otimes(\fg_{-1})_{\bar 1}\otimes(\fg_{-3})_{\bar 0}}
=0&\notag\Longrightarrow [\varphi_{c b\b}{}^{\gamma}\bep_\gamma,\bep_{a\a}]+[\varphi_{c a\a}{}^{\gamma}\bep_\gamma,\bep_{b\b}]
+[\varphi_{a\a b\b}{}^{\1}\1,\be_c]=0\\
&\label{eq:step4}\Longrightarrow \varphi_{c b\b\a}\delta_{a}^d+\varphi_{c a\a\b}\delta_{b}^d\propto\varphi_{a\a b\b}{}^{\1}\delta_{c}^d\;,
\end{align}
and taking $a=b$ implies that $\varphi_{c b\b\a}$ is skewsymmetric in the greek indices. If instead we multiply \eqref{eq:step4} by $\delta^a_d$ and sum over $a$ and $d$ we arrive at
\begin{align*}
2\varphi_{cb\b\a}+\varphi_{cb\a\b}\propto\varphi_{c\a b\b}{}^{\1}\Longrightarrow \varphi_{cb\b\a}\propto\varphi_{c\a b\b}{}^{\1}
\end{align*}
so that $\varphi_{c b\b\a}$ is skewsymmetric in the latin indices as well. Finally, we consider
\begin{align}
\label{eq:step5}
\partial\varphi|_{(\fg_{-1})_{\bar 0}\otimes(\fg_{-1})_{\bar 0}\otimes(\fg_{-2})_{\bar 1}}
&=0\Longrightarrow \varphi_{\alpha b a\beta}=\varphi_{\alpha a b\beta}
\end{align}
and also note
\begin{align}
\label{eq:step6}
\partial\varphi|_{(\fg_{-1})_{\bar 0}\otimes(\fg_{-2})_{\bar 0}\otimes(\fg_{-2})_{\bar 1}}
&=0\Longrightarrow \varphi_{a\a b\b}\propto\varphi_{\a a b\b}\;,
\end{align}
so that $\varphi_{a\a b\b}$ is symmetric in the latin indices.
\vskip0.05cm\par\noindent
{\underline{\it Second step}}
We already obtained $\varphi_{\1 \a}{}^{\b}=\varphi_{\a \b}{}^{Z}=0$ and
we now turn to prove that the other components in \eqref{eq:componentsrestriction} vanish. We start with
\begin{align}
\partial\varphi|_{(\fg_{-1})_{\bar 1}\otimes(\fg_{-2})_{\bar 1}\otimes\fg_{-3}}
=0&\notag\Longrightarrow \varphi_{c\b a\a}+\varphi_{c a\a\b}\\
&\notag\Longrightarrow \varphi_{c\b a\a}+\varphi_{a\b c\a}=0\\
&\label{eq:step7}\Longrightarrow \varphi_{c\b a\a}=0
\;,
\end{align}
where the next-to-last identity follows from symmetrization in $a$ and $c$. Hence
$\varphi_{\a a b\b}=0$
as well. The identity
\begin{equation}
\label{eq:step8}
\partial\varphi|_{(\fg_{-2})_{\bar 1}\otimes(\fg_{-2})_{\bar 1}\otimes(\fg_{-3})_{\bar 0}}=0
\end{equation}
now readily implies $\varphi_{\a \b}{}^{\fsl(2)}=0$.

We continue with
\begin{align}
\partial\varphi|_{(\fg_{-1})_{\bar 0}\otimes(\fg_{-1})_{\bar 1}\otimes(\fg_{-3})_{\bar 0}}
=0&\notag\Longrightarrow \varphi_{c b\b}{}^{\gamma}\bep_{a\gamma}\propto\varphi_{b\b}{}^{\gamma}{}_a \bep_{c\gamma}\\
&\label{eq:step9}\Longrightarrow \varphi_{b\b}{}^{\gamma}{}_a \bep_{b\gamma}=0
\;,
\end{align}
which yields $\varphi_{a\a}{}^{\b}{}_b=0$ and $\varphi_{a b\b}{}^{\gamma}=0$. Finally
\begin{equation}
\label{eq:step10}
\partial\varphi|_{(\fg_{-1})_{\bar 1}\otimes(\fg_{-2})_{\bar 0}\otimes(\fg_{-3})_{\bar 0}}=0\Longrightarrow \varphi_{\1 a\a}{}^{b\b}=0\;,
\end{equation}
whence $\varphi_{a\a b\b}{}^{\1}=\varphi_{\a b\b}{}^{c}=0$ too and
\begin{equation}
\label{eq:step11}
\partial\varphi|_{(\fg_{-1})_{\bar 0}\otimes(\fg_{-1})_{\bar 1}\otimes(\fg_{-2})_{\bar 1}}=0\Longrightarrow \varphi_{\a \b}{}^{\fsp(2)}=0\;.
\end{equation}
This concludes the proof of $H^{4,2}(\fm_{\bar 1},\fg)_{\bar 0}=0$.
\vskip0.05cm\par\noindent
{\underline{\it Last step}}
First of all, we note that $C^{4,2}(\fm,\fg)_{\bar 0}\cong C^{4,2}(\fm_{\bar 0},\fg)_{\bar 0}\bigoplus C^{4,2}(\fm_{\bar 1},\fg)_{\bar 0}$
as $(\fg_0)_{\bar 0}$-modules. The space $C^{4,2}(\fm_{\bar 0},\fg)_{\bar 0}$ has a unique $(\fg_0)_{\bar 0}$-irreducible submodule
\begin{equation*}
\label{eq:componentfixed}
\Big\{\psi_{ab}{}^{\beta}{}_{\alpha}:(\fg_{-1})_{\bar 0}\otimes (\fg_{-3})_{\bar 0}\to\fsp(2)\mid \psi_{ab}{}^{\beta}{}_{\alpha}=\psi_{(ab)}{}^{\beta}{}_{\alpha}\Big\}
\end{equation*}
of type $S^2\CC^2\boxtimes S^2\CC^2$.
On the other hand $C^{4,2}(\fm_{\bar 1},\fg)_{\bar 0}$ has $8$ such modules, formed by the maps in \eqref{eq:componentsrestriction} with $2$ latin and $2$ greek indices (including the maps $\varphi_{a\a b\b}{}^{\1}$, $\varphi_{\1 a\a}{}^{b\b}$ and $\varphi_{\a \b}{}^{\fsl(2)}$),  separately symmetric. Hence the $(\fg_0)_{\bar 0}$-isotypic component of type
$S^2\CC^2\boxtimes S^2\CC^2$ in $C^{4,2}(\fm,\fg)_{\bar 0}$ consists of the direct sum of $9$ irreducible submodules.

By the very same line of arguments running from \eqref{eq:step2} to \eqref{eq:step8}, 
one sees that any cocyle in the above $(\fg_0)_{\bar 0}$-isotypic component has trivial components, 
with the exception of $\varphi_{a\a}{}^{\b}{}_b$ and $\psi_{ab}{}^{\beta}{}_{\alpha}$. 
Identity \eqref{eq:step9} is now replaced by
 \begin{align}
\partial\varphi|_{(\fg_{-1})_{\bar 0}\otimes(\fg_{-1})_{\bar 1}\otimes(\fg_{-3})_{\bar 0}}
=0&\notag\Longrightarrow \psi_{ac}{}^{\gamma}{}_{\beta}\bep_{b\gamma}\propto
\varphi_{b\b}{}^{\gamma}{}_a\bep_{c\gamma}\\
&\notag\Longrightarrow \psi_{ac}{}^{\delta}{}_{\beta}\delta^{d}_b\propto
\varphi_{b\b}{}^{\delta}{}_a\delta^{d}_c\\
&\notag\Longrightarrow \psi_{ac}{}^{\delta}{}_{\beta}=\varphi_{b\b}{}^{\delta}{}_a=0\;,
 \end{align}
proving that $Z^{4,2}(\fm,\fg)_{\bar 0}$ does not contain any $(\fg_0)_{\bar 0}$-submodule isomorphic to $S^2\CC^2\boxtimes S^2\CC^2$. The same claim clearly holds for $H^{4,2}(\fm,\fg)_{\bar 0}$.
\end{proof}

 \section{Internal symmetries of the SHC equation}\label{S:SHC-sym}

The SHC equation written as the system of $(2|2)$ differential equations \eqref{SHC-ODE}
 %  $$
 % z_x=\tfrac12u_{xx}^2+u_{x\nu}u_{x\tau},\ z_{\tau}=u_{xx}u_{x\tau},\ 
 % z_{\nu}=u_{xx}u_{x\nu},\ u_{\tau\nu}=u_{xx}
 % $$
encodes as the following superdistribution $\cD$ of rank $(2|4)$ 
on $M=\C^{5|6}(x,u,u_x,u_{xx},z|\tau,\nu,u_\tau,u_\nu,u_{x\tau},u_{x\nu})$
with pure degree components
 \begin{align*}
\cD_{\bar0}=\langle D_x &= \p_x+u_x\p_u+u_{xx}\p_{u_x}+(\tfrac12u_{xx}^2+u_{x\nu}u_{x\tau})\p_z+u_{x\tau}\p_{u_\tau}
+u_{x\nu}\p_{u_\nu}, \p_{u_{xx}}\rangle,\\
\cD_{\bar1}=\langle D_\tau &= \p_\tau+u_\tau\p_u+u_{x\tau}\p_{u_x}+u_{xx}u_{x\tau}\p_z+u_{xx}\p_{u_\nu},\ \p_{u_{x\tau}},\\
D_\nu &= \p_{\nu}+u_{\nu}\p_u+u_{x\nu}\p_{u_x}+u_{xx}u_{x\nu}\p_z-u_{xx}\p_{u_\tau},\ \p_{u_{x\nu}}\rangle
 \end{align*}
 \comm{
The superdistribution $\overline\cH$ of rank $(2|4)$ on 
$\overline\cE=\C^{5|6}(x,u,u_x,u_{xx},z|\tau,\nu,u_\tau,u_\nu,u_{x\tau},u_{x\nu})$
that is associated to the SHC equation \eqref{SHC-ODE} is generated by two even supervector fields
 \begin{align*}
D_x&= \p_x+u_x\p_u+u_{xx}\p_{u_x}+(\tfrac12u_{xx}^2+u_{x\nu}u_{x\tau})\p_z+u_{x\tau}\p_{u_\tau}+u_{x\nu}\p_{u_\nu}\;,\qquad\p_{u_{xx}}\;,
\end{align*}
and four odd supervector fields
\begin{align*}
D_\tau &=\p_\tau+u_\tau\p_u+u_{x\tau}\p_{u_x}+u_{xx}u_{x\tau}\p_z+u_{xx}\p_{u_\nu}\;,\qquad\p_{u_{x\tau}}\;,\\
D_\nu &=\p_{\nu}+u_{\nu}\p_u+u_{x\nu}\p_{u_x}+u_{xx}u_{x\nu}\p_z-u_{xx}\p_{u_\tau}\;,\qquad\p_{u_{x\nu}}\;.
\end{align*}
 }
Its annihilator is given by
 \begin{align*}
\opp{Ann}\cD=\Big\langle
& dz-(dx)\left(\tfrac12u_{xx}^2+u_{x\nu}u_{x\tau}\right)-(d\tau)\left(u_{xx}u_{x\tau}\right)-(d\nu)\left(u_{xx}u_{x\nu}\right)\;,\\
& du-(dx)u_x-(d\tau)u_\tau-(d\nu)u_\nu\;,\
du_x-(dx)u_{xx}-(d\tau)u_{x\tau}-(d\nu)u_{x\nu}\;|\;\\
& du_\tau-(dx)u_{x\tau}+(d\nu)u_{xx}\;,\ du_\nu-(dx)u_{x\nu}-(d\tau)u_{xx}\Big\rangle\;.
 \end{align*}
The symmetry superalgebra % of $\cD$ 
 $
\mathfrak{inf}(M,\cD)= \{ \bX \in \mathfrak{X}(M) : \sigma(\cL_\bX V)=0\ \forall\, V\in\Gamma(\cD),
\sigma\in\Gamma(\opp{Ann}\cD)\}
 $
is isomorphic to $G(3)$. We computed its generators using symbolic packages of Maple.

To write down the explicit expression 
of the even generators $V_i$, $i=1,\ldots,17$, and the odd generators $U_j$, $j=1,\ldots, 14$, 
it is convenient to relabel the odd coordinates as follows:
$\theta_1=\tau$, $\theta_2=\nu$, $\theta_3=u_\tau$, $\theta_4=u_\nu$, $\theta_5=u_{x\tau}$, $\theta_6=u_{x\nu}$. We also set
$\theta_{ij}=\theta_i\theta_j$, $\theta_{ijk}=\theta_i\theta_j\theta_k$, etc.  
Here are the generators of $G(3)$:
 \begin{align*}
V_1 =&\, \p_{u},\
V_2 = \p_{z},\
V_3 = \p_{u_x}+x\p_{u},\
V_4 = \p_{x},
 \\
V_5 =&\,
\p_{u_{xx}}+(\tfrac12x^2-\theta_{12})\p_{u}+x\p_{u_x}+u_x\p_{z}-\theta_{2}\p_{u_{\tau}}+\theta_{1}\p_{u_{\nu}},
 \\
V_6 =&\,
 x\p_{x}+u\p_{u}-u_{xx}\p_{u_{xx}}-z\p_{z}+\theta_{1}\p_{\tau}+\theta_{2}\p_{\nu}
 -\theta_{5}\p_{u_{x\tau}}-\theta_{6}\p_{u_{x\nu}},
 \\
V_7 =&\,
u\p_{u}+u_x\p_{u_x}+u_{xx}\p_{u_{xx}}+2z\p_{z}+
\theta_{3}\p_{u_{\tau}}+\theta_{4}\p_{u_{\nu}}+\theta_{5}\p_{u_{x\tau}}+\theta_{6}\p_{u_{x\nu}},
 \\
V_8 =&\,
 \theta_{1}\p_{\tau}-\theta_{2}\p_{\nu}-\theta_{3}\p_{u_{\tau}}+\theta_{4}\p_{u_{\nu}}
 -\theta_{5}\p_{u_{x\tau}}+\theta_{6}\p_{u_{x\nu}},
 \\
V_9 =&\,
\theta_{2}\p_{\tau}-\theta_{3}\p_{u_{\nu}}-\theta_{5}\p_{u_{x\nu}},\
V_{10} =
\theta_{1}\p_{\nu}-\theta_{4}\p_{u_{\tau}}-\theta_{6}\p_{u_{x\tau}},
 \\
V_{11} =&\,
 u_{xx}\p_{x}+(u_x u_{xx}-z+\theta_{45}-\theta_{36})\p_{u}+(\tfrac12u_{xx}^2-\theta_{56})\p_{u_x}+
 (\tfrac16u_{xx}^3-u_{xx}\theta_{56})\p_{z}\\
 & +\theta_{6}\p_{\tau}-\theta_{5}\p_{\nu}+u_{xx}(\theta_{5}\p_{u_{\tau}}+\theta_{6}\p_{u_{\nu}}),
 \\
V_{12} =&\,
 (\tfrac16x^3-x\theta_{12})\p_{u}+(\tfrac12x^2-\theta_{12})\p_{u_x}+x\p_{u_{xx}}+
(x u_x-u+\theta_{13}+\theta_{24})\p_{z}\\
 & -x\theta_{2}\p_{u_{\tau}}+x\theta_{1}\p_{u_{\nu}}-\theta_{2}\p_{u_{x\tau}}+\theta_{1}\p_{u_{x\nu}},
 \\
V_{13} =&\,
(4u_x-3x u_{xx})\p_{x}+(3x(z-u_xu_{xx})+2u_x^2+3x(\theta_{36}-\theta_{45})-\theta_{34})\p_{u}\\
 & +(3z-\tfrac32xu_{xx}^2+3x\theta_{56})\p_{u_x}-
(u_{xx}^2+4\theta_{56})\p_{u_{xx}}-x u_{xx}(\tfrac12u_{xx}^2-3\theta_{56})\p_{z}\\
 & -(3\theta_{6}x-\theta_{4})\p_{\tau}+(3\theta_{5}x-\theta_{3})\p_{\nu}
-3x u_{xx}(\theta_{5}\p_{u_{\tau}}+\theta_{6}\p_{u_{\nu}})-2u_{xx}(\theta_{5}\p_{u_{x\tau}}+\theta_{6}\p_{u_{x\nu}}),
 \\
V_{14} =&\,
 (\tfrac12x^2+2\theta_{12})\p_{x}+\tfrac32xu\p_{u}+
(\tfrac12xu_x+\tfrac32u-\theta_{13}-\theta_{24})\p_{u_x}
+(u_x^2-\tfrac12\theta_{34})\p_{z}\\
 & +(2u_x-\tfrac12xu_{xx}-2\theta_{15}-2\theta_{26})\p_{u_{xx}}
+x\theta_{1}\p_{\tau}+x\theta_{2}\p_{\nu}
+(\tfrac12x\theta_{3}-2u_x\theta_{2})\p_{u_{\tau}}\\
 & +(\tfrac12x\theta_{4}+2u_x\theta_{1})\p_{u_{\nu}}
-(u_{xx}\theta_{2}-\tfrac12\theta_{3}+\tfrac12x\theta_{5})\p_{u_{x\tau}}
+(u_{xx}\theta_{1}+\tfrac12\theta_{4}-\tfrac12x\theta_{6})\p_{u_{x\nu}},\\
V_{15} =&\,
 (4xu_x-3u-\tfrac32x^2u_{xx}+3u_{xx}\theta_{12}+\theta_{13}+\theta_{24})\p_{x}
+(\tfrac32x^2(z-u_xu_{xx})+2xu_x^2\\
 & +3(u_xu_{xx}-z)\theta_{12}+u_x(\theta_{13}+\theta_{24})
 -x\theta_{34}+\tfrac32x^2(\theta_{36}-\theta_{45})-3\theta_{1236}+3\theta_{1245})\p_{u}\\
 & +(3xz+u_x^2-\tfrac34x^2u_{xx}^2+\tfrac32u_{xx}^2\theta_{12}+\theta_{34}+
\tfrac32x^2\theta_{56}-3\theta_{1256})\p_{u_x} +(3z+u_xu_{xx}-xu_{xx}^2\\
 & -2u_{xx}(\theta_{15}+\theta_{26})-4x\theta_{56}+2\theta_{36}-2\theta_{45})\p_{u_{xx}}
+(3zu_x-\tfrac14x^2u_{xx}^3+\tfrac12u_{xx}^3\theta_{12}\\
 &+\tfrac32x^2u_{xx}\theta_{56}-3u_{xx}\theta_{1256})\p_{z}
+(u_x\theta_{1}+x\theta_{4}-\tfrac32x^2\theta_{6}+3\theta_{126})\p_{\tau}
+(u_x\theta_{2}-x\theta_{3}\\
 & +\tfrac32x^2\theta_{5}-3\theta_{125})\p_{\nu}+
(2u_x\theta_{3}-3z\theta_{2}-\tfrac32x^2u_{xx}\theta_{5}+3u_{xx}\theta_{125})\p_{u_{\tau}}
+(3z\theta_{1}+2u_x\theta_{4}\\
 & -\tfrac32x^2u_{xx}\theta_{6} +3u_{xx}\theta_{126})\p_{u_{\nu}}
+(u_{xx}\theta_{3}-\tfrac12u_{xx}^2\theta_{2}+(u_x-2xu_{xx})\theta_{5}+4\theta_{256})\p_{u_{x\tau}}\\
 & +(u_{xx}\theta_{4}+\tfrac12u_{xx}^2\theta_{1}+(u_x-2xu_{xx})\theta_{6}-4\theta_{156})\p_{u_{x\nu}},
 \\
\! V_{16} =&\,
 (2 u_x^2-3u u_{xx}-\theta_{34})\p_{x}+(\tfrac43u_x^3-3uu_xu_{xx}+3uz-2u_x\theta_{34}+3u(\theta_{36}-\theta_{45}))\p_{u}\\
 & +(3u_xz-\tfrac32uu_{xx}^2+3u\theta_{56})\p_{u_x}+
(3u_{xx}z-u_xu_{xx}^2+2u_{xx}(\theta_{36}-\theta_{45})-4u_x\theta_{56})\p_{u_{xx}}\\
 & +(3z^2-\tfrac12uu_{xx}^3+3uu_{xx}\theta_{56})\p_{z}
 +(u_x\theta_{4}-3u\theta_{6})\p_{\tau}-(u_x\theta_{3}-3u\theta_{5})\p_{\nu}\\
 & +3(z\theta_{3}-uu_{xx}\theta_{5})\p_{u_{\tau}}
 +(\tfrac12u_{xx}^2\theta_{3}+(3z-2u_xu_{xx})\theta_{5}-4\theta_{356})\p_{u_{x\tau}}\\
 & +3(z\theta_{4}-uu_{xx}\theta_{6})\p_{u_{\nu}} +(\tfrac12u_{xx}^2\theta_{4}+(3z-2u_xu_{xx})\theta_{6}-4\theta_{456})\p_{u_{x\nu}},\\
V_{17} =&\,
(2x^2u_x-\tfrac12x^3u_{xx}-3xu+(3xu_{xx}-4u_x)\theta_{12}+x(\theta_{13}+\theta_{24}))\p_{x}+
(\tfrac12x^3(z-u_xu_{xx})\\
 & +x^2u_x^2-3u^2+(3x(u_xu_{xx}-z)-2u_x^2)\theta_{12}+xu_x(\theta_{13}+\theta_{24})
+\tfrac12x^3(\theta_{36}-\theta_{45})\\
 & -\tfrac12x^2\theta_{34}+3x(\theta_{1245}-\theta_{1236})+\theta_{1234})\p_{u}+
(\tfrac32x^2z-3uu_x+xu_x^2-\tfrac14x^3u_{xx}^2\\
 & +(\tfrac32xu_{xx}^2-3z)\theta_{12}+2u_x(\theta_{13}+\theta_{24})+
x\theta_{34}+\tfrac12x^3\theta_{56}-3x\theta_{1256})\p_{u_x}+(3xz-2u_x^2\\
 & +xu_xu_{xx}-\tfrac12x^2u_{xx}^2+u_{xx}^2\theta_{12}
+u_{xx}(\theta_{13}+\theta_{24})+2(2u_x-xu_{xx})(\theta_{15}+\theta_{26})+\theta_{34}\\
 & +2x(\theta_{36}-\theta_{45})-2x^2\theta_{56}+4\theta_{1256})\p_{u_{xx}}
+(3z(xu_x-u)-\tfrac23u_x^3-\tfrac1{12}x^3u_{xx}^3\\
 & +\tfrac12xu_{xx}^3\theta_{12}+3z(\theta_{13}+\theta_{24})+
u_x\theta_{34}+\tfrac12x^3u_{xx}\theta_{56}-3xu_{xx}\theta_{1256})\p_{z}
+((xu_x-3u)\theta_{1}\\
 & +\tfrac12x^2\theta_{4}-\tfrac12x^3\theta_{6}+3x\theta_{126}-\theta_{124})\p_{\tau}+((xu_x-3u)\theta_{2}
-\tfrac12x^2\theta_{3}+\tfrac12x^3\theta_{5}-3x\theta_{125}\\
 & +\theta_{123})\p_{\nu}+((2u_x^2-3xz)\theta_{2}+(2xu_x-3u)\theta_{3}-\tfrac12x^3u_{xx}\theta_{5}+
3xu_{xx}\theta_{125}-4\theta_{234})\p_{u_{\tau}}\\
 & +((3xz-2u_x^2)\theta_{1}+(2xu_x-3u)\theta_{4}-\tfrac12x^3u_{xx}\theta_{6}
+3xu_{xx}\theta_{126}+4\theta_{134})\p_{u_{\nu}}\\
 & +((2u_xu_{xx}-\tfrac12xu_{xx}^2-3z)\theta_{2}+(xu_{xx}-u_x)(\theta_{3}-x\theta_{5})
+2u_{xx}\theta_{125}+4x\theta_{256}-4\theta_{236}\\
 & -\theta_{135}+3\theta_{245})\p_{u_{x\tau}}+
((\tfrac12xu_{xx}^2-2u_xu_{xx}+3z)\theta_{1}+(xu_{xx}-u_x)(\theta_{4}-x\theta_{6})\\
 & +2u_{xx}\theta_{126}-4x\theta_{156}+3\theta_{136}-4\theta_{145}-\theta_{246})\p_{u_{x\nu}}
 \end{align*}
and
 \begin{align*}
U_1 =&\,
 \p_{u_{\tau}}-\theta_{1}\p_{u},\
U_2 = \p_{u_{\nu}}-\theta_{2}\p_{u},\
U_3 = \p_{\tau},\
U_4 = \p_{\nu},\\
U_5 =&\,
 \p_{u_{x\tau}}-\theta_{1}(\p_{u_x}+x\p_{u})-\theta_{4}\p_{z}+x\p_{u_{\tau}},\
U_6 = \p_{u_{x\nu}}-\theta_{2}(\p_{u_x}+x\p_{u})+\theta_{3}\p_{z}+x\p_{u_{\nu}},\\
U_7 =&\,
2\theta_{2}\p_{x}-\theta_{3}\p_{u_x}-2\theta_{5}\p_{u_{xx}}+x\p_{\tau}+2u_x\p_{u_{\nu}}+u_{xx}\p_{u_{x\nu}},\\
U_8 =&\,
 2\theta_{1}\p_{x}+\theta_{4}\p_{u_x}+2\theta_{6}\p_{u_{xx}}-x\p_{\nu}+2u_x\p_{u_{\tau}}+u_{xx}\p_{u_{x\tau}},\\
U_9 =&\,
 2x\theta_{1}\p_{x}+3u\theta_{1}\p_{u}+(u_x\theta_{1}+x\theta_{4})\p_{u_x}
 +(\theta_{4}+2x\theta_{6}-u_{xx}\theta_{1})\p_{u_{xx}}+u_x\theta_{4}\p_{z}\\
 & -(\tfrac12x^2-4\theta_{12})\p_{\nu}
 +(2xu_x-3u+3\theta_{13}+4\theta_{24})\p_{u_{\tau}}-\theta_{14}\p_{u_{\nu}}\\
 & +(xu_{xx}-u_x+\theta_{15}+4\theta_{26})\p_{u_{x\tau}}-3\theta_{16}\p_{u_{x\nu}},\\
U_{10} =&\,
 2x\theta_{2}\p_{x}+3u\theta_{2}\p_{u}+(u_x\theta_{2}-x\theta_{3})\p_{u_x}-(\theta_{3}+2x\theta_{5}+u_{xx}\theta_{2})\p_{u_{xx}}
 -u_x\theta_{3}\p_{z}\\
 & +(\tfrac12x^2-4\theta_{12})\p_{\tau}-\theta_{23}\p_{u_{\tau}}+(2xu_x-3u+4\theta_{13}+3\theta_{24})\p_{u_{\nu}}\\
 & -3\theta_{25}\p_{u_{x\tau}}+(xu_{xx}-u_x+4\theta_{15}+\theta_{26})\p_{u_{x\nu}},\\
U_{11} =&\,
 (3u_{xx}\theta_{2}+\theta_{3})\p_{x}+(3(u_xu_{xx}-z)\theta_{2}+u_x\theta_{3}-3(\theta_{236}-\theta_{245}))\p_{u}
 +(\tfrac32u_{xx}^2\theta_{2}-3\theta_{256})\p_{u_x}\\
 & -2u_{xx}\theta_{5}\p_{u_{xx}}+u_{xx}(\tfrac12u_{xx}^2\theta_{2}-3\theta_{256})\p_{z}
 +(u_x+3\theta_{26})\p_{\tau}-3\theta_{25}\p_{\nu}+3u_{xx}\theta_{25}\p_{u_{\tau}}\\
 & +3(z+u_{xx}\theta_{26})\p_{u_{\nu}}+(\tfrac12u_{xx}^2-4\theta_{56})\p_{u_{x\nu}},\\
U_{12} =&\,
 (\theta_{4}-3u_{xx}\theta_{1})\p_{x}+(3(z-u_xu_{xx})\theta_{1}+u_x\theta_{4}+3(\theta_{136}-\theta_{145}))\p_{u}\\
 & -(\tfrac32u_{xx}^2\theta_{1}-3\theta_{156})\p_{u_x}-2u_{xx}\theta_{6}\p_{u_{xx}}
 -u_{xx}(\tfrac12u_{xx}^2\theta_{1}-3\theta_{156})\p_{z}-3\theta_{16}\p_{\tau}\\
 & +(u_x+3\theta_{15})\p_{\nu}-3(z+u_{xx}\theta_{15})\p_{u_{\tau}}-3u_{xx}\theta_{16}\p_{u_{\nu}}
 -(\tfrac12u_{xx}^2-4\theta_{56})\p_{u_{x\tau}},\\
U_{13} =&\,
 ((3xu_{xx}-4u_x)\theta_{1}-x\theta_{4})\p_{x}+((3x(u_xu_{xx}-z)-2u_x^2)\theta_{1}-xu_x\theta_{4}+3x(\theta_{145}-\theta_{136})\\
 & +\theta_{134})\p_{u}+(3(\tfrac12xu_{xx}^2-z)\theta_{1}-2u_x\theta_{4}-3x\theta_{156})\p_{u_x}
 +(u_{xx}^2\theta_{1}-u_{xx}\theta_{4}+4\theta_{156}\\
 & +(2xu_{xx}-4u_x)\theta_{6})\p_{u_{xx}}+(\tfrac12xu_{xx}^3\theta_{1}-3z\theta_{4}-3xu_{xx}\theta_{156})\p_{z}
 +(3x\theta_{16}-\theta_{14})\p_{\tau}\\
 & +(3u-xu_x+\theta_{13}-3x\theta_{15})\p_{\nu}+(3xz-2u_x^2+4\theta_{34}+3xu_{xx}\theta_{15})\p_{u_{\tau}}
 +3xu_{xx}\theta_{16}\p_{u_{\nu}}\\
 & +(u_{xx}(\tfrac12xu_{xx}-2u_x+2\theta_{15})+3z-3\theta_{45}+4\theta_{36}-4x\theta_{56})\p_{u_{x\tau}}
 +(2u_{xx}\theta_{16}+\theta_{46})\p_{u_{x\nu}},\\
U_{14} =&\,
 ((4u_x-3xu_{xx})\theta_{2}-x\theta_{3})\p_{x}+((3xz+2u_x^2-3xu_xu_{xx})\theta_{2}-xu_x\theta_{3}
 +3x(\theta_{236}-\theta_{245})\\
 & -\theta_{234})\p_{u}+(3(z-\tfrac12xu_{xx}^2)\theta_{2}-2u_x\theta_{3}+3x\theta_{256})\p_{u_x}
 -(u_{xx}^2\theta_{2}+u_{xx}\theta_{3}+4\theta_{256}\\
 & +2(2u_x-xu_{xx})\theta_{5})\p_{u_{xx}}
 -(3z\theta_{3}+xu_{xx}(\tfrac12u_{xx}^2\theta_{2}-3\theta_{256}))\p_{z}
 +(3u-xu_x-3x\theta_{26}\\
 & +\theta_{24})\p_{\tau}
 +(3x\theta_{25}-\theta_{23})\p_{\nu}-3xu_{xx}\theta_{25}\p_{u_{\tau}}
 -(3xz-2u_x^2+4\theta_{34}+3xu_{xx}\theta_{26})\p_{u_{\nu}}\\
 & +(\theta_{35}-2u_{xx}\theta_{25})\p_{u_{x\tau}}
 -(3z+u_{xx}(\tfrac12xu_{xx}-2u_x+2\theta_{26})-4\theta_{45}+3\theta_{36}-4x\theta_{56})\p_{u_{x\nu}},
 \end{align*}

Note that $G(3)_{\bar0}$ contains the subalgebra $\mathfrak{sp}(2)=\langle V_8,V_9,V_{10}\rangle$,
with the complement $G(2)$ generated by the remaining $V_i$. 
The given generators are compatible with the SHC $\mathbb Z$-grading; more precisely:
$\opp{deg}(V_{1\text{-}2})=-3$, $\opp{deg}(V_3)=-2$, $\opp{deg}(V_{4\text{-}5})=-1$, $\opp{deg}(V_{6\text{-}12})=0$, $\opp{deg}(V_{13\text{-}14})=1$, $\opp{deg}(V_{15})=2$, $\opp{deg}(V_{16\text{-}17})=3$ for the even generators;
$\opp{deg}(U_{1\text{-}2})=-2$, $\opp{deg}(U_{3\text{-}6})=-1$, $\opp{deg}(U_{7\text{-}8})=0$, 
$\opp{deg}(U_{9\text{-}12})=1$, $\opp{deg}(U_{13\text{-}14})=2$ for the odd generators.

\section*{Acknowledgments}

This work was initiated during a visit by A. Santi to Troms\o{} in Fall 2018.  Support from the Troms\o{} Research Foundation -- Pure Mathematics in Norway is gratefully acknowledged.  The research of A. Santi is supported by the
project ``Supergravity backgrounds and Invariant theory'' of the University of Padova and partly supported
by the Project BIRD179758/17 ``Stratifications in algebraic groups, spherical varieties, Kac Moody algebras
and Kac Moody groups'' and Project DOR1717189/17 ``Algebraic, geometric and combinatorial properties of conjugacy classes''.
\bibliographystyle{abbrv}
\bibliography{G(3)-paper}
\end{document}